\newtheorem{lemma}{Lemma}[section]
\newtheorem{remark}[lemma]{Remark}
\newtheorem{proposition}[lemma]{Proposition}
\newtheorem{theorem}[lemma]{Theorem}
\newtheorem{corollary}[lemma]{Corollary}
\providecommand{\eps}{{\ensuremath{\varepsilon}}}
\newcommand{\sgn}{\operatorname{sgn}}
\providecommand{\eps}{{\ensuremath{\varepsilon}}}
\providecommand{\N}{{\ensuremath{\mathbbm{N}}}}
\providecommand{\R}{{\ensuremath{\mathbbm{R}}}}
\providecommand{\E}{{\ensuremath{\mathbb{E}}}}
\renewcommand{\P}{{\ensuremath{\mathbb{P}}}}
\providecommand{\1}{{\ensuremath{\mathbbm{1}}}}
\begin{document}
\title{
Strong convergence rates and temporal regularity for
 Cox-Ingersoll-Ross processes and Bessel processes\\
            with accessible boundaries }
\author{Martin Hutzenthaler, Arnulf Jentzen and Marco Noll}
\maketitle
\abstract{
Cox-Ingersoll-Ross (CIR) processes are widely used in financial modeling
such as in the Heston model
for the approximative pricing of financial derivatives.
Moreover,
CIR processes are mathematically interesting
due to the irregular square root function in the diffusion coefficient.
In the literature, positive strong convergence rates for 
numerical approximations of CIR processes have
been established in the case of an inaccessible boundary point.
Since calibrations of the Heston model frequently result in parameters
such that the boundary is accessible, we focus on this interesting case.
Our main result shows for every $p\in(0,\infty)$
that the drift-implicit square-root Euler approximations
proposed in Alfonsi (2005) converge in the strong $ L^p $-distance
with a positive rate for half of the parameter regime in which the boundary point is accessible.
A key step in our proof is temporal regularity of Bessel processes.
More precisely, we prove for every $p\in(0,\infty)$
that Bessel processes are temporally
$\nicefrac{1}{2}$-H\"older continuous
in $L^p$.
}
\tableofcontents

\section{Introduction}

Cox-Ingersoll-Ross (CIR) processes
have been proposed
in Cox, Ingersoll \& Ross~\cite{cir85}
as a model for short-term interest rates.
Since then, CIR processes are widely used in financial modeling.
For instance, CIR processes appear as the instantaneous squared volatility 
in the Heston model~\cite{h93}
which is one of the most popular
equity derivatives pricing models among practitioners
and which is  day after day being numerically approximated 
in the financial engineering industry.
Building on explicitly known Fourier transforms of
CIR processes at fixed time points,
it is well known how to valuate
plain vanilla options in the Heston model
or how to calibrate the Heston model with European calls and puts
(see, however, Kahl \& J\"ackel~\cite{KahlJaeckel2005} for numerical issues).
In contrast, 
complicated path-dependent financial derivatives
within the Heston model 
are prized by using 
time-discrete approximations
of the Heston model and Monte Carlo methods.
In addition, positive strong convergence rates of
the time-discrete approximations
are important
for applying efficient multilevel Monte Carlo methods
(see Giles~\cite{g08a}, Kebaier~\cite{k05}, Heinrich~\cite{h98,h01}).
%
In view of this,
the central goal of this article is to establish a positive
strong convergence rate for time-discrete approximations of CIR processes.

For a formal introduction of CIR processes, let
$ x, \beta \in (0,\infty) $, $\delta\in[0,\infty)$, $ \gamma \in \R $,
let
$ 
  ( 
    \Omega, \mathcal{F}, \P,  
    ( \mathcal{F}_t )_{ t \in [ 0 , \infty ) }
  ) 
$
be a stochastic basis
(see Section~\ref{ssec:Notation} for this and further notation),
let
$
  W \colon [0,\infty) \times \Omega \to \R
$
be a standard $ ( \mathcal{F}_t )_{ t \in [ 0, \infty ) } $-Wiener process
and let
$ X \colon [ 0 , \infty ) \times \Omega \to [0,\infty)$
be an adapted stochastic process
with continuous sample paths
satisfying the stochastic differential equation (SDE)
\begin{equation}
\label{eq:CIR_intro}
  d X_t = \left(\delta - \gamma X_t \right) dt
  +
  \beta \sqrt{ X_t } \, dW_t
  ,
  \qquad
  t \in [0,\infty) ,
  \qquad
  X_0 = x .
\end{equation}
If $\delta,\gamma\in(0,\infty)$, then the processes $X$ is
the CIR process
with long-time mean $ \nicefrac{ \delta }{ \gamma } $, 
speed of adaptation $ \gamma $ 
and volatility $ \beta $.
Moreover if $\gamma=0$ and $\beta=2$, then $X$ is the squared Bessel process
and $\sqrt{X}$ is the \emph{Bessel process of ``dimension'' $\delta$};
see, e.g., G\"oing-Jaeschke \& Yor~\cite{GoeingJaeschkeYor2003} for a review
on Bessel processes.
We note that according to Feller's boundary classification
(see, e.g., Theorem V.51.2 in~\cite{RogersWilliams2000b}),
the boundary point
zero is inaccessible
(that is, $ \P[ \, \forall \, t \in (0,\infty) \colon X_t > 0 \, ] = 1 $)
if and only if $2\delta\geq\beta^2$.
Alfonsi~\cite{Alfonsi2005} proposed in the case $4\delta>\beta^2$
the 
numerical
approximations
$
  Y^h \colon[0,\infty)\times\Omega\to[0,\infty)
$, 
$ h \in (0,\infty) \cap ( 0, \nicefrac{ 2 }{ \gamma^- } )
$, 
satisfying
for all 
$ 
  h \in ( 0, \nicefrac{ 2 }{ \gamma^- } ) 
$,
$ n \in \N_0 $,
$ t \in (nh, (n+1)h ] $
that
$
  Y_0^h = X_0
$ 
and
\begin{equation} \label{eq:LBE}
  Y_{t}^h
  =
  \tfrac{(n+1)h-t}{h}
  Y_{nh}^h+
  \tfrac{t-nh}{h}
  \left[
    \tfrac{
      ( Y_{nh}^h )^{ 1 / 2 }
      +
      \frac{
        \beta
      }{ 2 }
      ( W_{ ( n + 1 ) h } - W_{ n h } )
      +
      \sqrt{
        \left[
          ( Y_{nh}^h )^{ 1 / 2 }
          +
          \frac{ \beta }{ 2 }
          ( W_{ ( n + 1 ) h } - W_{ n h } )
        \right]^2
          +
          ( 2 + \gamma h ) 
          ( \delta - \frac{ \beta^2 }{ 4 } )
          h
      }
    }{ 
      ( 2 + \gamma h ) 
    }
  \right]^2.
\end{equation}
Following Dereich, Neuenkirch \& Szpruch~\cite{DereichNeuenkirchSzpruch2012}, we refer
to the numerical approximations~\eqref{eq:LBE} as
\emph{linearly interpolated drift-implicit square-root Euler approximations}.

In the literature, the
goal of establishing positive strong convergence rates
for CIR processes has been achieved in the case of an inaccessible boundary point.
The first positive rates were established in Theorem 2.2 in
Berkaoui, Bossy \& Diop~\cite{BerkaouiBossyDiop2008}
which proves 
for every $ p \in [1,\infty) $
with
$
  \tfrac{2\delta}{\beta^2}>
  1+\sqrt{8}\max(\tfrac{1}{\beta}\sqrt{\gamma^+(16p-1)},16p-2)
$
the uniform $ L^p $-rate $ \nicefrac{1}{2} $ 
in the case of a symmetrized Euler scheme.
In addition, Theorem~1.1
in Dereich, Neuenkirch \& Szpruch~\cite{DereichNeuenkirchSzpruch2012}
implies
that if the boundary point zero is inaccessible, then
the linearly interpolated
drift-implicit square-root Euler approximations~\eqref{eq:LBE}
converge
for every $p\in[1,\tfrac{2\delta}{\beta^2})$
with uniform $L^p$-rate $\nicefrac{1}{2}-$.
Theorem 2 in Alfonsi~\cite{Alfonsi2013}
and
Proposition 3.1 in Neuenkirch \& Szpruch~\cite{NeuenkirchSzpruch2014}
even imply
for every $p\in[1,\tfrac{4\delta}{3\beta^2})$
the uniform $L^p$-rate $1$
in the regime $\delta>\beta^2$.
These results exploit that the process $Z\colon[0,\infty)\times\Omega\to[0,\infty)$
defined through $Z_t:=\sqrt{X_t}$, $t\in[0,\infty)$, satisfies
in the case $2\delta\geq\beta^2$ the
SDE with additive noise
\begin{equation} \label{eq:Lamperti}
  dZ_t=\left[\tfrac{4\delta-\beta^2}{8Z_t}-\tfrac{\gamma}{2}Z_t\right] dt
       +\tfrac{ \beta }{ 2 } \, dW_t
  ,
  \qquad
  t \in [0,\infty) ,
\end{equation}
which follows from It\^o's lemma applied to the $C^2((0,\infty),\R)$-function
$(0,\infty)\ni x\mapsto \sqrt{x}\in\R$.
The reason for considering the SDE~\eqref{eq:Lamperti}
is that if the boundary point zero is inaccessible, then
the SDE~\eqref{eq:Lamperti} has a globally one-sided Lipschitz continuous drift coefficient
and a globally Lipschitz continuous diffusion coefficient.
To the best of our knowledge, the above goal remained an open problem
when the boundary point zero is 
accessible.
For results on strong convergence without rate
which apply to CIR processes with accessible boundary, see, e.g.,
Deelstra \& Delbaen~\cite{DeelstraDelbaen1998}, 
Alfonsi~\cite{Alfonsi2005},
Higham \& Mao~\cite{HighamMao2005},
Lord, Koekkoek \& Dijk~\cite{LordKoekkoekDijk2010},
Gy\"ongy \& R{\'a}sonyi~\cite{GyoengyRasonyi2011},
Halidias~\cite{Halidias2012}.
For results on pathwise convergence
which apply to CIR processes with accessible boundary, see, e.g.,
Milstein \& Schoenmakers~\cite{MilsteinSchoenmakers2013}.
To establish strong convergence rates in the case
$ 2 \delta < \beta^2 $
is of particular difficulty as the coefficients 
of the SDE~\eqref{eq:CIR_intro} are
not even \emph{locally} Lipschitz continuous
on the state space
in that case.

In real-world applications of the Heston model, calibrations
frequently result in parameters such that 
$2\delta<\beta^2$
so that
zero is 
an accessible boundary point of the volatility process;
see, e.g.,
Table III in~\cite{BakshiCaoChen1997},
Table 1 in~\cite{DuffiePanSingleton2000}
and Table 1 in~\cite{EngelmannKosterOeltz2011}
for calibration results in the literature.
So it is important for practitioners to know which
numerical approximations
converge with a positive rate
even in the case of an accessible boundary point.
To the best of our knowledge,
our main theorem,
Theorem~\ref{thm:main.result},
is the first result which establishes
a positive rate of strong convergence
of numerical approximations of Cox-Ingersoll-Ross processes
which hit the boundary point $0$ with positive probability.

\begin{theorem}   \label{thm:main.result}
Assume the above setting,
assume 
$ \nicefrac{ 2 \delta }{ \beta^2 } > \nicefrac{ 1 }{ 2 } $
and
let $ T, \varepsilon \in (0,\infty) $,
$ p \in \left[ 1, \infty \right) $.
Then
there exists a real number $ C \in [0,\infty) $ 
such that for all 
$
  h \in (0,T] \cap 
  (
    0, 
    \nicefrac{ 1 }{ 2 \gamma^{ - } } 
  )
$ 
it holds that
\begin{align} \label{eq:theorem_cir:ass8}
\bigg(
  \E\bigg[ 
    \sup_{
      t \in [0,T]
    }
    \big| 
      X_{ t } - Y^h_t 
    \big|^p
  \bigg]
\bigg)^{ \! 1 / p }
 \leq  
 C
 \cdot
 h^{ 
   \left[
     \frac{ (\nicefrac{2 \delta}{\beta^2}) \wedge 1  - \nicefrac{1}{2} }
{p}
     - \eps  
   \right]
 }
 .
\end{align}
\end{theorem}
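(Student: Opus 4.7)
My plan is to lift the problem to the Lamperti-transformed coordinate $Z_t := \sqrt{X_t}$. A direct computation verifies that the explicit formula in~\eqref{eq:LBE} is exactly the unique positive root of the quadratic arising from the drift-implicit Euler scheme applied to the Lamperti SDE~\eqref{eq:Lamperti}; setting $\tilde Z_n := \sqrt{Y^h_{nh}}$,
\begin{equation*}
 \tilde Z_{n+1} = \tilde Z_n + h\!\left[\frac{4\delta - \beta^2}{8\,\tilde Z_{n+1}} - \frac{\gamma}{2}\tilde Z_{n+1}\right] + \frac{\beta}{2}(W_{(n+1)h}-W_{nh}).
\end{equation*}
The hypothesis $\nicefrac{2\delta}{\beta^2} > \nicefrac{1}{2}$ is $4\delta > \beta^2$; this guarantees that the quadratic has a positive root, and moreover that the drift $b(z) := \tfrac{4\delta-\beta^2}{8z} - \tfrac{\gamma z}{2}$ satisfies the one-sided Lipschitz estimate $(z-w)(b(z)-b(w))\le \tfrac{|\gamma|}{2}(z-w)^2$ on $(0,\infty)$. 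Moreover, the effective Bessel dimension $d := \nicefrac{4\delta}{\beta^2} > 1$, which yields the negative-moment bound $\sup_{t\in[0,T]}\E[Z_t^{-r}] < \infty$ for every $r<d$, with the analogous estimate holding uniformly in $h$ for $\tilde Z_n$.

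\textbf{Error analysis in the Lamperti coordinate.} Let $\mathcal{E}_n := Z_{nh} - \tilde Z_n$. Subtracting the implicit scheme from the integrated SDE yields
\begin{equation*}
 \mathcal{E}_{n+1} = \mathcal{E}_n + \int_{nh}^{(n+1)h}\!\!\big[b(Z_s) - b(Z_{(n+1)h})\big]\,ds + h\,\big[b(Z_{(n+1)h}) - b(\tilde Z_{n+1})\big].
\end{equation*}
Multiplying by $\mathcal{E}_{n+1}$ and absorbing the implicit term via the one-sided Lipschitz estimate, discrete Gronwall bounds $\max_n \|\mathcal{E}_n\|_{L^2}$ in terms of the temporal fluctuations $\|b(Z_s)-b(Z_{(n+1)h})\|_{L^q}$. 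These are controlled by combining the temporal $\nicefrac{1}{2}$-H\"older regularity of Bessel processes in $L^q$ (established in the earlier sections of the paper) with the negative-moment bounds $\E[Z_s^{-r}] < \infty$ for $r<d$. Optimizing over admissible exponents produces
\begin{equation*}
 \max_{0\le n\le \lfloor T/h\rfloor}\|\mathcal{E}_n\|_{L^2} \le C_\epsilon\,h^{(\nicefrac{d}{2})\wedge 1 - \nicefrac{1}{2} -\epsilon}.
\end{equation*}
The loss of rate from the ideal $\nicefrac{1}{2}$ down to $(\nicefrac{d}{2})\wedge 1 - \nicefrac{1}{2}$ is exactly the one forced by the restricted range of finite negative moments when $d\in(1,2]$.

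\textbf{Passage to original coordinates.} Using $|X_t - Y^h_t| = |Z_t - \sqrt{Y^h_t}|\cdot(Z_t + \sqrt{Y^h_t})$, together with the linear interpolation prescribed in~\eqref{eq:LBE} and the $\nicefrac{1}{2}$-H\"older regularity of $X$ to handle off-grid times, the task reduces to bounding $\|Z - \sqrt{Y^h}\|_{L^{2p}}$. By log-convexity of $L^q$-norms,
\begin{equation*}
 \|Z - \sqrt{Y^h}\|_{L^{2p}} \le \|Z-\sqrt{Y^h}\|_{L^2}^{\theta}\cdot \|Z-\sqrt{Y^h}\|_{L^q}^{1-\theta}
\end{equation*}
with $\theta = \theta(p,q)\to \nicefrac{1}{p}$ as $q\to\infty$; interpolating the $L^2$-rate above against uniform-in-$h$ boundedness in arbitrary $L^q$ (which follows from standard positive-moment bounds on $Z$ and $\tilde Z_n$) produces the $\nicefrac{1}{p}$-exponent in the rate. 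A final Cauchy-Schwarz against the positive moments of $Z + \sqrt{Y^h}$ then delivers~\eqref{eq:theorem_cir:ass8}, the supremum over $t\in[0,T]$ being absorbed by standard maximal-inequality arguments applied to both the SDE and the linearly interpolated scheme.

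\textbf{Main obstacle.} The central difficulty is that $Z$ hits zero with positive probability (since the regime $d<2$ is permitted), whereas $\tilde Z_n$ is deterministically positive, so that $b(Z_s) = O(\nicefrac{1}{Z_s})$ can be arbitrarily large near zero-crossings of $Z$. Quantitative control of the integral $\int_{nh}^{(n+1)h}[b(Z_s)-b(Z_{(n+1)h})]\,ds$ therefore hinges on tightly coupling the temporal H\"older regularity of Bessel processes with sharp negative-moment estimates, and it is precisely this coupling that pins the achievable exponent at $(d-1)/(2p)$ in the accessible-boundary regime and causes the rate to degenerate as $\nicefrac{2\delta}{\beta^2}\downarrow\nicefrac{1}{2}$.
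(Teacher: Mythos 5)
Your overall strategy---pass to the Lamperti coordinate $Z=\sqrt{X}$, recognize the scheme~\eqref{eq:LBE} as a drift-implicit Euler method for the additive-noise SDE~\eqref{eq:Lamperti}, exploit the one-sided Lipschitz drift, and couple temporal $\nicefrac{1}{2}$-H\"older regularity of $Z$ with negative-moment bounds---matches the paper's. The gap is in the error analysis, specifically the claim that a discrete Gronwall argument delivers
\begin{equation*}
  \max_{0\le n\le \lfloor T/h\rfloor}\|\mathcal{E}_n\|_{L^2}
  \le C_\epsilon\, h^{(\nicefrac{d}{2}\wedge 1)-\nicefrac{1}{2}-\epsilon}.
\end{equation*}
After multiplying by $\mathcal{E}_{n+1}$ and absorbing the implicit term via one-sided Lipschitzness, the recursion is the pathwise inequality $|\mathcal{E}_{n+1}|\le (1-hL)^{-1}\bigl(|\mathcal{E}_n|+|R_{n+1}|\bigr)$ with $R_{n+1}=\int_{nh}^{(n+1)h}[b(Z_s)-b(Z_{(n+1)h})]\,ds$ (this is exactly Lemma~\ref{l:lemma_num}); to close the $L^2$-bound you must therefore control $\|R_{n+1}\|_{L^2}$, i.e.\ $\|b(Z_s)-b(Z_t)\|_{L^2}$ for $|s-t|\le h$. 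The singular part of $b$ is $\propto 1/z$, so $|b(Z_s)-b(Z_t)|\gtrsim |Z_s-Z_t|/(Z_sZ_t)$. Writing $|Z_s-Z_t|=|Z_s-Z_t|^\gamma|Z_s-Z_t|^{1-\gamma}$ and absorbing the $(1-\gamma)$-power via Young's inequality, the negative powers that remain are of order $1+\gamma$ (as in \eqref{l:lemma_conv_order:1}); applying H\"older with exponents $r,r'$ where $\nicefrac{1}{r}+\nicefrac{1}{r'}=\nicefrac{1}{2}$, hence $r'\ge 2$, requires $\E[Z_s^{-(1+\gamma)r'}]<\infty$, i.e.\ $(1+\gamma)r'<d$, which forces $\gamma < d/2-1$. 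In the accessible-boundary regime $d=\nicefrac{4\delta}{\beta^2}\in(1,2]$ one has $d/2-1\le 0$, so there is no admissible $\gamma>0$: the $L^2$-estimate simply does not close.

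The paper avoids this by closing the recursion in $L^1$ (Corollary~\ref{l:num_corollary}), where H\"older with exponents $1+\nicefrac{1}{\epsilon}$ and $1+\epsilon$ relaxes the negative-moment requirement to $(1+\gamma)(1+\epsilon)<d$, admitting $\gamma$ arbitrarily close to $d-1>0$ and yielding precisely the $L^1$-rate $\nicefrac{\gamma}{2}\approx(\nicefrac{2\delta}{\beta^2}\wedge 1)-\nicefrac{1}{2}$. Passage to $L^p$ is then by interpolation against uniform higher moments (Remark~\ref{remark.L1.to.Lp}, Corollary~\ref{l:uni_mom_num2}), and this interpolation is the reason the exponent in~\eqref{eq:theorem_cir:ass8} is divided by $p$; the $L^2$-rate accessible by these tools is $((\nicefrac{2\delta}{\beta^2}\wedge 1)-\nicefrac{1}{2})/2-\epsilon$, half of what you assert. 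Note also that your passage to original coordinates postulates negative moments of $\tilde{Z}_n$ uniformly in $h$, which the paper neither needs nor establishes: in~\eqref{thm:num_theorem:511} the inverse transformation is controlled via a pathwise mean-value bound $|X_{nh}-\phi^{-1}(Y_n^h)|\le \sup_z\sigma(z)\cdot|\mathcal{E}_n|$ and H\"older with exponents $1+\nicefrac{1}{\delta}$, $1+\delta$ (not Cauchy--Schwarz), so that the rate in $L^{p(1+\delta)}$, not $L^{2p}$, of $\mathcal{E}$ is what enters. Replacing your $L^2$-Gronwall step by the pathwise/$L^1$ estimate of Lemma~\ref{l:lemma_num} and Corollary~\ref{l:num_corollary} and tightening the final H\"older exponents as in~\eqref{thm:num_theorem:511} closes the gap and recovers the theorem.
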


Theorem~\ref{thm:main.result} follows
from Corollary~\ref{c:theorem_cir} below.
Corollary~\ref{c:theorem_cir}
in turn
is a corollary of
Theorem~\ref{thm:num_theorem} which proves a positive rate of strong convergence
for
drift-implicit square-root Euler approximations
for more general square-root diffusion processes with accessible boundaries
under the assumption of certain inverse moments.
In addition, Corollary~\ref{l:num_corollary} below
and Theorem~\ref{l:cir_hr} below yield a strong approximation result for
the Bessel process $Z$.
We emphasize that
it is not clear to us whether the strong convergence rate proved in 
Theorem~\ref{thm:main.result} is sharp.
It might very well be the case that the rate is not sharp
(see the numerical simulation in Figure 2 in Alfonsi~\cite{Alfonsi2005}
which suggests an $L^1$-rate between $\nicefrac{1}{2}$ and $1$ for
$\nicefrac{2\delta}{\beta^2}\in(\tfrac{1}{2},1]$).

We sketch the main steps in our proof of Theorem~\ref{thm:main.result}.
Corollary~\ref{l:num_corollary}
below
shows that suitable uniform temporal H\"older regularity 
of the Bessel-type processes~\eqref{eq:Lamperti}
together with uniform moment bounds
of the numerical approximation processes~\eqref{eq:LBE}
is actually enough
to deduce a strong rate of convergence
of the linearly interpolated
drift-implicit square-root Euler approximations~\eqref{eq:LBE}.
So this temporal H\"older regularity
of the Bessel-type processes~\eqref{eq:Lamperti}
is
an important step
in our proof of Theorem~\ref{thm:main.result}
and is subject of the following 
theorem, Theorem~\ref{thm:Bessel.regularity}.
To the best of our knowledge,
Theorem~\ref{thm:Bessel.regularity} (choose $\beta=2$ and $\gamma=0$) 
is the first result which proves
for every $ q\in(0,\infty) $, $ \eps \in ( 0, \frac{ 1 }{ 2 } ) $ 
that
$ 
  Z \in 
  L^q(
    \Omega ; 
    C^{ \nicefrac{ 1 }{ 2 } - \eps }( [0,T], [0,\infty) )
  )
$
holds
for the Bessel process $Z$ with ``dimension'' $\delta\in[0,2)$;
see, e.g., Lemma 3.2 in Dereich, Neuenkirch \& Szpruch~\cite{DereichNeuenkirchSzpruch2012}
for the case
$\tfrac{4\delta}{\beta^2}\in(2,\infty)$.
\begin{theorem}\label{thm:Bessel.regularity}
Assume the above setting.
  Then
for all $ T, \varepsilon, p \in (0,\infty) $
it holds that
\begin{equation} 
\begin{split} 
     \sup_{ \substack{ s, t \in [0,T] \\ s \neq t } }
 \left\|
   \tfrac{ Z_t  - Z_s  }{ |t-s|^{\nicefrac{1}{2} } }
 \right\|_{L^p( \Omega; \R )} 
 +
 \bigg\|
     \sup_{ \substack{ s, t \in [0,T] \\ s \neq t } }
   \tfrac{\left| Z_t  - Z_s \right| }{ |t-s|^{\nicefrac{1}{2} - \eps} }
 \bigg\|_{L^p( \Omega; \R )} 
< \infty.
\end{split} 
\end{equation}
\end{theorem}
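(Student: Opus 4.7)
The plan is to first establish the pointwise $L^p$ bound $\|Z_t - Z_s\|_{L^p(\Omega;\R)} \leq C_p |t-s|^{1/2}$ for every $p \in (0,\infty)$; the uniform Hölder estimate in the theorem then follows by Kolmogorov's continuity criterion applied with a moment exponent $q > \varepsilon^{-1}$. As a preliminary step I would reduce the general CIR parameters $(\delta,\beta,\gamma)$ to the pure Bessel case $(\delta_0,2,0)$ with $\delta_0 := 4\delta/\beta^2$: the rescaling $\hat X := 4X/\beta^2$ sets $\beta$ to $2$, and the classical deterministic time change $\tau(t) := (e^{\gamma t}-1)/\gamma$ applied to $e^{\gamma t}\hat X_t$ transforms $\hat X$ into a squared Bessel process $\tilde X$ of dimension $\delta_0$. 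This yields a representation $Z_t = c(t)\,\tilde Z_{\tau(t)}$ with $\tilde Z := \sqrt{\tilde X}$, where $c\colon[0,T]\to(0,\infty)$ is smooth and bounded away from $0$ and $\infty$ and both $c$ and $\tau$ are Lipschitz on $[0,T]$; the triangle inequality then reduces the Hölder bound for $Z$ to the analogous bound for $\tilde Z$.

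For the Bessel case, writing $Z^{(z)}$ for the Bessel process of dimension $\delta_0$ starting at $z \geq 0$, the Brownian scaling identity $Z^{(z)}_{\lambda r} \stackrel{d}{=} \sqrt{\lambda}\,Z^{(z/\sqrt{\lambda})}_r$ and the strong Markov property give
\begin{equation*}
  \E\bigl[\,|Z_t-Z_s|^p \;\big|\; \calF_s\,\bigr]
  = (t-s)^{p/2}\, \phi\!\left(\frac{Z_s}{\sqrt{t-s}}\right),
  \qquad
  \phi(u) := \E\bigl[\,|Z^{(u)}_1 - u|^p\,\bigr],
\end{equation*}
reducing the problem to the deterministic statement $\sup_{u\geq 0} \phi(u) < \infty$. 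For $u$ in any bounded range, $\phi(u)$ is controlled by standard Bessel moment bounds. For $u$ large I would split on $A_u := \{\inf_{r\in[0,1]} Z^{(u)}_r > u/2\}$: on $A_u$ the Lamperti SDE $Z^{(u)}_1 - u = \int_0^1 \tfrac{\delta_0 - 1}{2\,Z^{(u)}_r}\,dr + W_1$ is valid with drift integrand pointwise bounded by $|\delta_0-1|/u$, giving $\E[|Z^{(u)}_1-u|^p\,\1_{A_u}] \leq C_p$; on $A_u^c$, a Gaussian-type Bessel hitting-time bound (via Yamada--Watanabe comparison with Brownian motion for $\delta_0 \geq 1$, and via the estimate $|Z^{(u)}_r - (u+W_r)| \leq (1-\delta_0)/u$ on $A_u$ for $\delta_0 < 1$) yields $\P(A_u^c) \leq C\exp(-c u^2)$ for $u$ large, and combined with the polynomial moment estimate $\E[(Z^{(u)}_1)^{2p}] \leq C(1+u^{2p})$ via Cauchy--Schwarz this handles the off-event contribution.

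The main obstacle is precisely this off-event estimate: for $\delta_0 \in [0,2)$ the Lamperti SDE is not globally valid because the drift is singular at $0$ and the process can actually reach $0$. The localization above sidesteps this by treating the rare excursion of $Z^{(u)}$ down to $u/2$ separately, and it succeeds only because the Gaussian-type decay of $\P(A_u^c)$ beats any polynomial growth in $u$. Once $\sup_u \phi(u) < \infty$ is established the pointwise $L^p$-bound is immediate, and Kolmogorov's continuity theorem completes the proof of Theorem~\ref{thm:Bessel.regularity}.
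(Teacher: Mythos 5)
Your proposal is correct, but the route is genuinely different from the paper's. The paper proves Theorem~\ref{thm:Bessel.regularity} as a specialisation of Theorem~\ref{l:cir_hr}, which stays with the square-root diffusion $X$ itself (never passing to the Lamperti coordinate) and works for a whole class of $\sigma$'s satisfying $\sigma(z)\gtrsim\sqrt z$. Its core estimate bounds $\big\|\int_{X_s^x}^{X_t^x}\sigma^{-1}\big\|_{L^p}$ by $\sqrt{|t-s|}$ through the purely deterministic inequality (Lemma~\ref{l:cir_min})
\[
\Big|\int_w^v \tfrac{1}{\sigma(z)}\,dz\Big|\ \le\ C\Big(1+c_{10}v^{c_9}+c_{10}w^{c_9}\Big)\cdot\min\Big\{|v-w|^{1/2},\ \tfrac{|v-w|}{\sqrt v+\sqrt w}\Big\},
\]
combined with the one-step moment bounds of Lemma~\ref{l:reg_CIR} and a Markov/flow argument; the key dichotomy there is on the \emph{starting point versus the time increment} ($x\le t$ uses the square-root branch of the $\min$, $x\ge t$ uses the other), rather than on an event. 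The pathwise upgrade is then done by the Sobolev embedding $C^\alpha([0,T],L^p)\hookrightarrow L^p(\Omega,C^{\alpha-1/p-\varepsilon})$.

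What you do instead is reduce to the pure Bessel process by scaling plus the deterministic exponential time-change, then use Brownian scaling to collapse the entire problem to the single statement $\sup_{u\ge0}\phi(u)<\infty$ with $\phi(u)=\E\big[|Z^{(u)}_1-u|^p\big]$, and close the estimate for large $u$ via a dichotomy on the event $A_u=\{\inf_{r\le1}Z^{(u)}_r>u/2\}$: on $A_u$ the Lamperti drift is $O(1/u)$ so $Z^{(u)}_1-u\approx W_1$, and $\P(A_u^c)$ decays like $e^{-cu^2}$ by comparison with Brownian motion, which beats the polynomial growth of $\|Z^{(u)}_1-u\|_{L^{2p}}$. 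This is a clean and correct alternative. What it buys: the reduction to a single scalar function $\phi$ is conceptually crisp and the Gaussian tail argument is elementary. What it loses: it leans on the Brownian scaling of Bessel/BESQ, so it does not extend to the general square-root class in Theorem~\ref{l:cir_hr}; and the paper's deterministic-$\min$ trick is somewhat more robust because the dichotomy is parameterised rather than probabilistic. One detail worth being explicit about in a written version: for $\delta_0<1$ the Bessel process is not a semimartingale globally, so the Lamperti SDE you invoke must be read as holding only strictly away from zero, exactly as you use it on $A_u$ (and, in the comparison bound for $\P(A_u^c)$, only up to the first passage to $u/2$) — your sketch is correct on this point but it is the one place where carelessness would be fatal.
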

Theorem~\ref{thm:Bessel.regularity} follows
from Corollary~\ref{cor_hr} below.
Corollary~\ref{cor_hr}
in turn
is a corollary of
Theorem~\ref{l:cir_hr} which proves
temporal $\nicefrac{1}{2}$-H\"older continuity
for more general diffusion processes with additive noise
(see also Lemma \ref{l:transformierte_gleichung} below).

\subsection{Notation}
\label{ssec:Notation}

Throughout this article we use the following notation.
For $ d \in \N := \{ 1, 2, \dots \} $ and
$ v = ( v_1, \dots, v_d ) \in \R^d $,
we denote by
$
  \| v \|_{\R^d}
  := 
  \left[
    | v_1 |^2
    +
    \ldots
    +
    | v_d |^2
  \right]^{ 1 / 2 }
$
the Euclidean norm of $ v $.
For two sets
$ A $
and
$ B $
we denote by
$ \mathcal{M}( A, B ) $
the set of all mappings from
$ A $ to $ B $.
For two measurable spaces
$ ( A, \mathcal{A} ) $
and
$ ( B, \mathcal{B} ) $
we denote by
$ \mathcal{L}^0( A ; B ) $
the set of all $ \mathcal{A} $/$ \mathcal{B} $-measurable mappings 
from $ A $ to $ B $.
If $ \left( \Omega, \mathcal{F}, \P \right) $ is a probability space,
if $ I \subseteq\R $ is a closed and non-empty interval and if
$ ( \mathcal{F}_t )_{ t \in I } $ is a normal filtration on
$ ( \Omega, \mathcal{F}, \P ) $,
then we call the quadrupel 
$
  ( \Omega, \mathcal{F}, \P, ( \mathcal{F}_t )_{ t \in I } )
$
a \emph{stochastic basis}
(cf., e.g., Appendix~E in Pr{\'e}v{\^o}t \&\ R\"{o}ckner~\cite{PrevotRoeckner2007}).
For
an interval $ O \subseteq \R $
with $ \#( O ) = \infty $
and two functions
$
  \mu \colon O \to \R
$
and
$
  \sigma \colon O \to \R
$,
we define the linear operator
$  
  \mathcal{G}_{ \mu, \sigma }
  \colon
  C^2( O, \R )
  \to
  \mathcal{M}( O, \R )
$
by
\begin{equation}
  ( \mathcal{G}_{ \mu, \sigma } \phi)( x )
:=
  \phi'(x) \,
  \mu( x )
  +
  \tfrac{ 1 }{ 2 } \,
  \phi''(x) 
  \left(
    \sigma(x)
  \right)^2
\end{equation}
for all 
$ x \in O $,
$ \phi \in C^2( O, \R ) $.
Throughout this article we also often calculate and formulate
expressions in the extended real numbers 
$ [ - \infty, \infty ] = \R \cup \{ -\infty, \infty \} $.
In particular, 
we frequently use the conventions
$
  \sup( \emptyset ) = - \infty
$,
$
  \frac{ 0 }{ 0 } 
  =
  0 \cdot \infty
  = 0
$,
$
  0^0 = 1
$,
$
  \frac{ a }{ 0 } = \infty
$,
$
  \frac{ - a }{ 0 } = - \infty
$,
$
  \frac{ b }{ \infty } = 0
$,
$ 
  0^a
  = 0
$,
$
  0^{ - a } = 
  \frac{ 1 }{ 0^{ a } } 
  = 
  \infty
$
for all $ a \in ( 0,\infty) $, $ b \in \R $.
Furthermore, we define 
$
  x \wedge y := \min(x,y)
$ 
and 
$
  x \vee y
  := \max(x,y)
$ 
for all
$
  x, y \in \R
$.
%
%
%
Finally, if 
$ T \in (0,\infty) $, 
$ s \in [0,T] $,
$ n \in \N $,
$\theta=(t_0,\dots, t_n) \in [0,T]^{n+1}$ satisfy
$0=t_0< t_1 < \dots < t_n=T$,
then we define 
$
  | \theta | := \max_{k \in \{1, \dots,n\}} |t_k -t_{k-1}|
$,
$
  \lfloor s \rfloor_{ \theta } :=
  \sup\big( 
    \{
      t_0, t_1, \dots, t_n 
    \}
    \cap [0,s]
  \big)
$
and
$
  \lceil s \rceil_{\theta} :=
  \inf\big( 
    \{
      t_0, t_1, \dots, t_n 
    \}
    \cap
    [ s, T ]
  \big)
$.
For
two normed vector spaces 
$
  (V_1, \left\| \cdot \right\|_{V_1})
$ 
and 
$
  (V_2, \left\| \cdot \right\|_{V_2})
$, a real number $\theta \in [0,\infty)$ and a function 
$ f \colon V_1 \to V_2 $ 
from $ V_1 $ to $ V_2 $, 
we define 
$ 
  \| f \|_{ \mathcal{C}^{ \theta }( V_1, V_2 ) } 
  := 
  \sup_{ v, w \in V_1 }
  \frac{ 
    \| f(v) - f(w) \|_{ V_2 } 
  }{
    \| v - w \|_{ V_1 }^{ \theta } 
  }
$.
For a probability space 
$ \left( \Omega, \mathcal{F}, \P \right) $, 
a  normed vector space 
$ 
  (V, \left\| \cdot \right\|_V ) 
$,
an $ \mathcal{F} $/$ \mathcal{B}( V ) $-measurable mapping 
$
  X \colon \Omega \to V 
$
and a real number 
$ p \in (0,\infty) $, 
we define 
$
  \left\| 
      X
   \right\|_{ L^0(\Omega; V) } := 1
$
and
$
   \left\| 
     X
   \right\|_{ L^p(\Omega; V) } 
   := 
   \left( 
     \E\!\left[  
       \|X\|_V^p 
     \right] 
   \right)^{
     \nicefrac{ 1 }{ p } 
   } 
   \in [0,\infty]
$.
%
%
%



%
%
%
%

\section{Temporal H\"older regularity}
\label{sec:time}

The main results of this section, 
Proposition~\ref{p:lemma_conv_order_1} 
and
Theorem~\ref{l:cir_hr}
below,
establish temporal H\"older regularity for Bessel-type processes.
A central step in the proof of Proposition~\ref{p:lemma_conv_order_1}
is Lemma~\ref{l:lemma_inv_bound_new} below
which proves exponential inverse moments for Bessel-type processes.

\subsection{Setting}
\label{sec:setting_numeric}
Throughout Section~\ref{sec:time} 
we will frequently use the following setting.
Let $ T \in [0,\infty) $, 
$ \mu \in \mathcal{L}^0 ([0,\infty); \R) $,
let
$
  ( 
    \Omega, \mathcal{F}, \P, ( \mathcal{F}_t )_{ t \in [0,T] } 
  )
$
be a stochastic basis, let
$
  W \colon [0,T] \times \Omega \to \R
$
be a standard $ ( \mathcal{F}_t )_{ t \in [0,T] } $-Brownian motion,
  let $X\colon[0,T]\times \Omega\to[0,\infty)$
  be an adapted stochastic process with continuous sample paths satisfying 
  $\int_0^T \left| \mu(X_s) \right| ds < \infty$ $\P$-a.s.\
  and
  $
    X_t=X_0+\int_0^t  \mu(X_s) \,ds+ W_t
  $ $\P$-a.s.\
  for all $t\in[0,T]$.

\begin{remark}
Observe that if $ \sigma \in (0,\infty) $ and 
if $ Y \colon [0,T] \times \Omega \to \R $ 
is an adapted stochastic process with continuous
sample paths satisfying 
$ 
  Y_t = Y_0 + \int_0^t \mu( Y_s ) \, ds
  + \sigma W_t
$
$ \P $-a.s.\ for all $ t \in [0,T] $,
then the process $ \tilde{X} \colon [0,T] \times \Omega
\to \R $ given by 
$ \tilde{X}_t = \frac{ 1 }{ \sigma } Y_t $
for all $ t \in [0,T] $ is an adapted stochastic
process with continuous sample paths which
satisfies
$
  \tilde{X}_t 
  = \tilde{X}_0 + 
  \int_0^t ( \frac{ 1 }{ \sigma } \mu )( \tilde{X}_s ) \, ds
  +
  W_t
$
$ \P $-a.s.\ for all $ t \in [0,T] $.
This illustrates that w.l.o.g.\ one may assume
that $X$ is a solution of an SDE with a diffusion coefficient
which is identically equal to the constant $1$.
\end{remark}

\subsection{Exponential inverse moments for Bessel-type processes}
\begin{lemma}  
\label{l:lemma_mom_bound}
Assume the setting in Section~\ref{sec:setting_numeric},
let $ c, \beta \in [0,\infty) $
and assume $ x \mu(x) \leq c \left( 1 + x^2 \right) $ 
for all $ x \in (0,\infty) $.
Then it holds for all $ t\in [0,T]$ that
\begin{align}\label{l:lemma_mom_bound_new:5} 
 & \E\!\left[ 
 \exp\!\left( 
 \frac{1+(X_t)^2}{e^{2t(c+1)+t\beta}}
 +
 \int_0^t  \frac{\beta (1+(X_s)^2)}{e^{2s(c+1)+s\beta}}\, ds
\right)
\right]
\leq 
\E \Big[ \exp\!\big( 1+ (X_0)^{2} 
\big)
\Big].
\end{align}
\end{lemma}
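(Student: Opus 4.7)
The idea is to construct a continuous semimartingale $V$ whose exponential is a non-negative local supermartingale and whose value at time $t$ is exactly the expression inside the exponential on the left-hand side of~\eqref{l:lemma_mom_bound_new:5}. To this end, define the deterministic weight
\begin{equation*}
  a(t) := e^{-[2(c+1)+\beta]\,t},\qquad t \in [0,T],
\end{equation*}
which satisfies $a(0)=1$, $a(t) \in (0,1]$, and $a'(t) = -[2(c+1)+\beta]\,a(t)$, and set
\begin{equation*}
  V_t := a(t)(1+X_t^2) + \int_0^t \beta\, a(s)(1+X_s^2)\,ds,\qquad t \in [0,T],
\end{equation*}
so that $V_0 = 1+X_0^2$ and \eqref{l:lemma_mom_bound_new:5} becomes the assertion $\E[e^{V_t}] \le \E[e^{V_0}]$.

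First I would apply It\^o's formula to $s \mapsto a(s)(1+X_s^2)$, using $dX_s = \mu(X_s)\,ds + dW_s$ and $d\langle X\rangle_s = ds$, to see that $V$ is an It\^o process with drift
\begin{equation*}
  [a'(s) + \beta a(s)](1+X_s^2) + 2a(s)\,X_s\,\mu(X_s) + a(s)
\end{equation*}
and with martingale part of quadratic variation $4\,a(s)^2 X_s^2\,ds$. A second application of It\^o's formula, now to $e^{V}$, then shows that the drift of $e^{V_s}$ equals $e^{V_s}$ times
\begin{equation*}
  [a'(s) + \beta a(s)](1+X_s^2) + 2a(s)\,X_s\,\mu(X_s) + a(s) + 2\, a(s)^2 X_s^2 .
\end{equation*}
The hypothesis $X_s\mu(X_s) \le c(1+X_s^2)$ (valid for $X_s>0$, trivial at $X_s=0$) together with the identity $a'(s) + 2c\,a(s) + \beta a(s) = -2 a(s)$ bounds this expression above by
\begin{equation*}
  -2 a(s)(1+X_s^2) + a(s) + 2\,a(s)^2 X_s^2 \;=\; -a(s)\bigl[1 + 2\,X_s^2\,(1-a(s))\bigr] \;\le\; 0,
\end{equation*}
since $a(s) \in (0,1]$. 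Hence $(e^{V_s})_{s \in [0,T]}$ is a non-negative continuous local supermartingale started at $e^{1+X_0^2}$.

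To finish, I would localize along the stopping times $\tau_n := \inf\{s \in [0,T] : X_s \ge n\} \wedge T$: on $[0,\tau_n]$ the process $V$ is bounded, so $e^{V_{\cdot\wedge\tau_n}}$ is a bounded local supermartingale and therefore a true supermartingale, which gives $\E[e^{V_{t\wedge\tau_n}}] \le \E[e^{V_0}]$ for every $n \in \N$. Since $X$ has continuous sample paths, $\tau_n \nearrow T$ almost surely, so $V_{t\wedge\tau_n} \to V_t$ almost surely, and Fatou's lemma applied to the non-negative variables $e^{V_{t\wedge\tau_n}}$ yields $\E[e^{V_t}] \le \E[e^{1+X_0^2}]$. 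The main obstacle is choosing the weight $a(t)$ correctly: its decay rate must simultaneously absorb the quadratic growth coming from $X\mu(X)$, the running term $\beta a(s)(1+X_s^2)$, and the quadratic It\^o correction $2a(s)^2 X_s^2$ produced in passing from $V$ to $e^V$; the precise rate $2(c+1)+\beta$ in the statement is exactly what makes all three contributions cancel and yields a manifestly non-positive drift.
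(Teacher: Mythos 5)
Your proof is correct and follows essentially the same path as the paper: the paper checks the Lyapunov-type inequality $(\mathcal{G}_{\mu,\sigma}U)(x) + \tfrac{1}{2}|\nabla U(x)|^2 + \overline{U}(x) \le (2(c+1)+\beta)(1+x^2)$ for $U(x)=1+x^2$, $\overline{U}(x)=\beta(1+x^2)$, and then invokes Corollary~2.4 of Cox et al.~\cite{CoxHutzenthalerJentzen2013}, which is precisely the exponential-supermartingale estimate you re-derive by hand via It\^o's formula and the localization/Fatou argument. You have simply unwrapped the cited black-box result into the explicit computation.
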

\begin{proof} [Proof of Lemma~\ref{l:lemma_mom_bound}]
First of all, we define functions 
$
  \sigma \colon [0,\infty) \to \R
$,
$
  U, \overline{U} \colon \R \to \R
$
by 
$
  \sigma(x) = 1
$
for all $ x \in [0,\infty) $
and by
$
  U(x) := 1 + x^2
$ 
and 
$
  \overline{U}(x)
  :=
  \beta \left( 1 + x^2 \right)
$
for all $ x \in \R $.
Then observe that for all $x \in [0,\infty)$ it holds that
$|\nabla U(x) |^2 =  4 x^{2}$ and 
\begin{align}\label{l:lemma_mom_bound_new:3}
  ( \mathcal{G}_{ \mu, \sigma }U) (x)
=
2  x \mu(x)
+ 1
\leq 
2 \, c \left( 1 + x^2 \right) 
+ 1.
\end{align}
This implies that
for all $x \in [0,\infty)$ it holds that
\begin{equation}
\label{l:lemma_mom_bound_new:4} 
\begin{split}
 ( \mathcal{G}_{ \mu, \sigma }U) (x) + \tfrac{1}{2} |\nabla U (x) |^2 + \overline{U}(x)
 & \leq
 2c (1+x^2) 
+ 1
+ 2 x^2
+ \beta (1 + x^2)
\\ & \leq \big( 2 ( c + 1 ) + \beta \big) 
  \left( 1 + x^2 \right) .
\end{split}
\end{equation}
Corollary 2.4 in Cox et.~al~\cite{CoxHutzenthalerJentzen2013} together with \eqref{l:lemma_mom_bound_new:4} shows that for all $ t\in [0,T]$ it holds that
\begin{align}
 \E\!\left[ 
 \exp\!\left( 
 \frac{ 1 + (X_t)^2 }{
   e^{ 2 t ( c + 1 ) + t \beta }
 }
 +
 \int_0^t  
 \frac{\beta (1+(X_s)^2)}{e^{2s(c+1)+s\beta}}\, ds
\right)
\right]
\leq 
\E\Big[ 
  \exp\!\big( 1 + ( X_0 )^2 
\big)
\Big].
\end{align}
This finishes the proof of Lemma~\ref{l:lemma_mom_bound}.
\end{proof}

Theorem 3.1 in Hurd \& Kuznetsov~\cite{HurdKuznetsov2008}, in particular,
implies in the setting of the introduction
for the Bessel-type process $Z$ that
if $2\delta>\beta^2$ and if 
$
  \E\big[
    ( Z_0 )^{ 
      (1 - \frac{2 \delta }{ \beta^2 })
    }
  \big]
  < \infty
$, then
$
\E\big[
  \tfrac{\beta^2}{8}\big(\tfrac{2\delta}{\beta^2}-1\big)^2
    \int_0^T (Z_s)^{-2}\,ds
    \big]
<
\infty
$.
Inequality~\eqref{l:lemma_inv_bound_new:state1B}
in Lemma~\ref{l:lemma_inv_bound_new} 
below
together with Lemma~\ref{l:lemma_mom_bound}
above complements this result with exponential 
inverse moments
in the case 
$ 
  \frac{ \beta^2 }{ 2 } < 2 \delta 
  \leq \beta^2 
$.
In particular,
we reveal in 
Lemma~\ref{l:lemma_inv_bound_new}
a suitable exponentially growing
\emph{Lyapunov-type function}
for Bessel- and Cox-Ingersoll-Ross-type
processes respectively
(see \eqref{l:lemma_inv_bound_new:state1}
and \eqref{l:lemma_inv_bound_new:4}
below).

\begin{lemma}
\label{l:lemma_inv_bound_new}
Assume the setting in Section~\ref{sec:setting_numeric},
let
$
  \alpha \in (0, \infty)
$, 
$ 
  p \in ( 1, ( 1 + 2 \alpha ) \wedge 2 )
$,
$ 
  \tilde{c} \in [0,\infty)
$,
$ 
  c \in [p, \infty) 
$ 
and assume 
$
  \alpha - \tilde{c} \, x^{c} \leq x \mu(x) 
$ 
for all $x \in (0,\infty)$.
Then it holds for all 
$ t \in [0,T] $, 
$ q, \rho \in (0,\infty) $, 
$ \tilde{\rho} \in (0,\rho) $,
$ r \in (1,\infty) $
that
\begin{equation} 
\begin{split}
\label{l:lemma_inv_bound_new:state1}
 &
 \E\!\left[ 
   \exp\!\Big( 
     \tilde{\rho} \left( 2 - p \right) 
     \left(
       \alpha + \tfrac{ 1 - p }{ 2 } 
     \right) 
     \smallint_0^t  
     ( X_s )^{ - p } 
     \, ds
     - \tilde{c} \, \rho \left( 2 - p \right)  
     \smallint_0^t  
     (X_s)^{ (c - p) } 
     \, ds 
     - \rho \, ( X_t )^{ (2 - p) } 
  \Big)
\right]
\\ &
\leq
\exp\!\bigg( 
  \tfrac{ t \rho^2 }{ 2 }
  \left[
    \tfrac{ 
      \rho^2 
    }{
      ( \rho - \tilde{\rho} ) 
      \,
      \left( 2 \alpha + 1 - p \right)
    } 
  \right]^{ 
    \frac{ ( 2 p - 2 ) }{ ( 2 - p ) }
  }
\bigg)
  \,
  \E\!\left[ 
    e^{
      - \rho \, ( X_0 )^{ (2 - p) } 
    }
  \right] < \infty
  ,
\end{split}
\end{equation}%
\begin{equation}
\label{l:lemma_inv_bound_new:state1B} 
\begin{split}
 &
  \E\!\left[ 
    \exp\!\left(
      \smallint_0^t   
      \tilde{\rho} \left( 2 - p \right)
      \left(
        \alpha + \tfrac{ 1 - p }{ 2 } 
      \right)  
      \left( X_s \right)^{ - p } 
      ds 
    \right)
  \right]
\leq
  \exp\!\bigg(  
    \tfrac{ t r \rho^2 }{ 2 }
    \left[ 
      \tfrac{  
        r \rho^2 
      }{
        \left( \rho - \tilde{\rho} \right) \,
        \left( 
          2 \alpha + 1 - p 
        \right)
      } 
    \right]^{
      \frac{ (2 p - 2) }{ (2 - p) }
    }
  \bigg)
\\ & \cdot
    \left\|
      e^{ 
        - \rho \, \left( X_0 \right)^{ (2 - p) } 
      } 
    \right\|_{
      L^r( \Omega; \R )
    } 
    \left\|
      \exp\!\left( 
          \rho 
          \left( X_t \right)^{ (2 - p) } 
        + 
        \smallint_0^t 
          \tilde{c} \, \rho 
          \left( 2 - p \right) 
          \left( X_s \right)^{ (c - p) } 
        ds 
      \right)
    \right\|_{
      L^{ r / ( r - 1 ) }( \Omega; \R )
    }
  \quad
  \text{and}
\end{split}
\end{equation}
\begin{equation}
\label{l:lemma_inv_bound_new:state3} 
\begin{split}
  \Big\|  
    \smallint_0^t  (X_s)^{ - p } \, ds
  \Big\|_{L^{q}(\Omega;\R)}
&
\leq 
  \tfrac{ 
    2^{ 
      1 \vee ( 1 / q )
    }
  }{ 
    \tilde{\rho} \,
    \left( 2 - p \right) \,
    \left(
      2 \alpha + 1 - p 
    \right)
  }
\Bigg[
    \rho
\left\|   
  \left( X_t \right)^{ (2 - p) } 
  +  
  \smallint\nolimits_0^t   
    \tilde{c} 
    \left( 2 - p \right) 
    \left( X_s \right)^{ (c - p) } 
  ds 
\right\|_{L^{q}(\Omega;\R)} 
\\ & \quad
  +
  \exp\!\bigg( 
    \tfrac{ t q \rho^2 }{ 2 }
    \left[ 
      \tfrac{ q \rho^2 }{
        ( \rho - \tilde{\rho} ) 
        \left( 2 \alpha + 1 - p \right)
      } 
    \right]^{
      \frac{ (2 p - 2) }{ (2 - p) } 
    }
    - 1
  \bigg)
  \left\|
    e^{
      - 
      \rho \, 
      ( X_0 )^{ (2 - p) } 
    }
  \right\|_{
    L^q( \Omega; \R )
  }
  \Bigg] 
.
\end{split}
\end{equation}
\end{lemma}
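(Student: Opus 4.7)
The plan is to apply It\^o's formula to the Lyapunov-type function $U(x) := -\rho\, x^{2-p}$ on $(0,\infty)$, to exploit the one-sided drift bound $x\mu(x) \geq \alpha - \tilde c\, x^c$ in order to extract a favorable $-x^{-p}$ contribution to the It\^o drift, to absorb the extra quadratic-variation-type term $\tfrac12 |U'(x)|^2 = \tfrac{\rho^2 (2-p)^2}{2}\, x^{2-2p}$ via a Young-type inequality, and finally to deduce \eqref{l:lemma_inv_bound_new:state1} from a supermartingale argument in the style of Corollary~2.4 in \cite{CoxHutzenthalerJentzen2013}. Inequalities \eqref{l:lemma_inv_bound_new:state1B} and \eqref{l:lemma_inv_bound_new:state3} will then follow from \eqref{l:lemma_inv_bound_new:state1} by H\"older's inequality and by the convexity bound $1 + y \leq e^y$, respectively.

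Concretely, $p \in (1,2)$ makes $U \in C^2((0,\infty), \R)$, and with $\lambda := (2-p)(\alpha + \tfrac{1-p}{2}) = \tfrac{(2-p)(2\alpha + 1 - p)}{2} > 0$ (positive by the hypothesis $p < 1 + 2\alpha$), the assumption $x\mu(x) \geq \alpha - \tilde c\, x^c$ gives
\begin{equation*}
  (\mathcal{G}_{\mu,1} U)(x) + \tfrac{1}{2}|U'(x)|^2 \;\leq\; -\rho\lambda\, x^{-p} + \rho(2-p)\tilde c\, x^{c-p} + \tfrac{\rho^2(2-p)^2}{2}\, x^{2-2p}
\end{equation*}
for all $x \in (0,\infty)$. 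The identity $x^{2-2p} = (x^{-p})^{(2p-2)/p}$ together with $(2p-2)/p \in (0,1)$ permits weighted AM--GM to produce
\begin{equation*}
  \tfrac{\rho^2(2-p)^2}{2}\, x^{2-2p} \;\leq\; (\rho - \tilde\rho)\lambda\, x^{-p} + C_\rho,\qquad C_\rho := \tfrac{\rho^2}{2}\!\left[\tfrac{\rho^2}{(\rho - \tilde\rho)(2\alpha + 1 - p)}\right]^{\!(2p-2)/(2-p)}
\end{equation*}
for every $x \in (0,\infty)$; here $C_\rho$ is a clean majorant of the exact Young constant, since a short monotonicity check shows the loss factor $(2-p)^3 p^{-1}[(2-p)(2p-2)/p]^{(2p-2)/(2-p)}$ is $\leq 1$ on $p \in (1,2)$. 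Setting $\bar U(x) := \tilde\rho\lambda\, x^{-p} - \rho(2-p)\tilde c\, x^{c-p}$ yields the pointwise estimate $(\mathcal{G}_{\mu,1} U)(x) + \tfrac{1}{2}|U'(x)|^2 + \bar U(x) \leq C_\rho$ on $(0,\infty)$. An application of Corollary~2.4 of \cite{CoxHutzenthalerJentzen2013}---equivalently, It\^o applied to $\exp\!\bigl(U(X_{\cdot\wedge\tau_n}) + \int_0^{\cdot\wedge\tau_n}\bar U(X_s)\,ds - C_\rho(\cdot\wedge\tau_n)\bigr)$ with $\tau_n := \inf\{s \in [0,T]\colon X_s \leq 1/n\}$, which produces a nonnegative local supermartingale with initial value $e^{-\rho X_0^{2-p}} \leq 1$, followed by Fatou's lemma as $n \to \infty$---then delivers \eqref{l:lemma_inv_bound_new:state1}.

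Writing $I_t := \int_0^t X_s^{-p}\,ds$, $B_t := \rho X_t^{2-p} + \rho(2-p)\tilde c \int_0^t X_s^{c-p}\,ds$, and $A_t := \tilde\rho\lambda\, I_t - B_t$, the derivation of \eqref{l:lemma_inv_bound_new:state1B} factors $e^{\tilde\rho\lambda I_t} = e^{A_t}\cdot e^{B_t}$ and applies H\"older with conjugate exponents $r$ and $r/(r-1)$; bounding $\|e^{A_t}\|_{L^r}$ by \eqref{l:lemma_inv_bound_new:state1} applied with $(\tilde\rho, \rho)$ replaced by $(r\tilde\rho, r\rho)$ (the admissibility $0 < r\tilde\rho < r\rho$ is preserved) and taking $r$-th roots yields the stated estimate. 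For \eqref{l:lemma_inv_bound_new:state3}, the convexity bound $1 + y \leq e^y$ applied at $y = A_t$ yields $\tilde\rho\lambda I_t \leq (e^{A_t} - 1) + B_t \leq (e^{A_t} - 1)_+ + B_t$; taking $L^q$-norms via the generalized triangle inequality $\|F + G\|_{L^q} \leq 2^{(1/q - 1)_+}(\|F\|_{L^q} + \|G\|_{L^q})$ (Minkowski for $q \geq 1$ and the $q$-subadditivity $|a+b|^q \leq |a|^q + |b|^q$ for $q < 1$, which produces the displayed factor $2^{1 \vee (1/q)}$ once $\tilde\rho\lambda/2$ is divided out) reduces the task to controlling $\|(e^{A_t} - 1)_+\|_{L^q}$; this follows by combining \eqref{l:lemma_inv_bound_new:state1} with $(\tilde\rho, \rho)$ replaced by $(q\tilde\rho, q\rho)$ (which gives $\|e^{A_t}\|_{L^q} \leq \exp\!\bigl(\tfrac{tq\rho^2}{2}[\tfrac{q\rho^2}{(\rho-\tilde\rho)(2\alpha+1-p)}]^{(2p-2)/(2-p)}\bigr)\|e^{-\rho X_0^{2-p}}\|_{L^q}$), the elementary inequality $(e^A - 1)_+^q \leq (e^{qA} - 1)_+$, and the bound $\|e^{-\rho X_0^{2-p}}\|_{L^q} \leq 1$ which allows the subtracted unit to be absorbed into the exponent in the sharper ``$\exp(\cdot) - 1$'' form.

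The main technical hurdles will be (i) the Young step, in particular verifying that the clean constant $C_\rho$ indeed majorises the optimal Young constant on $p \in (1,2)$, and (ii) the localization in the supermartingale argument: since $x \mapsto x^{-p}$ is singular at the accessible boundary point $0$ (which is hit with positive probability in this regime), one must carefully pass from the local supermartingale on $[0, t \wedge \tau_n]$ to the global bound on $[0,t]$, using that the a.s.\ finiteness of $\int_0^T X_s^{-p}\,ds$ is itself a byproduct of the Lyapunov structure once the bound is passed to the limit.
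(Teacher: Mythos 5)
Your overall blueprint---Lyapunov function $U(x)=-\rho x^{2-p}$, drift-cancellation from $x\mu(x)\geq\alpha-\tilde c\,x^c$, Young's inequality to absorb $\tfrac{\rho^2(2-p)^2}{2}x^{2-2p}$ into the favorable $-x^{-p}$ term, Corollary~2.4 of Cox et al., and then H\"older for~\eqref{l:lemma_inv_bound_new:state1B} and the elementary pointwise bound ``$z\vee 0\leq e^{z-1}$ plus triangle inequality'' for~\eqref{l:lemma_inv_bound_new:state3}---matches the paper's strategy closely, including the precise exponent shifts $(\tilde\rho,\rho)\to(r\tilde\rho,r\rho)$ and $(\tilde\rho,\rho)\to(q\tilde\rho,q\rho)$ and the origin of the $2^{1\vee(1/q)}$ constant.

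However, the localization step as you propose it has a genuine gap. You stop at $\tau_n=\inf\{s\colon X_s\leq 1/n\}$ and claim Fatou as $n\to\infty$ delivers the global bound. But in the accessible-boundary regime (the only case where this lemma is not routine) the limit $\tau_\infty:=\lim_n\tau_n$ is the first hitting time of $0$, which is \emph{strictly less} than $t$ with positive probability. On the event $\{\tau_\infty<t\}$, Fatou yields control of $\E[M_{\tau_\infty}]$, not $\E[M_t]$, and one cannot compare the two: between $\tau_\infty$ and $t$ the integrand $\bar U(X_r)=\tilde\rho\lambda\,X_r^{-p}-\rho(2-p)\tilde c\,X_r^{c-p}$ keeps accumulating positive mass, so $M_t$ can exceed $M_{\tau_\infty}$ by an arbitrary amount. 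One also cannot simply iterate past $\tau_\infty$: the zero set of $X$ is in general an uncountable Lebesgue-null set (as for a Bessel process of dimension $\delta\in(1,2)$), so there is no finite chain of excursions to induct over. Finally, note that $U(x)=-\rho x^{2-p}$ is continuous at $0$ but not $C^1$ there (its derivative $\sim x^{1-p}$ diverges), so It\^o itself breaks down at the boundary; attempting to extend it would drag in local-time corrections.

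The device the paper uses to avoid all of this is precisely the $\eps$-shift: replace $U$ by $U_\eps(x):=-\rho(\eps+x)^{2-p}$ and $\bar U$ by $\bar U_\eps$, which are $C^2$ and uniformly bounded on all of $[0,\infty)$ (since $(\eps+x)^{-p}\leq\eps^{-p}$). With this regularization Corollary~2.4 of Cox et al.\ applies on the whole interval $[0,t]$ with no localization near the boundary at all; one then sends $\eps\to0$, using Fatou for the $(\eps+X_s)^{-p}$ term (monotone increase as $\eps\searrow 0$) and dominated convergence for the $(\eps+X_s)^{c-p}$ term (which is bounded pathwise because $X$ has continuous paths). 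That is the missing idea. If you rewrite your supermartingale step with the $\eps$-shifted Lyapunov pair and then take $\eps\to0$ at the very end, the rest of your argument goes through exactly as you describe.
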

\begin{proof} [Proof of Lemma~\ref{l:lemma_inv_bound_new}]
Throughout this proof we fix
$
  \rho \in (0,\infty)
$, 
$
  \tilde{\rho} \in (0,\rho)
$,
we define a function 
$ \sigma \colon [0,\infty) \to \R $
by
$
  \sigma(x) = 1
$
for all $ x \in [0,\infty) $
and we define
\begin{equation}
\label{l:lemma_inv_bound_new:1}
 \tilde{q} := \tfrac{ p }{ 2 \, \left( p - 1 \right) } \in (1,\infty) 
 ,
\quad 
  \tilde{p} := \tfrac{ 1 }{ ( 1 - 1 / \tilde{q} ) } \in (1,\infty)
  ,
\quad
  \delta
  :=
  \left[
    \tfrac{ 
      2 \, \tilde{q} \, \left( \rho - \tilde{ \rho } \right) 
    }{ 
      \left( 2 - p \right) \, \rho^2 
    }
    \left( 
      \alpha + \tfrac{ 1 - p }{ 2 } 
    \right) 
  \right]^{ 1 / \tilde{q} }
  \in (0,\infty)
  .
\end{equation}
Then we observe that
Young's inequality shows 
that for all 
$ \eps \in (0,1) $, $ x \in [0,\infty) $ 
it holds that
\begin{align} 
\label{l:lemma_inv_bound_new:2}
 (\eps + x)^{ (2 - 2 p) } 
\leq 
  \frac{ 1 }{ \tilde{p} } 
  \left[
    \frac{ 1 }{ \delta } 
  \right]^{ \tilde{p} } 
  + 
  \frac{ 
    \delta^{ \tilde{q} } 
    \left( \eps + x \right)^{ ( 2 - 2 p )
      \tilde{q}
    }
  }{
    \tilde{q}
  }
 =
 \frac{ 1 }{ \tilde{p} } 
 \left[
   \frac{ 1 }{ \delta } 
 \right]^{ 
   \tilde{p}
 } 
 + 
 \frac{ 
   \delta^{ \tilde{q} } 
 }{
   \tilde{q}
   \left( \eps + x \right)^{ p }
 }
 .
\end{align}
In the next step 
we define functions 
$
  U_{ \eps }, 
  \overline{U}_{ \eps } \colon [0,\infty) \to \R
$, 
$
  \eps \in (0,1)
$, 
by
$
  U_{ \eps }(x) := - \rho \left( \eps + x \right)^{ (2 - p) }
$ 
and 
\begin{align} 
  \overline{U}_{\eps}(x)
:=
  \tilde{\rho} \left( 2 - p \right)  
  \left(
    \alpha + 
    \tfrac{ 1 - p }{ 2 } 
  \right) 
  \left( \eps + x \right)^{ - p }
  - 
  \tilde{c} \, \rho \left( 2 - p \right)  
  \left( \eps + x \right)^{ (c - p) }
  - 
  \tfrac{
    \rho^2 \left( 2 - p \right)^2 
  }{
    2 \tilde{p}
  }
  \left[
    \tfrac{ 1 }{ \delta }
  \right]^{
    \tilde{p}
  } 
\end{align}
for all $x \in [0,\infty)$ and all $\eps \in (0,1)$.
Observe that for all $ \eps \in (0,1) $ 
it holds that 
$ U_{\eps} \in C^2([0,\infty), \R) $ 
and that for all 
$
  x \in [0,\infty)
$, 
$ \eps \in (0,1)
$ 
it holds that
$
  | ( \nabla U_{\eps} )(x) |^2  
  = 
  \rho^2 
  \left( 2 - p \right)^2 
  \left( \eps + x \right)^{ (2 - 2 p) }
$ 
and 
\begin{align}
\label{l:lemma_inv_bound_new:3}
  ( \mathcal{G}_{ \mu, \sigma } U_{\eps} ) (x)
=
  - 
  \rho \left( 2 - p \right) 
  \left( \eps + x \right)^{ (1 - p) } 
  \mu(x)
  -
  \rho \left( 2 - p \right)  
  \tfrac{ ( 1 - p ) }{ 2 } 
  \left( \eps + x \right)^{ - p }
  .
\end{align}
Moreover, note that 
definition~\eqref{l:lemma_inv_bound_new:2}
ensures that
$
  \tfrac{ \rho^2 \left( 2 - p \right) }{ 2 \tilde{q} } 
  \delta^{ \tilde{q} } 
  + 
  ( \tilde{\rho} - \rho )
  \left( 
    \alpha  + \tfrac{ 1 - p }{ 2 } 
  \right) = 0
$.
This, the fact that for all 
$ x \in (0,\infty) $ it holds that
$ 
  - \mu(x) 
\leq 
  \tilde{c} \, x^{ (c - 1) } - \tfrac{ \alpha }{ x }  
\leq 
  \tilde{c} 
  \left( \eps + x \right)^{ (c - 1) } 
  -
  \tfrac{ \alpha }{ (\eps + x) } 
$ 
and \eqref{l:lemma_inv_bound_new:2} imply that 
for all $x \in [0,\infty)$, $\eps \in (0,1)$ it holds that
 {\allowdisplaybreaks  
\begin{align} \nonumber
\label{l:lemma_inv_bound_new:4}  
&
  ( \mathcal{G}_{ \mu, \sigma } U_{ \eps } )( x ) 
  + 
  \tfrac{ 1 }{ 2 } 
  \left| ( \nabla U_{ \eps } )( x ) \right|^2 
  + 
  \overline{U}_{ \eps }( x )
\\ &  \nonumber
 = 
  - \rho \left( 2 - p \right) \left( \eps + x \right)^{ (1 - p) } \mu(x)
  - 
  \rho \left( 2 - p \right)  
  \tfrac{ (1 - p) }{ 2 }
  \left( \eps + x \right)^{ - p }
+
 \tfrac{ \rho^2 \, ( 2 - p )^2 }{ 2 }
 \left( \eps + x \right)^{ (2 - 2 p ) }
\\ &   \nonumber
+
  \tilde{\rho} \left( 2 - p \right)  
  \left(
    \alpha + \tfrac{ 1 - p }{ 2 } 
  \right) 
  \left( \eps + x \right)^{ - p } 
- \tilde{c} \, \rho \left( 2 - p \right)   
  \left( \eps + x \right)^{ ( c - p ) } 
- 
  \tfrac{ \rho^2 \, \left( 2 - p \right)^2 
  }{
    2 \tilde{p}
  }
  \left[ 
    \tfrac{ 1 }{ \delta } 
  \right]^{ \tilde{p} } 
\\   \nonumber
&
 \leq 
-  
  \alpha \, \rho \left( 2 - p \right)  
  \left( \eps + x \right)^{ - p } 
  + 
  \tilde{c} \, \rho \left( 2 - p \right)    
  \left( \eps + x \right)^{ (c - p) } 
  - 
  \rho \left( 2 - p \right)  
  \tfrac{ ( 1 - p ) }{2}(\eps + x )^{-p}
\\   \nonumber
&  
+
 \tfrac{ \rho^2 \, ( 2 - p )^2 }{ 2 } 
 \left( \eps + x \right)^{ (2 - 2 p) }
+
 \tilde{\rho} \left( 2 - p \right)  
 \left( 
   \alpha + \tfrac{ 1 - p }{ 2 } 
 \right) 
 \left( \eps + x \right)^{ - p }
-  
  \tilde{c} \, \rho \left( 2 - p \right)     
  \left( \eps + x \right)^{ (c - p) } 
- 
  \tfrac{ \rho^2 \, ( 2 - p )^2 }{ 2 \tilde{p} }
  \left[ \tfrac{ 1 }{ \delta } \right]^{ \tilde{p} } 
%
%
%
%
\\  \nonumber
&
\leq
  \left( 2 - p \right)  
  \left( \tilde{\rho} - \rho \right)
  \left( \alpha  + \tfrac{ 1 - p }{ 2 } \right) 
  \left( \eps + x \right)^{ - p } 
+
  \tfrac{ \rho^2 \, (2-p)^2 }{ 2 } 
  \left[
    \tfrac{ 1 }{ \tilde{p} } 
    \left[
      \tfrac{ 1 }{ \delta }
    \right]^{ \tilde{p} } 
    + 
    \tfrac{ 
      1
    }{
      \tilde{q}
    }
    \,
    \delta^{ \tilde{q} } 
    \left( \eps + x \right)^{ - p } 
  \right]
  - 
  \tfrac{ 
    \rho^2 \, ( 2 - p )^2 
  }{
    2 \tilde{p}
  }
  \left[ 
    \tfrac{ 1 }{ \delta }
  \right]^{ \tilde{p} } 
\\ 
&
=
\left(  
  \left( 2 - p \right) 
  \left( \tilde{ \rho } - \rho \right)
  \left( 
    \alpha  + \tfrac{ 1 - p }{ 2 } 
  \right) 
  + 
  \tfrac{ \rho^2 \, ( 2 - p )^2 
  }{ 2 } 
  \tfrac{ \delta^{ \tilde{q} } 
  }{
    \tilde{q}
  } 
\right)
  \left( \eps + x \right)^{ - p } 
= 0 .
\end{align}
 }%
Corollary 2.4 in 
Cox et.~al~\cite{CoxHutzenthalerJentzen2013} 
and \eqref{l:lemma_inv_bound_new:4} 
show that for all $ \eps \in (0,1) $, $ t \in [0,T] $ 
it holds that
\begin{equation} 
\begin{split} 
\label{l:lemma_inv_bound_new:5}
 &
   \E\!\left[ 
   \exp\!\left( 
     - \rho \left( \eps + X_t \right)^{ (2 - p) } 
     + 
     \smallint_0^t 
     \tfrac{
       \tilde{\rho} \, \left( 2 - p \right) 
       \,
       \left(
         \alpha + \frac{ 1 - p }{ 2 } 
       \right) 
     }{
       \left( \eps + X_s \right)^{ p }
     }
       - 
     \tfrac{
       \tilde{c} \, \rho \, \left( 2 - p \right)  
     }{
       \left( \eps + X_s \right)^{ (p - c) } 
     }
    - 
    \tfrac{ 
      \rho^2 \, ( 2 - p )^2 
    }{ 
      2 \tilde{p}
    } 
    \left[ \tfrac{ 1 }{ \delta } \right]^{ \tilde{p} } 
  ds 
  \right)
\right]
\\ & \leq 
  \E\Big[ 
    \exp\!\big( 
      - \rho \left( \eps + X_0 \right)^{ (2 - p) } 
    \big) 
  \Big]
  .
\end{split} 
\end{equation} 
%
%
%
%
%
%
%
%
%
%
%
%
%
%
%
%
%
%
%
%
%
In the next step we observe that the
identities 
$
  \tfrac{ 1 }{ 2 \tilde{q} } 
= 
  \tfrac{ ( p - 1 ) }{ p } 
$, 
$
  \tilde{q} - 1 
  = 
  \tfrac{ ( 2 - p ) }{ ( 2 p - 2 ) }
$, 
$
  \tilde{p} = \tfrac{ p }{ (2 - p) }
$ 
and 
$
  \tfrac{ \tilde{p} }{ \tilde{q} } 
  = 
  \tfrac{ (2 p - 2) }{ (2 - p) }
$ 
and the estimate
$
  \frac{
    \left( 2 - p \right) \, \left( p - 1 \right)
  }{
    p
  }
  \leq 
  \frac{ 1 }{ 2 }
$
show that
\begin{equation}
\label{l:lemma_inv_bound_new:5b}
\begin{split}
&
 \tfrac{
   t \rho^2 \, ( 2 - p )^2 
 }{
   2 \, \tilde{p}
 } 
 \left[ 
   \tfrac{ 1 }{ \delta }
 \right]^{
   \tilde{p}
 }
=
 \tfrac{
   t \rho^2 \, ( 2 - p )^2 
 }{
   2 \, \tilde{p}
 } 
  \left[ 
    \tfrac{
      \rho^2 \, ( 2 - p )
    }{
      2 \, \tilde{q} \, ( \rho - \tilde{\rho} ) 
      \left( \alpha + \frac{ 1 - p }{ 2 } \right)
    } 
  \right]^{ 
    \frac{ (2 p - 2) }{ (2 - p) } 
  }
\\ & =
  \tfrac{ 
    t \rho^2 \, ( 2 - p )^3
  }{ 
    2 \, p 
  }
  \left[ 
    \tfrac{
      \rho^2 \, ( 2 - p ) \, ( p - 1 ) 
    }{
      \left( \rho - \tilde{\rho} \right) 
      p 
      \left( \alpha + \frac{ 1 - p }{ 2 } \right)
    } 
  \right]^{ 
    \frac{ (2 p - 2) }{ (2 - p) } 
  }
\leq
  \tfrac{ t \rho^2 }{ 2 }
  \left[
    \tfrac{ 
      \rho^2 
    }{ 
      \left( \rho - \tilde{\rho} \right) 
      \left( 2 \alpha + 1 - p \right)
    } 
  \right]^{ 
    \frac{ (2 p - 2) }{ (2 - p) } 
  }
  .
\end{split}
\end{equation}
In the next step we note that 
Fatou's lemma, the dominated convergence theorem, the observation 
that for every $ \omega \in \Omega $ 
it holds that the function
$
  [0,T] \ni s \to 
  \left( X_s( \omega ) \right)^{ (c - p) } \in [0,\infty)
$ 
is bounded from above and
\eqref{l:lemma_inv_bound_new:5} and \eqref{l:lemma_inv_bound_new:5b} 
imply that for all 
$ t \in [0,T] $ 
it holds that
\begin{equation}
\label{l:lemma_inv_bound_new:6} 
\begin{split}
 &
 \E\!\left[ 
   \exp\!\left( 
     - \rho \left( X_t \right)^{ (2 - p) } 
     + 
     \smallint_0^t 
     \tilde{ \rho } 
     \left( 2 - p \right) 
     \left( 
       \alpha + \tfrac{ 1 - p }{ 2 } 
     \right)  
     \left( X_s \right)^{ - p } 
     ds
     - \smallint_0^t  
     \tilde{c} \, \rho \left( 2 - p \right)   
     \left( X_s \right)^{ (c - p) } 
     ds 
  \right)
\right]
\\ & \leq
  \E\!\left[ 
  \exp\!\left( 
    - \rho  
    \left( X_t \right)^{ (2 - p) } 
    + 
    \liminf_{ (0,1) \ni \eps \to 0 } 
    \left[
    \smallint_0^t 
      \tfrac{ 
        \tilde{\rho} 
        \, 
        \left( 2 - p \right) 
        \, 
        \left( \alpha + \frac{ 1 - p }{ 2 } \right) 
      }{
        \left( \eps + X_s \right)^p
      } 
    \, ds
      -  
      \smallint_0^t 
      \tfrac{ 
        \tilde{c} \, \rho \, \left( 2 - p \right)   
      }{
        \left( \eps + X_s \right)^{ ( p - c ) }
      }
      \,
      ds 
    \right]
  \right)
\right]
\\ & \leq 
  \liminf_{
    (0,1) \ni \eps \to 0
  }
  \E\!\left[  
    \exp\!\left( 
      - \rho  
      \left( \eps + X_t \right)^{ (2 - p) }
      +  
      \left( 2 - p \right) 
      \smallint_0^t
      \tfrac{
        \tilde{\rho}
        \,
        \left( 
          \alpha + \frac{ 1 - p }{ 2 } 
        \right) 
      }{
        \left( \eps + X_s \right)^p
      }
      -
        \tilde{c} \, \rho 
        \left( \eps + X_s \right)^{ (c - p) } 
      ds 
    \right)
  \right]
\\ & \leq
  \exp\!\left( 
    \tfrac{ t \rho^2 \left( 2 - p \right)^2 }{
      2 \tilde{p}
    } 
    \left[
      \tfrac{ 1 }{ \delta }
    \right]^{ \tilde{p} } 
  \right)
  \liminf_{
    (0,1) \ni \eps \to 0
  }
  \E\!\left[ 
    \exp\!\left(  
      - \rho \left( \eps + X_0 \right)^{ (2 - p) } 
    \right) 
  \right] 
\\ & \leq 
  \exp\!\left(  
    \tfrac{ t \rho^2 }{ 2 }
    \left[ 
      \tfrac{ \rho^2 }{ \left( \rho - \tilde{\rho} \right) \, \left( 2 \alpha + 1 - p \right) } 
    \right]^{ \frac{ (2 p - 2) }{ (2 - p) } }
  \right)
  \E\!\left[ 
    \exp\!\left(  
      - \rho \left( X_0 \right)^{ (2 - p) } 
    \right) 
  \right] .
\end{split} 
\end{equation}
This proves 
\eqref{l:lemma_inv_bound_new:state1}.
In the next step observe that
H\"older's inequality and
\eqref{l:lemma_inv_bound_new:state1}
imply that for all $ t \in [0,T] $,
$ r \in (1,\infty) $ 
it holds that
 {\allowdisplaybreaks  
\begin{equation}
\label{l:lemma_inv_bound_new:7}
\begin{split}
 &
  \E\!\left[ 
    \exp\!\left(
      \smallint_0^t   
        \tilde{\rho} \left( 2 - p \right)
      \left(
        \alpha + \tfrac{ 1 - p }{ 2 } 
      \right)  
      \left( X_s \right)^{ - p } 
      ds 
    \right)
  \right]
\\ & =
  \E\!\left[ 
    \exp\!\left( 
      \smallint_0^t 
      \tfrac{ 
        \tilde{\rho} \,
        \left( 2 - p \right) \,
        \left( 
          \alpha + \frac{ 1 - p }{ 2 } 
        \right)  
      }{ 
        \left( X_s \right)^p
      }  
      - 
        \tilde{c} \, \rho \left( 2 - p \right) 
        \left( X_s \right)^{ (c - p) } 
      ds 
      + 
      \smallint_0^t  
        \tilde{c} \, \rho \left( 2 - p \right) 
        \left( X_s \right)^{ (c - p) } 
      ds 
    \right)
  \right]
\\ & \leq
    \left\|
      \exp\!\left( 
        \smallint_0^t 
          \tilde{\rho} 
          \left( 2 - p \right) 
          \left(
            \alpha + \tfrac{ 1 - p }{ 2 } 
          \right)  
          \left( X_s \right)^{ - p }
          -  
          \tilde{c} \, \rho 
          \left( 2 - p \right)   
          \left( X_s \right)^{ (c - p) } 
        ds  
        - \rho  
        \left( X_t \right)^{ (2 - p) } 
      \right)
    \right\|_{ L^r( \Omega; \R ) }
\\ & \quad 
  \cdot 
    \left\|
      \exp\!\left( 
        \rho 
        \left( X_t \right)^{ (2 - p) } 
        + 
        \smallint_0^t 
          \tilde{c} \, \rho 
          \left( 2 - p \right)   
        \left( X_s \right)^{ (c - p) } 
        ds 
      \right)
    \right\|_{
      L^{ r / ( r - 1 ) }( \Omega; \R )
    }
\\ & \leq
  \exp\!\left(  
    \tfrac{ t \left( r \rho \right)^2 }{ 2 r }
    \left[ 
      \tfrac{  
        \left( r \rho \right)^2 
      }{
        \left( r \rho - r \tilde{\rho} \right) \,
        \left( 
          2 \alpha + 1 - p 
        \right)
      } 
    \right]^{
      \frac{ (2 p - 2) }{ (2 - p) }
    }
  \right)
  \left( 
    \E\!\left[ 
      e^{ 
        - r \, \rho \, \left( X_0 \right)^{ (2 - p) } 
      } 
    \right] 
  \right)^{
    \frac{ 1 }{ r }
  }
\\ & \quad \cdot
    \left\|
      \exp\!\left( 
        \rho 
        \left( X_t \right)^{ (2 - p) } 
        + 
        \smallint_0^t 
          \tilde{c} \, \rho 
          \left( 2 - p \right)   
        \left( X_s \right)^{ (c - p) } 
        ds 
      \right)
    \right\|_{
      L^{ r / ( r - 1 ) }( \Omega; \R )
    }
.
\end{split}
\end{equation}}This
shows \eqref{l:lemma_inv_bound_new:state1B}.
It thus remains to prove
\eqref{l:lemma_inv_bound_new:state3}.
For this observe 
that for all 
$ x,y \in \R $
it holds that
\begin{equation}
\label{l:lemma_inv_bound_new:6110}
  x \vee 0
\leq 
  \left[
    ( x - y ) \vee 0
  \right]
  +
  \left[
    y \vee 0
  \right]
\leq 
  \left[ (x-y) \vee 0 \right] 
  + |y| 
\leq 
  \exp\!\left( x - y - 1 \right) + |y| 
  .
\end{equation}
This and 
\eqref{l:lemma_inv_bound_new:state1}
imply that for all 
$ t \in [0,T] $, $ q \in (0,\infty) $ 
it holds that
 {\allowdisplaybreaks  
\begin{equation} 
\begin{split}
&
  \left\|
      \smallint_0^t  
        \tilde{\rho} \left( 2 - p \right) 
        \left( \alpha + \tfrac{ 1 - p }{ 2 } \right)  
        \left( X_s \right)^{ - p } 
      ds
  \right\|_{ L^q( \Omega; \R ) }
\\ & \leq 
  \bigg\|
    \exp\!\left( 
      \smallint_0^t 
      \tfrac{
        \tilde{\rho} \, \left( 2 - p \right) \,
        \left( \alpha + \frac{ 1 - p }{ 2 } \right)  
      }{
        \left( X_s \right)^p 
      }
      \, ds
      - \rho \left( X_t \right)^{ (2 - p) } 
      - 
      \smallint_0^t  
        \tilde{c} \, \rho \left( 2 - p \right)   
        \left( X_s \right)^{ (c - p) } 
      ds 
      - 1
    \right)
\\ & \quad
  +
      \rho \left( X_t \right)^{ (2 - p) } 
      +
      \smallint_0^t  
        \tilde{c} \, \rho \left( 2 - p \right)   
        \left( X_s \right)^{ (c - p) } 
      ds 
  \bigg\|_{ L^q( \Omega; \R ) }
\\ & \leq 
    \tfrac{ 
      2^{
        0 \vee ( 1 / q - 1 )
      }
    }{ e }
  \left\|
    \exp\!\left( 
      \smallint_0^t 
      \tfrac{
        \tilde{\rho} \, \left( 2 - p \right) \,
        \left( \alpha + \frac{ 1 - p }{ 2 } \right)  
      }{
        \left( X_s \right)^p 
      }
      \, ds
      - \rho \left( X_t \right)^{ (2 - p) } 
      - 
      \smallint_0^t  
        \tilde{c} \, \rho \left( 2 - p \right)   
        \left( X_s \right)^{ (c - p) } 
      ds 
    \right)
  \right\|_{ L^q( \Omega; \R ) }
\\ & \quad
  +
  2^{
    0 \vee ( 1 / q - 1 )
  }
  \left\|
      \rho \left( X_t \right)^{ (2 - p) } 
      +
      \smallint_0^t  
        \tilde{c} \, \rho \left( 2 - p \right)   
        \left( X_s \right)^{ (c - p) } 
      ds 
  \right\|_{ L^q( \Omega; \R ) }
\\ & \leq 
    \tfrac{ 
      2^{
        0 \vee ( 1 / q - 1 )
      }
    }{ e }
    \exp\!\left(  
      \tfrac{ t \left( q \rho \right)^2 }{ 2 q }
      \left[ 
        \tfrac{ 
          \left( q \rho \right)^2 
        }{
          ( q \rho - q \tilde{\rho} ) 
          \left( 2 \alpha + 1 -p \right)
        } 
      \right]^{ 
        \!
        \frac{ (2 p - 2) }{ (2 - p) }
      }
    \right)
  \left|
  \E\!\left[ 
    e^{
      - q \, \rho \, \left( X_0 \right)^{ (2 - p) } 
    }
  \right]
  \right|^{ 1 / q }
\\ & \quad
  +
  \rho \,
  2^{
    0 \vee ( 1 / q - 1 )
  }
  \left\|
      \left( X_t \right)^{ (2 - p) } 
      +
      \smallint\nolimits_0^t  
        \tilde{c} \left( 2 - p \right)   
        \left( X_s \right)^{ (c - p) } 
      ds 
  \right\|_{ L^q( \Omega; \R ) }
  .
\end{split}
\end{equation}}This 
proves \eqref{l:lemma_inv_bound_new:state3}
and thereby finishes the proof
of Lemma~\ref{l:lemma_inv_bound_new}.
\end{proof}
%
%
%
%
%
%
%
%
%
%
%
%
%
%
%
%
%
%
%
%
%
%
%
%
%
%
%
%
%
%
%
%
%
%
%
%
%

An important assumption that we use in 
Lemma~\ref{l:lemma_inv_bound_new} above
and in several results below is the assumption
that there exist real numbers 
$
  \alpha, c \in (0, \infty)
$, 
$ 
  \tilde{c} \in [0,\infty)
$
such that for all $ x \in (0,\infty) $
it holds that
\begin{equation}
\label{eq:key_assumption}
  \alpha - \tilde{c} \, x^{c} \leq x \mu(x) 
  .
\end{equation}
The next remark presents a slightly modified version
of this assumption (see \eqref{eq:modified}) and shows
that the modified version in \eqref{eq:modified} also ensures that
\eqref{eq:key_assumption} is fulfilled.

\begin{remark} 
\label{remark:assumptions}
Assume the setting in Section~\ref{sec:setting_numeric}.
If there exist real numbers
$ \alpha \in (0, \infty) $, $ c \in ( 1, \infty ) $ 
such that for all 
$
  x \in (0,\infty)
$ 
it holds that 
\begin{equation}
\label{eq:modified}
  \alpha 
  - 
  c \, \big( x^{ 1 / c } + x^c \big) 
  \leq x \mu(x) 
  ,
\end{equation}
then we obtain 
from Young's inequality
that for all 
$
  x, \delta \in (0,\infty)
$
it holds that
\begin{align}
  \left[ 
    \alpha - 
    \delta^{ 
      \nicefrac{ 1 }{ ( c^2 - 1 ) } 
    } 
    \left[
     c 
     -
      \tfrac{ 
        1 
      }{ 
        c 
      } 
    \right]
  \right] 
  - 
  \left[
    \tfrac{ 1 }{ \delta c^2 } + c 
  \right]
  x^c
\leq
  \alpha 
  - 
  c
  \,
  \big( 
    x^{ 1 / c } + x^c 
  \big) 
\leq 
  x \mu(x)
  .
\end{align}
\end{remark}

%
%
%
%
%
%

%
%
%
%
%

%
%

%

The next lemma (together 
with Lemma~\ref{l:transformierte_gleichung} below)
extends
Theorem 3.1 in Hurd \& Kuznetsov ~\cite{HurdKuznetsov2008}
to more general square-root diffusion processes.
Note -- in the setting of Lemma~\ref{l:lemma_inv_mom} -- that
if 
$
  \mu(x) = \tfrac{ \alpha }{ x }
$ 
for all $ x \in (0,\infty) $
and some $ \alpha \in (\frac{1}{2},\infty) $, 
then the argument of the exponential function
in \eqref{l:lemma_inv_mom:statement}
is positive
if
$ p \in (0, 2 \alpha - 1 ) $.

\begin{lemma}
\label{l:lemma_inv_mom}
Assume the setting in Section~\ref{sec:setting_numeric}
and assume $X_t(\omega)\in(0,\infty)$ for all $(t,\omega)\in[0,T]\times\Omega$.
Then it holds for all $t \in [0,T]$, $p \in \R$ that
\begin{align} \label{l:lemma_inv_mom:statement}
&  
  \E\!\left[ 
    (X_t)^{-p} 
    \exp\!\left( 
      p \int_0^t  \tfrac{\mu(X_s)}{X_s} - \tfrac{p+1}{2(X_s)^2}  \,  ds   
    \right) 
  \right] 
 \leq \E\! \left[ (X_0)^{-p} \right].
\end{align}
\end{lemma}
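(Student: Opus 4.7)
The plan is to recognise the random variable inside the expectation, up to the multiplicative constant $(X_0)^{-p}$, as the Dol\'eans--Dade exponential of a continuous local martingale; the desired inequality then reduces to the fact that a nonnegative local martingale is a supermartingale.

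Concretely, I would first apply It\^o's formula to the $C^2((0,\infty),\R)$-function $x\mapsto-p\log x$ along the process $X$, using that $X_t(\omega)\in(0,\infty)$ for all $(t,\omega)$ and that $X$ solves $dX_t=\mu(X_t)\,dt+dW_t$. A short computation gives
\begin{equation*}
-p\log X_t = -p\log X_0 - p\int_0^t \tfrac{\mu(X_s)}{X_s}\,ds + \tfrac{p}{2}\int_0^t \tfrac{1}{(X_s)^2}\,ds - p\int_0^t \tfrac{1}{X_s}\,dW_s.
\end{equation*}
Adding $A_t := p\int_0^t\!\bigl[\mu(X_s)/X_s-(p+1)/(2(X_s)^2)\bigr]\,ds$ to both sides produces an exact cancellation of the $\mu$-term and merges the two $(X_s)^{-2}$ contributions into a single $-\tfrac{p^2}{2}\int_0^t (X_s)^{-2}\,ds$, so that $Y_t:=(X_t)^{-p}\exp(A_t)$ satisfies
\begin{equation*}
Y_t = (X_0)^{-p}\exp\!\left(-p\int_0^t \tfrac{1}{X_s}\,dW_s - \tfrac{p^2}{2}\int_0^t \tfrac{1}{(X_s)^2}\,ds\right).
\end{equation*}
This identifies $Y$ as $(X_0)^{-p}$ times the stochastic exponential of the continuous local martingale $M_t:=-p\int_0^t (X_s)^{-1}\,dW_s$. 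Since $Y\geq 0$, it is a nonnegative local martingale, hence a supermartingale, which yields $\E[Y_t]\leq\E[Y_0]=\E[(X_0)^{-p}]$, i.e.\ the claim.

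Two technical points need care. To justify the It\^o computation and the local-martingale status of $M$, I would localise using the stopping times $\tau_n:=\inf\{t\in[0,T]\colon X_t\notin[1/n,n]\ \text{or}\ \int_0^t|\mu(X_s)|\,ds\geq n\}\wedge T$. On $[0,\tau_n]$ the integrands $(X_s)^{-1}$, $(X_s)^{-2}$ and $\mu(X_s)/X_s$ are bounded or integrable, so $M^{\tau_n}$ is a genuine $L^2$-martingale and all stochastic-calculus manipulations are rigorous. Because $X$ has continuous, strictly positive sample paths on $[0,T]$ and $\int_0^T|\mu(X_s)|\,ds<\infty$ $\P$-a.s., the paths of $X$ stay in a compact subinterval of $(0,\infty)$, so $\tau_n\uparrow T$ $\P$-a.s. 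Passing from $\E[Y_{t\wedge\tau_n}]\leq\E[(X_0)^{-p}]$ to the stated inequality then uses continuity of $Y$, nonnegativity, and Fatou's lemma. The main obstacle is really just the bookkeeping in this localisation; the algebraic heart of the proof is the one-line It\^o computation that cancels the drift exactly and unmasks the stochastic exponential, which also dictates the precise form of the correction $-(p+1)/(2(X_s)^2)$ appearing in the statement.
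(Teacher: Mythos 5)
Your proposal is correct and, at bottom, takes the same route as the paper: apply It\^o to $U(x)=-p\log x$ and use a supermartingale/stochastic exponential estimate. The difference is one of packaging rather than mathematics. The paper defines $U(x):=-p\log x$, observes $U\in C^2((0,\infty),\R)$, and invokes Corollary~2.5 of Cox et al.~\cite{CoxHutzenthalerJentzen2013} in the form
\[
\E\!\Big[(X_t)^{-p}\exp\!\Big(\int_0^t -(\mathcal G_{\mu,\sigma}U)(X_s)-\tfrac12|(\nabla U)(X_s)|^2\,ds\Big)\Big]\le\E\big[(X_0)^{-p}\big],
\]
then simply computes $\nabla U(x)=-p/x$ and $(\mathcal G_{\mu,\sigma}U)(x)=-p\mu(x)/x+p/(2x^2)$ to read off the stated inequality. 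You instead reprove the content of that corollary from scratch for this specific $U$: your It\^o computation giving
\[
Y_t=(X_0)^{-p}\exp\!\Big(-p\smallint_0^t(X_s)^{-1}\,dW_s-\tfrac{p^2}{2}\smallint_0^t(X_s)^{-2}\,ds\Big)
\]
is exactly the supermartingale identity hidden inside the cited black box, and your localisation via $\tau_n$ together with Fatou's lemma is the standard justification. Both versions are valid; the paper's is shorter because it leverages an existing general exponential Lyapunov inequality (used repeatedly elsewhere in the article, e.g.\ in Lemmas~\ref{l:lemma_mom_bound} and~\ref{l:lemma_inv_bound_new}), while your version is self-contained and makes the Dol\'eans--Dade structure explicit.

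One small remark: your observation that the correction $-(p+1)/(2(X_s)^2)$ is ``dictated'' by the algebra is exactly right — this is the unique drift that makes $\exp(-p\log X_t+A_t)$ a stochastic exponential rather than merely a semimartingale exponential, and it is precisely $\,-(\mathcal G_{\mu,\sigma}U)-\tfrac12|\nabla U|^2$ in the paper's notation.
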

\begin{proof} [Proof of Lemma~\ref{l:lemma_inv_mom}]
First of all, we define a function 
$
  \sigma \colon [0,\infty) \to \R 
$ 
by
$
  \sigma(x) = 1
$
for all $ x \in [0,\infty) $
and
we fix a real number $ p \in \R $.
Then we define a function 
$
  U \colon (0,\infty) \to \R
$ 
by
$
  U(x) := - p \log(x)
$
for all $ x \in (0,\infty) $ and 
we note that 
$
  U \in C^2( (0,\infty) , \R )
$.
Corollary~2.5
in Cox et.~al~\cite{CoxHutzenthalerJentzen2013}
can thus be applied to obtain
that for all 
$
  t \in [0,T]
$ 
it holds that
\begin{align} 
\label{l:lemma_inv_mom:1}
  \E\!\left[ 
    ( X_t )^{ - p } 
    \exp\!\left(  
      \smallint_0^t 
      - ( \mathcal{G}_{ \mu, \sigma } U)( X_s ) 
      - \tfrac{ 1 }{ 2 } \left| ( \nabla U )( X_s ) \right|^2 ds
    \right) 
  \right] 
  \leq 
  \E\!\left[ 
    ( X_0 )^{ - p } 
  \right]
  .
\end{align}
This together with the observation 
that for all $x \in (0,\infty)$ it holds that $\nabla U (x) = - \tfrac{p}{x}$ and
 $(\mathcal{G}_{ \mu, \sigma } U ) (x) = - p\tfrac{\mu(x)}{x}   + \tfrac{p }{2x^2} $
finishes the proof of Lemma~\ref{l:lemma_inv_mom}.
\end{proof}

\subsection{Temporal H\"older regularity based on Bessel-type processes}
In the next two lemmas we present two elementary 
and essentially well-known estimates for 
SDEs with a globally one-sided Lipschitz continuous drift
coefficient and an at most linearly growing diffusion coefficient.

\begin{lemma}  
\label{l:lemma_positive_mom}
Assume the setting in 
Section~\ref{sec:setting_numeric},
let $ c \in [0,\infty) $
and assume
$ 
  x \mu(x) 
  \leq 
  c
  \left( 1 + x^2 \right)
$ 
for all 
$ x \in (0,\infty) $.
Then it holds for all $q \in (0,\infty)$, $t \in [0,T]$ that
\begin{align} 
\label{l:lemma_positive_mom:statement}
 &
 \E\big[ 
   ( X_t )^q 
 \big]
\leq
  \E\big[ 
    ( 1 + ( X_t )^2 
    )^{ \frac{ q }{ 2 } } 
  \big]  
\leq 
 \exp\!\Big( 
   t \left[ 1 + ( q - 2 )^+ + 2 c \right] 
 \Big) 
 \,
 \E\big[ 
   ( 
     1 + ( X_0 )^2 
   )^{ \frac{ q }{ 2 } } 
 \big]
 .
\end{align}

\end{lemma}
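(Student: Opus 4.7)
The plan is to apply Itô's formula to the Lyapunov function $V\colon[0,\infty)\to[1,\infty)$ defined by $V(x)=(1+x^2)^{q/2}$, combined with a standard localisation at the stopping times $\tau_n:=\inf\{s\in[0,T]:X_s\geq n\}$, and close up with Grönwall's inequality. The first claimed inequality $\E[(X_t)^q]\leq \E[(1+(X_t)^2)^{q/2}]$ is immediate from the pointwise estimate $x^q\leq (1+x^2)^{q/2}$, valid for all $x\in[0,\infty)$ and $q\in(0,\infty)$, so only the second inequality requires work.

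For the second inequality, I would first compute
$$
  \mathcal{G}_{\mu,1} V(x) = q\,x\,\mu(x)\,(1+x^2)^{q/2-1} + \tfrac{q}{2}(1+x^2)^{q/2-2}\bigl[1+(q-1)x^2\bigr],
$$
using that the diffusion coefficient is identically $1$ in the setting of Section~\ref{sec:setting_numeric}. The hypothesis $x\mu(x)\leq c(1+x^2)$ bounds the drift contribution by $qc\,(1+x^2)^{q/2}=qc\,V(x)$. For the second-derivative term, I would rewrite $1+(q-1)x^2=(1+x^2)+(q-2)x^2$ and handle the two cases separately: when $q\in(0,2]$ the coefficient $(q-2)$ is non-positive so the $x^2$-contribution is $\leq 0$ and the remainder is controlled by $\tfrac{q}{2}(1+x^2)^{q/2-1}\leq \tfrac{q}{2}V(x)$; when $q\in(2,\infty)$ I would use $x^2\leq 1+x^2$ to get $\tfrac{q(q-2)}{2}x^2(1+x^2)^{q/2-2}\leq \tfrac{q(q-2)}{2}V(x)$. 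Combining these with $1+x^2\geq 1$ (used once at the end to collapse $(1+x^2)^{q/2-1}$ into $V(x)$) yields the pointwise operator inequality $\mathcal{G}_{\mu,1}V(x)\leq [1+(q-2)^++2c]\,V(x)$.

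Once the operator inequality is established, Itô's formula applied to $V(X_{t\wedge\tau_n})$ turns into
$$
  \E\bigl[V(X_{t\wedge\tau_n})\bigr] \leq \E\bigl[V(X_0)\bigr] + [1+(q-2)^++2c]\int_0^t \E\bigl[V(X_{s\wedge\tau_n})\bigr]\,ds,
$$
the stopping at $\tau_n$ ensuring that the local martingale $\int_0^{t\wedge\tau_n} V'(X_s)\,dW_s$ is a genuine martingale with vanishing expectation. Grönwall's inequality then delivers the exponential bound for the stopped process, and Fatou's lemma together with $\tau_n\nearrow\infty$ $\P$-a.s.\ (by path-continuity of $X$) extends it to $X_t$ itself. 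The main obstacle, as I see it, is the delicate bookkeeping in the pointwise estimate: the naive absorption yields a constant of order $qc+q(q-1)/2$, and producing the linear-in-$q$ constant $1+(q-2)^++2c$ as stated requires performing the $(1+x^2)^{q/2-1}\mapsto V$ collapse only after splitting the diffusion term by the sign of $q-2$, rather than grouping terms naively. An alternative route, mirroring the use of Corollary 2.5 of Cox, Hutzenthaler \& Jentzen in Lemmas~\ref{l:lemma_mom_bound} and \ref{l:lemma_inv_mom}, would be to set $U(x)=\tfrac{q}{2}\log(1+x^2)$ and exploit the cancellation between $\mathcal{G}_{\mu,1}U$ and $\tfrac{1}{2}|U'|^2$ — this provides a cleaner algebraic route to the same linear-in-$q$ constant.
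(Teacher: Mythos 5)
Your decomposition is a sensible starting point, but the pointwise operator inequality you claim to derive does not follow from your own estimates. Reading them back: the drift term is bounded by $qc\,V(x)$, the remainder $\tfrac{q}{2}(1+x^2)^{q/2-1}$ by $\tfrac{q}{2}V(x)$, and the piece with coefficient $(q-2)$ by $\tfrac{q(q-2)^+}{2}V(x)$. Summing, you obtain $\mathcal{G}_{\mu,1}V(x)\leq \tfrac{q}{2}\bigl[1+(q-2)^++2c\bigr]V(x)$, which carries an extra factor $\tfrac{q}{2}$ that your write-up silently drops. The factor is unavoidable on this route: every term in $\mathcal{G}_{\mu,1}V$ inherits the $q$ from $V'(x)=qx(1+x^2)^{q/2-1}$. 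You can see that the unscaled bound you assert cannot be correct by taking $\mu(x)=cx$ and letting $x\to\infty$: the ratio $\mathcal{G}_{\mu,1}V(x)/V(x)\to qc$, which exceeds $1+(q-2)^++2c$ as soon as $q(c-1)>2c-1$. Consequently your It\^o--Gr\"onwall argument delivers $\E[V(X_t)]\leq \exp\bigl(\tfrac{q}{2}[1+(q-2)^++2c]\,t\bigr)\,\E[V(X_0)]$, which for $q>2$ is strictly weaker than the exponent asserted in the lemma.

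The paper's proof follows a different path: it works with the unscaled Lyapunov function $U(x)=1+x^2$, establishes the operator-level estimate $\mathcal{G}_{\mu,1}U(x)+\tfrac{q/2-1}{2}\,\tfrac{|U'(x)|^2}{U(x)}\leq[1+(q-2)^++2c]\,U(x)$ (this is exactly \eqref{l:lemma_positive_mom:3}, once one notices $2x\mu(x)+1=(\mathcal{G}_{\mu,1}U)(x)$ and $\tfrac{4x^2}{1+x^2}=\tfrac{|U'(x)|^2}{U(x)}$), and then cites Corollary~2.6 of Cox, Hutzenthaler \& Jentzen, which is a pre-packaged power-moment estimate converting exactly such a bound into a bound on $\E[U(X_t)^{q/2}]$, absorbing the localisation and Fatou bookkeeping you would otherwise carry out by hand. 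You should quote that corollary as the paper does rather than re-deriving Gr\"onwall directly, since the direct route does not reproduce the stated constant. Your proposed alternative with $U(x)=\tfrac{q}{2}\log(1+x^2)$ does not rescue the constant either: computing $\mathcal{G}_{\mu,1}U+\tfrac{1}{2}|U'|^2$ produces the term $\tfrac{q+q(q-1)x^2}{2(1+x^2)^2}$, which is quadratic in $q$, so the cancellation you hope for does not occur. Note, finally, that for every downstream application in the paper only the qualitative conclusion $\sup_{t\in[0,T]}\E[(X_t)^q]<\infty$ is used, so the weaker constant you actually obtain would suffice there even though it does not prove the lemma as stated.
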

\begin{proof}[Proof 
of Lemma~\ref{l:lemma_positive_mom}]
Observe that the estimate
$
  x \mu(x) \leq c \left( 1 + x^2 \right)
$  
for all 
$
  x \in [0,\infty)
$ 
implies that for all 
$ x \in [0,\infty) $, 
$ q \in (0,\infty) $ 
it holds that
\begin{align}  
\label{l:lemma_positive_mom:3}
&  
  2 x \mu(x) + 1 
  + 
  \frac{ 2 \left( \frac{ q }{ 2 } - 1 \right) x^2 }{ 1 + x^2 }
\leq 
  1 + ( q - 2 )^+ 
  + 2 c ( 1 + x^2 )
\leq 
  \left( 
    1 + ( q - 2 )^+  
    + 2 c 
  \right) 
  \left( 1 + x ^2 \right)
  .
\end{align}
This and 
Corollary 2.6 in Cox et.~al~\cite{CoxHutzenthalerJentzen2013} 
prove that for all 
$ q \in (0, \infty) $, $ t \in [0,T] $ 
it holds that
\begin{equation} 
\label{l:lemma_positive_mom:4} 
\begin{split} 
  \E\big[ 
    ( X_t)^q 
  \big] 
& 
  \leq  
  \E\big[ 
    ( 1 + (X_t)^2 )^{ \frac{ q }{ 2 } } 
  \big]  
 \leq 
 \exp\!\Big( 
   t \left[ 1 + ( q - 2 )^+ + 2 c \right] 
 \Big) 
 \E\big[ 
   ( 
     1 + ( X_0 )^2 
   )^{ \frac{ q }{ 2 } } 
 \big]
  .
\end{split}
\end{equation}
This finishes the proof of Lemma~\ref{l:lemma_positive_mom}.
\end{proof}
\begin{lemma}  
\label{l:lemma_positive_mom_mit_sup}
Assume the setting in 
Section~\ref{sec:setting_numeric},
let $ c \in [0,\infty) $, $ q \in [2,\infty) $
and assume
$ 
  x \mu(x) 
  \leq 
  c
  \left( 1 + x^2 \right)
$ 
for all 
$ x \in (0,\infty) $.
Then 
\begin{equation} 
\begin{split}  
\label{l:lemma_positive_mom_mit_sup:state} 
 \left\| 
  \sup_{t \in [0,T]}( X_t )^2 
\right\|_{ 
  L^q( 
    \Omega ; 
  \R ) 
} 
\leq 
\left\|
( X_0)^2
\right\|_{ 
  L^q( 
    \Omega ; 
  \R ) 
} 
+
2cT
  \sup_{s \in [0,T]}
\left\|
     X_s 
\right\|_{ 
  L^{2q}( 
    \Omega ; 
  \R ) 
}^2 
+
\sqrt{\tfrac{2T q^3}{q-1}}
  \sup_{s \in [0,T]}
\left\|
X_s
\right\|_{L^{q}( \Omega; \R )}
+
T(2c+1)
  .
\end{split}
\end{equation}
\end{lemma}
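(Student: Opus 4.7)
The plan is to apply Itô's formula to $(X_t)^2$, bound the resulting drift term using the one-sided growth assumption $x\mu(x)\leq c(1+x^2)$, and reduce the remaining task to an $L^q$-estimate for the Itô martingale $M_t:=\int_0^t X_s\,dW_s$. The non-routine ingredient is then extracting the precise constant $\sqrt{2Tq^3/(q-1)}$ appearing in \eqref{l:lemma_positive_mom_mit_sup:state}, which I would obtain by combining Doob's $L^q$-maximal inequality with an explicit moment estimate derived from Itô's formula applied to $|M_t|^q$.

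First, since the diffusion coefficient in the SDE for $X$ is identically $1$, Itô's formula applied to the $C^2$ function $x\mapsto x^2$ yields, for every $t\in[0,T]$, the identity $(X_t)^2=(X_0)^2+\int_0^t[2X_s\mu(X_s)+1]\,ds+2M_t$. The assumption $x\mu(x)\leq c(1+x^2)$ (which extends trivially to $x=0$, since $X$ is $[0,\infty)$-valued and $c\geq 0$) bounds the $ds$-integral from above by $T(2c+1)+2c\int_0^T(X_s)^2\,ds$. Taking the supremum over $t\in[0,T]$, then the $L^q(\Omega;\R)$-norm, and applying Minkowski's inequality to the $ds$-integral together with the identity $\|(X_s)^2\|_{L^q(\Omega;\R)}=\|X_s\|_{L^{2q}(\Omega;\R)}^2$, three of the four summands on the right-hand side of \eqref{l:lemma_positive_mom_mit_sup:state} appear immediately; what remains is to verify $2\|\sup_{t\in[0,T]}|M_t|\|_{L^q(\Omega;\R)}\leq\sqrt{2Tq^3/(q-1)}\,\sup_{s\in[0,T]}\|X_s\|_{L^q(\Omega;\R)}$.

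For this final estimate, I would first invoke Doob's $L^q$-maximal inequality (valid since $q>1$) to obtain $\|\sup_{t\in[0,T]}|M_t|\|_{L^q(\Omega;\R)}\leq\tfrac{q}{q-1}\|M_T\|_{L^q(\Omega;\R)}$. Since $x\mapsto|x|^q$ is $C^2$ for $q\geq 2$, Itô's formula applied to $|M_t|^q$ together with standard localisation yields the identity $\E[|M_T|^q]=\tfrac{q(q-1)}{2}\int_0^T\E[|M_s|^{q-2}(X_s)^2]\,ds$. Hölder's inequality with conjugate exponents $q/(q-2)$ and $q/2$ then gives, writing $\phi(s):=\E[|M_s|^q]$, the bound $\phi(T)\leq\tfrac{q(q-1)}{2}\int_0^T\phi(s)^{(q-2)/q}\|X_s\|_{L^q(\Omega;\R)}^2\,ds$. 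Because $|M|^q$ is a submartingale, $\phi$ is nondecreasing, so this rearranges to $\|M_T\|_{L^q(\Omega;\R)}\leq\sqrt{Tq(q-1)/2}\,\sup_{s\in[0,T]}\|X_s\|_{L^q(\Omega;\R)}$, and multiplying by $2q/(q-1)$ (i.e., combining with Doob's factor $q/(q-1)$ and the factor $2$ from the Itô expansion of $(X_t)^2$) produces exactly $\sqrt{2Tq^3/(q-1)}$. The main obstacle is not conceptual but the careful bookkeeping needed to land on this explicit constant; the minor technical point that $M$ is a priori only a local martingale is handled by a routine stopping-time argument, applying the estimate to $M^{\tau_n}$ with $\tau_n:=\inf\{t\in[0,T]:|X_t|\geq n\}\wedge T$ and passing to the limit via Fatou's lemma.
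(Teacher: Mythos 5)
Your proof is correct and follows essentially the same route as the paper: It\^{o} on $x\mapsto x^2$, the one-sided growth bound on $x\mu(x)$ to absorb the drift, Minkowski for the $ds$-integral, and a Burkholder--Davis--Gundy-type maximal estimate for $\int_0^{\cdot}X_s\,dW_s$ to produce the constant $\sqrt{2Tq^3/(q-1)}$. The only difference is that the paper simply cites Lemma~7.2 and Lemma~7.7 of Da Prato \& Zabczyk for the maximal inequality, whereas you re-derive that estimate from scratch via Doob's $L^q$-inequality combined with It\^{o}'s formula applied to $|M_t|^q$, H\"older, and the monotonicity of $s\mapsto\E[|M_s|^q]$; your bookkeeping lands exactly on the quoted constant, so the re-derivation is valid, just longer than a citation.
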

\begin{proof}[Proof 
of Lemma~\ref{l:lemma_positive_mom_mit_sup}]
First of all, observe that It\^{o}'s lemma and 
the assumption that 
for all 
$ x \in [0,\infty) $ 
it holds that
$ 
  x \mu(x) 
  \leq 
  c
  \left( 1 + x^2 \right)
$ 
implies that 
for all 
$ t \in [0,T] $ 
it holds that
\begin{equation} 
\label{l:lemma_positive_mom_mit_sup:1} 
\begin{split}  
    ( X_t)^2
& 
=
( X_0)^2
+
2 \int_0^t X_s \mu(X_s) 
 ds
+
t 
+ 
2  \int_0^t X_s \, dW_s
\\
& 
\leq
( X_0)^2
+
2c \int_0^t 1+ (X_s)^2  
 \, ds 
+
t
+ 
2  \int_0^t X_s \, dW_s.
\end{split}
\end{equation}
%
%
%
Furthermore, note that the Burkholder-Davis-Gundy-type inequalities in Lemma 7.2 and Lemma 7.7 in 
Da Prato \& Zabczyk~\cite{dz92} show that 
\begin{equation} 
\begin{split}  
\label{l:lemma_positive_mom_mit_sup:3}
&\left\| 
  \sup_{t \in [0,T]} \int_0^t X_s \, dW_s 
\right\|_{ 
  L^q( 
    \Omega ; 
  \R ) 
} 
\leq 
\sqrt{\tfrac{q^3}{2(q-1)}
\int_{0}^{T}
\left\|
X_s
\right\|_{L^q( \Omega; \R )}^2   ds }
\leq
\sqrt{\tfrac{T q^3}{2(q-1)}}
  \sup_{s \in [0,T]}
\left\|
X_s
\right\|_{L^{q}( \Omega; \R )}
.
\end{split} 
\end{equation} 
Combining \eqref{l:lemma_positive_mom_mit_sup:1} and \eqref{l:lemma_positive_mom_mit_sup:3} 
proves that
\begin{equation} 
\begin{split}  
\label{l:lemma_positive_mom_mit_sup:4}
&\left\| 
  \sup_{t \in [0,T]}( X_t )^2 
\right\|_{ 
  L^q( 
    \Omega ; 
  \R ) 
} 
\\ &
\leq
\left\|
( X_0)^2
\right\|_{ 
  L^q( 
    \Omega ; 
  \R ) 
} 
+
2c 
\left\|
\int_0^T (X_s)^2  
 \, ds 
\right\|_{ 
  L^q( 
    \Omega ; 
  \R ) 
} 
+
2 
\left\|
  \sup_{t \in [0,T]} 
 \int_0^t X_s \, dW_s
\right\|_{ 
  L^{q}( 
    \Omega ; 
  \R ) 
} 
+
T(2c+1)
\\ &
\leq
\left\|
( X_0)^2
\right\|_{ 
  L^q( 
    \Omega ; 
  \R ) 
} 
+
2cT
  \sup_{s \in [0,T]}
\left\|
    ( X_s )^{ 
      2
    } 
\right\|_{ 
  L^q( 
    \Omega ; 
  \R ) 
} 
+
\sqrt{\tfrac{2T q^3}{q-1}}
  \sup_{s \in [0,T]}
\left\|
X_s
\right\|_{L^{q}( \Omega; \R )}
+
T(2c+1)
  .
\end{split} 
\end{equation} 
This finishes the proof of Lemma~\ref{l:lemma_positive_mom_mit_sup}.
\end{proof}
\begin{lemma}
\label{p:lemma_conv_order_0}
Let $ T \in [0,\infty) $, 
let
$
  ( 
    \Omega, \mathcal{F}, \P  
  )
$
be a probability space, 
let 
$
  Z \in \mathcal{L}^0( [0,T] \times \Omega; \R )
$
satisfy
$
  \int_0^T \left| Z_s \right| ds < \infty
$ $\P$-a.s.\
and let $ Y \colon [0,T] \times \Omega \to \R $
be a stochastic process with continuous sample paths
satisfying
$
  Y_t = \smallint_0^t Z_r \, dr $ $ \P $-a.s.\ for all $ t \in [0,T] 
$.
Then for all $ q \in ( 0, \infty) $, $ p \in [1, \infty) $ it holds that
\begin{equation} 
\label{p:lemma_conv_order_0:statement1}
  \left\|  
 \left\|
Y
\right\|_{ \mathcal{C}^{ ( 1 - \nicefrac{ 1 }{ p } ) }( [0,T], \R ) }
  \right\|_{ 
    L^q( 
      \Omega ; 
    \R ) 
    ) 
  } 
\leq
  \left\| 
    Z
  \right\|_{
    L^q( \Omega; L^p( [0,T]; \R ) )
  }
\end{equation}
and for all $p, q \in ( 0, \infty) $, $\delta \in [ 0, \infty] $, $\theta \in [0,1]$ satisfying
$
  q \left( p - \theta \right) \left( 1 + \delta \right) \geq p
$  it holds that
\begin{equation} 
\label{p:lemma_conv_order_0:statement2} 
\begin{split}
& 
  \left\|  
Y
  \right\|_{ 
    \mathcal{C}^{ \left( 1 - \nicefrac{ \theta }{ p } \right) }( [0,T], L^q( \Omega; \R) ) 
  }
\leq
    \left\|
      Z 
    \right\|_{
      L^{
        q \theta ( 1 + \nicefrac{ 1 }{ \delta } )
      }( 
        \Omega; 
        L^p( [0,T]; \R ) 
      )
    }^{ \theta }
    \left\|
      Z
    \right\|_{
      L^{ \infty }( [0,T]; 
        L^{ 
          q ( 1 - \theta ) ( 1 + \delta ) 
        }(
          \Omega; \R
        )
      )
    }^{
      ( 1 - \theta ) 
    }
  .
\end{split}
\end{equation}
\end{lemma}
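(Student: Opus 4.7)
For \eqref{p:lemma_conv_order_0:statement1}, the plan is to establish a pathwise bound on the H\"older seminorm and then pass to the $L^q(\Omega;\R)$-norm. First I would use the identity $Y_t(\omega)-Y_s(\omega)=\int_s^tZ_r(\omega)\,dr$ and apply H\"older's inequality in time with the conjugate exponents $p$ and $p/(p-1)$ (valid since $p\in[1,\infty)$) to obtain, for $\P$-a.e.\ $\omega\in\Omega$ and all $s,t\in[0,T]$ with $s<t$, the pathwise estimate $|Y_t(\omega)-Y_s(\omega)|\leq|t-s|^{1-1/p}\|Z(\cdot,\omega)\|_{L^p([0,T];\R)}$. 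Since the right-hand side is independent of $s,t$, dividing by $|t-s|^{1-1/p}$ and taking the supremum over $s\neq t$ produces a pathwise bound on $\|Y\|_{\mathcal{C}^{(1-1/p)}([0,T],\R)}$ by $\|Z\|_{L^p([0,T];\R)}$, and taking the $L^q(\Omega;\R)$-norm then finishes \eqref{p:lemma_conv_order_0:statement1}.

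For \eqref{p:lemma_conv_order_0:statement2}, the plan is to sharpen the pathwise H\"older step by a second H\"older in time and then decouple the two resulting factors via H\"older in $\Omega$ combined with Minkowski's integral inequality. Splitting $|Z_r|=|Z_r|^\theta|Z_r|^{1-\theta}$ and applying H\"older in time with exponents $p/\theta$ and $p/(p-\theta)$ (both $\geq 1$, since the hypothesis $q(p-\theta)(1+\delta)\geq p>0$ forces $\theta<p$) will yield the pathwise bound
\begin{equation*}
  |Y_t-Y_s|\leq\bigg(\int_s^t|Z_r|^p\,dr\bigg)^{\!\theta/p}\bigg(\int_s^t|Z_r|^{(1-\theta)p/(p-\theta)}\,dr\bigg)^{\!(p-\theta)/p}.
\end{equation*}
Passing to the $L^q(\Omega;\R)$-norm and applying H\"older in $\Omega$ with the conjugate exponents $1+\nicefrac{1}{\delta}$ and $1+\delta$ separates the two factors. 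The first is immediately bounded, by enlarging $\int_s^t$ to $\int_0^T$, by $\|Z\|_{L^{q\theta(1+1/\delta)}(\Omega;L^p([0,T];\R))}^\theta$.

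For the second factor, the hypothesis $q(p-\theta)(1+\delta)\geq p$ is precisely the statement $q(1+\delta)(p-\theta)/p\geq 1$ needed to apply Minkowski's integral inequality and pull the $L^{q(1+\delta)(p-\theta)/p}(\Omega)$-norm inside the time integral; bounding the resulting integrand $\|Z_r\|_{L^{q(1-\theta)(1+\delta)}(\Omega)}^{(1-\theta)p/(p-\theta)}$ uniformly in $r$ by its $L^\infty([0,T])$-norm then produces the factor $|t-s|^{(p-\theta)/p}\|Z\|_{L^\infty([0,T];L^{q(1-\theta)(1+\delta)}(\Omega;\R))}^{1-\theta}$. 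Combining the two factors and using $(p-\theta)/p=1-\theta/p$, division by $|t-s|^{1-\theta/p}$ and supremum over $s\neq t$ will produce \eqref{p:lemma_conv_order_0:statement2}. The main bookkeeping hurdle is the verification of admissibility in the degenerate parameter values $\theta\in\{0,1\}$ and $\delta\in\{0,\infty\}$; under the conventions $0^0=1$ and $\|\cdot\|_{L^0(\Omega;V)}=1$ from Section~\ref{ssec:Notation} each such corner will reduce either to a direct Minkowski estimate (when $\theta=0$) or to \eqref{p:lemma_conv_order_0:statement1} combined with Jensen's inequality (when $\theta=1$).
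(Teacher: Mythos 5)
Your proposal is correct and follows essentially the same route as the paper: a single Hölder in time (exponents $p$ and $p/(p-1)$) followed by the $L^q(\Omega)$-norm for \eqref{p:lemma_conv_order_0:statement1}, and the split $|Z_r|=|Z_r|^\theta|Z_r|^{1-\theta}$, Hölder in time with exponents $p/\theta$ and $p/(p-\theta)$, Hölder in $\Omega$ with exponents $1+\nicefrac{1}{\delta}$ and $1+\delta$, and Minkowski's integral inequality (justified by $q(p-\theta)(1+\delta)/p\geq 1$) plus an $L^\infty$-in-time bound for \eqref{p:lemma_conv_order_0:statement2}. The paper's proof of \eqref{p:lemma_conv_order_0:statement2} does restrict its display to $\theta\in(0,1]$, so your remark that the corners $\theta\in\{0,1\}$, $\delta\in\{0,\infty\}$ must be absorbed via the conventions (with $\theta=0$ handled by a direct Minkowski argument) is the right reading of the remaining bookkeeping.
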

\begin{proof} [Proof of Lemma~\ref{p:lemma_conv_order_0}]
H\"older's inequality 
shows that 
for all $ q \in ( 0, \infty) $, $ p \in [1, \infty) $ 
it holds that
\begin{equation}  
\label{p:lemma_conv_order_0:7}
\begin{split}
&
  \left\|
    \sup_{ s, t \in [0,T] } 
    \left[ 
      \left| t - s \right|^{
        - 
        \left[ 1 - \frac{ 1 }{ p } \right]
      }
 \left|\smallint_s^t  Z_r \, dr \right| \right] \right\|_{ L^{q}( \Omega; \R ) }
\\ & \leq
 \left\| 
\sup_{s,t \in [0,T]} \left[ 
  \left| t - s \right|^{ 
    - \left[ 1 - \frac{ 1 }{ p } \right] 
  }
\left(
\smallint_s^t | Z_r |^p \, dr 
\right)^{ \! \frac{ 1 }{ p } }
\left(
\smallint_s^t 1 \, dr 
\right)^{ 
  \! \left[ 1 - \frac{ 1 }{ p } \right]
} 
\right]
 \right\|_{ L^{q}( \Omega; \R ) } 
=
 \left\| 
\smallint_0^T | Z_r |^p \, dr 
 \right\|_{ L^{ \nicefrac{ q }{ p } }( \Omega; \R ) }^{ \frac{ 1 }{ p } }
 .
\end{split}
\end{equation}
This proves \eqref{p:lemma_conv_order_0:statement1}.
Again H\"older's inequality shows 
that for all 
$ q \in ( 0, \infty) $, 
$ \delta \in [0,\infty] $,
 $ \theta \in (0,1] $,
$ p \in (\theta, \infty) $ 
satisfying
$
  q \left( p - \theta \right) \left( 1 + \delta \right) \geq p
$  
it holds that  
{\allowdisplaybreaks  
\begin{align}  
\label{p:lemma_conv_order_0:8} \nonumber
&
  \sup_{
    \substack{
      s, t \in [0,T] ,
    \\
      s \neq t
    }
  } 
  \left[
    \left| t - s 
    \right|^{
      - \left[ 1 - \frac{ \theta }{ p } \right]
    }
  \left\|
    \smallint_s^t  Z_r \, dr   
  \right\|_{ L^q( \Omega; \R ) }
 \right]
=
  \sup_{
    \substack{
      s, t \in [0,T] ,
    \\
      s \neq t
    }
  } 
  \left[
    \left| t - s 
    \right|^{
      - \left[ 1 - \frac{ \theta }{ p } \right]
    }
   \left\|
     \smallint_s^t  
       \left| Z_r \right|^{ \theta }
       \left| Z_r \right|^{ (1 - \theta) } 
     dr   
   \right\|_{ 
     L^q( \Omega; \R ) 
   }
 \right]
\\ \nonumber & \leq
  \sup_{
    \substack{
      s, t \in [0,T] ,
    \\
      s \neq t
    }
  } 
  \left[
    \left| t - s 
    \right|^{
      - \left[ 1 - \frac{ \theta }{ p } \right]
    }
 \left\| 
  \left(
    \smallint_s^t | Z_r |^p \, dr 
  \right)^{ \! \frac{ \theta }{ p } }
  \left(
    \smallint_s^t 
    | Z_r |^{
      \frac{ p ( 1 - \theta ) }{ p - \theta }
    }  
    dr 
  \right)^{ 
    \! 1 - \frac{ \theta }{ p } 
  }
 \right\|_{ L^{q}( \Omega; \R ) }  \right]
\\ & \leq
  \sup_{
    \substack{
      s, t \in [0,T] ,
    \\
      s \neq t
    }
  } 
  \left[
    \left| t - s 
    \right|^{
      - \left[ 1 - \frac{ \theta }{ p } \right]
    }
    \left\| 
      \left(
        \smallint_s^t | Z_r |^p \, dr 
      \right)^{ \! \frac{ \theta }{ p } }
    \right\|_{ 
      L^{ q ( 1 + \nicefrac{ 1 }{ \delta } ) }( \Omega; \R ) 
    }
    \left\|
      \left(
        \smallint_s^t 
        | Z_r |^{ \frac{ p ( 1 - \theta ) }{ p - \theta } } 
        dr 
      \right)^{ 
        \! 1 - \frac{ \theta }{ p } 
      }
   \right\|_{ 
     L^{ 
       q ( 1 + \delta ) 
     }( \Omega; \R ) 
   }  
 \right]
\\ \nonumber & =
  \sup_{
    \substack{
      s, t \in [0,T] ,
    \\
      s \neq t
    }
  } 
  \left[
    \left| 
      t - s 
    \right|^{
      - \left[ 1 - \frac{ \theta }{ p } \right]
    }
    \left\|
      Z 
    \right\|_{
      L^{
        q \theta ( 1 + \nicefrac{ 1 }{ \delta } )
      }( 
        \Omega; 
        L^p( [s,t]; \R ) 
      )
    }^{ \theta }
%
    \left\| 
      \smallint_s^t 
        | Z_r |^{ \frac{ p ( 1 - \theta ) }{ p - \theta } } 
      dr 
    \right\|_{ 
      L^{ \frac{q(p-\theta)(1+\delta)}{p}
    }( \Omega; \R ) }^{ 1 - \frac{ \theta }{ p } }  
  \right]
\\ \nonumber & \leq
  \sup_{
    \substack{
      s, t \in [0,T] ,
    \\
      s \neq t
    }
  } 
  \left[
    \left| 
      t - s 
    \right|^{
      - \left[ 1 - \frac{ \theta }{ p } \right]
    }
    \left\|
      Z 
    \right\|_{
      L^{
        q \theta ( 1 + \nicefrac{ 1 }{ \delta } )
      }( 
        \Omega; 
        L^p( [s,t]; \R ) 
      )
    }^{ \theta }
    \left(
      \smallint_s^t 
      \left\| 
        Z_r
      \right\|_{ L^{q(1-\theta)(1+\delta)}( \Omega; \R ) }^{\frac{p(1-\theta)}{p-\theta}}  
      dr 
    \right)^{ \!\! 1 - \frac{ \theta }{ p } }
  \right]
\\ \nonumber & \leq
    \left\|
      Z 
    \right\|_{
      L^{
        q \theta ( 1 + \nicefrac{ 1 }{ \delta } )
      }( 
        \Omega; 
        L^p( [0,T]; \R ) 
      )
    }^{ \theta }
    \left\|
      Z
    \right\|_{
      L^{ \infty }( [0,T]; 
        L^{ 
          q ( 1 - \theta ) ( 1 + \delta ) 
        }(
          \Omega; \R
        )
      )
    }^{
      ( 1 - \theta ) 
    }
  .
\end{align} }%
This finishes the proof of Lemma~\ref{p:lemma_conv_order_0}.
\end{proof}
\begin{proposition}[Temporal H\"older regularity for Bessel-type processes]
\label{p:lemma_conv_order_1}
Assume the setting in Section~\ref{sec:setting_numeric},
let
$ \alpha, q \in ( 0, \infty) $, 
$
  \eps \in 
  \big( 
    \frac{ [ \alpha - 1/2 ]^+ }{ 1 + 2 \alpha }     
  , 
    \frac{ 2 \alpha }{ 1 + 2 \alpha }
  \big)
$,  
$
  p = 
  \frac{ 
    1 + 2 \alpha 
  }{
    \eps \, ( 1 + 2 \alpha ) + 1 
  }
$,
$ c \in [p, \infty)$, $\tilde{c} \in [0, \infty) $
and assume
that for all 
$ x \in (0,\infty)
$ 
it holds that
$
  \alpha - 
  \tilde{c} 
  \, x^c  
  \leq 
  x \mu(x) 
  \leq 
  \tilde{c}
  \left( 1 + x^2 \right)
$ 
and
$
  \E\big[ 
    ( X_0 )^{
      \max\{ c - 1, 1 \}
      \max\!\left\{ 
        p , 
        q , 
        \nicefrac{ 
          ( q - p ) 
          \,
          ( 1 + 2 \alpha + p ) 
        }{ 
          (1 + 2 \alpha - p ) 
        }
      \right\}
    } 
  \big] < \infty
$.
Then
\begin{equation} 
\label{p:lemma_conv_order_1:statement1}
  \left\| 
      \left\|  
     X - W  
       \right\|_{ \mathcal{C}^{ (\nicefrac{2\alpha}{(1+2\alpha)}) -\eps }( [0,T],  \R) }
   \right\|_{ 
    L^q( 
      \Omega ; 
      \R
    ) 
  } 
\leq
  \big\|
    \!
    \left\| \mu( X ) \right\|_{
      L^p( [0,T]; \R )
    }
    \!
  \big\|_{
    L^q( \Omega; \R )
  }
  < \infty
  .
\end{equation}
If, in addition to the above assumptions, 
$
  \sup_{ t \in [0,T] } 
  \E\!\left[ 
    ( X_t )^{ - r } 
  \right] 
  < \infty
$ 
for all 
$ 
  r \in 
  \big[ 
    0,
    ( \nicefrac{ q }{ 2p } \wedge \nicefrac{ 1 }{ 2 } ) 
    ( 1 + 2 \alpha + p ) 
  \big]
$,
then 
\begin{equation} 
\label{p:lemma_conv_order_1:statement2} 
\begin{split}
& 
  \left\|  
    X - W  
  \right\|_{ 
    \mathcal{C}^{\left(\left(\nicefrac{2 \alpha(q+1) + 1}{q(1+2 \alpha)}\right)  - \eps\right) \wedge 1 }( [0,T], L^q( \Omega; \R) ) 
  }
\\ & \leq
\sup_{u \in [0,T]}
 \Big\| 
\mu(X_u)
\Big\|_{ L^{\left( \nicefrac{q}{2p} \wedge  \nicefrac{1}{2} \right) (1 + 2 \alpha + p) }( \Omega; \R ) }^{ \nicefrac{p}{q} \wedge 1 } 
 \left\| 
\smallint_0^T | \mu(X_r) |^p \, dr 
 \right\|_{ L^{  \left(\nicefrac{q}{p} -1 \right) \left(\nicefrac{(1 + 2 \alpha + p)}{(1 + 2 \alpha - p)} \right)}( \Omega; \R ) }^{ \nicefrac{1}{p}- \nicefrac{1}{q} }
  \in [0,\infty).
\end{split}
\end{equation}
\end{proposition}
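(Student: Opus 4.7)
The key observation is that the SDE gives $X_t - W_t - X_0 = \int_0^t \mu(X_s)\,ds$; since adding the $\mathcal F_0$-measurable constant $X_0$ does not alter any Hölder seminorm, the Hölder regularity of $X-W$ coincides with that of the pathwise integral $Y_t := \int_0^t \mu(X_s)\,ds$. The plan is therefore to apply Lemma~\ref{p:lemma_conv_order_0} with $Z_r = \mu(X_r)$: its estimate \eqref{p:lemma_conv_order_0:statement1} will deliver \eqref{p:lemma_conv_order_1:statement1} and its estimate \eqref{p:lemma_conv_order_0:statement2} will deliver \eqref{p:lemma_conv_order_1:statement2}, provided the relevant mixed $L^p$-in-time, $L^q$-in-$\Omega$ norms of $\mu(X)$ are shown to be finite.

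Pointwise control of $|\mu|$ follows from combining the two-sided hypothesis $\alpha - \tilde c\, x^c \leq x\mu(x) \leq \tilde c\,(1+x^2)$: the lower bound gives $\mu(x) \geq \alpha/x - \tilde c\, x^{c-1}$ and the upper bound gives $\mu(x) \leq \tilde c/x + \tilde c\, x$, so that for every $x \in (0,\infty)$
\begin{equation*}
  |\mu(x)| \leq (\alpha+\tilde c)\,x^{-1} + \tilde c\, x + \tilde c\, x^{c-1}.
\end{equation*}
Consequently $|\mu(X_r)|^p$ is dominated by a sum of $X_r^{-p}$, $X_r^p$ and $X_r^{(c-1)p}$. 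The positive-power terms I would handle via Lemma~\ref{l:lemma_positive_mom}; the moment hypothesis on $X_0$ in the proposition is tailored precisely so that all exponents that will arise are covered. The inverse-power term I would handle via Lemma~\ref{l:lemma_inv_bound_new}: the specific value $p = (1+2\alpha)/(1+\eps(1+2\alpha))$ lies in $(1, (1+2\alpha)\wedge 2)$ exactly because of the two-sided bound imposed on $\eps$, so that lemma is indeed applicable, and its estimate \eqref{l:lemma_inv_bound_new:state3} bounds $\|\int_0^T X_s^{-p}\,ds\|_{L^{q/p}(\Omega;\R)}$ in terms of moments of $X_T^{2-p}$ and of $\int_0^T X_s^{c-p}\,ds$, which in turn are controlled by Lemma~\ref{l:lemma_positive_mom}.

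Inequality \eqref{p:lemma_conv_order_1:statement1} then follows from \eqref{p:lemma_conv_order_0:statement1}: the resulting Hölder exponent $1 - 1/p$ equals $2\alpha/(1+2\alpha) - \eps$ by the definition of $p$, and finiteness of the right-hand side is precisely the program above. For \eqref{p:lemma_conv_order_1:statement2} I would apply \eqref{p:lemma_conv_order_0:statement2} with $\theta := (1-p/q)\vee 0$. A direct calculation shows that $1-\theta/p = 2\alpha/(1+2\alpha) + 1/q - \eps = (2\alpha(q+1)+1)/(q(1+2\alpha)) - \eps$ when $p \leq q$ and $1-\theta/p = 1$ when $p > q$, matching the capped target exponent. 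The first factor of the product bound rewrites as $\|\int_0^T |\mu(X_r)|^p\,dr\|_{L^{q\theta(1+1/\delta)/p}(\Omega;\R)}^{\theta/p}$ with outer exponent $\theta/p = 1/p - 1/q$, and the second factor $\|\mu(X)\|_{L^\infty([0,T]; L^{q(1-\theta)(1+\delta)}(\Omega;\R))}^{1-\theta}$ I would bound uniformly in $u \in [0,T]$ via the pointwise inequality on $|\mu|$ together with Lemma~\ref{l:lemma_positive_mom} and the extra uniform inverse-moment hypothesis $\sup_t \E[X_t^{-r}] < \infty$. The free parameter $\delta$ is selected so that $q(1-\theta)(1+\delta) = (q/(2p)\wedge 1/2)(1+2\alpha+p)$, which for $p \leq q$ forces $\delta = (1+2\alpha-p)/(2p) > 0$.

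The main obstacle is the arithmetic of keeping all Lebesgue exponents mutually compatible: once $\delta$ is fixed by the second factor, the exponent $q\theta(1+1/\delta)/p$ appearing in the first factor turns out to equal $(q/p-1)(1+2\alpha+p)/(1+2\alpha-p)$, and this is precisely the exponent under which the inverse-moment bound from Lemma~\ref{l:lemma_inv_bound_new} \eqref{l:lemma_inv_bound_new:state3} can still be used -- so the lower and upper constraints on $\eps$ in the proposition are exactly what keeps the whole scheme consistent. Once this bookkeeping is settled, no further analytic input is needed beyond the lemmas already established in this section.
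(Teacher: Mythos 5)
Your proposal follows the paper's own argument step by step: you decompose $X-W$ as $X_0 + \int_0^\cdot\mu(X_s)\,ds$, reduce to Lemma~\ref{p:lemma_conv_order_0} with $Z=\mu(X)$, control $|\mu(X)|^p$ by a sum of negative and positive powers of $X$ handled by Lemmas~\ref{l:lemma_inv_bound_new} and~\ref{l:lemma_positive_mom} respectively, and choose $\theta=(1-p/q)\vee 0$ and $\delta=\tfrac{1+2\alpha-p}{2p}$ exactly as the paper does, with matching exponent arithmetic. This is the same route, and the computations you sketch all check out.
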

\begin{proof} [Proof of Proposition~\ref{p:lemma_conv_order_1}]
%
%
%
Lemma~\ref{l:lemma_positive_mom} 
and the assumption that
\begin{equation}
  \E\!\left[ 
    (X_0)^{
      \max\{ c - 1, 1 \}
      \max\!\left\{ 
        p , 
        q , 
        \frac{ 
          ( q - p ) 
          \,
          ( 1 + 2 \alpha + p ) 
        }{ 
          (1 + 2 \alpha - p ) 
        }
      \right\}
    } 
  \right] < \infty
\end{equation}
imply that 
\begin{equation}
\label{eq:exact_sol_moments}
  \sup_{ t \in [0,T] }
  \E\!\left[ 
    ( X_t )^{
      \max\{ c - 1, 1 \}
      \max\!\left\{ 
        p , 
        q , 
        \frac{ 
          ( q - p ) 
          \,
          ( 1 + 2 \alpha + p ) 
        }{ 
          (1 + 2 \alpha - p ) 
        }
      \right\}
    } 
  \right] 
   < \infty
   .
\end{equation}
Next we note that
Jensen's inequality ensures 
that for all $ r \in (0,\infty) $ 
it holds that
\begin{equation} 
\begin{split}  
\label{p:lemma_conv_order_1:5}
  & 
    \E\Bigg[ 
    \left|
      \smallint_0^T  
      ( X_u )^{ p \max\{ c - 1 , 1 \} } 
      \, du 
     \right|^r 
   \Bigg]
\leq
  \E\!\left[
    1 + 
    \left|
      \smallint_0^T  
      ( X_u )^{ 
        p \left( ( c - 1 ) \vee 1 \right) 
      } \, du 
    \right|^{ r \vee 1 } 
  \right]
\\ & =
  1 + 
  \E\!\left[
    \left| 
      \tfrac{ 1 }{ T }
      \smallint_0^T  
        T \,
        ( X_u )^{ 
          p \left( ( c - 1 ) \vee 1 \right)
        } 
      \, du 
     \right|^{ r \vee 1 } 
   \right]
\leq
  1 +  
  T^{ [ r - 1 ]^+ } 
  \left[
  \smallint_0^T 
  \E\!\left[ 
    ( X_u )^{ 
      p \left( ( c - 1 ) \vee 1 \right) \left( r \vee 1 \right) 
    } 
  \right]
  du
  \right] 
  .
\end{split} 
\end{equation} 
In the next step we observe that
the assumption that for all $ x \in (0,\infty) $
it holds that
$
  \alpha -\tilde{c} \, x^c \leq x \mu(x) \leq \tilde{c} \left( 1 + x^2 \right)
$ 
implies that 
for all $ x \in [0,\infty) $ it holds that
$
  | \mu(x)| 
  \leq 
  \tfrac{ 2 \, ( \alpha + \tilde{c} ) }{ x } 
  \left( 1 +  x^{ c \vee 2 } \right) 
$.
This, in turn, ensures that
for all $ x \in [0,\infty) $ it holds that
\begin{equation}
\label{eq:mu_estimate}
  | \mu(x)|^p  
  \leq 
  4^p
  \left[ 
    \alpha + \tilde{c} 
  \right]^p 
  \left( 
    \left[ 
      \tfrac{ 1 }{ x }
    \right]^p 
    + 
    x^{
      p \left( ( c - 1 ) \vee 1 \right)
    } 
  \right)
  .
\end{equation}
Lemma~\ref{l:lemma_inv_bound_new}
together with the
estimates~\eqref{eq:exact_sol_moments},
\eqref{p:lemma_conv_order_1:5} 
and \eqref{eq:mu_estimate}
implies that
\begin{equation} 
\begin{split}  
\label{p:lemma_conv_order_1:2}  
  & \left\| 
    \smallint_0^T 
    | \mu( X_r ) |^p 
    \, dr 
  \right\|_{ 
    L^{ 
      \frac{ q }{ p } \vee 
      \left[ 
        \frac{ ( q - p ) }{ p } 
        \frac{ 1 + 2 \alpha +p }{ 1 + 2 \alpha - p }
      \right]
    }( \Omega; \R ) 
  }
\leq
   4^p 
   \left[ 
     \alpha + \tilde{c}
   \right]^p
 \left\| 
 \smallint_0^T  
   \big[ 
     \tfrac{ 1 }{ X_r }
   \big]^p  
   \,
   dr 
 \right\|_{ 
    L^{ 
      \frac{ q }{ p } \vee 
      \left[ 
        \frac{ ( q - p ) }{ p } 
        \frac{ 1 + 2 \alpha +p }{ 1 + 2 \alpha - p }
      \right]
    }( \Omega; \R ) 
 }
\\ & 
\qquad
+
 4^p 
 \left[ \alpha + \tilde{c} \right]^p
 \left\| 
   \smallint_0^T  
   ( X_r )^{
     p \left( ( c - 1 ) \vee 1 \right)
   } 
   \, dr 
 \right\|_{ 
    L^{ 
      \frac{ q }{ p } \vee 
      \left[ 
        \frac{ ( q - p ) }{ p } 
        \frac{ 1 + 2 \alpha +p }{ 1 + 2 \alpha - p }
      \right]
    }( \Omega; \R ) 
 }
 < \infty
 .
\end{split} 
\end{equation}  
Lemma~\ref{p:lemma_conv_order_0} and \eqref{p:lemma_conv_order_1:2} 
show that 
\begin{equation} 
\begin{split}   \label{p:lemma_conv_order_1:7} 
&\left\|\sup_{s,t \in [0,T]} \left[ 
\left| t-s \right| ^{- \frac{p-1}{p}
}
 |X_t - X_s - W_t + W_s| \right] \right\|_{ L^{q}( \Omega; \R ) }
\\ 
&
=
\left\|\sup_{s,t \in [0,T]} \left[ 
\left| t-s \right| ^{- \frac{p-1}{p}
}
 \left|\smallint_s^t  \mu(X_r) \, dr \right| \right] \right\|_{ L^{q}( \Omega; \R ) }
\leq
 \left\| 
\smallint_0^T | \mu(X_r) |^p \, dr 
 \right\|_{ L^{\frac{q}{p}}( \Omega; \R ) }^{\frac{1}{p}} 
 < \infty
 .
\end{split} 
\end{equation} 
This together with the identity
that 
$
  \tfrac{p-1}{p} = \frac{2 \alpha}{1+2 \alpha} - \eps   
  \in \left(0, \tfrac{1}{2}\right)
$ 
proves 
\eqref{p:lemma_conv_order_1:statement1}.
%
%
%
We define $\theta := \left( 1-\tfrac{p}{q} \right) \vee 0  \in [0,1]$ and $\delta:=  \tfrac{1+ 2 \alpha}{2p} -\tfrac{1}{2}   \in (0,\infty)$. 
Lemma~\ref{p:lemma_conv_order_0} and 
$
  q \left( p - \theta \right) \left( 1 + \delta \right) \geq p
$ 
yield  
{\allowdisplaybreaks  
\begin{align}  
\label{p:lemma_conv_order_1:8} 
\nonumber
&
  \sup_{
    s, t \in [0,T]
  } 
  \left| t - s \right|^{ 
    - \frac{ p - \theta }{ p } 
  }
  \left[ \left\|
 X_t - X_s - W_t + W_s  \right\|_{ L^{q}( \Omega; \R ) }\right]
=
\sup_{s,t \in [0,T]} \left[\left| t-s \right| ^{- \frac{p-\theta}{p}
}
\left\|
 \smallint_s^t  \mu(X_r) \, dr   \right\|_{ L^{q}( \Omega; \R ) }\right]
\\ 
&
\leq
 \left\| 
\smallint_0^T | \mu(X_r) |^p \, dr 
 \right\|_{ L^{\frac{q \theta (1+\delta)}{p \delta}}( \Omega; \R ) }^{\frac{\theta}{p}}
\sup_{u \in [0,T]}
 \left\| 
\mu(X_u)
\right\|_{ L^{q(1-\theta)(1+\delta)}( \Omega; \R ) }^{1-\theta}.
\end{align} }%
Note that $\tfrac{\theta}{p} = \left( \tfrac{1}{p}- \tfrac{1}{q} \right) \vee 0$, $1 - \theta = \tfrac{p}{q} \wedge 1$,
$1+\delta =
 \tfrac{1 + 2 \alpha + p}{2p}
$,
$q(1-\theta)(1+\delta)
=   \left( \tfrac{q}{p} \wedge 1 \right)\tfrac{1 + 2 \alpha + p}{2}$.
This together with \eqref{eq:exact_sol_moments} and $\sup_{t \in [0,T]} \E\!  \left[ (X_t)^{-r} \right] < \infty $ for all $r \in \left[0,\left( \tfrac{q}{p} \wedge 1 \right)\tfrac{1 + 2 \alpha + p}{2} \right]$ implies that
\begin{equation} 
\begin{split}  \label{p:lemma_conv_order_1:12bb}
\sup_{u \in [0,T]}
 \left\| 
\mu(X_u) 
\right\|_{ L^{q(1-\theta)(1+\delta)}( \Omega; \R ) }^{1-\theta}
\leq
2(\alpha +\tilde{c})
\sup_{u \in [0,T]}
 \left\| 
\tfrac{1}{X_u} + X_u^{(c-1) \vee 1}
\right\|_{ L^{\left( \frac{q}{p} \wedge 1 \right)\frac{1 + 2 \alpha + p}{2}}( \Omega; \R ) }^{1-\theta} \in [0,\infty).
\end{split}
\end{equation} 
Furthermore, observe that  
$\tfrac{1+\delta}{\delta} =
1 + \tfrac{2p}{1 + 2 \alpha - p}
=
\tfrac{1 + 2 \alpha + p}{1 + 2 \alpha - p}
$,
$
\tfrac{q \theta (1+\delta)}{p \delta} = \left( \left(\tfrac{q}{p} -1 \right) \vee 0 \right) \tfrac{1 + 2 \alpha + p}{1 + 2 \alpha - p}$ and that  $\tfrac{p - \theta}{p} =1- \left( \left(\tfrac{1}{p} -\tfrac{1}{q}\right) \vee 0 \right) 
= \left(\tfrac{p-1}{p} +\tfrac{1}{q}\right) \wedge 1  
 =  \left( \tfrac{2 \alpha}{1+2 \alpha}  + \tfrac{1}{q} - \eps \right) \wedge 1$.
This, \eqref{p:lemma_conv_order_1:8}, \eqref{p:lemma_conv_order_1:2} and \eqref{p:lemma_conv_order_1:12bb} imply that 
\begin{equation} 
\begin{split}  \label{p:lemma_conv_order_1:12}
& \|  X - W  \|_{ 
    \mathcal{C}^{\left(\frac{2 \alpha}{1+2 \alpha}  + \frac{1}{q} - \eps\right) \wedge 1 }( [0,T], L^q( \Omega; \R) ) 
  }
  \\ 
  & 
\leq
 \left\| 
\smallint_0^T | \mu(X_r) |^p \, dr 
 \right\|_{ L^{ \left(\frac{q}{p} -1 \right)  \frac{1 + 2 \alpha + p}{1 + 2 \alpha - p}}( \Omega; \R ) }^{  \frac{1}{p}- \frac{1}{q} }
\sup_{u \in [0,T]}
 \Big\| 
\mu(X_u)
\Big\|_{ L^{\left( \frac{q}{p} \wedge 1 \right)\frac{1 + 2 \alpha + p}{2} }( \Omega; \R ) }^{ \frac{p}{q} \wedge 1 } 
\in [0,\infty).
\end{split} 
\end{equation} 
This finishes the proof of Proposition~\ref{p:lemma_conv_order_1}.
\end{proof}
%
%
%
%
%
%
%
%
%
%
%
%
%
%
%
%
%
%
%
%
%
%
%
%
%
%
%
%

%
%
%
%

The next lemma presents an estimate that allows us to 
obtain H\"older-continuity of the mapping
$
  [0,T] \ni t \mapsto \tfrac{ 1 }{ X_t } \in L^1(\Omega;\R)
$
from H\"older-continuity of the mappings
$
  [0,T] \ni t \mapsto X_t \in L^q(\Omega;\R)
$,
$ q \in (0,\infty) $.
In particular, in the setting of Section~\ref{sec:setting_numeric}, this implies H\"older-continuity
of the mapping $[0,T] \ni t \to \mu(X_t) \in L^1(\Omega;\R)$ under certain assumptions.
We also refer the reader to the literature on strong approximations
of SDEs with discontinuous coefficients where different but related difficulties
arise; see, e.g., \cite{Yan2002}, \cite{Etore2006}, \cite{ForoushTahmasebi2012} and \cite{Przybylowicz2013}.

\begin{lemma}  \label{l:lemma_conv_order}
Let 
$ 
  T \in [0,\infty)
$,
$ 
  c \in (0,\infty)
$,
$
  f \in \mathcal{L}^0( [0,\infty) ; \R )
$,
 let
$
  ( 
    \Omega, \mathcal{F}, \P
  )
$
be a probability space, 
let 
$
  X \colon [0,T] \times \Omega\to[0,\infty)
$
be a stochastic process and assume 
$
  \sup_{t \in [0,T]} \P\!\left[X_t = 0\right] = 0
$
and
$
  |
   f(x) - f(y)
  | 
  \leq c \left| x - y \right| 
  \big(
    \frac{ 1 }{ x } + 
    \frac{ 1 }{ y } + 
    \frac{ 1 }{ x y } + x^c + y^c 
  \big)
$ 
for all 
$ x, y \in (0,\infty) $.
Then it holds for all $\theta, \delta, q \in (0,\infty)$, $ \gamma \in (0, 1]$
 that
\begin{equation} 
\begin{split} 
  \label{l:lemma_conv_order:statement}
 & 
  \| f( X ) \|_{ 
    \mathcal{C}^{ \gamma \theta 
    }( [0,T], L^q( \Omega; \R) ) 
  }
\leq
  c
  \,
  7^{
    \max\left\{ 
      1 ,
      \frac{ 1 }{ q ( 1 + \delta ) } 
    \right\}  
  }
  \| X  \|_{ 
    \mathcal{C}^{ \theta  
    }( 
      [0,T], 
      L^{
        q \gamma (1 / \delta + 1 )
      }( \Omega; \R) 
    ) 
  }^{ \gamma }  
\\ & \quad
  \cdot
\sup_{ u \in [0,T] }
  \left[ 
    1 +
    \left\| 
      \tfrac{ 1 }{ X_u }
    \right\|_{ 
      L^{ q ( 1 + \gamma ) ( 1 + \delta ) 
      }( \Omega; \R ) 
    }^{ 1 + \gamma }
    + 
    \Big\| 
      X_u
    \Big\|_{ 
      L^{
        q ( 1 + \delta ) 
        \left( \max\left\{ \nicefrac{ 1 }{ \gamma } , c + 1 \right\} - \gamma \right)
      }( \Omega; \R )
    }^{
      \max\left\{ \nicefrac{ 1 }{ \gamma } , c + 1 \right\} - \gamma 
    }   
  \right] . 
\end{split} 
\end{equation} 

\end{lemma}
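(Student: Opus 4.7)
My plan is a two-step interpolation: first apply the Lipschitz-like hypothesis on $f$ to bound $|f(X_s) - f(X_t)|$ pointwise, then use H\"older's inequality in $\Omega$ to transfer a fractional power of $|X_s-X_t|$ into the $\mathcal{C}^\theta$-seminorm of $X$, paying for the remaining factors with inverse and positive moments of $X$ at single time points. Fix $s,t \in [0,T]$. From the hypothesis, together with the identity $|X_s - X_t| = |X_s - X_t|^\gamma |X_s - X_t|^{1-\gamma}$ and the subadditivity $|X_s - X_t|^{1-\gamma} \le X_s^{1-\gamma} + X_t^{1-\gamma}$ (valid since $1-\gamma \in [0,1]$ and $X_s, X_t \ge 0$), I obtain
\begin{equation*}
|f(X_s) - f(X_t)| \le c\, |X_s - X_t|^\gamma\, R(X_s, X_t),
\end{equation*}
where $R(X_s, X_t)$ is a sum of at most ten monomials $X_s^a X_t^b$ arising from the products of $\{X_s^{1-\gamma}, X_t^{1-\gamma}\}$ with the five summands of $\tfrac{1}{X_s} + \tfrac{1}{X_t} + \tfrac{1}{X_s X_t} + X_s^c + X_t^c$.

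Next I apply H\"older's inequality in $L^q(\Omega;\R)$ with the conjugate pair $q(1+1/\delta)$ and $q(1+\delta)$ (which satisfy $\tfrac{1}{q(1+1/\delta)} + \tfrac{1}{q(1+\delta)} = \tfrac{1}{q}$) to separate $|X_s - X_t|^\gamma$ from $R(X_s, X_t)$. The first factor equals $\|X_s - X_t\|_{L^{q\gamma(1+1/\delta)}(\Omega;\R)}^\gamma$, which by definition of the $\mathcal{C}^\theta$-seminorm is bounded by $|t-s|^{\gamma\theta}\, \|X\|_{\mathcal{C}^\theta([0,T], L^{q\gamma(1/\delta+1)}(\Omega;\R))}^\gamma$, exactly the $X$-regularity factor on the right-hand side of the claim. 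The remaining task is to estimate $\|R(X_s, X_t)\|_{L^{q(1+\delta)}(\Omega;\R)}$ by the supremum factor.

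For this, I treat each monomial $X_s^a X_t^b$ in $R$ separately. Pure-variable terms (with $a=0$ or $b=0$) are handled directly via $\|X_u^{\alpha}\|_{L^{q(1+\delta)}} = \|X_u\|_{L^{\alpha q(1+\delta)}}^{\alpha}$ for $\alpha > 0$ and $\|X_u^{-\beta}\|_{L^{q(1+\delta)}} = \|1/X_u\|_{L^{\beta q(1+\delta)}}^{\beta}$ for $\beta > 0$, together with the monotonicity of $L^p$-norms on a probability space. Each mixed term is decomposed by Young's inequality with the conjugate pair tuned so that the resulting pure powers exactly hit the moment orders $M := q(1+\delta)(\max\{1/\gamma, c+1\} - \gamma)$ of $X$ and $N := q(1+\gamma)(1+\delta)$ of $1/X$ featured in the statement. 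For instance, $X_s^{1-\gamma} X_t^{-1} \le \tfrac{\gamma}{1+\gamma} X_s^{1/\gamma - \gamma} + \tfrac{1}{1+\gamma} X_t^{-(1+\gamma)}$ (Young's with exponent $p=(1+\gamma)/\gamma$), $X_s^{-\gamma} X_t^{-1} \le \tfrac{\gamma}{1+\gamma} X_s^{-(1+\gamma)} + \tfrac{1}{1+\gamma} X_t^{-(1+\gamma)}$ (same $p$), and $X_s^{1-\gamma} X_t^c \le \tfrac{1-\gamma}{c+1-\gamma} X_s^{c+1-\gamma} + \tfrac{c}{c+1-\gamma} X_t^{c+1-\gamma}$ (Young's with $p=(c+1-\gamma)/(1-\gamma)$); the remaining mixed monomials are treated analogously. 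The elementary bound $x^\alpha \le 1 + x^{\alpha_{\max}}$ (for $x \ge 0$ and $0 \le \alpha \le \alpha_{\max}$), together with the obvious inequality $1/\gamma - \gamma \le \max\{1/\gamma, c+1\}-\gamma$, then dominates every resulting pure term by $\sup_{u \in [0,T]} [1 + \|X_u\|_{L^M}^{\max\{1/\gamma, c+1\}-\gamma} + \|1/X_u\|_{L^N}^{1+\gamma}]$.

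Summing the finitely many contributions via the (quasi-)triangle inequality for the $L^{q(1+\delta)}$-norm -- which costs a factor $K^{1/(q(1+\delta))}$ when $q(1+\delta) < 1$ but only $K$ when $q(1+\delta) \ge 1$, for a sum of $K$ nonnegative terms -- yields the constant $7^{\max\{1, 1/(q(1+\delta))\}}$ after appropriate grouping of the symmetric pairs produced by the expansion. The main technical obstacle is the bookkeeping in the third step: for each mixed monomial one must choose the Young-exponent so that the two resulting pure powers land precisely on the orders $M$ and $N$ appearing in the statement, and the emergence of $\max\{1/\gamma, c+1\}$ reflects the tension between the exponent $1/\gamma$ forced by the mixed term $X_s^{1-\gamma} X_t^{-1}$ and the exponent $c+1$ forced by the pure positive-power terms such as $X_s^{c+1-\gamma}$.
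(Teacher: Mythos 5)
Your argument is correct and takes essentially the same route as the paper. The only cosmetic difference is your choice of subadditivity: you bound $|X_s-X_t|^{1-\gamma}\leq X_s^{1-\gamma}+X_t^{1-\gamma}$ directly and expand into ten monomials, whereas the paper bounds $|X_s-X_t|^{1-\gamma}\leq (X_s+X_t)^{1-\gamma}$ and then performs the algebraic splitting in inequality~\eqref{l:lemma_conv_order:1} before invoking Young's inequality; if you carry out the cascade of Young's inequalities you describe (with the exponent pairs you name), you recover precisely the paper's seven-term bound $\tfrac{2}{x^{1+\gamma}}+\tfrac{2}{y^{1+\gamma}}+\tfrac{\gamma}{1+\gamma}x^{1/\gamma-\gamma}+\tfrac{\gamma}{1+\gamma}y^{1/\gamma-\gamma}+\tfrac{2}{1+\gamma}+2x^{c+1-\gamma}+2y^{c+1-\gamma}$, and the remaining steps (H\"older in $\Omega$ with exponents $q(1+\nicefrac{1}{\delta})$ and $q(1+\delta)$, the quasi-triangle inequality giving the factor $7^{\max\{1,1/(q(1+\delta))\}}$, and the dominance $x^{\alpha}\leq 1+x^{\alpha_{\max}}$) are identical.
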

\begin{proof} [Proof of Lemma~\ref{l:lemma_conv_order}]
First of all, 
observe that 
Young's inequality implies 
that for all 
$
  x, y \in (0,\infty)
$, 
$
  \gamma \in (0, 1]
$ 
it holds that
 {\allowdisplaybreaks 
\begin{align}   
\label{l:lemma_conv_order:1} 
\nonumber
 &
  \left( x + y \right)^{ (1 - \gamma) } 
  \left( 
    \tfrac{ 1 }{ x } + 
    \tfrac{ 1 }{ y } + 
    \tfrac{ 1 }{ x y }
  \right)
=
 \tfrac{x+y}{x(x+y)^{\gamma}} + \tfrac{x+y}{y(x+y)^{\gamma}} + \tfrac{x+y}{xy(x+y)^{\gamma}} 
=
\tfrac{2 + \frac{x}{y} + \frac{y}{x}}{(x+y)^{\gamma}} 
+
\tfrac{ \frac{1}{y} + \frac{1}{x}}{(x+y)^{\gamma}} 
\\ \nonumber
&
\leq
\tfrac{1}{x^{\gamma}} + \tfrac{1}{y^{\gamma}} 
+
\tfrac{x^{1-\gamma}}{y} + \tfrac{y^{1-\gamma}}{x}
+
\tfrac{1}{y^{1+\gamma}} + \tfrac{1}{x^{1+\gamma}}
\\ \nonumber
&
\leq
\tfrac{\gamma}{1+\gamma} \tfrac{1}{x^{1+\gamma}} + \tfrac{1}{1+\gamma}
+
\tfrac{\gamma}{1+\gamma} \tfrac{1}{y^{1+\gamma}} + \tfrac{1}{1+\gamma}
+
\tfrac{1}{1+\gamma} \tfrac{1}{y^{1+\gamma}} + \tfrac{\gamma}{1+\gamma} x^{\frac{1}{\gamma} -\gamma}
+
\tfrac{1}{1+\gamma} \tfrac{1}{x^{1+\gamma}} + \tfrac{\gamma}{1+\gamma} y^{\frac{1}{\gamma} -\gamma}
+
 \tfrac{1}{y^{1+\gamma}} +  \tfrac{1}{x^{1+\gamma}} 
\\
&
=
\tfrac{2}{x^{1+\gamma}} + \tfrac{2}{y^{1+\gamma}} +   \tfrac{\gamma}{1+ \gamma} x^{\frac{1}{\gamma} -\gamma} +   \tfrac{\gamma}{1+ \gamma}  y^{\frac{1}{\gamma} -\gamma}  + \tfrac{2}{1+\gamma}.
\end{align}}%
%
%
Next note that the
assumption that
$ 
  | f(x) - f(y) | \leq 
  c \left| x - y \right|
  \big(
    \frac{ 1 }{ x } + 
    \frac{ 1 }{ y } + 
    \frac{ 1 }{ x y } + x^c + y^c 
  \big)
$ 
for all 
$ x, y \in (0,\infty) $, 
the assumption 
$
  \sup_{t \in [0,T]} \P\!\left[X_t = 0\right] = 0
$, 
H\"older's inequality
and 
\eqref{l:lemma_conv_order:1} show 
that 
for all 
$ t, s \in [0,T] $, 
$ \delta, q \in (0,\infty) $, 
$ \gamma \in (0, 1] $ 
it holds that
{\allowdisplaybreaks 
\begin{align}  \nonumber
\label{l:lemma_conv_order:2} 
 & 
  \left\|  
    f(X_t) - f(X_s) 
  \right\|_{ L^q( \Omega; \R ) } 
\\   \nonumber & \leq 
  c \left\| |X_t - X_s| 
\left(\tfrac{1}{X_t} + \tfrac{1}{X_s}+ \tfrac{1}{X_t X_s} +(X_t)^{c} + (X_s)^{c} \right) \1_{\{X_t, X_s \in (0,\infty) \} }
 \right\|_{ L^q( \Omega; \R )} 
\\  \nonumber &= 
  c 
  \left\| 
    \left| X_t - X_s \right|^{ \gamma } 
    \left| X_t - X_s \right|^{ (1 - \gamma) } 
    \left(
      \tfrac{ 1 }{ X_t } + 
      \tfrac{ 1 }{ X_s } + 
      \tfrac{ 1 }{ X_t X_s } +
      ( X_t )^c + 
      ( X_s )^c 
    \right) 
    \1_{ 
      \{ X_t, X_s \in (0,\infty) \} 
    }
 \right\|_{ L^q( \Omega; \R ) }  
\\ & \leq 
  c
  \left\| 
    \left| X_t - X_s \right|^{ \gamma }  
  \right\|_{ 
    L^{ q \left( 1 + \frac{1}{ \delta} \right) }( \Omega; \R )
  }
\\ &   \nonumber
\cdot
 \left\|  \left| X_t - X_s \right|^{1-\gamma} \hspace{-0.05cm}
\left(\tfrac{1}{X_t} + \tfrac{1}{X_s}+ \tfrac{1}{X_t X_s} +
      ( X_t )^c + 
      ( X_s )^c 
  \right) \hspace{-0.1cm}
\1_{\{X_t, X_s \in (0,\infty) \} } 
 \right\|_{ L^{q(1+\delta)}( \Omega; \R )}
%
%
\\  \nonumber & \leq 
 c
 \left\| 
   X_t - X_s 
 \right\|_{  
   L^{ q \gamma \left( 1 + \frac{ 1 }{ \delta } \right)
   }( \Omega; \R )
 }^{ \gamma }
\\ & \nonumber
 \cdot 
\left\| 
  \tfrac{2}{(X_t)^{1+\gamma}} + \tfrac{2}{(X_s)^{1+\gamma}} +  \tfrac{\gamma}{1+\gamma}   (X_t)^{\frac{1}{\gamma} -\gamma} + \tfrac{\gamma}{1+\gamma} (X_s)^{\frac{1}{\gamma} -\gamma}  + \tfrac{2}{1+\gamma} 
  + 2 \left( X_t \right)^{ c + 1 - \gamma }
  + 2 \left( X_s \right)^{ c + 1 - \gamma }
 \right\|_{ L^{ q ( 1 + \delta ) }( \Omega; \R ) }
 .
\end{align}}
Jensen's inequality
hence shows that
for all 
$ t, s \in [0,T] $, 
$ \delta, q \in (0,\infty) $, 
$ \gamma \in (0, 1] $ 
it holds that
{\allowdisplaybreaks 
\begin{align}
 &  \nonumber
  \left\|  
    f(X_t) - f(X_s) 
  \right\|_{ L^q( \Omega; \R ) } 
\\ & \leq 
  c
  \,
  7^{ 
    \left[ 
      \frac{ 1 }{ q ( 1 + \delta ) } - 1 
    \right]^+
  }
  \left\| X_t - X_s \right\|_{ 
    L^{ 
      q \gamma \left( 1 + \frac{ 1 }{ \delta } \right)
    }
  }^{ \gamma }
\\  \nonumber &  
  \cdot
\sup_{u \in [0,T]}
\left[
  \tfrac{ 2 }{ 1 + \gamma }
  +
  4 
  \left\| 
    \tfrac{ 1 }{ X_u }
  \right\|_{ 
    L^{ q ( 1 + \gamma ) ( 1 + \delta ) 
    }( \Omega; \R )
  }^{ 1 + \gamma }
  +
  \tfrac{ 2 \gamma }{ 1 + \gamma }
  \left\|
    ( X_u )^{ ( \nicefrac{ 1 }{ \gamma } - \gamma ) }
  \right\|_{ 
    L^{ q ( 1 + \delta ) }( \Omega; \R )
  }
 + 4
 \left\|  
   X_u 
 \right\|_{ L^{q(1+\delta)(c+1-\gamma)}( \Omega; \R )}^{ c + 1 - \gamma }
\right] 
  .
\end{align}}%
%
%
%
%
%
%
This implies that for all $\theta, \delta, q \in (0,\infty)$, $\gamma \in (0, 1]$
 it holds that
 {\allowdisplaybreaks 
\begin{equation} 
\begin{split} 
\label{l:lemma_conv_order:5}  
&\sup_{s,t \in [0,T]} \left[ 
\left| t-s \right|^{- \gamma \theta 
}
\left\|  f(X_t) - f(X_s) \right\|_{ L^{q}( \Omega; \R ) }
\right] 
\\ & \leq 
  c
  \cdot
 7^{ 
   \left[
     \frac{ 1 }{ q ( 1 + \delta ) } 
     - 1 
   \right]^+ 
 }
 \cdot
  \sup_{ s, t \in [0,T]
  } 
\Big[ 
  \left| t-s \right| ^{
    - \theta  
  }
  \left\| X_t - X_s  \right\|_{ 
    L^{ q \gamma \left( 1 + \frac{ 1 }{ \delta } \right) }( \Omega; \R )
  }
\Big]^{ \gamma }
\\
& 
 \cdot
\sup_{u \in [0,T]}
\left[  
  \tfrac{ 2 }{ 1 + \gamma }
  +
  4
  \left\| 
    \tfrac{ 1 }{ X_u }
  \right\|_{ L^{q(1+\gamma)(1+\delta)}( \Omega; \R )}^{1+\gamma}
  +
  \left[ 
    \tfrac{ 2 \gamma }{ 1 + \gamma }
    +
    4
  \right]
  \max\!\left\{
    1,
    \big\| 
      X_u
    \big\|_{ 
      L^{ 
        q ( 1 + \delta ) \left( \left(\frac{1}{\gamma} \vee (c+1)\right) - \gamma \right)
      }( \Omega; \R )
    }^{
      \left( \frac{ 1 }{ \gamma } \vee ( c + 1 ) \right) - \gamma
    }  
  \right\}
\right]
  .
\end{split} 
\end{equation} }%
This finishes the proof of Lemma~\ref{l:lemma_conv_order}.
\end{proof}

\subsection{Temporal H\"older regularity based on Cox-Ingersoll-Ross type processes}

\begin{lemma}
\label{l:reg_CIR}
Let $T, x \in [0,\infty)$, $( c_i )_{ i \in \{ 1, 2, \dots, 7 \} } \subseteq [0,\infty)$, $\mu \in \mathcal{L}^0([0,\infty); \R)$, $\sigma \in  \mathcal{L}^0([0,\infty); [0,\infty))$
satisfy 
$\mu(z) \leq c_1+ c_2 z$ and 
$|\sigma(z)|^2 \leq 2 (c_3 z + c_4 z^2)$ for all $z \in [0,\infty)$, 
let
$
  ( 
    \Omega, \mathcal{F}, \P,
       ( \mathcal{F}_t )_{ t \in [0,T] } 
  )
$
be a stochastic basis,
let
$
  W \colon [0,T] \times \Omega \to \R
$
be a standard $ ( \mathcal{F}_t )_{ t \in [0,T] } $-Brownian motion,
let
  $X \colon [0,T]\times \Omega\to [0,\infty)$
  be an adapted stochastic process with continuous sample paths satisfying 
  $\int_0^t |\mu(X_s)| \,ds < \infty$ $\P$-a.s.\
  and
  \begin{equation}
    X_t=x+\int_0^t \mu(X_s)\,ds+\int_0^t\sigma(X_s)\,dW_s
  \end{equation}
  $\P$-a.s.\
  for all $t\in [0,T]$.
  Then it holds for all $t\in [0,T]$, $p \in (0,\infty]$ that
\begin{equation}
\begin{split} 
 \label{l:reg_CIR:stateNEW} 
 \big\| X_t \big\|_{L^p( \Omega; \R )} 
\leq e^{ t(c_2 + c_4(p-1)^+ )}    \Big[x+ t \big(c_1+c_3 (p-1)^+  \big)  \Big].
\end{split} 
\end{equation}  
If, in addition to the above assumptions,  $|\mu(z)| \leq c_5+ c_6 z^{c_7}$  for all $z \in [0,\infty)$, then  it holds  for all $t\in [0,T]$, $p \in [1,\infty]$, $q \in [p\vee 2,\infty]$ that 
\begin{equation} 
\begin{split} 
&\left\|X_t - x   \right\|_{ 
    L^p( \Omega; \R ) }
%
%
%
\leq
c_5 t +  e^{(c_7 \vee 1) t(c_2 + c_4(q(c_7 \vee 1)-1) )}  
\Bigg(
t
c_6 
  \Big[x+ t \big(c_1+c_3 (pc_7-1)^+  \big)  \Big]^{c_7}  
\\ &  \label{l:reg_CIR:state3} 
+
\sqrt{q(q-1)}
\sqrt{t
c_3 
x
+
\tfrac{1}{2}t^2 c_3
  (c_1+c_3 (\tfrac{q}{2}-1)) 
+
 t c_4
   \Big[x+ t \big(c_1+c_3 (q-1)   \big)  \Big]^2
 }
\Bigg).
\end{split} 
\end{equation}
\end{lemma}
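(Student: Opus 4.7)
The plan is to establish \eqref{l:reg_CIR:stateNEW} and \eqref{l:reg_CIR:state3} separately. For \eqref{l:reg_CIR:stateNEW}, I would first dispose of $p=1$ by taking expectations in the SDE, using $\mu(z)\le c_1+c_2 z$, and applying Gronwall's inequality to $t\mapsto\E[X_t]$ to obtain $\E[X_t]\le e^{c_2 t}(x+c_1 t)$. Jensen's inequality then extends the bound to $p\in(0,1)$, since $(p-1)^+=0$ makes the claimed right-hand side coincide with the $p=1$ bound. The delicate case is $p\in[1,\infty)$: because $X$ may attain zero, It\^o's formula cannot be applied directly to $x\mapsto x^p$ when $p<2$. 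Instead, for $\eps\in(0,\infty)$ I would apply It\^o to $x\mapsto(\eps+x)^p\in C^2([0,\infty),\R)$, use the pointwise bounds
\begin{equation*}
  (\eps+X_s)^{p-1}\mu(X_s)\le c_1(\eps+X_s)^{p-1}+c_2(\eps+X_s)^p,\qquad (\eps+X_s)^{p-2}\sigma(X_s)^2\le 2c_3(\eps+X_s)^{p-1}+2c_4(\eps+X_s)^p,
\end{equation*}
localize the stochastic integral so that its expectation vanishes, apply Jensen $\E[(\eps+X_s)^{p-1}]\le\|\eps+X_s\|_{L^p}^{p-1}$, and obtain the linear ODE inequality $h_\eps'(t)\le(c_1+(p-1)c_3)+(c_2+(p-1)c_4)\,h_\eps(t)$ for $h_\eps(t):=\|\eps+X_t\|_{L^p(\Omega;\R)}$. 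Gronwall gives the explicit bound, and $\eps\downarrow 0$ via monotone convergence yields \eqref{l:reg_CIR:stateNEW}. The case $p=\infty$ is either trivial (the right-hand side is $\infty$) or reduces to a deterministic Gronwall estimate when $\sigma\equiv 0$ (equivalently $c_3=c_4=0$).

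For \eqref{l:reg_CIR:state3}, I would split $X_t-x=\int_0^t\mu(X_s)\,ds+\int_0^t\sigma(X_s)\,dW_s$ and apply Minkowski's inequality to separate the drift and diffusion contributions. For the drift, $\|\int_0^t\mu(X_s)\,ds\|_{L^p}\le c_5 t+c_6\int_0^t\|X_s\|_{L^{pc_7}}^{c_7}\,ds$, and \eqref{l:reg_CIR:stateNEW} applied with exponent $pc_7$ produces the term $tc_6[x+t(c_1+c_3(pc_7-1)^+)]^{c_7}$ up to an exponential prefactor. For the stochastic integral, I would use the embedding $\|\cdot\|_{L^p}\le\|\cdot\|_{L^q}$ (valid since $q\ge p$) together with a Burkholder-Davis-Gundy-type inequality, obtained by applying It\^o's formula to $y\mapsto|y|^q$ followed by H\"older, yielding a bound of $\sqrt{q(q-1)}$ times a suitable root of an $L^{q/2}$-norm of $\int_0^t\sigma(X_s)^2\,ds$. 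The growth bound $|\sigma(z)|^2\le 2(c_3 z+c_4 z^2)$ reduces this, via $\|\sigma(X_s)\|_{L^q}^2\le 2(c_3\|X_s\|_{L^{q/2}}+c_4\|X_s\|_{L^q}^2)$, to quantities controlled by \eqref{l:reg_CIR:stateNEW} at exponents $q/2$ and $q$, yielding exactly the three summands under the square root in \eqref{l:reg_CIR:state3}. The exponential prefactors from all three invocations of \eqref{l:reg_CIR:stateNEW} (at exponents $pc_7$, $q/2$, and $q$) are finally consolidated into the single factor $e^{(c_7\vee 1)t(c_2+c_4(q(c_7\vee 1)-1))}$ via a short case split on whether $c_7\le 1$ or $c_7>1$, using $p\le q$ and $c_3,c_4\ge 0$.

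The main obstacle is the regularization step for \eqref{l:reg_CIR:stateNEW}: because CIR-type processes with an accessible boundary genuinely reach zero (which is precisely the regime of interest in the rest of the paper), a direct application of It\^o to $X_t^p$ with $p\in[1,2)$ is illegal, and one must work on $(\eps+X_t)^p$, localize the stochastic integral to ensure it has zero expectation, and then justify passing to the limit $\eps\downarrow 0$ by monotone convergence. Beyond that, the argument is essentially bookkeeping --- tracking how the exponential prefactors and the BDG constant combine in \eqref{l:reg_CIR:state3} and carefully consolidating them into the advertised form.
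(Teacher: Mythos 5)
Your argument is correct and reaches the stated bounds, but your route to \eqref{l:reg_CIR:stateNEW} is genuinely different from the paper's. You resolve the $\E[(\eps+X_s)^{p-1}]$-term arising from It\^o's formula by Jensen within the fixed exponent $p$, namely $\E[(\eps+X_s)^{p-1}]\le(\E[(\eps+X_s)^p])^{(p-1)/p}$, close a Gronwall loop for $h_\eps(t)=\|\eps+X_t\|_{L^p(\Omega;\R)}$, and then use $\smallint_0^t e^{a(t-s)}\,ds\le t\,e^{at}$ to recover the advertised polynomial factor. The paper instead proves the auxiliary estimate $\|X_t+\eps\|_{L^p}\le e^{t(c_2+c_4(p-1)^+)}[(x+\eps)+t(c_1+c_3(p-1)^+)]$ by induction on $k\in\N_0$ over the ranges $p\in(k,k+1]$ (base case $p\in(0,1]$ by Jensen from the $\E[X_t+\eps]$-bound), inserting the case-$(p-1)$ bound for $\E[(X_s+\eps)^{p-1}]$ into the integrated It\^o inequality and computing the resulting integral exactly to the closed form $[(x+\eps)+t(c_1+c_3(p-1))]^p$, so that no Gronwall step and no final elementary estimate are needed. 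Your Gronwall route is more self-contained, avoiding the cross-exponent induction entirely, while the paper's induction lands on the polynomial form in a single integration; both rely on the same $\eps$-regularization, localization of the stochastic integral, and letting $\eps\searrow 0$, and both handle $p\in(0,1)$ and $p=\infty$ compatibly (the paper passes to $p=\infty$ simply by letting $p\to\infty$, which is marginally cleaner than your case split on $c_3=c_4=0$, but both work). Your proof of \eqref{l:reg_CIR:state3} --- triangle inequality, embedding $\|\cdot\|_{L^p}\le\|\cdot\|_{L^q}$ on the stochastic integral, the Da Prato--Zabczyk BDG constant $\sqrt{q(q-1)/2}$, three applications of \eqref{l:reg_CIR:stateNEW} at exponents $pc_7$, $q/2$ and $q$, and consolidation of the exponential prefactors using $p\le q$ --- follows the paper's argument essentially step by step.
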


\begin{proof} [Proof of Lemma~\ref{l:reg_CIR}]

First of all, we
show that for all 
$ \eps \in (0,1) $, 
$ k \in \N_0 $,
$ p \in ( k, k + 1 ] $,
$ t \in [0,T] $
it holds that
\begin{equation} 
\begin{split} 
 \label{l:reg_CIR:state} 
 \big\| X_t + \eps \big\|_{L^p( \Omega; \R )} 
\leq e^{ t(c_2 + c_4(p-1)^+ )}    \Big[(x+ \eps)+ t \big(c_1+c_3 (p-1)^+ \big)  \Big]
\end{split} 
\end{equation}
 by induction on $k \in \N_0$.
For this observe that It\^{o}'s lemma implies that for all $\eps \in (0,1)$, $t\in [0,T]$ it holds that
\begin{equation} 
\begin{split} 
 \label{l:reg_CIR:2} 
 (X_t+ \eps) e^{-c_2 t} &= (x+ \eps) + \smallint_0^t (-c_2) (X_s+ \eps) e^{-c_2s} + \mu(X_s) e^{-c_2 s} \, ds
+ \smallint_0^t \sigma(X_s) e^{-c_2 s} \, dW_s
\\ &
\leq (x+ \eps) + \smallint_0^t (-c_2) (X_s+ \eps) e^{-c_2s} + (c_1 + c_2 X_s) e^{-c_2 s} \, ds
+ \smallint_0^t \sigma(X_s) e^{-c_2 s} \, dW_s
\\ &
\leq (x+ \eps) + \smallint_0^t  c_1 e^{-c_2s} \, ds
+ \smallint_0^t \sigma(X_s) e^{-c_2 s} \, dW_s
\\ &
\leq (x+ \eps) +   c_1 t 
+ \smallint_0^t \sigma(X_s) e^{-c_2 s} \, dW_s.
\end{split} 
\end{equation}  
This 
implies that
 for all $\eps \in (0,1)$, $t\in [0,T]$ it holds that
\begin{equation} 
\begin{split} 
 \label{l:reg_CIR:3} 
 \E\big[ X_t+ \eps \big] e^{-c_2 t} 
\leq (x+ \eps) + c_1 t.
\end{split} 
\end{equation}  
Note that inequality \eqref{l:reg_CIR:state} in the case 
$ p \in (0,1] $ 
follows immediately from \eqref{l:reg_CIR:3}. 
This proves the base case $k=0$ of \eqref{l:reg_CIR:state}. 
For the induction step we assume that \eqref{l:reg_CIR:state} holds for some $k \in \N_0$. Then note with It\^{o}'s lemma that for all $\eps \in (0,1)$, $t\in [0,T]$, $p \in (k+1, k+2]$ it holds that
\begin{align}
 \label{l:reg_CIR:4}  \nonumber
  &(X_t + \eps)^p e^{-pt(c_2 + c_4 (p-1))}
\\ & \nonumber
= (x + \eps)^p + \smallint_0^t -p(c_2 + c_4 (p-1)) (X_s + \eps)^p e^{-ps(c_2 + c_4 (p-1))} + p \mu(X_s) (X_s + \eps)^{p-1} e^{-ps(c_2 + c_4 (p-1))} 
\\ \nonumber & \quad + \tfrac{p(p-1)}{2} |\sigma(X_s)|^2 (X_s + \eps)^{p-2} e^{-ps(c_2 + c_4 (p-1))}
 \, ds
+ p \smallint_0^t \sigma(X_s) (X_s + \eps)^{p-1} e^{-ps(c_2 + c_4 (p-1))} \, dW_s
\\ &
\leq
 (x + \eps)^p + \smallint_0^t e^{-ps(c_2 + c_4 (p-1))} \Big[ -p(c_2 + c_4 (p-1))  (X_s + \eps)^p 
 + p (X_s + \eps)^{p-1} (c_1+ c_2 X_s) 
\\ \nonumber& \quad +p(p-1) \big (c_3 X_s + c_4 (X_s)^2 \big)  (X_s + \eps)^{p-2} \Big]
 \, ds
+ p \smallint_0^t \sigma(X_s) (X_s + \eps)^{p-1} e^{-ps(c_2 + c_4 (p-1))} \, dW_s
\\ & \nonumber
\leq
 (x + \eps)^p + \smallint_0^t e^{-ps(c_2 + c_4 (p-1))}  p \big[ c_1  + c_3 (p-1) \big] (X_s + \eps)^{p-1} 
 \, ds
 \\ \nonumber & \quad 
+ p \smallint_0^t \sigma(X_s) (X_s + \eps)^{p-1} e^{-ps(c_2 + c_4 (p-1))} \, dW_s.
\end{align} 
This
implies that
 for all $\eps \in (0,1)$, $t\in [0,T]$, $p \in (k+1, k+2]$ it holds that
  \begin{align}
 \label{l:reg_CIR:5}  \nonumber
 &\E\big[ (X_t + \eps)^p \big] e^{-pt(c_2 + c_4 (p-1))}
\leq (x + \eps)^p + \smallint_0^t  e^{-ps(c_2 + c_4 (p-1))}  p \big[ c_1  + c_3 (p-1) \big]  \E\big[ (X_s + \eps)^{p-1} \big]
 \, ds
 \\ \nonumber
&
\leq (x + \eps)^p 
+ \smallint_0^t e^{-ps(c_2 + c_4 (p-1))}  p \big[ c_1  + c_3 (p-1) \big] 
 e^{(p-1) s(c_2 + c_4 (p-2)^+)} 
 \\
 &
 \quad
 \cdot
 \Big[(x + \eps)+ s \big(c_1+c_3 (p-2)^+)  \big)  \Big]^{p-1} 
 \, ds
\\ \nonumber
&
\leq (x + \eps)^p + p \big[ c_1  + c_3 (p-1) \big]   \smallint_0^t  
  \Big[(x + \eps)+ s \big(c_1+c_3 (p-1)  \big)  \Big]^{p-1}   \, ds
\\ \nonumber
&
= (x + \eps)^p +  
 \Big[  
\Big((x + \eps)+t \big(c_1+c_3 (p-1)  \big)  \Big)^{p} - (x + \eps)^p \Big]
=
\Big((x + \eps)+t \big(c_1+c_3 (p-1)  \big)  \Big)^{p}.
  \end{align} 
Induction hence shows that inequality \eqref{l:reg_CIR:state} holds for all $k \in \N_0$. Taking the infinum over all $\eps \in (0,1)$ on the right-hand side of inequality \eqref{l:reg_CIR:state} shows \eqref{l:reg_CIR:stateNEW}  in the case $p \in (0,\infty)$. The case $p=\infty$ follows from letting $(0,\infty) \ni p \to \infty$.
Now we assume in addition that for all $z \in [0,\infty)$ it holds that $|\mu(z)| \leq c_5+c_6 z^{c_7}$. 
Note that the Burkholder-Davis-Gundy-type inequality in Lemma 7.2 in Da Prato \& Zabczyk~\cite{dz92},
Jensen's inequality and \eqref{l:reg_CIR:stateNEW}  ensure that for all $t\in [0,T]$, $p \in [1,\infty)$, $q \in [p\vee 2,\infty)$ it holds that
  \begin{equation} 
\begin{split} 
 \label{l:reg_CIR:6} 
&\left\|X_t - x   \right\|_{ 
    L^p( \Omega; \R ) }
\leq
\left\|
\smallint_0^t |\mu(X_s)| \, ds 
 \right\|_{ 
    L^p( \Omega; \R ) }
+
\left\|
\smallint_0^t \sigma(X_s) dW_s
 \right\|_{ 
    L^p( \Omega; \R ) }
\\ &
\leq
\left\|
\smallint_0^t |\mu(X_s)| \, ds 
 \right\|_{ 
    L^p( \Omega; \R ) }
+
\left\|
\smallint_0^t \sigma(X_s) dW_s
 \right\|_{ 
    L^q( \Omega; \R ) }
\\ &
\leq
\smallint_0^tc_5  +  c_6 \left\|(X_s)^{c_7} \right\|_{ 
    L^p( \Omega; \R ) }  ds 
+
\sqrt{\tfrac{q(q-1)}{2}}
\sqrt{
\smallint_0^t \left\||\sigma(X_s)|^2  \right\|_{ 
    L^{\frac{q}{2}}( \Omega; \R ) }  ds }
\\ &
\leq
\smallint_0^t c_5  + c_6\left\|(X_s)^{c_7} \right\|_{ 
    L^p( \Omega; \R ) }  ds 
+
\sqrt{\tfrac{q(q-1)}{2}}
\sqrt{2
\smallint_0^t c_3 \left\|X_s \right\|_{ 
    L^{\frac{q}{2}}( \Omega; \R ) } 
+
c_4\left\| X_s \right\|_{ 
    L^{q}( \Omega; \R ) }^2
 ds }
\\ &
\leq
c_5 t + c_6 
\smallint_0^t 
 e^{c_7 s(c_2 + c_4   (pc_7-1)^+  )}    \Big[x+ s \big(c_1+c_3 (pc_7-1)^+  \big)  \Big]^{c_7}  ds 
+
\sqrt{q(q-1)}
\\ & \ \cdot
\sqrt{
\smallint_0^t 
c_3
e^{s(c_2 + c_4(q-1))}    \Big[x+ s \big(c_1+c_3 (\tfrac{q}{2}-1)   \big)  \Big]
+
c_4
e^{2 s (c_2 + c_4 (q-1) )}    \Big[x+ s \big(c_1+c_3 (q-1)   \big)  \Big]^2
 ds }.
\end{split} 
\end{equation} 
This shows  that for all $t\in [0,T]$, $p \in [1,\infty]$, $q \in [p\vee 2,\infty)$ it holds that
\begin{equation} 
\begin{split}  \label{l:reg_CIR:6b} 
&\left\|X_t - x   \right\|_{ 
    L^p( \Omega; \R ) }
\leq
c_5 t + c_6 
\smallint_0^t 
 e^{c_7 s(c_2 + c_4   (pc_7-1)^+  )}    \Big[x+ s \big(c_1+c_3 (pc_7-1)^+  \big)  \Big]^{c_7} \, ds 
+
\sqrt{q(q-1)}
\\ &   \ \cdot
\sqrt{
\smallint_0^t 
c_3
e^{s(c_2 + c_4(q-1))}    \Big[x+ s \big(c_1+c_3 (\tfrac{q}{2}-1)   \big)  \Big]
+
c_4
e^{2 s (c_2 + c_4 (q-1) )}    \Big[x+ s \big(c_1+c_3 (q-1)   \big)  \Big]^2
\, ds }.
\end{split} 
\end{equation}
Finally,  \eqref{l:reg_CIR:6b}   implies that for all $t\in [0,T]$, $p \in [1,\infty]$, $q \in [p\vee 2,\infty]$ it holds that
  \begin{equation} 
\begin{split} 
 \label{l:reg_CIR:7} 
&\left\|X_t - x   \right\|_{ 
    L^p( \Omega; \R ) }
\leq
c_5 t +  e^{(c_7 \vee 1) t(c_2 + c_4(q(c_7 \vee 1)-1) )}  
\Bigg(
c_6 
\smallint_0^t 
  \Big[x+ s \big(c_1+c_3 (pc_7-1)^+  \big)  \Big]^{c_7}  ds 
\\ & \quad 
+
\sqrt{q(q-1)}
\sqrt{
\smallint_0^t 
c_3
\Big[x+ s \big(c_1+c_3 (\tfrac{q}{2}-1)   \big)  \Big]
+
c_4
   \Big[x+ s \big(c_1+c_3 (q-1)   \big)  \Big]^2
 ds }
\Bigg)
\\ &
\leq
c_5 t +  e^{(c_7 \vee 1) t(c_2 + c_4(q(c_7 \vee 1)-1) )}  
\Bigg(
t
c_6 
  \Big[x+ t \big(c_1+c_3 (pc_7-1)^+  \big)  \Big]^{c_7}  
\\ & \quad 
+
\sqrt{q(q-1)}
\sqrt{t
c_3 
x
+
\tfrac{1}{2}t^2 c_3
  (c_1+c_3 (\tfrac{q}{2}-1)) 
+
 t c_4
   \Big[x+ t \big(c_1+c_3 (q-1)   \big)  \Big]^2
 }
\Bigg).
\end{split} 
\end{equation}  
This 
finishes the proof of Lemma~\ref{l:reg_CIR}.
\end{proof}

\begin{lemma}
\label{l:cir_min}
Let $T \in [0,\infty)$,  $( c_i )_{ i \in \{ 1, 2, 3 \} } \subseteq [0,\infty)$, $ \sigma \in \mathcal{L}^0([0,\infty); [0,\infty))$
satisfy 
$\sigma(z) \geq \tfrac{c_1 \sqrt{z} } {1+c_3 z^{c_2}}$ for all $z \in [0,\infty)$, 
let
$
  ( 
    \Omega, \mathcal{F}, \P
  )
$
be a probability space and 
let
  $X, Y \colon  \Omega\to [0,\infty)$
  be random variables.
  Then it holds for all  $p \in (0,\infty]$, $r \in [0,\infty]$  that
\begin{align}
 \label{l:cir_min:state} 
 &  \left\| \int_Y^X \tfrac{1}{\sigma(z)} \, dz \right\|_{L^p( \Omega; \R )} 
 \\ & \nonumber
\leq
\tfrac{2}{c_1}  \left\| 1+ c_3 X^{c_2} + c_3 Y^{c_2}  \right\|_{L^{p(1+\nicefrac{1}{r})}( \Omega; \R )}
\min\!\left\{
\left\|  | Y-X |^{\nicefrac{1}{2}}  \right\|_{L^{p(1+r)} ( \Omega; \R )},
\left\|  \tfrac{ \left| Y-X \right| }{ \sqrt{Y} + \sqrt{X}}\right\|_{L^{p(1+r)}( \Omega; \R )} 
\right\}.
\end{align}
\end{lemma}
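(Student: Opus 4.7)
The plan is to reduce the estimate to a pointwise inequality involving only $X$, $Y$, and $\sqrt{X}\pm\sqrt{Y}$, and then apply H\"older's inequality.

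First I would use the hypothesis $\sigma(z)\geq \tfrac{c_1\sqrt{z}}{1+c_3 z^{c_2}}$ to obtain, for every $z\in(0,\infty)$, the pointwise bound $\tfrac{1}{\sigma(z)}\leq \tfrac{1+c_3 z^{c_2}}{c_1\sqrt{z}}$. Since for each $z$ lying between $X(\omega)$ and $Y(\omega)$ one has $z^{c_2}\leq X(\omega)^{c_2}+Y(\omega)^{c_2}$, the factor $1+c_3 z^{c_2}$ can be pulled out of the integral, giving
\begin{equation*}
  \left|\int_Y^X \tfrac{1}{\sigma(z)}\,dz\right|
  \leq
  \tfrac{1+c_3 X^{c_2}+c_3 Y^{c_2}}{c_1}\,
  \left|\int_Y^X \tfrac{1}{\sqrt{z}}\,dz\right|
  .
\end{equation*}
Note that this argument is valid whether $X\geq Y$ or $Y\geq X$, since reversing the limits only changes the overall sign inside the absolute value.

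Next I would evaluate the elementary integral $\int_Y^X z^{-1/2}\,dz=2(\sqrt{X}-\sqrt{Y})$, and then use the two standard identities/inequalities
\begin{equation*}
  \bigl|\sqrt{X}-\sqrt{Y}\bigr|
  = \tfrac{|X-Y|}{\sqrt{X}+\sqrt{Y}}
  \quad\text{and}\quad
  \bigl|\sqrt{X}-\sqrt{Y}\bigr|
  \leq \sqrt{|X-Y|}
\end{equation*}
(the second following from the concavity/subadditivity $(\sqrt{a}-\sqrt{b})^2\leq|a-b|$ for $a,b\geq 0$, with the usual conventions handling $\sqrt{X}+\sqrt{Y}=0$). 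Combining these yields the pointwise bound
\begin{equation*}
  \left|\int_Y^X \tfrac{1}{\sigma(z)}\,dz\right|
  \leq
  \tfrac{2(1+c_3 X^{c_2}+c_3 Y^{c_2})}{c_1}\,
  \min\!\left\{\,|X-Y|^{1/2},\;\tfrac{|X-Y|}{\sqrt{X}+\sqrt{Y}}\,\right\}
  .
\end{equation*}

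Finally, taking the $L^p(\Omega;\R)$-norm of both sides and applying H\"older's inequality with the conjugate pair of exponents $p(1+\nicefrac{1}{r})$ and $p(1+r)$, which satisfy $\tfrac{1}{p(1+1/r)}+\tfrac{1}{p(1+r)}=\tfrac{1}{p}$ (with the obvious interpretations at $r\in\{0,\infty\}$), gives exactly the claimed inequality \eqref{l:cir_min:state}. There is no real obstacle here: the only minor care-point is that the bound on $|\int_Y^X \sigma^{-1}|$ is symmetric in $X$ and $Y$, so one does not need to split into cases on the sign of $X-Y$, and the $\min$ on the right-hand side comes from applying H\"older separately with either factor from $\{|X-Y|^{1/2},|X-Y|/(\sqrt{X}+\sqrt{Y})\}$ and then taking the better of the two resulting bounds.
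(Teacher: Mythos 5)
Your proposal is correct and takes essentially the same route as the paper: both arguments reduce to the pointwise bound $|\int_Y^X \sigma(z)^{-1}\,dz|\leq \tfrac{2}{c_1}(1+c_3 X^{c_2}+c_3 Y^{c_2})|\sqrt{X}-\sqrt{Y}|$, use the two elementary bounds $|\sqrt{X}-\sqrt{Y}|=|X-Y|/(\sqrt{X}+\sqrt{Y})$ and $|\sqrt{X}-\sqrt{Y}|\leq |X-Y|^{1/2}$, and then apply H\"older (the paper parametrizes the conjugate exponents with $q\in[1,\infty]$ rather than your $r=q-1\in[0,\infty]$) to each of the two pointwise bounds and takes the better result.
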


\begin{proof} [Proof of Lemma~\ref{l:cir_min}]

First of all, observe that for all $v,w \in [0,\infty)$ it holds that
\begin{equation} 
\begin{split} 
 \label{l:cir_min:1} 
 \left| \int_w^v \tfrac{1}{\sigma(z)} \, dz \right|
 &\leq
 \tfrac{1}{c_1} \left| \int_w^v \tfrac{1}{\sqrt{z}} \, |1+ c_3 z^{c_2}| \, dz \right| 
  \leq
 \tfrac{1}{c_1} \sup_{u \in [v,w] \cup [w,v]} |1+ c_3 u^{c_2}| \left| \int_w^v \tfrac{1}{\sqrt{z}} \, dz \right| 
 \\ &
 \leq
 \tfrac{2}{c_1} \left|1+ c_3 v^{c_2} + c_3 w^{c_2} \right| \left| \sqrt{v} - \sqrt{w} \right| 
 =
  \tfrac{2}{c_1} \tfrac{ \left| v-w \right| }{ \sqrt{v} + \sqrt{w}}\left|1+ c_3 v^{c_2} + c_3 w^{c_2} \right|.
\end{split} 
\end{equation}  
From \eqref{l:cir_min:1} and from the estimate $\left| v-w \right| \leq  \left(\sqrt{v}+\sqrt{w}\right)^2$ for all $v,w \in [0,\infty)$ we get that for all $v,w \in [0,\infty)$ it holds that
\begin{equation} 
\begin{split} 
 \label{l:cir_min:2} 
 &\left| \int_w^v \tfrac{1}{\sigma(z)} \, dz \right|^2 
\leq
  \tfrac{4}{(c_1)^2}  \left| v-w \right|   \left|1+ c_3 v^{c_2} + c_3 w^{c_2} \right|^2.
\end{split} 
\end{equation}  
Inequality \eqref{l:cir_min:1} and H\"older's inequality ensure that for all $p \in (0,\infty]$, $q \in [1,\infty]$ it holds that
\begin{equation} 
\begin{split} 
 \label{l:cir_min:3} 
 \left \|\int_X^{Y} \tfrac{1}{\sigma(z)} \, dz \right\|_{L^p( \Omega; \R )} 
& 
 \leq
  \tfrac{2}{c_1} \left\| \tfrac{  Y-X  }{ \sqrt{Y} + \sqrt{X}}\left|1+ c_3 X^{c_2} + c_3 Y^{c_2}\right| \right\|_{L^p( \Omega; \R )}  
\\ &
 \leq
  \tfrac{2}{c_1} \left\|\tfrac{  Y-X  }{ \sqrt{Y} + \sqrt{X}} \right\|_{L^{pq}( \Omega; \R )}  
  \left\|1+ c_3 X^{c_2} + c_3 Y^{c_2}  \right\|_{L^{\nicefrac{p}{(1-\nicefrac{1}{q})}}( \Omega; \R )}.
\end{split} 
\end{equation} 
Similarly, inequality \eqref{l:cir_min:2} and H\"older's inequality imply that for all $p \in (0,\infty]$, $q \in [1,\infty]$ it holds that
\begin{align}
 \label{l:cir_min:4}  \nonumber
  \left\| \int_Y^X \tfrac{1}{\sigma(z)} \, dz \right\|_{L^p( \Omega; \R )} 
 & =  \left\| \left| \int_Y^X \tfrac{1}{\sigma(z)} \, dz \right|^2 \right\|_{L^{\nicefrac{p}{2}} ( \Omega; \R )}^{\nicefrac{1}{2}}
 \leq
\left\| \tfrac{4}{(c_1)^2}  \left| Y-X \right|   \left|1+ c_3 X^{c_2} + c_3 Y^{c_2} \right|^2 \right\|_{L^{\nicefrac{p}{2}} ( \Omega; \R )}^{\nicefrac{1}{2}}
\\ 
& 
 \leq
\tfrac{2}{c_1}  
\left\|  Y-X  \right\|_{L^{\nicefrac{pq}{2}} ( \Omega; \R )}^{\nicefrac{1}{2}}  
\left\| \left|1+ c_3 X^{c_2} + c_3 Y^{c_2} \right|^2 \right\|_{L^{\nicefrac{p}{(2-\nicefrac{2}{q})}}( \Omega; \R )}^{\nicefrac{1}{2}}
\\ \nonumber
& 
=
\tfrac{2}{c_1}  
\left\|  Y-X \right\|_{L^{\nicefrac{pq}{2}} ( \Omega; \R )}^{\nicefrac{1}{2}}  
\left\| 1+ c_3 X^{c_2} + c_3 Y^{c_2}  \right\|_{L^{\nicefrac{p}{(1-\nicefrac{1}{q})}}( \Omega; \R )}.
\end{align}
This and \eqref{l:cir_min:3} show that for all   $p \in (0,\infty]$, $q \in [1,\infty]$ it holds that
\begin{equation} 
\begin{split} 
 \label{l:cir_min:4bbb} 
 &  \left\| \int_Y^X \tfrac{1}{\sigma(z)} \, dz \right\|_{L^p( \Omega; \R )} 
 \\ &
\leq
\tfrac{2}{c_1}  \left\| 1+ c_3 X^{c_2} + c_3 Y^{c_2}  \right\|_{L^{\nicefrac{p}{(1-\nicefrac{1}{q})}}( \Omega; \R )}
\min \left\{
\left\|  Y-X  \right\|_{L^{\nicefrac{pq}{2}} ( \Omega; \R )}^{\nicefrac{1}{2}},
\left\|  \tfrac{  Y-X  }{ \sqrt{Y} + \sqrt{X}}\right\|_{L^{pq}( \Omega; \R )} 
\right\}.
\end{split} 
\end{equation} 
This 
finishes the proof of Lemma~\ref{l:cir_min}.
\end{proof}

\begin{theorem}
\label{l:cir_hr}
Let $T \in (0,\infty)$,  $( c_i )_{ i \in \{ 1, 2, \dots, 10 \} } \subseteq [0,\infty)$, $\mu \in \mathcal{L}^0([0,\infty); \R)$,
$ \sigma \in \mathcal{L}^{0}([0,\infty), [0,\infty))$
satisfy 
$ c_8 > 0 $,
$\mu(z) \leq c_1+ c_2 z$,
$|\sigma(z)|^2 \leq 2 (c_3 z + c_4 z^2)$,
$|\mu(z)| \leq c_5+ c_6 z^{c_7}$,
$\sigma(z) \geq \tfrac{c_8 \sqrt{z} } {1+c_{10} z^{c_9}}$ for all $z \in [0,\infty)$,
let
$
  ( 
    \Omega, \mathcal{F}, \P,
       ( \mathcal{F}_t )_{ t \in [0,T] } 
  )
$
be a stochastic basis,
let
$
  W \colon [0,T] \times \Omega \to \R
$
be a standard $ ( \mathcal{F}_t )_{ t \in [0,T] } $-Brownian motion,
let
  $X^x \colon [0,T]\times \Omega\to [0,\infty)$, $x \in [0,\infty)$,
  be a family of adapted stochastic processes with continuous sample paths satisfying 
  \begin{equation} \label{l:cir_hr:sde} 
    X_t^x=x+\int_0^t \mu(X_s^x)\,ds+\int_0^t\sigma(X_s^x)\,dW_s
  \end{equation}
  $\P$-a.s.\
  for all $(t,x)\in[0,T]\times [0,\infty)$
  and assume that for every compact set $K \subseteq [0,\infty)$ it holds that 
  \begin{align} \label{l:cir_hr:ass1} 
\sup_{
  \substack{
    x,y \in K, \,
   \\ 
    x \neq y
  }
}
\tfrac{(x-y)(\mu(x)-\mu(y))}{|x-y|^2} + 
\sup_{
  \substack{
    x,y \in K, \,
  \\ 
    x \neq y
  }
}
\tfrac{(\sigma(x)-\sigma(y))^2}{|x-y|} < \infty.
  \end{align}
Then it holds for all 
$ s, t \in [0,T] $, 
$ x \in [0,\infty) $, 
$ p \in (0,\infty) $, 
$ q \in [1,\infty) $ 
with $ p q \geq 2 $ that
\begin{equation} 
\begin{split} 
 \label{l:cir_hr:state} 
  &
  \left\| 
    \int_{ X_s^x }^{ X_t^x } 
    \tfrac{ 1 }{ \sigma(z) } \, dz 
  \right\|_{
    L^p( \Omega; \R )
  } 
\leq
  \sqrt{ | t - s | } 
\\ & 
  \cdot 
  \frac{ 2 }{ c_8 } 
  \Bigg[ 
    1 + 
    e^{ 
      ( t \wedge s ) 
      ( 
        c_2 + 
        c_4 
        [ 
          p 
          ( c_9 + \max\{ c_7 , \nicefrac{ 1 }{ 2 } \} ) - 1 
        ]^+ 
      )
    }    
    \Big[
    x+ (t \wedge s) \big(c_1+c_3 (p(c_9 + (c_7 \vee \tfrac{1}{2}))-1)^+  \big)  \Big] \Bigg]^{c_9 + (c_7 \vee \nicefrac{1}{2})}
\\ &
\cdot
\Bigg[
  1 + c_{ 10 }  
  + c_{ 10 } \,
  e^{
    | t - s | \, 
    c_9 
    \big( 
      c_2 + c_4 
      \left[
        \frac{ c_9 p q }{ q - 1 } - 1 
      \right]^+ 
    \big)
  }   
    \left[
      1 + | t - s | 
      \left(
        c_1 + 
        c_3 
        \left[ \tfrac{ c_9 p q }{ q - 1 } - 1 \right]^+ 
      \right)  
    \right]^{c_9} 
  \Bigg]
\\ & \cdot
  \max\Bigg\{ 
    1,
    c_5 + 
    e^{ 
      ( c_7 \vee 1 ) \left| t - s \right|
      \left[
        c_2 + c_4
        \left(pq(c_7 \vee 1)-1\right) 
      \right]
    }  
\Bigg(
c_6 
  \Big[1+ |t-s| \big(c_1+c_3 \left( pq c_7-1\right)^+  \big)  \Big]^{c_7}  
\\ &   
\quad+
\sqrt{pq(pq-1)}
 \sqrt{c_3
+
   \tfrac{c_1c_3}{2}+\tfrac{c_3^2}{2} \left( \tfrac{pq}{2} -1\right)
+
  c_4
   \Big[1+ \sqrt{|t-s|} \big(c_1+c_3 \left( pq-1 \right)   \big)  \Big]^2
 }
\Bigg)
\Bigg\} 
\end{split} 
\end{equation} 
and
it holds for all $p, \eps \in (0,\infty)$ that
\begin{equation} 
\begin{split} 
 \label{l:cir_hr:state2} 
 &
  \sup_{
    x \in [0,\infty)
  }
  \left[
    \frac{ 1 }{
      1 + 
      x^{ 
        c_9 + \max\{ c_7 , \nicefrac{ 1 }{ 2 } \} 
      }
    }
    \left\|
      \sup_{ 
       \substack{ 
         s, t \in [0,T] , \,
         s \neq t 
       }
     }
     \left[
       \left| 
         t - s 
       \right|^{ ( \eps - \nicefrac{1}{2} ) } 
     \left|
       \int_{
         X_s^x
       }^{
         X_t^x
       } 
       \tfrac{ 1 }{ \sigma(z) } 
       \, dz 
     \right|
    \right]
   \right\|_{
     L^p( \Omega; \R )
   } 
   \right]
  < \infty
  .
\end{split} 
\end{equation}
\end{theorem}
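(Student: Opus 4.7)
The plan is to combine Lemma~\ref{l:cir_min}, applied to the random endpoints $X_s^x$ and $X_t^x$, with Lemma~\ref{l:reg_CIR}, applied both to moments of $X_u^x$ and to the increments $X_t^x - X_s^x$, and then to extract the uniform H\"older statement \eqref{l:cir_hr:state2} from the pointwise bound \eqref{l:cir_hr:state} via a Kolmogorov--Chentsov continuity argument applied to the Lamperti-type process $t \mapsto \smallint_0^{X_t^x} \tfrac{1}{\sigma(z)}\,dz$.

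For \eqref{l:cir_hr:state}, first apply Lemma~\ref{l:cir_min} to $X = X_t^x$ and $Y = X_s^x$ with the auxiliary parameter $r = q - 1$, so that $p(1+r) = pq$ and $p(1 + \nicefrac{1}{r}) = \nicefrac{pq}{(q-1)}$. Using the $\||Y-X|^{1/2}\|_{L^{p(1+r)}}$ branch of the minimum this yields
\begin{equation*}
\big\|\smallint_{X_s^x}^{X_t^x}\tfrac{1}{\sigma(z)}\,dz\big\|_{L^p(\Omega;\R)} \leq \tfrac{2}{c_8}\big\|1 + c_{10}(X_t^x)^{c_9} + c_{10}(X_s^x)^{c_9}\big\|_{L^{pq/(q-1)}(\Omega;\R)} \cdot \big\|X_t^x - X_s^x\big\|_{L^{pq/2}(\Omega;\R)}^{1/2}.
\end{equation*}
The first factor is handled by \eqref{l:reg_CIR:stateNEW} applied with exponent $\nicefrac{c_9 pq}{(q-1)}$, which bounds $\|X_u^x\|_{L^r}$ linearly in $x$ with an exponential-in-$u$ prefactor; this accounts for the $x^{c_9}$ contribution to the first bracketed factor of \eqref{l:cir_hr:state}.

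For the second factor, assume WLOG $s \leq t$ and invoke the Markov property of the SDE \eqref{l:cir_hr:sde}, which is licensed by strong uniqueness under the local Lipschitz hypothesis \eqref{l:cir_hr:ass1} together with the growth conditions. Conditionally on $\mathcal F_s$, the law of $X_t^x$ coincides with that of $\tilde X_{t-s}$, where $\tilde X$ solves \eqref{l:cir_hr:sde} with deterministic initial datum $y := X_s^x$. Applying \eqref{l:reg_CIR:state3} conditionally (with $p=q=pq/2$), using $\sqrt{a+b} \leq \sqrt a + \sqrt b$ to separate the drift and diffusion contributions, and then taking outer $L^{pq/2}$-norms, and finally invoking \eqref{l:reg_CIR:stateNEW} once more to bound the emerging moments of $X_s^x$, produces a bound in which the universal $\sqrt{|t-s|}$ scaling can be pulled from the diffusion-generated square-root term, and the remaining $|t-s|$-dependent prefactors remain bounded for $|t-s| \leq T$. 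Combining with the first-factor contribution gives precisely the polynomial-in-$x$ degree $c_9 + (c_7 \vee \nicefrac{1}{2})$ stated in \eqref{l:cir_hr:state}, the second bracketed factor (the moment-polynomial without $x$), and the third bracketed $\max\{1,\ldots\}$ factor (the increment-polynomial without $x$).

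For \eqref{l:cir_hr:state2}, define $Z_t^x := \smallint_0^{X_t^x}\tfrac{1}{\sigma(z)}\,dz$, which is pathwise well-defined and continuous in $t$ by the integrability estimate \eqref{l:cir_min:1}. The bound \eqref{l:cir_hr:state}, applied with $q = 2$ and $p$ chosen sufficiently large (depending on $\eps$), gives a constant $C_p(x)$ dominated by a multiple of $1 + x^{c_9 + (c_7 \vee \nicefrac{1}{2})}$ such that $\|Z_t^x - Z_s^x\|_{L^p(\Omega;\R)} \leq C_p(x)\sqrt{|t - s|}$ for all $s, t \in [0,T]$. A quantitative Kolmogorov--Chentsov continuity theorem then yields, for every $p$ with $\nicefrac{1}{p} < \eps$, the bound $\|\sup_{s \neq t}|t - s|^{\eps - 1/2}|Z_t^x - Z_s^x|\|_{L^p(\Omega;\R)} \leq K(T,p,\eps)\,C_p(x)$ with $K$ independent of $x$. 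Dividing by $1 + x^{c_9 + (c_7 \vee \nicefrac{1}{2})}$ and taking the supremum over $x \in [0,\infty)$ gives \eqref{l:cir_hr:state2}.

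The main technical hurdle is the bookkeeping: one has to verify that the moment exponents produced by successive applications of Lemma~\ref{l:cir_min} (at $L^{pq/(q-1)}$ and $L^{pq/2}$) and of Lemma~\ref{l:reg_CIR} (with various internal $L^r$-exponents) combine to give exactly the stated polynomial degree $c_9 + (c_7 \vee \nicefrac{1}{2})$ in $x$, no more. A secondary subtlety is the application of \eqref{l:reg_CIR:state3} with the random initial datum $X_s^x$, which is justified by the strong Markov property arising from strong uniqueness under hypothesis \eqref{l:cir_hr:ass1}.
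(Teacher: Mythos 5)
There is a genuine gap in your handling of the second factor. You propose to use only the $\big\| |Y-X|^{1/2} \big\|_{L^{pq}}$ branch of the minimum in Lemma~\ref{l:cir_min}, i.e.\ to control $\|X_t^x - X_s^x\|_{L^{pq/2}}^{1/2}$. After conditioning and applying~\eqref{l:reg_CIR:state3}, the increment moment splits as $\|Y_h^y - y\|_{L^{pq/2}} \lesssim h\,(\text{drift factor}) + \sqrt{h}\,\sqrt{c_3 y + c_4 y^2 + \cdots}$, and the diffusion term already carries only a single factor $\sqrt{h}$. Taking the square root (as your chosen branch of the minimum forces you to do) produces $h^{1/2}\sqrt{\text{drift}} + h^{1/4}\,(c_3 y + c_4 y^2 + \cdots)^{1/4}$, i.e.\ the diffusion contribution degrades to a rate $h^{1/4}$. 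This is not fixable by your ``pulling $\sqrt{h}$ from the diffusion-generated square-root term'': that $\sqrt{h}$ becomes $h^{1/4}$ precisely because you then take a second square root to land on the $\big\||Y-X|^{1/2}\big\|$ branch. Your calculation therefore yields at best $|t-s|^{1/4}$ H\"older regularity (and, for $c_4>0$, an extra factor $y^{1/2}$ from the quartic root), which is strictly weaker than the stated $\sqrt{|t-s|}$ estimate~\eqref{l:cir_hr:state}.

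The paper's argument needs \emph{both} branches of the minimum in Lemma~\ref{l:cir_min}, selected according to whether the (conditional) initial point $y$ satisfies $y\le h$ or $y\ge h$. For $y\le h$, the square-root branch is fine: the diffusion term is $\sqrt{h}\sqrt{c_3 y + \cdots}\le\sqrt{h}\sqrt{c_3 h + \cdots}=O(h)$, so $\|Y_h^y - y\|_{L^{pq/2}}=O(h)$ and the square root gives $O(\sqrt{h})$. For $y\ge h$, the paper instead uses the $\big\|\tfrac{|Y-X|}{\sqrt{Y}+\sqrt{X}}\big\|$ branch, which divides by $\sqrt{y}$ and so yields $\tfrac{1}{\sqrt y}\|Y_h^y - y\|_{L^{pq}}\lesssim\tfrac{\sqrt h}{\sqrt y}\sqrt{c_3 y + c_4 y^2 + \cdots}\lesssim\sqrt{h}\,(1 + y^{1/2})$; no second square root is taken. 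The two regimes are then merged via the elementary inequality $\sqrt{a}\le a\vee 1$, which is where the $\max\{1,\ldots\}$ factor in~\eqref{l:cir_hr:state} comes from. Your remaining steps (the Markov reduction via pathwise uniqueness, the $r=q-1$ parameterisation in Lemma~\ref{l:cir_min}, and extracting~\eqref{l:cir_hr:state2} by a Kolmogorov--Chentsov-type argument in place of the paper's Sobolev embedding $C^\alpha([0,T],L^p)\hookrightarrow L^p(\Omega;C^{\alpha-1/p-\varepsilon})$) are in line with the paper, but the missing second branch is essential and cannot be circumvented by elementary rearrangement of the same inequalities.
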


\begin{proof} [Proof of Theorem~\ref{l:cir_hr}]
%
%
%
%
From Theorem V.40.1 in~\cite{RogersWilliams2000b}
together with \eqref{l:cir_hr:ass1}
(the proof of
Theorem V.40.1 in~\cite{RogersWilliams2000b} only uses global one-sided Lipschitz continuity of $\mu$)
and from a stopping argument,
we obtain that the SDE with coefficients $(\mu, \sigma)$ is pathwise unique.
Then we fix $t_1 \in [0,T)$ and define $W^{t_1} \colon [0,T-t_1] \times \Omega \to \R$ by $W^{t_1}_t := W_{t_1+t} - W_{t_1}$ for all $t \in [0,T-t_1]$. 
Theorem 3.4 in Cox et.~al~\cite{CoxHutzenthalerJentzen2013} shows that there exists a measurable mapping 
$Y\colon[0,T-t_1]\times [0,\infty) \times \Omega\to[0,\infty)$
such that for all $x \in [0,\infty)$ it holds that $Y^x$ is an adapted stochastic process
with continuous sample paths satisfying 
\begin{equation}
  Y_t^x=x+\int_0^t \mu(Y_s^x)\,ds+\int_0^t\sigma(Y_s^x)\,dW_s^{t_1}
\end{equation}
$\P$-a.s.\
for all $t\in[0,T-t_1]$.
Pathwise uniqueness ensures that for all $x \in [0,\infty)$ it holds that
\begin{equation} 
\begin{split} 
 \label{l:cir_hr:2} 
\P \big[ \big(X_{t_1+t}^x\big)_{t \in [0,T-t_1]} \in \cdot \, \big]
=
\P \big[ \big(Y_{t}^{X_{t_1}^x}\big)_{t \in [0,T-t_1]} \in \cdot \,  \big].
\end{split} 
\end{equation}
 Lemma~\ref{l:reg_CIR} shows that for all $t \in [0,T-t_1]$, $x \in [0,t]$,
 $p, q \in (0,\infty)$ with $pq\geq 2$ it holds that
\begin{equation} 
\begin{split} 
 \label{l:cir_hr:4} 
&\left\|  Y_t^x - x  \right\|_{L^{\nicefrac{pq}{2}} ( \Omega; \R )}
\leq\left\|  Y_t^x - x  \right\|_{L^{pq} ( \Omega; \R )}
\\
&
\leq
c_5 t +  e^{(c_7 \vee 1) t\left(c_2 + c_4\left(pq (c_7 \vee 1)-1\right) \right)}  
\Bigg(
t
c_6 
  \Big[x+ t \big(c_1+c_3 \left( pq  c_7-1\right)^+  \big)  \Big]^{c_7}  
\\ &  \quad
+
\sqrt{pq(pq-1)}
\sqrt{
tc_3 
x
+
\tfrac{1}{2}t^2 c_3
  \left(c_1+c_3 \left( \tfrac{pq}{2} -1\right) \right) 
+
  tc_4
   \Big[x+ t \big(c_1+c_3 \left( pq -1 \right)   \big)  \Big]^2
 }
\Bigg)
\\ &
\leq
c_5 t +  e^{(c_7 \vee 1) t\left(c_2 + c_4\left(pq (c_7 \vee 1)-1\right) \right)}  
\Bigg(
t
c_6 
  \Big[x+ t \big(c_1+c_3 \left( pq  c_7-1\right)^+  \big)  \Big]^{c_7}  
\\ &  \quad
+
\sqrt{pq(pq-1)}
\sqrt{
c_3 
t^2
+
\tfrac{1}{2}t^2 c_3
  \left(c_1+c_3 \left(\tfrac{pq}{2} -1\right) \right) 
+
  tc_4
   \Big[\sqrt{t}\sqrt{x}+ t \big(c_1+c_3 \left( pq-1 \right)   \big)  \Big]^2
 }
\Bigg)
\\ &
=
t\Bigg[ c_5  +  e^{(c_7 \vee 1) t\left(c_2 + c_4\left(pq(c_7 \vee 1)-1\right) \right)}  
\Bigg(
c_6 
  \Big[ x+  t\Big(c_1+c_3 \left( pq c_7-1\right)^+   \Big)  \Big]^{c_7}  
\\ &  \quad
+
\sqrt{pq(pq-1)}
 \sqrt{
c_3 
+
 \tfrac{c_3}{2}
  \left(c_1+c_3 \left( \tfrac{pq}{2} -1\right) \right) 
+
  c_4
   \Big[\sqrt{x}+  \sqrt{t} \big(c_1+c_3 \left(  pq-1 \right)   \big)  \Big]^2
 }
\Bigg) \Bigg].
\end{split} 
\end{equation}
Similarly, Lemma~\ref{l:reg_CIR} shows that for all $t \in [0,T-t_1]$, $x \in [t,\infty)$, $p, q \in (0,\infty)$ with $pq\geq 2$ it holds that
\begin{equation} 
\begin{split} 
  \label{l:cir_hr:5} 
& \tfrac{1}{\sqrt{x}} \left\|  Y_t^x - x  \right\|_{L^{pq} ( \Omega; \R )}
\\ &
\leq
c_5 \tfrac{t}{\sqrt{x}} +  e^{(c_7 \vee 1) t\left(c_2 + c_4\left(pq(c_7 \vee 1)-1\right) \right)}  
\Bigg(
\tfrac{t}{\sqrt{x}}
c_6 
  \Big[x+ t \big(c_1+c_3 \left( pq c_7-1\right)^+  \big)  \Big]^{c_7}  
\\ &  \qquad
+
\sqrt{pq(pq-1)}
\sqrt{
tc_3 
+
\tfrac{t^2c_3}{2x}
  \left(c_1+c_3 \left( \tfrac{pq}{2} -1\right) \right) 
+
\tfrac{  tc_4 }{x}
   \Big[x+ t \big(c_1+c_3 \left( pq-1 \right)   \big)  \Big]^2
 }
\Bigg)
\\ &
\leq
c_5 \sqrt{t} +  e^{(c_7 \vee 1) t\left(c_2 + c_4\left(pq(c_7 \vee 1)-1\right) \right)}  
\Bigg(
\sqrt{t}
c_6 
  \Big[x+ t \big(c_1+c_3 \left( pq c_7-1\right)^+  \big)  \Big]^{c_7}  
\\ &  \qquad
+
\sqrt{pq(pq-1)}
\sqrt{
tc_3 
+
 \tfrac{tc_3}{2}
  \left(c_1+c_3 \left( \tfrac{pq}{2} -1\right) \right) 
+
  tc_4
   \Big[\sqrt{x}+ \sqrt{t} \big(c_1+c_3 \left( pq-1 \right)   \big)  \Big]^2
 }
\Bigg)
\\ &
=
 \sqrt{t} \Bigg[
c_5 +  e^{(c_7 \vee 1) t\left(c_2 + c_4\left(pq(c_7 \vee 1)-1\right) \right)}  
\Bigg(
c_6 
  \Big[x+ t \big(c_1+c_3 \left( pq c_7-1\right)^+  \big)  \Big]^{c_7}  
\\ &  \qquad
+
\sqrt{pq(pq-1)}
 \sqrt{
c_3 
+
 \tfrac{c_3}{2}
  \left(c_1+c_3 \left( \tfrac{pq}{2} -1\right) \right) 
+
  c_4
   \Big[\sqrt{x}+ \sqrt{t} \big(c_1+c_3 \left( pq-1 \right)   \big)  \Big]^2
 }
\Bigg)
\Bigg].
\end{split} 
\end{equation}
Next we define a function 
$
  \phi\colon [0,\infty) \to [0,\infty)
$ 
through 
$
  \phi(y) := \int_0^y \tfrac{ 1 }{ \sigma(z) } \, dz
$
for all $ y \in [0,\infty) $. 
The function $ \phi $ is well defined due to
the estimate that
$\sigma(z) \geq \tfrac{c_8 \sqrt{z} } {1+c_{10} z^{c_9}}$ for all $z \in [0,\infty)$.
In the next step we observe that
Lemma~\ref{l:cir_min},  
Lemma~\ref{l:reg_CIR}, 
\eqref{l:cir_hr:4} and 
\eqref{l:cir_hr:5} imply 
that for all 
$ t \in [0, T - t_1] $, 
$ x \in [0, \infty) $, 
$ p \in (0,\infty) $,
$ q \in [1,\infty) $ with $ p q \geq 2 $ 
it holds that
\begin{equation} 
\begin{split} 
 \label{l:cir_hr:6}  
 &\|\phi(Y_t^x) - \phi(x)\|_{L^p( \Omega; \R )} 
 \\ &
\leq
\tfrac{2}{c_8}  \left(1 + c_{10}  x^{c_9}+ c_{10} \left\| Y_t^x \right\|_{L^{\nicefrac{c_9pq}{q-1}}( \Omega; \R )}^{c_9}  \right)
\min \left\{
\left\|  Y_t^x - x  \right\|_{L^{\nicefrac{pq}{2}} ( \Omega; \R )}^{\nicefrac{1}{2}},
 \tfrac{1}{\sqrt{x}} \left\|  Y_t^x - x  \right\|_{L^{pq}( \Omega; \R )} 
\right\}
 \\ &
\leq
\tfrac{2}{c_8}   \left(1 + c_{10}  x^{c_9}
+ c_{10}  
e^{ t c_9 \left(c_2 + c_4 \left(\frac{c_9pq}{q-1}-1 \right)^+ \right)}    \left[x+ t \left(c_1+c_3 \left(\tfrac{c_9pq}{q-1}-1\right)^+  \right)  \right]^{c_9} 
\right)
\\ & \quad \cdot
\min \left\{
\left\|  Y_t^x - x  \right\|_{L^{\nicefrac{pq}{2}} ( \Omega; \R )}^{\nicefrac{1}{2}},
 \tfrac{1}{\sqrt{x}} \left\|  Y_t^x - x  \right\|_{L^{pq}( \Omega; \R )} 
\right\}
%
%
%
%
 \\ &
\leq
\tfrac{2}{c_8}  \sqrt{t} 
\left(1 + c_{10}  x^{c_9}
+ c_{10}  
e^{ t c_9 \left(c_2 + c_4 \left(\frac{c_9pq}{q-1}-1 \right)^+ \right)}    \left[x+ t \left(c_1+c_3 \left(\tfrac{c_9pq}{q-1}-1\right)^+  \right)  \right]^{c_9} 
\right)
\\ & \quad \cdot
 \Bigg\{
\Bigg[
c_5 +  e^{(c_7 \vee 1) t\left(c_2 + c_4\left(pq(c_7 \vee 1)-1\right) \right)}  
\Bigg(
c_6 
  \Big[x+ t \big(c_1+c_3 \left( pq c_7-1\right)^+  \big)  \Big]^{c_7}  
\\ &   \quad 
+
\sqrt{pq(pq-1)}
 \sqrt{
c_3 
+
 \tfrac{c_3}{2}
  \left(c_1+c_3 \left( \tfrac{pq}{2} -1\right) \right) 
+
  c_4
   \Big[\sqrt{x}+ \sqrt{t} \big(c_1+c_3 \left( pq-1 \right)   \big)  \Big]^2
 }
\Bigg)
\Bigg] \vee 1
\Bigg\}.
\end{split} 
\end{equation}
Then \eqref{l:cir_hr:2}  and \eqref{l:cir_hr:6}  yield that for all $h \in [0,T-t_1]$, $x \in [0, \infty)$, $p \in (0,\infty)$, $q \in [1,\infty) $ 
with $pq\geq 2$ it holds that
\begin{align}
 \label{l:cir_hr:7} 
 &\|\phi(X_{t_1+h}^x) - \phi(X_{t_1}^x)\|_{L^p( \Omega; \R )} 
= \left( \E \left[ \E \left[ \left( \phi(Y_{h}^{X_{t_1}^x}) - \phi(X_{t_1}^x) \right)^p \big| X_{t_1}^x \right] \right] \right)^{\nicefrac{1}{p}}
 \\ & \nonumber
\leq
\Bigg\|
\tfrac{2}{c_8}  \sqrt{h} 
\left(1 + c_{10}  (X_{t_1}^x)^{c_9}
+ c_{10}  
e^{h c_9 \left(c_2 + c_4 \left(\frac{c_9pq}{q-1}-1 \right)^+ \right)}    \left[X_{t_1}^x+ h \left(c_1+c_3 \left(\tfrac{c_9pq}{q-1}-1\right)^+  \right)  \right]^{c_9} 
\right)
\\ & \quad  \cdot \nonumber
\Bigg[
c_5 +  e^{(c_7 \vee 1) h\left(c_2 + c_4\left(pq(c_7 \vee 1)-1\right) \right)}  
\Bigg(
c_6 
  \Big[X_{t_1}^x+ h \big(c_1+c_3 \left( pq c_7-1\right)^+  \big)  \Big]^{c_7}  
\\ &  \qquad \nonumber
+
\sqrt{pq(pq-1)}
 \sqrt{c_3
+
  \tfrac{ c_1c_3}{2} + \tfrac{c_3^2}{2}  \! \left( \tfrac{pq}{2} -1\right)
+
  c_4
   \Big[\sqrt{X_{t_1}^x}+ \sqrt{h} \big(c_1+c_3 pq-c_3    \big)  \Big]^2
 }
\Bigg)
\Bigg] \! \vee \! 1
\Bigg\|_{L^p( \Omega; \R )} 
\end{align}
and
\begin{equation}  \begin{split}
 &\|\phi(X_{t_1+h}^x) - \phi(X_{t_1}^x)\|_{L^p( \Omega; \R )} 
\leq
\Bigg\|
\tfrac{2}{c_8}  \sqrt{h} \left(1 + X_{t_1}^x \right)^{c_9 + (c_7 \vee \frac{1}{2})}
\\ &  \quad 
\cdot
\Bigg(1 + c_{10}  
+ c_{10}  
e^{h c_9 \left(c_2 + c_4 \left(\frac{c_9pq}{q-1}-1 \right)^+ \right)}   
 \left[1+ h \left(c_1+c_3 \left(\tfrac{c_9pq}{q-1}-1\right)^+  \right)  \right]^{c_9} 
\Bigg)
\\ & \quad  \cdot
\Bigg[
c_5 +  e^{(c_7 \vee 1) h\left(c_2 + c_4\left(pq(c_7 \vee 1)-1\right) \right)}  
\Bigg(
c_6 
  \Big[1+ h \big(c_1+c_3 \left( pq c_7-1\right)^+  \big)  \Big]^{c_7}  
\\ &  \qquad  
+
\sqrt{pq(pq-1)}
 \sqrt{c_3
+
   \tfrac{c_1c_3}{2}+\tfrac{c_3^2}{2} \left( \tfrac{pq}{2} -1\right)
+
  c_4
   \Big[1+ \sqrt{h} \big(c_1+c_3  pq-c_3   \big)  \Big]^2
 }
\Bigg)
\Bigg] \! \vee \! 1
\Bigg\|_{L^p( \Omega; \R )} 
 \\ & 
=
\tfrac{2}{c_8}  \sqrt{h} \Big\|1 + X_{t_1}^x  \Big\|_{L^{p\left(c_9 + (c_7 \vee \nicefrac{1}{2})\right)} ( \Omega; \R )}^{c_9 + (c_7 \vee \frac{1}{2})} 
\\ &  \quad 
\cdot
\Bigg(1 + c_{10}  
+ c_{10}  
e^{h c_9 \left(c_2 + c_4 \left(\frac{c_9pq}{q-1}-1 \right)^+ \right)}   
 \left[1+ h \left(c_1+c_3 \left(\tfrac{c_9pq}{q-1}-1\right)^+  \right)  \right]^{c_9} 
\Bigg)
\\ & \quad  \cdot
\Bigg[
c_5 +  e^{(c_7 \vee 1) h\left(c_2 + c_4\left(pq(c_7 \vee 1)-1\right) \right)}  
\Bigg(
c_6 
  \Big[1+ h \big(c_1+c_3 \left( pq c_7-1\right)^+  \big)  \Big]^{c_7}  
\\ &  \qquad  
+
\sqrt{pq(pq-1)}
 \sqrt{c_3
+
   \tfrac{c_1c_3}{2}+\tfrac{c_3^2}{2} \left( \tfrac{pq}{2} -1\right)
+
  c_4
   \Big[1+ \sqrt{h} \big(c_1+c_3  pq-c_3   \big)  \Big]^2
 }
\Bigg)
\Bigg] \! \vee \! 1.
\end{split} 
\end{equation}
This and 
Lemma~\ref{l:reg_CIR} yield
that for all 
$
  h \in [ 0, T - t_1 ] 
$,  
$ 
  x \in [0, \infty)
$, 
$ p \in (0,\infty) $,
$ q \in [1,\infty) $ 
with $ p q \geq 2 $
it holds that
\begin{equation} 
\begin{split} 
 \label{l:cir_hr:8} 
&
  \big\|
    \phi( X_{ t_1 + h }^x ) - 
    \phi( X_{ t_1 }^x )
  \big\|_{
    L^p( \Omega; \R )
  } 
\\ & \leq
  \frac{ 2 \sqrt{ h } }{ c_8 }  
\Bigg[ 1 + e^{ t_1(c_2 + c_4(p(c_9 + (c_7 \vee \nicefrac{1}{2}))-1)^+ )}    \Big[x+ t_1 \big(c_1+c_3 (p(c_9 + (c_7 \vee \tfrac{1}{2}))-1)^+  \big)  \Big] \Bigg]^{c_9 + (c_7 \vee \frac{1}{2})}
\\ &
\quad\cdot
\Bigg(1 + c_{10}  
+ c_{10}  
e^{h c_9 \left(c_2 + c_4 \left(\frac{c_9pq}{q-1}-1 \right)^+ \right)}   
 \left[1+ h \left(c_1+c_3 \left(\tfrac{c_9pq}{q-1}-1\right)^+  \right)  \right]^{c_9} 
\Bigg)
\\ & \quad  \cdot
\Bigg[
c_5 +  e^{(c_7 \vee 1) h\left(c_2 + c_4\left(pq(c_7 \vee 1)-1\right) \right)}  
\Bigg(
c_6 
  \Big[1+ h \big(c_1+c_3 \left( pq c_7-1\right)^+  \big)  \Big]^{c_7}  
\\ &  \qquad  
+
\sqrt{pq(pq-1)}
 \sqrt{c_3
+
   \tfrac{c_1c_3}{2}+\tfrac{c_3^2}{2} \left( \tfrac{pq}{2} -1\right)
+
  c_4
   \Big[1+ \sqrt{h} \big(c_1+c_3  pq-c_3   \big)  \Big]^2
 }
\Bigg)
\Bigg] \! \vee \! 1.
\end{split} 
\end{equation}
This, in particular, proves
that for all 
$ p \in (0,\infty) $,
$ q \in [1,\infty) $ 
with $ p q \geq 2 $
it holds that
\begin{equation}
  \sup_{
    x \in [0,\infty)
  }
  \left[
  \frac{ 1 }{
    \left[
      1 + 
      x^{
        c_9 + \max\{ c_7 , \nicefrac{ 1 }{ 2 } \}
      }
    \right]
  }
  \left[
  \sup_{ 
    t_1, t_2 \in [0,T] , \, t_1 \neq t_2
  }
  \frac{
    \big\|
      \phi( X_{ t_2 }^x ) - 
      \phi( X_{ t_1 }^x )
    \big\|_{
      L^p( \Omega; \R )
    } 
  }{
    \left| t_2 - t_1 \right|^{ \nicefrac{ 1 }{ 2 } }
  }
  \right]
  \right]
  < \infty
\end{equation}
Combining this with
the Sobolev embedding that for all
$ p \in (1,\infty) $,
$ \alpha \in ( \nicefrac{ 1 }{ p }, 1 ] $,
$ \eps \in ( 0, \alpha - \nicefrac{ 1 }{ p } ) $
it holds that
$
  C^{ \alpha }( 
    [0,T]
    ,
    L^p( \Omega; \R )
  )
  \subseteq
  L^p( 
    \Omega ;
    C^{ \alpha - 1 / p - \eps }( [0,T] , \R )
  )
$
continuously
implies
that for all $p, \eps  \in (0,\infty)$ it holds that
\begin{equation} 
\begin{split} 
 \label{l:cir_hr:100} 
 &
  \sup_{ 
    x \in [0,\infty)
  }
  \left[
    \tfrac{ 1 }{
      1 + 
      x^{ 
        c_9 + \max\{ c_7 , \nicefrac{ 1 }{ 2 } \} 
      }
    }
    \,
    \Big\|
      \sup\nolimits_{ s, t \in [0,T] , \, s \neq t } 
      \tfrac{
        | \phi( X_t^x ) - \phi( X_s^x ) |
      }{
        | t - s |^{ \nicefrac{ 1 }{ 2 } - \eps } 
      } 
    \Big\|_{
      L^p( \Omega; \R ) 
    } 
  \right]
  < \infty .
\end{split} 
\end{equation}
This finishes the proof of Theorem~\ref{l:cir_hr}.
\end{proof}

The following corollary, Corollary~\ref{cor_hr}, specialises Theorem~\ref{l:cir_hr} (applied with $c_2=c_4=c_9=c_{10}=0, c_1=\delta, c_3=\tfrac{\beta^2}{2}, c_5=\delta, c_6=\gamma^+, c_7=1, c_8=\beta, q=1$) for Cox-Ingersoll-Ross  processes
and generalizes Lemma 3.2 in Dereich, Neuenkirch \& Szpruch~\cite{DereichNeuenkirchSzpruch2012}
which assumes
$\tfrac{2\delta}{\beta^2}\in(1,\infty)$.

\begin{corollary}
\label{cor_hr}
Let $\delta  \in [0,\infty)$,  $\gamma\in\R$, $T,\beta \in (0,\infty)$, 
let
$
  ( 
    \Omega, \mathcal{F}, \P,
       ( \mathcal{F}_t )_{ t \in [0,T] } 
  )
$
be a stochastic basis,
let
$
  W \colon [0,T] \times \Omega \to \R
$
be a standard $ ( \mathcal{F}_t )_{ t \in [0,T] } $-Brownian motion and
let
  $X^x \colon [0,T]\times \Omega\to [0,\infty)$, $x \in [0,\infty)$,
  be a family of adapted stochastic processes with continuous sample paths satisfying
  \begin{equation} \label{cor_hr:sde}
    X_t^x=x+\int_0^t \delta - \gamma X_s^x\,ds+ \int_0^t \beta \sqrt{X_s^x} dW_s
  \end{equation}
  $\P$-a.s.\
  for all $t \in [0,T]$, $x \in [0,\infty)$.
  Then it holds for all  $s, t \in [0,T]$, $x \in [0,\infty)$, $p \in (0,\infty)$ that
\begin{align} 
\label{cor_hr:state} 
&
  \left\| 
    \sqrt{ X_t^x } - 
    \sqrt{ X_s^x } 
  \right\|_{
    L^p( \Omega ; \R )
  } 
\leq
  \sqrt{ | t - s | }
  \,
  \Big[ 
    1 + x +
    \min\{ s , t \} 
    \big[
      \delta  
      + 
      \tfrac{ \beta^2 }{ 2 } 
      ( p - 1 )^+ 
    \big]
  \Big] 
\\ & \cdot 
  \max\Big\{ 
    1 ,
    \delta 
    + 
  \gamma^+
  \Big[ 
    1 + \left| t - s \right| 
    \big[
      \delta + \tfrac{ \beta^2 }{ 2 } 
      [ \tfrac{ p }{ 2 } - 1 ]^+  
    \big]
  \Big]   
+
  \beta
  \sqrt{
    \max( p ( p - 1 ) , 2 ) 
  }
  \sqrt{
    1
    +
     \tfrac{ 1 }{ 2 }
     \big[
       \delta + 
       \tfrac{ \beta^2 }{ 2 } 
       \max\{ p - 1 , 1 \} 
     \big] 
 }
  \Big\}
\nonumber
\end{align} 
and it holds for all $p, \eps  \in (0,\infty)$ that
\begin{equation} 
\begin{split} 
 \label{cor_hr:state2} 
 &
  \sup_{ x \in [0,\infty) }
  \left[
    \frac{ 1 }{ 1 + x }
    \left\|
      \sup_{ 
       \substack{ 
         s, t \in [0,T] , \,
         s \neq t 
       }
     }
   \tfrac{\left| \sqrt{X_{t}^x}  - \sqrt{X_{s}^x} \right| }{ |t-s|^{\nicefrac{1}{2} - \eps} }
 \right\|_{L^p( \Omega; \R )} 
 \right]
< \infty.
\end{split} 
\end{equation}

\end{corollary}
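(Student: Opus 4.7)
The plan is to apply Theorem~\ref{l:cir_hr} directly to the CIR SDE~\eqref{cor_hr:sde} with the identifications $\mu(z) = \delta - \gamma z$ and $\sigma(z) = \beta\sqrt{z}$, and to specialize its explicit bounds to these coefficients. A natural choice of constants is $c_1 = \delta$, $c_2 = \gamma^-$, $c_3 = \tfrac{\beta^2}{2}$, $c_4 = 0$, $c_5 = \delta$, $c_6 = |\gamma|$, $c_7 = 1$, $c_8 = \beta$, $c_9 = 0$, $c_{10} = 0$. Then all four growth hypotheses of Theorem~\ref{l:cir_hr} are immediate: $\mu(z) \leq \delta + \gamma^- z$, $|\sigma(z)|^2 = \beta^2 z = 2 c_3 z$, $|\mu(z)| \leq \delta + |\gamma| z$, and $\sigma(z) = \beta\sqrt{z}$. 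The local regularity assumption~\eqref{l:cir_hr:ass1} holds because $\mu$ is globally (one-sidedly) Lipschitz and $(\sigma(x) - \sigma(y))^2 = \beta^2(\sqrt{x} - \sqrt{y})^2 \leq \beta^2 |x - y|$ on $[0,\infty)$.

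Next I would exploit the identity $\int_0^y \sigma(z)^{-1} dz = \tfrac{2\sqrt{y}}{\beta}$, which gives $\sqrt{X_t^x} - \sqrt{X_s^x} = \tfrac{\beta}{2} \int_{X_s^x}^{X_t^x} \sigma(z)^{-1}\, dz$. Consequently, $\|\sqrt{X_t^x} - \sqrt{X_s^x}\|_{L^p(\Omega;\R)}$ equals $\tfrac{\beta}{2}$ times the quantity controlled by~\eqref{l:cir_hr:state}, and the leading constant $2/c_8 = 2/\beta$ in that bound cancels the factor $\beta/2$ exactly, producing a clean prefactor.

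To cover the full range $p \in (0,\infty)$ under the theorem's constraint $pq \geq 2$ with $q \in [1,\infty)$, I would take $q := \max\{1, 2/p\}$, so that $pq = \max\{p, 2\}$. This produces $pq(pq - 1) = \max\{p(p - 1), 2\}$ under the square root in~\eqref{l:cir_hr:state}, matching the corresponding factor in~\eqref{cor_hr:state}. With $c_4 = c_9 = c_{10} = 0$ and $c_7 = 1$, most of~\eqref{l:cir_hr:state} collapses: the exponent $c_9 + (c_7 \vee \tfrac{1}{2}) = 1$ turns the first bracketed factor into $1 + x + \min\{s,t\}[\delta + \tfrac{\beta^2}{2}(p - 1)^+]$; the middle bracket $1 + c_{10} + c_{10}(\cdots)$ collapses to $1$; every exponential of the form $e^{(\cdot)\, c_4(\cdots)}$ becomes $1$; and the final $\max\{1,\ldots\}$ factor simplifies to the expression in~\eqref{cor_hr:state} after a standard estimate $\tfrac{pq}{2} - 1 \leq \max\{p - 1, 1\}$ that produces the cleaner radicand stated there.

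The uniform H\"older estimate~\eqref{cor_hr:state2} is then immediate from~\eqref{l:cir_hr:state2} under the same constants: since $c_9 + \max\{c_7, 1/2\} = 1$, the prefactor $(1 + x^{c_9 + \max\{c_7, 1/2\}})^{-1}$ becomes $(1 + x)^{-1}$, and the factor $\beta/2$ relating $\sqrt{X_t^x} - \sqrt{X_s^x}$ to $\int_{X_s^x}^{X_t^x} \sigma(z)^{-1}\, dz$ is absorbed into the supremum. The main obstacle is not conceptual but rather careful bookkeeping: matching the many terms of Theorem~\ref{l:cir_hr}'s explicit bound against the simpler expression in the corollary. The only nonautomatic step is the choice $q = \max\{1, 2/p\}$, needed to apply the theorem when $p < 2$ and to recover the $\max\{p(p-1), 2\}$ factor stated.
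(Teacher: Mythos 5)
Your strategy of specializing Theorem~\ref{l:cir_hr} to $\mu(z) = \delta - \gamma z$, $\sigma(z) = \beta\sqrt{z}$ and converting via $\int_0^y \sigma(z)^{-1}\,dz = \tfrac{2}{\beta}\sqrt{y}$ is exactly what the paper does (the paper applies Theorem~\ref{l:cir_hr} with $c_2=c_4=c_9=c_{10}=0$, $c_1=\delta$, $c_3=\tfrac{\beta^2}{2}$, $c_5=\delta$, $c_6=\gamma^+$, $c_7=1$, $c_8=\beta$, $q=1$). Your choice $q = \max\{1, 2/p\}$ is actually more careful than the paper's $q=1$, which only satisfies $pq \geq 2$ when $p\geq 2$; and you correctly recover $pq(pq-1)=\max\{p(p-1),2\}$. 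But there is a real gap in the constant $c_2$. The bound~\eqref{l:cir_hr:state} contains three exponential factors, and with $c_4=c_9=0$ only the one in the middle bracket disappears; the two factors $e^{(t\wedge s)c_2}$ (first bracket) and $e^{(c_7\vee 1)|t-s|\,c_2}$ (final bracket) survive and vanish only if $c_2 = 0$. Your statement that ``every exponential of the form $e^{(\cdot)c_4(\cdots)}$ becomes $1$'' is true but misses these; with your $c_2=\gamma^-$, for $\gamma<0$ they do not vanish, so the bound you obtain is \emph{not}~\eqref{cor_hr:state}, which contains no exponentials. (Conversely, the paper's own $c_2=0$, $c_6=\gamma^+$ fails the hypothesis $\mu(z)\leq c_1+c_2 z$ precisely when $\gamma<0$, so the explicit bound~\eqref{cor_hr:state} can in fact only be extracted from Theorem~\ref{l:cir_hr} in the regime $\gamma\geq 0$, where $\gamma^-=0$ and your constants coincide with the paper's.)

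A second, smaller, bookkeeping issue: the $c_6$ (i.e., $\gamma^+$) term in~\eqref{l:cir_hr:state} carries $(pq\,c_7-1)^+$, which with your $q$ is $\max\{p-1,1\}$; this is strictly larger than the $[\tfrac{p}{2}-1]^+$ appearing in~\eqref{cor_hr:state}, so the inequality points the wrong way for deducing the corollary's bound from the theorem's in this term. What you have established is a valid but weaker bound, not the one stated. By contrast, your derivation of~\eqref{cor_hr:state2} is sound: it depends only on the finiteness assertion~\eqref{l:cir_hr:state2}, and with $c_9=0$, $c_7=1$ the weight $1+x^{c_9+\max\{c_7,1/2\}}$ reduces to $1+x$, while the constant factor $\tfrac{2}{\beta}$ from $\int_{X_s^x}^{X_t^x}\sigma(z)^{-1}\,dz = \tfrac{2}{\beta}\big(\sqrt{X_t^x}-\sqrt{X_s^x}\big)$ does not affect finiteness.
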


\section{Strong convergence rates for drift-implicit 
(square-root) 
Euler approximations of Bessel- and 
Cox-Ingersoll-Ross-type processes}

Strong convergence rates are established in 
Corollary~\ref{l:num_corollary}
and 
Theorem~\ref{thm:num_theorem} below.

\subsection{On the relation between 
Bessel- and Cox-Ingersoll-Ross-type processes}

In Lemma~\ref{l:transformierte_gleichung} below we establish, roughly speaking, 
that a transformation of a solution process of
an SDE with non-additive noise is a solution process
of a suitable SDE with additive noise (cf., e.g., 
Exercise XI.1.26 in~Revuz \& Yor~\cite{RevuzYor1994} for the case of CIR processes).
In the proof of Lemma~\ref{l:transformierte_gleichung} the following 
result, Lemma~\ref{l:PX0=0}, is used.

\begin{lemma}
[Time at non-trap boundaries has Lebesgue measure zero]
\label{l:PX0=0}
Let $I\subseteq\R $ be an open interval, let $T \in [0,\infty)$,
$J\subseteq\overline{I}$,
$\mu \in \mathcal{L}^0(J; \R)$, $\sigma \in \mathcal{L}^0(J, [0,\infty))$
satisfy $I\subseteq J$
and $\sup_{z\in K}|\sigma(z)|<\infty$ for all compact sets $K\subseteq J$,
let
$
  ( 
    \Omega, \mathcal{F}, \P,
$ 
$ 
       ( \mathcal{F}_t )_{ t \in [0,T] } 
  )
$
be a stochastic basis,
let
$
  W \colon [0,T] \times \Omega \to \R
$
be a standard $ ( \mathcal{F}_t )_{ t \in [0,T] } $-Brownian motion,
 let $X\colon[0,T]\times \Omega\to J$
  be an adapted stochastic process with continuous sample paths satisfying 
  $\int_0^T |\mu(X_s)| \,ds < \infty$ $\P$-a.s.\
  and
  \begin{equation}
    X_t=X_0+\int_0^t \mu(X_s)\,ds+\int_0^t\sigma(X_s)\,dW_s
  \end{equation}
  $\P$-a.s.\
  for all $t\in[0,T]$. 
  Then it holds for all 
  $ 
    b \in 
    \{ 
      z \in  J \colon |\mu(z)| > 0 , 
      \limsup_{J\setminus\{z\} \ni x \to z} \tfrac{(\sigma \cdot \sigma) (x)}{|x-z|} < \infty
    \} 
  $  
that
\begin{align} \label{lemma2.0:result}
  \P\!\left[ \int_0^T \1_{\{ X_s = b\}} \,ds = 0 \right]
  = 1 
  .
\end{align}
\end{lemma}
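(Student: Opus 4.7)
The plan is to reduce the claim to the identity $\int_0^T \1_{\{X_s = b\}} \, dX_s = 0$ $\P$-a.s.\ via Tanaka's formula, and then read off the result by substituting the SDE. First I would apply Tanaka--Meyer's formula for the convex functions $x \mapsto (x-b)^{\pm}$ to the continuous semimartingale $X$ to get
\begin{align*}
(X_t - b)^{+} &= (X_0 - b)^{+} + \int_0^t \1_{\{X_s > b\}} \, dX_s + \tfrac{1}{2} L_t^b(X), \\
(X_t - b)^{-} &= (X_0 - b)^{-} - \int_0^t \1_{\{X_s < b\}} \, dX_s + \tfrac{1}{2} L_t^b(X).
\end{align*}
Subtracting these and noting $(X_t - b)^{+} - (X_t - b)^{-} = X_t - b$ gives $X_t - b = (X_0 - b) + \int_0^t \1_{\{X_s \neq b\}} \, dX_s$, which compared with $X_t - b = (X_0 - b) + \int_0^t dX_s$ (the SDE) immediately yields the desired identity.

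Next I would substitute the SDE into this identity. Since $\mu(X_s) = \mu(b)$ and $\sigma(X_s) = \sigma(b)$ on the event $\{X_s = b\}$ (and $\sigma(b) \in [0,\infty)$ by the codomain of $\sigma$), the identity reads $0 = \mu(b) V_T + \sigma(b) N_T$, where $V_T := \int_0^T \1_{\{X_s = b\}} \, ds$ and $N_T := \int_0^T \1_{\{X_s = b\}} \, dW_s$. The integrand defining $N_T$ is bounded by $1$, so $N_T$ is a true $L^2$-martingale with $\E[N_T] = 0$. Taking expectations then yields $\mu(b) \, \E[V_T] = 0$, and since $|\mu(b)| > 0$ and $V_T \geq 0$, we conclude $V_T = 0$ $\P$-a.s., which is precisely \eqref{lemma2.0:result}.

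The main technical step to justify carefully is the Tanaka--Meyer identity $\int_0^T \1_{\{X_s = b\}} \, dX_s = 0$; the fact that $X$ is a continuous semimartingale on $[0,T]$ follows from the hypotheses of the lemma together with the local boundedness of $\sigma$ on $J$. The $\limsup$ hypothesis is not strictly needed for the argument above beyond the automatic finiteness of $\sigma(b)$; it is presumably present because it characterizes $b$ as a non-trap boundary (the diffusion coefficient vanishes at least linearly there) and this is the natural regime for the subsequent Bessel- and CIR-type applications of this lemma.
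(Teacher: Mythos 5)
Your Tanaka step is the problem, and it is a genuine gap, not a cosmetic one. With the right-continuous family of local times $L^a$ (the convention in which the formula for $(x-b)^+$ carries the integrand $\mathbbm 1_{\{X_s>b\}}$), the companion formula for $(x-b)^-$ carries the integrand $-\mathbbm 1_{\{X_s\le b\}}$, \emph{not} $-\mathbbm 1_{\{X_s<b\}}$. Subtracting the two correct formulas gives $X_t-b=(X_0-b)+\int_0^t (\mathbbm 1_{\{X_s>b\}}+\mathbbm 1_{\{X_s\le b\}})\,dX_s$, which is a tautology and tells you nothing. To write both integrands with strict inequalities you must use \emph{different} local times ($L^b$ on one side, $L^{b-}$ on the other), and subtracting then gives
\begin{equation}
  \int_0^t \mathbbm 1_{\{X_s=b\}}\,dX_s \;=\; \tfrac12\bigl(L^b_t-L^{b-}_t\bigr),
\end{equation}
whose right-hand side is, by the standard description of the spatial discontinuity of local time (Revuz--Yor, Thm.~VI.1.7), exactly $\int_0^t\mathbbm 1_{\{X_s=b\}}\,dV_s$ with $V$ the finite-variation part, i.e.\ $\mu(b)\int_0^t\mathbbm 1_{\{X_s=b\}}\,ds$ here. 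So the identity you want to use, $\int_0^T\mathbbm 1_{\{X_s=b\}}\,dX_s=0$, is not a free consequence of Tanaka; modulo the martingale part (which does vanish, by the occupation-time formula giving $\int_0^T\mathbbm 1_{\{X_s=b\}}\,d\langle X\rangle_s=0$) it is \emph{equivalent} to the assertion of the lemma. The argument is circular.

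Your closing remark that the $\limsup$ hypothesis on $(\sigma\cdot\sigma)$ is ``not strictly needed'' should have been a red flag. In the paper's proof that hypothesis is essential: it is what makes the upper-semicontinuous function $\phi_b(z)=\frac{(\sigma\cdot\sigma)(z)}{|z-b|}$ bounded on compact sets, which is used in \eqref{lemma2.0:majorante} to dominate the second-order It\^o term $f''_n(X_s-b)(\sigma\cdot\sigma)(X_s)$ uniformly in $n$ and thereby justify the dominated-convergence passage to the limit. Without some such quantitative control near $b$, the degenerate-diffusion regime $\sigma(b)=0$ (the only interesting one here, and the one relevant for CIR/Bessel) is precisely where the one-line occupation-time/Tanaka arguments break down, which is why the paper instead builds the $\delta$-approximation by hand with the bump functions $f_n$ and kills the troublesome term via the $\limsup$ bound.
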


\begin{proof} [Proof of Lemma~\ref{l:PX0=0}]
Define a set 
$
  \Gamma := 
  \{ 
    z \in  J \colon |\mu(z)| > 0 , 
    \limsup_{ J\setminus\{z\} \ni x \to z } \tfrac{ ( \sigma \cdot \sigma ) ( x ) }{ | x - z | } < \infty
  \}
$ 
and define a family $f_n \colon \R \to \R$, $n \in \N$, 
of functions by
\begin{align} \label{lemma2.0:func_def}
f_n(x) :=\begin{cases} 
 x \exp\! \left( 1- \tfrac{1}{1-n^2 x^2}\right)   &\colon x \in (-\tfrac{1}{n}, \tfrac{1}{n})\\
  0  &\colon x \in \R \backslash (-\tfrac{1}{n}, \tfrac{1}{n})
\end{cases}
\end{align}
for all $x \in \R$, $ n \in \N$. 
Observe that for all $ n \in \N$ it holds 
that $f_n \in C^2\!\left(\R, \R \right)$
and note that for all $ n \in \N $, $ x \in \R$ it holds that
\begin{align} \label{lemma2.0:first_diff}
f'_n(x) =\begin{cases} 
 \exp\!\left(1- \tfrac{1}{1-n^2 x^2}\right) - 2 n^2 x^2  \tfrac{ \exp\left( 1- \tfrac{1}{1-n^2 x^2}\right) }{(1-n^2 x^2)^2}   &\colon  x \in (-\tfrac{1}{n}, \tfrac{1}{n})\\
  0  & \colon  x \in \R \backslash (-\tfrac{1}{n}, \tfrac{1}{n})
\end{cases}
\qquad 
  \text{and}
\end{align}
\begin{align} \label{lemma2.0:sec_diff}
f''_n(x) =\begin{cases} 
  - \tfrac{6 n^2 x \exp\left( 1- \tfrac{1}{1-n^2 x^2}\right) }{(1-n^2 x^2)^2}
   - \tfrac{8 n^4 x^3 \exp\left( 1- \tfrac{1}{1-n^2 x^2}\right) }{(1-n^2 x^2)^3} 
   + \tfrac{4 n^4 x^3 \exp\left( 1- \tfrac{1}{1-n^2 x^2}\right) }{(1-n^2 x^2)^4} 
     &\colon  x \in (-\tfrac{1}{n}, \tfrac{1}{n})\\
  0  &\colon  x \in \R \backslash (-\tfrac{1}{n}, \tfrac{1}{n}).
\end{cases}
\end{align}
Next observe that \eqref{lemma2.0:func_def} and  \eqref{lemma2.0:first_diff} 
ensure that for all $z \in \R$ it holds that
\begin{align} \label{lemma2.0:def_boundary}
 \lim_{n \to \infty} \sup_{x \in \R} |f_n(x)| = 0, 
\qquad  
  \sup_{n\in \N} \sup_{x \in \R} \left| f'_n(x) \right|  \leq 3 \exp(1)
\qquad \text{and} 
\qquad  
  \lim_{n \to \infty} f'_n(z) = \1_{\{0\}} (z).
\end{align} 
Furthermore, note that \eqref{lemma2.0:sec_diff} shows that
\begin{align} \label{lemma2.0:dom1}
  \sup_{n  \in \N} \sup_{x  \in \R} | f''_n(x)x | 
\leq   \sup_{x  \in (-1,1)}  \tfrac{18 \exp \left( 1- \tfrac{1}{1-x^2}\right) }{(1-x^2)^4} < \infty.
\end{align} 
Next define
a family 
$ 
  \phi_{b} \colon  J \to [0, \infty)
$, 
$
  b \in \Gamma
$,  
of functions by
\begin{align} \label{lemma2.0:upper_semi}
\phi_{b}(z):=\begin{cases} 
 \tfrac{(\sigma \cdot \sigma)(z)}{|z-b|}   & \colon z \in J\setminus\{b\} \\ 
   \limsup_{J\setminus\{b\} \ni x \to b} \tfrac{(\sigma \cdot \sigma)(x)}{|x-b|}
   &\colon z \in \{b\}
\end{cases}
\end{align}
for all $z \in  J $, $b \in \Gamma$. 
The boundedness of $ \sigma $ on compact subsets of $ J $ and the fact 
that for all $ b \in \Gamma$ it holds that 
$ \limsup_{J\setminus\{b\} \ni x \to b}
\tfrac{(\sigma \cdot \sigma)(x)}{|x-b|} < \infty $ 
implies that for every $ b \in \Gamma $ and every compact set $K\subseteq J$
it holds
that $\phi_{b}|_{K}$ is an upper semi-continuous function
on the compact set $  K $ and, therefore,
that $\phi_{b}|_{K}$ is bounded from above.
Furthermore, note  that 
\eqref{lemma2.0:dom1}, \eqref{lemma2.0:upper_semi} and 
the fact that for all $ n \in \N $ it holds that $ f_{n}''(0) = 0 $ 
show that for all $ b \in \Gamma $ and all compact sets $ K \subseteq J $
it holds that
\begin{equation} 
\begin{split} 
 \label{lemma2.0:majorante} 
\sup_{n  \in \N} \sup_{z \in  K } | f''_n(z-b) \, (\sigma \cdot \sigma)(z) | 
 & = 
 \sup_{n  \in \N} \sup_{ z \in K \backslash \{ b \} } 
 \left| 
   \tfrac{ ( z - b ) f''_n( z - b ) \, ( \sigma \cdot \sigma )( z ) }{ z - b } 
 \right|
\\ & \leq   
\left[
  \sup_{ z \in  K }  
  \phi_{b}( z )  
\right]
\left[
  \sup_{ z \in (-1,1) }  
  \tfrac{ 18 \exp\left( 1 - \tfrac{ 1 }{ 1 - z^2 } \right) 
  }{
    ( 1 - z^2 )^4
  } 
\right]
  < \infty
  .
\end{split} 
\end{equation} 
%
%
%
In the next step we define stopping times 
$
  \tau_k \colon \Omega \to [0,T]
$, 
$ k \in \N $, by
\begin{align}  
  \tau_k
  := 
  \inf\!\left( 
    \left\{ T \right\} 
    \cup 
    \left\{
      t \in [0,T] \colon 
      \smallint\limits_0^t
      \left| \mu( X_s ) \right| ds  
      +
      |X_t| > k
    \right\}
    \cup
    \Big\{
      t \in [0,T] \colon 
      \big(
        \exists \,
        b \in ( \partial I ) \backslash J
        \colon
        | X_t - b | < \tfrac{ 1 }{ k }
      \big)
    \Big\}
  \right)
\end{align}
for all $ k \in \N $.
It\^{o}'s lemma implies that
\begin{equation} 
\begin{split} 
\label{lemma2.0:ito} 
  f_n(X_{t \wedge \tau_k} - b) - f_n( X_0 - b ) 
& = 
  \smallint_0^{ t \wedge \tau_k } 
    f'_n(X_s - b) \, \mu(X_s) + \tfrac{ 1 }{ 2 } \, f''_n( X_s - b ) 
    \, ( \sigma \cdot \sigma )( X_s ) 
  \, ds 
\\ &  
  + \smallint_0^{t \wedge \tau_k} f'_n(X_s - b) \, \sigma(X_s)  \, dW_s
\end{split} 
\end{equation} 
$\P$-a.s.\ for all $b \in \Gamma$, $t \in [0,T]$, $n, k \in \N$. 
Next note that the boundedness of $ \sigma $ on compact subsets of $ J $,
the compactness of the sets 
$ 
  K_k 
  := 
  [ - k , k ] 
  \cap
  \big\{
    z \in J \colon 
    \big(
      \forall \, 
      b \in ( \partial I ) \setminus J 
      \colon
      |z - b| \geq \tfrac{ 1 }{ k } 
    \big)
  \big\}
  \subseteq J
$, 
$
  k \in \N
$, 
and \eqref{lemma2.0:def_boundary} imply that 
for all $ b \in \Gamma $, $ k \in \N $ it holds that
\begin{equation}
\begin{split}
  \sup_{ n \in \N }  
  \sup_{ x \in K_k } 
  \big[ 
    f'_n(x-b) \, \sigma(x) 
  \big]^2
\leq
  \left[ 
    \sup_{ n \in \N }  
    \sup_{ x \in \R }
    \left| f'_n( x ) \right|
  \right]^2
  \left[ 
    \sup_{ x \in K_k } 
    \left|
      \sigma(x)
    \right|
  \right]^2
  < \infty .
\end{split}
\end{equation}
This implies that the expectation of the stochastic integral 
on the right-hand side of \eqref{lemma2.0:ito} vanishes. 
Hence, it holds for all $ b \in \Gamma $, $ t \in [0,T] $, $ n, k \in \N $ 
that
\begin{align} \label{lemma2.0:expectations}
  \E\big[
    f_n(X_{t \wedge \tau_k} - b) - f_n(X_0 - b ) 
  \big] 
& = 
  \E\!\left[ 
    \smallint_0^{ t \wedge \tau_k } 
      f'_n( X_s - b ) \, \mu(X_s) + 
      \tfrac{ 1 }{ 2 } \, f''_n( X_s - b )  
      \, ( \sigma \cdot \sigma )( X_s ) \, ds 
  \right]
  .
\end{align}
The dominated convergence theorem together with \eqref{lemma2.0:def_boundary},
\eqref{lemma2.0:majorante} and 
\eqref{lemma2.0:sec_diff} 
shows that for all 
$ b \in \Gamma $, $ t \in [0,T] $, $ k \in \N $ 
it holds that
\begin{equation}
\label{lemma2.0:new1}
\begin{split}
   0 & = 
   \lim_{ n \to \infty } 
   \E\big[
     f_n( X_{ t \wedge \tau_k } - b ) - f_n( X_0 - b ) 
   \big] 
 \\ & = 
 \lim_{ n \to \infty} 
 \E \Bigg[ \smallint_0^{t \wedge \tau_k} 
 \Big(
   f'_n( X_s - b ) \, \mu(X_s) 
   + 
   \tfrac{ 1 }{ 2 } \,
   f''_n( X_s - b ) \, 
   ( \sigma \cdot \sigma )( X_s ) 
 \Big) \, ds 
 \Bigg]
 \\ & =  \E \Bigg[ \smallint_0^{t \wedge \tau_k} 
 \lim_{ n \to \infty} 
 \Big(
   f'_n( X_s - b ) \, \mu(X_s) 
   + 
   \tfrac{ 1 }{ 2 } \,
   f''_n( X_s - b ) \, 
   ( \sigma \cdot \sigma )( X_s ) 
 \Big) \, ds 
 \Bigg]
=  \E \!\left[ \smallint_0^{t \wedge \tau_k} \1_{\{b \}} (X_s) \, \mu(b) \, ds \right].
\end{split}
\end{equation}
Next we obtain from  
$
  \int_0^T |\mu(X_s)| \,ds < \infty
$ 
$ \P $-a.s.\ and from the assumption that $ X $ has continuous sample paths that 
$ \lim_{ k \to \infty } \tau_k = T $ $ \P $-a.s.
This, \eqref{lemma2.0:new1}, the fact 
that for all $ b \in \Gamma $ it holds that 
$ |\mu(b)| > 0 $ and Fatou's lemma show
that for all $b \in \Gamma$, $t \in [0,T]$ it holds that
\begin{align} 
  \E \! \left[ \smallint_0^t \1_{ \{ X_s = b \} } \, ds \right] 
  =
  \E \! \left[ \smallint_0^t \1_{\{b \}} (X_s) \, ds \right] 
  \leq \liminf_{k \to \infty}  \E \! \left[ \smallint_0^{t \wedge \tau_k} \1_{\{b \}} (X_s) \, ds \right]  = 0.
\end{align} 
%
%
%
This finishes the proof of Lemma~\ref{l:PX0=0}.
\end{proof}

\begin{lemma}  \label{l:transformierte_gleichung}
Let $ I \subseteq \R $ be an open interval,
let 
$ T \in [0,\infty)$, $x_0 \in \overline{I}$,  
$ J \subseteq \overline{I} $,
$
  \mu \in C(J, \R)
$, 
$
  \sigma \in C(J, [0,\infty))
$ 
satisfy  
$
  I \subseteq J
$,
$
  \sigma(I) \subseteq (0,\infty)
$, 
$
  \sigma \cdot \sigma
  \in
  C^1\!\left(J, [0,\infty) \right)$, 
let
$
  ( 
    \Omega, \mathcal{F}, \P, ( \mathcal{F}_t )_{ t \in [0,T] } 
  )
$
be a stochastic basis, let
$
  W \colon [0,T] \times \Omega \to \R
$
be a standard $ ( \mathcal{F}_t )_{ t \in [0,T] } $-Brownian motion,
  let $X\colon[0,T]\times \Omega\to J$
  be an adapted stochastic process with continuous sample paths satisfying 
  \begin{equation} \label{l:transformierte_gleichung:ass1}
    X_t=X_0+\int_0^t \mu(X_s)\,ds+\int_0^t\sigma(X_s)\,dW_s
  \end{equation}
   $\P$-a.s.\ for all $t \in [0,T]$,
   assume that 
   $
     \big|
       \!
       \int_{ x_0 }^y \tfrac{ 1 }{ \sigma(z) } \, dz 
     \big| < \infty
   $
  for all $y \in J$,
    let $\phi\colon J \to \R$ be a function defined by
  $
    \phi(y) := \int_{x_0}^y \tfrac{1}{\sigma(z)} \, dz
  $
  for all $ y \in J $
  and
  assume that
  for all $b \in J \cap \partial I$ it holds that
  $
    \sigma(b) = 0
  $
  and
  %
  \begin{align}
  \big(
    \mu(b) - \tfrac{ 1 }{ 4 } ( \sigma \cdot \sigma )'( b ) 
  \big)
  \left( 
    \1_{
      \{ ( \sigma \cdot \sigma )'( b ) \geq 0 \}
    } 
    -
    \1_{ 
      \{
        ( \sigma \cdot \sigma )'( b ) < 0 
      \}
    } 
  \right) 
  > 0 ,
  \quad
\label{l:transformierte_gleichung:ass4} 
     \limsup_{I \ni x \to b} \tfrac{|\mu(x) - \mu(b)| + | (\sigma \cdot \sigma)'(x) - (\sigma \cdot \sigma)'(b)|}{\sigma(x)} &< \infty.
\end{align} 
  Then it holds that 
  $ \phi $ is injective and absolutely continuous,
  it holds that
\begin{align}
\label{eq:SDE_transformation_integral_finite}
    \int_{0}^{T}
      \tfrac{
        1
          +
        \left|
          \mu(X_s)
        \right|
          + 
        |
          (\sigma \cdot \sigma)'(X_s)
        |
      }{
        \sigma( X_s )
      }  
      \,
      \1_{\{ X_s \in I\} } 
   \, ds < \infty
\end{align}
$\P$-a.s.\ and it holds that
the mapping $Y\colon[0,T]\times \Omega\to \phi(J)$ defined by
   $Y_t := \phi(X_t)$  
  for all $t \in [0,T]$
  is an adapted stochastic process with continuous sample paths which
  satisfies
  \begin{equation} \label{l:transformierte_gleichung:ass5}
   Y_t = Y_0 + \int_0^t \left( \tfrac{\mu - \frac{1}{4} (\sigma \cdot \sigma)'}{\sigma} \right) \! \left( \phi^{-1}(Y_s) \right) \1_{\{Y_s \in \phi(I)\}} \, ds + W_t
  \end{equation}
  $\P$-a.s.\
  for all $t\in[0,T]$.
\end{lemma}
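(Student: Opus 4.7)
The plan is to establish the analytic properties of $\phi$, then derive \eqref{l:transformierte_gleichung:ass5} through a localized It\^o argument, with the required integrability \eqref{eq:SDE_transformation_integral_finite} following from the sign hypothesis at each boundary point combined with Lemma~\ref{l:PX0=0}.

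First, since $\sigma \cdot \sigma \in C^1(J,[0,\infty))$ is strictly positive on the open interval $I$, the function $\sigma$ is itself $C^1$ on $I$, so $\phi'(z) = 1/\sigma(z)$ is continuous and strictly positive on $I$. Hence $\phi$ is strictly increasing and $C^2$ on $I$ with $\phi''(x) = -(\sigma \cdot \sigma)'(x)/(2\sigma(x)^3)$. The finite-value assumption for $\phi$ on $J$ combined with $|\partial I| \leq 2$ extends strict monotonicity, continuity, and absolute continuity to $J$, yielding the injectivity claim.

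Next I would introduce the stopping times
\begin{equation}
\tau_n := \inf\!\left\{ t \in [0,T] \colon \dist(X_t, J \cap \partial I) \leq 1/n \right\}
\end{equation}
(with the convention $\inf \emptyset = T$). On the stochastic interval $[0, \tau_n]$ the process $X$ stays in a subset of $I$ on which $\phi \in C^2$, and $\P$-a.s.\ pathwise boundedness of $X$ ensures that this subset is pathwise compact. It\^o's lemma together with $\phi'(X_s)\sigma(X_s) = 1$ and the above expression for $\phi''$ gives
\begin{equation} \label{plan:itolocal}
Y_{t \wedge \tau_n} - Y_0 = \int_0^{t \wedge \tau_n} \frac{\mu(X_s) - \tfrac{1}{4}(\sigma \cdot \sigma)'(X_s)}{\sigma(X_s)}\, ds + W_{t \wedge \tau_n}
\end{equation}
for every $t \in [0,T]$ and $n \in \N$.

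The main obstacle is to pass to the limit $n \to \infty$ in \eqref{plan:itolocal} and to justify \eqref{eq:SDE_transformation_integral_finite}. For each $b \in J \cap \partial I$ set $c_b := \mu(b) - \tfrac{1}{4}(\sigma \cdot \sigma)'(b)$; the sign hypothesis forces $c_b \neq 0$ with $\mathrm{sign}(c_b)$ determined by $(\sigma \cdot \sigma)'(b)$, and the second part of the boundary assumption yields a decomposition
\begin{equation}
\frac{\mu(x) - \tfrac{1}{4}(\sigma \cdot \sigma)'(x)}{\sigma(x)} = \frac{c_b}{\sigma(x)} + R_b(x), \qquad x \in I \cap U_b,
\end{equation}
for some neighborhood $U_b$ of $b$ in $J$, where $R_b$ is bounded on $U_b \cap I$. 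Multiplying \eqref{plan:itolocal} by $\mathrm{sign}(c_b)$ and using both the $\P$-a.s.\ pathwise boundedness of $Y_{t \wedge \tau_n} = \phi(X_{t \wedge \tau_n})$ (which follows from pathwise boundedness of $X$ together with continuity of $\phi$ on the compact set $X([0,T])\subseteq J$) and the continuity of $W$ on $[0,T]$ would yield a $\P$-a.s.\ bound $\sup_{n \in \N} \int_0^{\tau_n} (1/\sigma(X_s))\, \1_{\{X_s \in U_b \cap I\}}\, ds < \infty$, which, combined with continuity of $\mu$ and $(\sigma \cdot \sigma)'$ on $J$ and with boundedness of $1/\sigma$ on compact subsets of $I$ away from $\partial I$, gives \eqref{eq:SDE_transformation_integral_finite}. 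Finally, Lemma~\ref{l:PX0=0} applies at each $b \in J \cap \partial I$, since the sign condition enforces $|\mu(b)| > 0$ and $\sigma \cdot \sigma \in C^1$ gives $\limsup_{J\setminus\{b\} \ni x \to b} \sigma(x)^2/|x-b| = |(\sigma \cdot \sigma)'(b)| < \infty$; this ensures that $\{s : X_s \in J \cap \partial I\}$ has $\P$-a.s.\ Lebesgue measure zero, which (via a pasting argument across excursions of $X$ from $J \cap \partial I$) allows me to extend \eqref{plan:itolocal} to $[0,T]$ and to insert the indicator $\1_{\{Y_s \in \phi(I)\}}$ without altering the drift integral, yielding \eqref{l:transformierte_gleichung:ass5}.
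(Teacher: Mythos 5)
Your high-level plan is recognizable, but it diverges from the paper's proof in a crucial way, and that divergence introduces a genuine gap. The paper does \emph{not} localize by stopping at distance $1/n$ from $\partial I$; instead it regularizes the coefficient, setting $\sigma_\varepsilon := \sqrt{\varepsilon + \sigma\cdot\sigma}$ and $\phi_\varepsilon(y) := \int_{x_0}^y \sigma_\varepsilon(z)^{-1}\,dz$, which is $C^2$ on all of $J$. This lets It\^o's formula be applied on the entire interval $[0,T]$ without any excursion decomposition, and the passage $\varepsilon \to 0$ is handled by a Doob/$L^2$-isometry argument for the stochastic integral and by monotone/dominated convergence for the drift, pieced together over a sequence of stopping times $\tau_k$ defined as successive returns of $X$ to a fixed interior point $y_0$ after a boundary touch --- a sequence that is a.s.\ \emph{finite} by path continuity. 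Your approach is genuinely different: you localize with $\tau_n := \inf\{t : \dist(X_t, J\cap\partial I)\leq 1/n\}$, whose limit is only the \emph{first} hitting time of $\partial I$, and then you invoke a ``pasting argument across excursions'' to cover the rest of $[0,T]$.

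That pasting step is exactly where the difficulty lives, and it is not established by the lemmas you cite. The set $\{s : X_s\in\partial I\}$ being Lebesgue-null gives you that the excursion intervals $(a_i,b_i)$ cover $[0,T]$ up to a null set, but it does \emph{not} by itself let you sum the excursion-wise It\^o identities: the Brownian increments $\sum_i(W_{b_i}-W_{a_i})$ over countably many excursions do not converge absolutely (Brownian paths are of unbounded variation), so you need an $L^2$/martingale argument to show the truncated sums converge, and you need this to produce a \emph{pathwise} bound, since you are trying to establish \eqref{eq:SDE_transformation_integral_finite} $\P$-a.s.\ and not just in some mean sense. You also need to justify that $\sum_i(Y_{b_i}-Y_{a_i})$ reassembles into $Y_t - Y_0$ when $\partial I$ has two points (when it has a single boundary point the interior excursion increments of $Y$ vanish, but the lemma as stated covers two-sided boundaries). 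Both points require the kind of limiting control that the paper obtains from $\phi_\varepsilon$ and the finite family $(\tau_k)_k$ but which your sketch leaves unaddressed; as written, the ``pasting argument'' is asserted rather than proved, and filling it in would essentially reproduce the paper's regularization-plus-$\tau_k$ machinery in disguise.
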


\begin{proof}[Proof 
of Lemma~\ref{l:transformierte_gleichung}]
First of all, note 
that the assumption that for all $ b \in J $
it holds that
$
  \big| 
    \int_{x_0}^b \tfrac{1}{\sigma(z)} \, dz 
  \big| < \infty
$
ensures that $ \phi $ is well-defined
and absolutely continuous. 
Next observe that for all $x,y \in J$ it holds that $\int_{y}^x \tfrac{1}{\sigma(z)} \, dz = 0$ if and only if $\phi(x) = \phi(y)$. 
This together with the assumption that $\sigma \geq 0$ 
implies that for all $x,y \in J$ it holds that $ \phi(x) = \phi(y) $ 
if and only if $ x = y $, 
which proofs the injectivity of $ \phi $.
In the next step we define families 
$ \sigma_{ \eps } \colon J \to [0,\infty) $, 
$ \eps \in (0,1) $, 
and
$
  \phi_{\eps}\colon J \to \R
$, 
$
  \eps \in (0,1)
$,
of functions by
\begin{equation} \label{l:transformierte_gleichung:phi_eps}
  \sigma_{ \eps }( z ) := 
  \sqrt{ \eps + ( \sigma \cdot \sigma )( z ) }
\qquad 
  \text{and}
\qquad
  \phi_{\eps}(y) := \int_{x_0}^y \tfrac{1}{\sigma_{\eps}(z)} \, dz
\end{equation}
for all $ y \in J $, $ \eps \in (0,1) $. 
Note that for all 
$ \eps \in (0,1) $ 
it holds that 
$
  \sigma_{\eps}  \in
  C^1\!\left(J, [0,\infty) \right)
$  
and 
$
  \phi_{ \eps }  
  \in
  C^2\!\left( J, \R \right)
$. 
It\^{o}'s lemma
and the fact that 
for all $ x \in J $, $ \eps \in (0,1) $
it holds that
$
  (\sigma_{\eps} \cdot \sigma_{\eps})'(x) =  (\sigma \cdot \sigma)'(x)
$ 
and 
$
  (
    \sigma_{ \eps } 
  )'(x) 
  = 
  \frac{
    ( \sigma_{ \eps } \cdot \sigma_{ \eps } )'(x) 
  }{ 
    2 \sigma_{ \eps }( x ) 
  } 
$ 
hence implies 
that for all $ \eps \in (0,1) $, $ t \in [0,T] $ 
it holds that
\begin{equation} 
\begin{split} 
 \label{l:transformierte_gleichung:ito}
  \phi_{\eps}(X_t)& =  \phi_{\eps}(X_0) + \int_{0}^t \tfrac{\mu(X_s)}{\sigma_{\eps}(X_s)} - \tfrac{1}{2} \tfrac{\sigma'_{\eps}(X_s)}{(\sigma_{\eps} \cdot \sigma_{\eps}) (X_s)} (\sigma \cdot \sigma) (X_s) \, ds
  + \int_{0}^t \tfrac{\sigma(X_s)}{\sigma_{\eps} (X_s)} \, dW_s
\\ 
& 
=  \phi_{\eps}(X_0) +
 \int_{0}^t 
\tfrac{\mu(X_s) - \frac{1}{4} (\sigma \cdot \sigma)'(X_s)
\frac{(\sigma \cdot \sigma) (X_s) }{(\sigma_{\eps} \cdot \sigma_{\eps}) (X_s)}   }{\sigma_{\eps} (X_s)} 
 \, ds
  + \int_{0}^t \tfrac{\sigma(X_s)}{\sigma_{\eps} (X_s)}  \, dW_s.
\end{split} 
\end{equation} 
Lemma~\ref{l:PX0=0} together with the fact that 
for all $b \in J \cap \partial I$ it holds that
$ | \mu(b)| >0 = \sigma(b) $
shows that 
for Lebesgue almost all 
$ t \in [0,T] $
it holds that
\begin{equation}
\label{eq:X_ist_kaum_am_Rand}
  \P\big[ X_t \in I \big]
  = 1
  .
\end{equation}
Next we fix $ y_0 \in I $,
we define a sequence 
$
  \tau_k \colon \Omega \to [0,T]
$, 
$
  k \in \N
$, 
of stopping times 
by 
$
  \tau_0 := 0
$ 
and
\begin{align}  
  \tau_k := 
  \inf\!\big( 
    \left\{ T \right\} 
    \cup 
    \left\{ 
      t \in ( \tau_{ k - 1 } , T ] 
      \colon 
      X_t = y_0 \text{ and } 
      \exists \, s \in [ \tau_{ k - 1 } , t ] 
      \colon 
      X_s \in \partial I
    \right\} 
  \big)
\end{align}
for all $ k \in \N $ 
and we define a mapping $\kappa \colon \Omega \to \N_0 \cup \{\infty\}$
by $\kappa : = \min(\{k\in \N_0 \colon \tau_k =T \}\cup\{\infty\})$.
Pathwise continuity of $X$ implies that for all $ \omega \in \Omega $
it holds that $ \kappa(\omega) <  \infty$.
%
%
%
%
%
%
%
%
%
%
%
%
%
Now assumption
\eqref{l:transformierte_gleichung:ass4}
and the assumptions that
$
  \sigma \cdot \sigma
  \in C^1\!\left( J , [0,\infty) \right)
$, 
$
  \sigma( I ) \subseteq (0,\infty)
$
and
$
  \mu \in C( J, \R )
$
imply 
that for every 
$
  b \in J \cap \partial I
$ 
and every compact set
$
  K \subseteq I \cup \{ b \}
$ 
it holds that
\begin{align} \label{l:transformierte_gleichung:dom1}
  \sup_{ \eps \in (0,1) } 
  \sup_{ z \in K } 
  \left| 
    \tfrac{ 
      \mu(z) - \mu(b) - 
      \frac{ 1 }{ 4 } 
      \big( 
        ( \sigma \cdot \sigma )'(z) - 
        ( \sigma \cdot \sigma )'(b) 
      \big)
      \frac{
        ( \sigma \cdot \sigma )(z) 
      }{
        ( \sigma_{ \eps } \cdot \sigma_{ \eps } )(z)
      }   
    }{
      \sigma_{ \eps }(z)
    } 
  \right|
\leq
  \sup_{
    z \in K \backslash \{ b \}
  } 
  \tfrac{
    | \mu(z) - \mu(b) | + 
    | ( \sigma \cdot \sigma )'(z) - ( \sigma \cdot \sigma )'(b) |
  }{
    \sigma(z)
  } 
  < \infty 
  .
\end{align}
This proves that
for all $ b \in \partial I \cap J $, $ k \in \N $
it holds $ \P $-a.s.\ that
\begin{align}
\label{l:domination_integral}
&
  \mathbbm{1}_{
    \{ 
      \exists \, u \in [ \tau_{ k - 1 }, \tau_k ] \colon
      X_u = b
    \}
  }
  \int_{ \tau_{ k - 1 } }^{
    \tau_k
  }
    \tfrac{ 
      \left| \mu(X_s) - \mu(b) \right|
      +
      \frac{ 1 }{ 4 } 
      | 
        ( \sigma \cdot \sigma )'( X_s ) - 
        ( \sigma \cdot \sigma )'( b ) 
      |
    }{
      \sigma( X_s )
    }
    \,
    \1_{ 
      \{ X_s \in I \} 
    } 
    \,
  ds
\\ & \leq
  \mathbbm{1}_{
    \{ 
      \exists \, u \in [ \tau_{ k - 1 }, \tau_k ] \colon
      X_u = b
    \}
  }
  \int_{ \tau_{ k - 1 } }^{
    \tau_k
  }
  \sup\!\left\{
    \tfrac{
      | \mu(z) - \mu(b) | + 
      | ( \sigma \cdot \sigma )'(z) - ( \sigma \cdot \sigma )'(b) |
    }{
      \sigma(z)
    } 
  \colon
    z \in 
    \left[
      \cup_{ u \in [ \tau_{ k - 1 }, \tau_k ] } \{ X_u \}
    \right]
    \backslash \{ b \}
  \right\}
  ds
  < \infty
  .
\nonumber
\end{align}
Next, we define a function 
$
  \sgn \colon \R \to \{-1,1\}
$ 
by
$
  \sgn(x) := 
  \1_{
    \{ x \geq 0 \}
  } 
  -
  \1_{
    \{ x < 0 \}
  } 
$
for all $ x \in \R $.
Then observe that 
the fact that 
for all $ b \in J \cap \partial I $  
it holds that
$
  \big(\mu(b) - \tfrac{1}{4}  (\sigma \cdot \sigma)'(b) \big)
  \sgn\left( (\sigma \cdot \sigma)'(b) \right)  > 0
$
implies that for all $ b \in J \cap \partial I $ 
it holds that 
$
  \sgn\!\big( (\sigma \cdot \sigma)'(b) \big) \mu(b) = |\mu(b)| 
$.
This, 
\eqref{l:transformierte_gleichung:ito} 
and 
\eqref{eq:X_ist_kaum_am_Rand}
imply that
\begin{align}  \nonumber  \label{l:transformierte_gleichung:dom4b}
& \sgn\!\big( (\sigma \cdot \sigma)'(b) \big)    \int_{\tau_{k-1}}^{\tau_k} \tfrac{\mu(b) - \frac{1}{4} (\sigma \cdot \sigma)'(b)
   }{\sigma_{\eps} (X_s)} \left(\tfrac{(\sigma \cdot \sigma) (X_s) }{(\sigma_{\eps} \cdot \sigma_{\eps}) (X_s)}\right)  \,  ds
\leq
  \sgn\!\big( (\sigma \cdot \sigma)'(b) \big)   \int_{\tau_{k-1}}^{\tau_k} \tfrac{\mu(b) - \frac{1}{4} (\sigma \cdot \sigma)'(b)
\frac{(\sigma \cdot \sigma) (X_s) }{(\sigma_{\eps} \cdot \sigma_{\eps}) (X_s)}   }{\sigma_{\eps} (X_s)} \,  ds
\\ & =  
\nonumber
  \sgn\!\big( 
    ( \sigma \cdot \sigma )'(b) 
  \big)   
  \Bigg(
    \phi_{ \eps }( X_{ \tau_k } ) 
    - \phi_{ \eps }( X_{ \tau_{ k - 1 } } ) 
    - 
    \int_{ \tau_{ k - 1 } }^{ \tau_k } 
      \tfrac{ \sigma( X_s ) }{ \sigma_{ \eps }( X_s ) } 
    \, dW_s 
\\ & \quad -
  \int_{
    \tau_{ k - 1 } 
  }^{ \tau_k } 
  \tfrac{
    \mu( X_s ) - \mu( b ) 
    - 
    \frac{ 1 }{ 4 } 
    \big( ( \sigma \cdot \sigma )'( X_s ) - ( \sigma \cdot \sigma )'( b ) 
    \big)
    \frac{
      ( \sigma \cdot \sigma )( X_s ) 
    }{
      ( \sigma_{ \eps } \cdot \sigma_{ \eps } )( X_s )
    }   
  }{
    \sigma_{ \eps }( X_s ) 
  }
  \, 
  ds
\Bigg)  
\\ & =  
\nonumber
  \sgn\!\big( 
    ( \sigma \cdot \sigma )'(b) 
  \big)   
  \Bigg(
    \phi_{ \eps }( X_{ \tau_k } ) 
    - \phi_{ \eps }( X_{ \tau_{ k - 1 } } ) 
    - 
    \int_{ \tau_{ k - 1 } }^{ \tau_k } 
      \tfrac{ \sigma( X_s ) }{ \sigma_{ \eps }( X_s ) } 
    \, dW_s 
\\ \nonumber & \quad -
  \int_{
    \tau_{ k - 1 } 
  }^{ \tau_k } 
  \tfrac{
    \mu( X_s ) - \mu( b ) 
    - 
    \frac{ 1 }{ 4 } 
    \big( ( \sigma \cdot \sigma )'( X_s ) - ( \sigma \cdot \sigma )'( b ) 
    \big)
    \frac{
      ( \sigma \cdot \sigma )( X_s ) 
    }{
      ( \sigma_{ \eps } \cdot \sigma_{ \eps } )( X_s )
    }   
  }{
    \sigma_{ \eps }( X_s ) 
  }
  \,
  \1_{
    \{ X_s \in I \} 
  } 
  \, 
  ds
\Bigg)  
\end{align}
$\P$-a.s.\ for all $ b \in  \partial I \cap J $, $ \eps \in (0,1) $, $ k \in \N $. 
The monotone convergence theorem shows that 
for all $ t \in [0,T] $ it holds that
$
  \lim_{ (0,1) \ni \eps \to 0} 
  \phi_{\eps}(X_t) = \phi(X_t)
$.
Moreover,
Doob's martingale inequality,
the dominated convergence theorem,
\eqref{eq:X_ist_kaum_am_Rand}
and $\sigma^{-1}(\{0\})=J\cap\partial I$
imply that
\begin{equation} 
\label{l:transformierte_gleichung:ito_iso}
\begin{split}
&
 \lim_{(0,1) \ni \eps \to 0} 
 \left\|
 \sup_{
    t\in[0,T]
 }
 \left|
   W_t
   - 
   \int_{ 0 }^{ t } 
   \tfrac{ \sigma(X_s) }{ \sigma_{ \eps }( X_s ) } 
   \, dW_s 
 \right|
 \right\|_{ 
   L^2( \Omega; \R )
 }^2
 =
 \lim_{(0,1) \ni \eps \to 0} 
 \left\|
 \sup_{
    t\in[0,T]
 }
 \left|
   \int_{ 0 }^{ t } 
   1-
   \tfrac{ \sigma(X_s) }{ \sigma_{ \eps }( X_s ) } 
   \, dW_s 
 \right|
 \right\|_{ 
   L^2( \Omega; \R )
 }^2
\\ & 
\leq
 2
  \lim_{ (0,1) \ni \eps \to 0 } 
  \E\!\left[
    \int_0^T 
    \left( 
      1 - \tfrac{ \sigma(X_s) }{ \sigma_{ \eps }( X_s ) } 
    \right)^2 
    ds 
  \right] 
=
 2 \, 
   \E\!\left[
    \int_0^T 
    \lim_{ (0,1) \ni \eps \to 0 } 
    \left(
      1 - \tfrac{\sigma(X_s)}{\sigma_{\eps}(X_s)} 
    \right)^2 
    ds 
  \right] 
\\ & =
 2 \,
 \E\!\left[ 
   \int_0^T  \1_{\{\sigma(X_s) = 0 \}} \, ds 
 \right]
= 
 2 \int_0^T 
  \P\big[ 
    \sigma(X_s) = 0
  \big] \, ds
=
 2 \int_0^T 
  \P\big[ 
    X_s \in J \cap \partial I 
  \big] \, ds
= 0
  .
\end{split}
\end{equation}
Consequently,
there exists 
a strictly decreasing sequence
$ \eps_n \in (0,\infty) $, $n \in \N$,
such that 
it holds $ \P $-a.s.\ that
$
 \lim_{\N \ni n \to \infty}
 \sup_{t\in[0,T]}\big|W_t-\int_0^t \tfrac{\sigma(X_s)}{\sigma_{\eps_n}(X_s)}  \, dW_s
                 \big|=0
$.
This, the monotone convergence theorem, 
\eqref{l:transformierte_gleichung:dom4b}
and
the dominated convergence theorem 
together with
\eqref{l:domination_integral}
prove that
{\allowdisplaybreaks 
\begin{align}   
\label{l:transformierte_gleichung:dom3a} \nonumber
& 
  \mathbbm{1}_{
    \{ 
      \exists \, u \in [ \tau_{ k - 1 }, \tau_k ] \colon
      X_u = b
    \}
  }
 \int_{
   \tau_{k-1}
 }^{
   \tau_k
 } 
   \tfrac{
     | \mu(b) - \frac{ 1 }{ 4 } (\sigma \cdot \sigma)'(b) |
   }{
     \sigma( X_s ) 
   } 
   \, 
   \1_{\{ X_s \in I\} } 
 \, ds
\\ \nonumber
&=
  \mathbbm{1}_{
    \{ 
      \exists \, u \in [ \tau_{ k - 1 }, \tau_k ] \colon
      X_u = b
    \}
  }
  \lim_{\N \ni n \to \infty} 
  \sgn\!\big( 
    (\sigma \cdot \sigma)'(b) 
  \big) 
  \int_{\tau_{k-1}}^{\tau_k}  
    \tfrac{
      \mu(b) - \frac{1}{4} (\sigma \cdot \sigma)'(b)
    }{
      \sigma_{\eps_n}(X_s)} 
      \left(
        \tfrac{
          ( \sigma \cdot \sigma )(X_s) 
        }{
          ( \sigma_{ \eps_n } \cdot \sigma_{ \eps_n } )( X_s )
        }
      \right) 
      ds
\\ \nonumber
&\leq
  \mathbbm{1}_{
    \{ 
      \exists \, u \in [ \tau_{ k - 1 }, \tau_k ] \colon
      X_u = b
    \}
  }
\lim_{\N \ni n \to \infty}  \sgn\!\big( (\sigma \cdot \sigma)'(b) \big)   \Bigg(\phi_{\eps_n}(X_{\tau_{k}}) - \phi_{\eps_n}(X_{\tau_{k-1}}) -
 \int_{\tau_{k-1}}^{\tau_{k}} \tfrac{\sigma(X_s)}{\sigma_{\eps_n} (X_s)}  \, dW_s 
\\ 
& \qquad -
\int_{\tau_{k-1}}^{\tau_k} \tfrac{\mu(X_s) - \mu(b) - \frac{1}{4} \big( (\sigma \cdot \sigma)'(X_s) - (\sigma \cdot \sigma)'(b) \big)
\frac{(\sigma \cdot \sigma) (X_s) }{(\sigma_{\eps_n} \cdot \sigma_{\eps_n}) (X_s)}   }{\sigma_{\eps_n} (X_s)} \, \1_{\{ X_s \in I\} } \,  ds
\Bigg)  
\\ \nonumber
&=
  \mathbbm{1}_{
    \{ 
      \exists \, u \in [ \tau_{ k - 1 }, \tau_k ] \colon
      X_u = b
    \}
  }
\sgn\!\big( (\sigma \cdot \sigma)'(b) \big)   \Bigg(\phi(X_{\tau_{k}}) - \phi(X_{\tau_{k-1}}) -
(W_{\tau_k} - W_{\tau_{k-1}})
\\  \nonumber
& \qquad -
  \int_{
    \tau_{ k - 1 }
  }^{
    \tau_k
  } 
  \tfrac{ 
    \mu(X_s) - \mu(b) - \frac{ 1 }{ 4 } 
    \big( 
      ( \sigma \cdot \sigma )'( X_s ) - 
      ( \sigma \cdot \sigma )'( b ) 
    \big)
  }{
    \sigma( X_s )
  }
  \,
  \1_{ 
    \{ X_s \in I \} 
  } 
  \, 
  ds
\Bigg)  < \infty
\end{align}}%
$\P$-a.s.\ 
for all $ b \in \partial I \cap J $, $ k \in \N $. 
This and the fact that 
for all $ b \in \partial I \cap J $
it holds that
$
  | \mu(b) - \tfrac{ 1 }{ 4 } ( \sigma \cdot \sigma )'( b )| > 0 
$
imply that for all $ k \in \N $ it holds $ \P $-a.s.\ that
\begin{equation}
\begin{split}
&
  \int_{
    \tau_{ k - 1 }
  }^{
    \tau_k
  } 
    \tfrac{
      1
    }{
      \sigma( X_s ) 
    } 
    \, 
    \1_{
      \{ X_s \in I \} 
    } 
    \, 
  ds
\leq
  \mathbbm{1}_{
    \{ 
      \forall \, u \in [ \tau_{ k - 1 }, \tau_k ] \colon
      X_u \in I
    \}
  }
  \int_{
    \tau_{ k - 1 }
  }^{
    \tau_k
  } 
    \tfrac{
      1
    }{
      \sigma( X_s ) 
    } 
    \, 
    \1_{
      \{ X_s \in I \} 
    } 
    \, 
  ds
\\ &
  +
  \sum_{ b \in \partial I \cap J }
  \frac{ 1 }{
    | \mu(b) - \frac{ 1 }{ 4 } (\sigma \cdot \sigma)'(b) |
  }
  \left[
  \mathbbm{1}_{
    \{ 
      \exists \, u \in [ \tau_{ k - 1 }, \tau_k ] \colon
      X_u = b
    \}
  }
  \int_{
    \tau_{k-1}
  }^{
    \tau_k
  } 
    \tfrac{
      | \mu(b) - \frac{ 1 }{ 4 } (\sigma \cdot \sigma)'(b) |
    }{
      \sigma( X_s ) 
    } 
    \, 
    \1_{
      \{ X_s \in I \} 
    } 
    \, 
  ds
  \right]
  < \infty
  .
\end{split}
\end{equation}
The fact that for all $ \omega \in \Omega $ it holds that
$ \kappa( \omega ) < \infty $ hence proves that
\begin{equation}
  \int_0^T
  \tfrac{ 1 }{ \sigma( X_s ) } 
  \, \mathbbm{1}_{ \{ X_s \in I \} } \, ds
  < \infty
\end{equation}
$ \P $-a.s.
This and the fact that
$
  \P\big[
    \sup\{
      | \mu(X_s) | + | ( \sigma \cdot \sigma )'( X_s ) | 
      \colon s \in[0,T]
    \}
    < \infty
  \big]
  =
  1
$
imply 
\eqref{eq:SDE_transformation_integral_finite}.
Finally, 
the dominated convergence theorem together
with~\eqref{eq:SDE_transformation_integral_finite},
\eqref{eq:X_ist_kaum_am_Rand} and \eqref{l:transformierte_gleichung:ito}
shows that for every $ t \in [0,T] $ it holds $ \P $-a.s.\ that
 {\allowdisplaybreaks 
\begin{equation} 
\begin{split} 
 \label{l:transformierte_gleichung:ito_lim} 
& Y_t
  - Y_0
  - \int_0^t \left( \tfrac{\mu - \frac{1}{4} (\sigma \cdot \sigma)'}{\sigma} \right)
             \! \left( \phi^{-1}(Y_s) \right) \1_{\{Y_s \in \phi(I)\}}\,  ds
  - W_t
\\ 
& =
  \phi(X_t)
  - \phi(X_0)
  -  \int_{0}^t \tfrac{\mu(X_s) - \frac{1}{4} (\sigma \cdot \sigma)'(X_s)
                      }{\sigma (X_s)} \1_{\{ X_s \in I\} }\,  ds 
 - W_t
\\ 
& =
\lim_{n\to\infty}
\left(
  \phi_{\eps_n}(X_t) 
  - \phi_{\eps_n}(X_0)
  -  \int_{0}^t \tfrac{\mu(X_s) - \frac{1}{4} (\sigma_{\eps_n} \cdot \sigma_{\eps_n})'(X_s)
                      }{\sigma_{\eps_n} (X_s)} \, \1_{\{ X_s \in I\} } \, ds 
 - \int_{0}^t \tfrac{\sigma(X_s)}{\sigma_{\eps_n} (X_s)}  \, dW_s
\right)
\\ 
& =
\lim_{n\to\infty}
\left(
  \phi_{\eps_n}(X_t) 
  - \phi_{\eps_n}(X_0)
  -  \int_{0}^t \tfrac{\mu(X_s) - \frac{1}{4} (\sigma_{\eps_n} \cdot \sigma_{\eps_n})'(X_s)
                      }{\sigma_{\eps_n} (X_s)}  \, ds 
 - \int_{0}^t \tfrac{\sigma(X_s)}{\sigma_{\eps_n} (X_s)}  \, dW_s
\right)
=
0.
\end{split} 
\end{equation}  }%
This finishes the proof of Lemma~\ref{l:transformierte_gleichung}.
\end{proof}

\begin{remark}
  In the setting of Lemma~\ref{l:transformierte_gleichung} it holds that
  if
  $\big(
    \mu(b) - \tfrac{ 1 }{ 4 } ( \sigma \cdot \sigma )'( b ) 
  \big)
  \left( 
    \1_{
      \{ ( \sigma \cdot \sigma )'( b ) \geq 0 \}
    } 
    -
    \1_{ 
      \{
        ( \sigma \cdot \sigma )'( b ) < 0 
      \}
    } 
  \right) 
  \leq 0$ for some $b\in\partial I$, then the process $Y$ may not be a solution process of an It\^o SDE;
  see Section XI.1 in Revuz \& Yor~\cite{RevuzYor1994}
  for the example of the one-dimensional Bessel process.
\end{remark}

\subsection{Properties of drift-implicit Euler approximations for 
Bessel-type processes}

\begin{lemma}  \label{l:inverse_function}
  Let 
  $
    \left( H, \langle \cdot, \cdot \rangle_H, \left\| \cdot \right\|_H \right)
  $ 
  be an $ \R $-Hilbert space,
  let $ I \subseteq H $ be a non-empty set, 
  let $ T, L \in [0,\infty) $, 
  let $ \mu \colon I \to H $ 
  be a function
  satisfying
\begin{align} \label{l:inverse_function_ass1}
   \langle x-y, \mu(x) - \mu(y) \rangle_H \leq L \|x-y\|_H^2
\end{align}
for all $x, y \in I$,
let
  $F\colon(0,\tfrac{1}{L}) \times I \to H$ be a mapping defined by
   $F_t(x) := x - t \mu(x)$ 
for all $x \in I$, $t  \in (0,\tfrac{1}{L})$.
Then for every $ t \in (0,\tfrac{1}{L}) $ 
the function $ F_t \colon I \to H $ is injective and 
for all $ t \in ( 0, \frac{ 1 }{ L } ) $, $ x, y \in F_t(I) $ 
it holds that 
\begin{align} 
\label{eq:Finverse_estimate}
   \|
     F^{ - 1 }_t( x ) - F^{ - 1 }_t( y ) 
   \|_H 
   \leq  
   \tfrac{ 1 }{ (1 - t L) } 
   \left\| x - y \right\|_H
   .
\end{align}
\end{lemma}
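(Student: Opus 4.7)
The plan is to derive both the injectivity of $F_t$ and the Lipschitz bound \eqref{eq:Finverse_estimate} simultaneously from a single lower bound of the form $\|F_t(u)-F_t(v)\|_H\geq (1-tL)\|u-v\|_H$ for all $u,v\in I$. Since $t\in(0,\tfrac{1}{L})$ forces $1-tL>0$ (using the convention $\tfrac{1}{0}=\infty$ so the case $L=0$ is automatic), such an estimate immediately gives injectivity, and rewriting it with $x=F_t(u)$, $y=F_t(v)$ yields \eqref{eq:Finverse_estimate}.

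To obtain this lower bound, I would fix $t\in(0,\tfrac{1}{L})$ and $u,v\in I$, and apply Cauchy--Schwarz together with the monotonicity assumption \eqref{l:inverse_function_ass1}:
\begin{equation*}
  \|F_t(u)-F_t(v)\|_H\,\|u-v\|_H
  \geq
  \langle F_t(u)-F_t(v),\,u-v\rangle_H
  =
  \|u-v\|_H^2 - t\,\langle \mu(u)-\mu(v),\,u-v\rangle_H
  \geq (1-tL)\,\|u-v\|_H^2.
\end{equation*}
If $u=v$, the desired inequality $\|F_t(u)-F_t(v)\|_H\geq(1-tL)\|u-v\|_H$ is trivial; otherwise, dividing by $\|u-v\|_H>0$ gives it in general.

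With this, injectivity is immediate: $F_t(u)=F_t(v)$ implies $(1-tL)\|u-v\|_H\leq 0$, hence $u=v$. Consequently $F_t\colon I\to F_t(I)$ is a bijection, and substituting $u=F_t^{-1}(x)$, $v=F_t^{-1}(y)$ for $x,y\in F_t(I)$ gives
\begin{equation*}
  \|x-y\|_H
  \geq
  (1-tL)\,\|F_t^{-1}(x)-F_t^{-1}(y)\|_H,
\end{equation*}
which, after dividing by the positive factor $1-tL$, yields \eqref{eq:Finverse_estimate}.

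There is no real obstacle here: the only subtlety is handling the degenerate case $L=0$, but the conventions fixed in Section~\ref{ssec:Notation} make $(0,\tfrac{1}{L})=(0,\infty)$ and $1-tL=1$ in that case, so the argument goes through unchanged. No completeness or closedness of $I$ is needed, because we never invert $F_t$ beyond its own image.
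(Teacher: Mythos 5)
Your proof is correct and follows essentially the same route as the paper: both derive the lower bound $\|F_t(u)-F_t(v)\|_H \geq (1-tL)\|u-v\|_H$ from Cauchy--Schwarz together with the one-sided Lipschitz assumption, and then read off injectivity and the inverse Lipschitz estimate from it. The only cosmetic difference is that you treat the case $u=v$ explicitly, which the paper leaves implicit.
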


\begin{proof} [Proof of Lemma~\ref{l:inverse_function}]
Observe that the Cauchy-Schwarz inequality 
and \eqref{l:inverse_function_ass1} 
imply that for all $ x, y \in I $, $ t \in ( 0, \frac{ 1 }{ L } ) $ 
it holds that
\begin{align} \label{l:inverse_function1} \nonumber
  \left\| x - y \right\|_H
  \left\| F_t(x) - F_t(y) \right\|_H
& \geq
  \langle x - y , F_t(x) - F_t(y) \rangle_H  
  = 
  \left\| x - y \right\|_H^2 
  - t \langle x - y , \mu(x) - \mu(y) \rangle_H 
\\ 
&
 \geq \|x-y\|_H^2  - tL \|x-y\|_H^2
 = \left( 1 - t L \right) \|x-y\|_H^2.
\end{align}
This ensures that for every $ t \in ( 0, \frac{ 1 }{ L } ) $ 
it holds that the mapping $ F_t $ is injective. 
%
%
%
In addition, \eqref{l:inverse_function1}
shows that
for all 
$ t \in (0,\frac{ 1 }{ L } ) $, $ x, y \in F_t(I) $ 
it holds that
\begin{equation} 
\label{eq:last_estimate}
  \| x - y \|_H
\geq
  \left( 1 - t L \right)
  \| 
    F_t^{ - 1 }(x) - F_t^{ - 1 }(y) 
  \|_H
  .
\end{equation}
Estimate~\eqref{eq:last_estimate}
implies \eqref{eq:Finverse_estimate}.
The proof of Lemma~\ref{l:inverse_function}
is thus completed.
\end{proof}
\begin{lemma}  \label{l:lemma_num}
Let $ I \subseteq \R $ be an open interval, let $T, L \in [0,\infty)$, 
$n \in \N_0$, $\theta=(t_0,\dots, t_n) \in [0,T]^{n+1}$ satisfy $0=t_0< t_1 < \dots < t_n=T$ and $L | \theta | < 1$,
let $\mu \in \mathcal{L}^0(\overline{I}; \R)$ satisfy
$( x-y) (\mu(x) - \mu(y)) \leq L (x-y)^2
$ for all $x, y \in I$,
let
$
  ( 
    \Omega, \mathcal{F}, \P, ( \mathcal{F}_t )_{ t \in [0,T] } 
  )
$
be a stochastic basis, let
$
  W \colon [0,T] \times \Omega \to \R
$
be a standard $ ( \mathcal{F}_t )_{ t \in [0,T] } $-Brownian motion, 
let $X\colon[0,T]\times \Omega\to\overline{I}$
  be an adapted stochastic process with continuous sample paths satisfying 
  $\int_0^t |\mu(X_s)| \,ds < \infty$ $\P$-a.s.\
  and
  \begin{align} 
  \label{l:lemma_num_X}
    X_t=X_0+\int_0^t \mu(X_s)\,ds+W_t
  \end{align}
 $\P$-a.s.\
  for all $t\in[0,T]$,
let 
$
  Y \colon \{ t_0, t_1, \ldots, t_n \} \times \Omega \to I
$ 
be a measurable mapping satisfying
  \begin{equation} \label{l:lemma_num_Y_umformung}
    Y_{t_{k+1}}= Y_{t_k} +  \mu( Y_{ t_{ k + 1 } } ) \left( t_{ k + 1 } - t_k \right) 
    + W_{ t_{ k + 1 } } - W_{ t_k }  
  \end{equation}
 $\P$-a.s.\ for all $k \in \{0,\dots,n-1 \}$ and
assume that 
 $
   \sup_{ i \in\{ 0, 1, \ldots, n \} }
   \P\!\left[ X_{ t_i } \in \partial I \right] 
   = 0
 $.
Then 
\begin{align} \label{l:lemma_num_state1}
  &
  \left(
    X_{ t_k } - Y_{ t_k }
  \right)
  \prod_{ i = 0 }^{ k - 1 }
  \left(
    1 - 
    ( t_{ i + 1 } - t_i ) 
    \tfrac{ 
      \mu( X_{ t_{ i + 1 } } ) - 
      \mu( Y_{ t_{ i + 1 } } ) 
    }{
      X_{ t_{ i + 1 } } - Y_{ t_{ i + 1 } } 
    }
  \right)
  \\& \nonumber
  =X_0-Y_0
  +\int_{0}^{t_k}\left(\mu(X_s)-\mu(X_{\lceil s\rceil_{\theta}})\right)
  \prod_{i\in\{0,\ldots,n-1\}\colon t_{i+1}\leq \lfloor s\rfloor_{\theta}}
   \left(1-(t_{i+1}-t_i)\tfrac{\mu(X_{t_{i+1}})-\mu(Y_{t_{i+1}})}{X_{t_{i+1}}-Y_{t_{i+1}}}\right)
   ds
\end{align}
$\P$-a.s.\
  for all  $k \in \{0,\dots,n \}$
and
\begin{align} 
\label{l:lemma_num:state2}
  \left| X_{t_k} - Y_{t_k} \right|
\leq  
  \left(
    \tfrac{ 1 }{ 1 - | \theta | L } 
  \right)^k 
    \left( 
      \left| X_0 - Y_0 \right|
      +  
      \int_0^{ t_k } 
      \left|
        \mu( X_s ) - \mu( X_{ \lceil s \rceil_{ \theta } } ) 
      \right|  
      ds 
    \right)
\end{align}
$\P$-a.s.\ for all $k\in\{0,\dots,n\}$.
\end{lemma}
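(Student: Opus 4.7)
The plan is to establish a telescoping identity from a one-step recursion for $X_{t_{k+1}}-Y_{t_{k+1}}$, iterate it to obtain \eqref{l:lemma_num_state1}, and then deduce the pathwise estimate \eqref{l:lemma_num:state2} by exploiting positivity of the multipliers that arise from the one-sided Lipschitz hypothesis.

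First I would subtract \eqref{l:lemma_num_Y_umformung} from the analogous increment of \eqref{l:lemma_num_X} on $[t_k,t_{k+1}]$, rewrite $\mu(Y_{t_{k+1}})(t_{k+1}-t_k)=\int_{t_k}^{t_{k+1}}\mu(Y_{t_{k+1}})\,ds$, and add and subtract $\mu(X_{t_{k+1}})=\mu(X_{\lceil s\rceil_\theta})$ (the equality holding for $s\in(t_k,t_{k+1}]$). Introducing, with the convention $0/0=0$ from Section~\ref{ssec:Notation}, the random variable
\begin{equation*}
A_i
:=
(t_{i+1}-t_i)\,
\tfrac{\mu(X_{t_{i+1}})-\mu(Y_{t_{i+1}})}{X_{t_{i+1}}-Y_{t_{i+1}}},
\end{equation*}
so that $A_i(X_{t_{i+1}}-Y_{t_{i+1}})=(t_{i+1}-t_i)(\mu(X_{t_{i+1}})-\mu(Y_{t_{i+1}}))$ pointwise, the above reads almost surely
\begin{equation*}
(1-A_k)(X_{t_{k+1}}-Y_{t_{k+1}})
=(X_{t_k}-Y_{t_k})+\int_{t_k}^{t_{k+1}}\bigl(\mu(X_s)-\mu(X_{\lceil s\rceil_\theta})\bigr)\,ds.
\end{equation*}

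Next I would prove \eqref{l:lemma_num_state1} by induction on $k$. The base $k=0$ is trivial (empty product and empty integral). For the induction step I multiply the one-step recursion by $\prod_{i=0}^{k-1}(1-A_i)$ and apply the inductive hypothesis to the term $(X_{t_k}-Y_{t_k})\prod_{i=0}^{k-1}(1-A_i)$. It remains to observe that for $s\in(t_k,t_{k+1})$ the condition $t_{i+1}\leq\lfloor s\rfloor_\theta=t_k$ is equivalent to $0\leq i\leq k-1$, so the integrand $(\mu(X_s)-\mu(X_{\lceil s\rceil_\theta}))\prod_{i=0}^{k-1}(1-A_i)$ on $(t_k,t_{k+1})$ glues with the inductive contribution on $[0,t_k]$ into the desired integral on $[0,t_{k+1}]$ (the endpoint $s=t_{k+1}$ has Lebesgue measure zero).

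For \eqref{l:lemma_num:state2} I would invoke the one-sided Lipschitz hypothesis on $I$. The assumption $\sup_i\P[X_{t_i}\in\partial I]=0$ combined with $Y_{t_i}\in I$ yields $(X_{t_{i+1}}-Y_{t_{i+1}})(\mu(X_{t_{i+1}})-\mu(Y_{t_{i+1}}))\leq L(X_{t_{i+1}}-Y_{t_{i+1}})^2$ almost surely, and with the convention $0/0=0$ this gives $A_i\leq(t_{i+1}-t_i)L\leq |\theta|L$, hence $1-A_i\geq 1-|\theta|L>0$ for every $i$, almost surely. Taking absolute values in \eqref{l:lemma_num_state1}, using positivity of all the (sub-)products, and dividing by $\prod_{i=0}^{k-1}(1-A_i)>0$ yields
\begin{equation*}
|X_{t_k}-Y_{t_k}|
\leq
\frac{|X_0-Y_0|}{\prod_{i=0}^{k-1}(1-A_i)}
+\int_0^{t_k}
\bigl|\mu(X_s)-\mu(X_{\lceil s\rceil_\theta})\bigr|\,
\frac{\prod_{i\colon t_{i+1}\leq\lfloor s\rfloor_\theta}(1-A_i)}{\prod_{i=0}^{k-1}(1-A_i)}\,ds.
\end{equation*}
The ratio on the right equals $\prod_{i\colon \lfloor s\rfloor_\theta<t_{i+1},\,i\leq k-1}(1-A_i)^{-1}$, a product of at most $k$ factors each bounded by $(1-|\theta|L)^{-1}\geq 1$, hence by $(1-|\theta|L)^{-k}$; the same bound applies to $1/\prod_{i=0}^{k-1}(1-A_i)$, so \eqref{l:lemma_num:state2} follows.

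The only real subtlety, and the place where I expect to spend the most care, is the bookkeeping around the $0/0=0$ convention and the boundary event: $A_i$ must be defined measurably on all of $\Omega$, the one-sided Lipschitz bound must be applied at the random points $(X_{t_{i+1}},Y_{t_{i+1}})$ which requires the assumption $\P[X_{t_i}\in\partial I]=0$ to land in $I\times I$ almost surely, and the elementary factor-counting argument for the sub-products must use $1/(1-|\theta|L)\geq 1$ to replace $k-j$ by $k$. Once these points are pinned down, both assertions follow by direct algebra from the one-step recursion.
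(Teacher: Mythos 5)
Your proposal is correct and follows essentially the same route as the paper: the same one-step algebraic recursion obtained by subtracting \eqref{l:lemma_num_Y_umformung} from the increment of \eqref{l:lemma_num_X}, the same induction to telescope it into \eqref{l:lemma_num_state1}, and the same use of the one-sided Lipschitz bound together with $\sup_i \P[X_{t_i}\in\partial I]=0$ and $|\theta|L<1$ to make all factors $1-A_i$ strictly positive and bounded below, yielding \eqref{l:lemma_num:state2} by dividing and crudely bounding the at most $k$ remaining factors. Your extra attention to the $0/0=0$ convention and the boundary event only makes explicit what the paper uses implicitly.
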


\begin{proof} [Proof of Lemma~\ref{l:lemma_num}]
First of all, observe that
\eqref{l:lemma_num_X} and \eqref{l:lemma_num_Y_umformung} 
imply that
\begin{align} \nonumber \label{l:lemma_num2}
&\left(
    X_{t_{k+1}}-Y_{t_{k+1}} \right)
  \left(1-(t_{k+1}-t_k)\tfrac{\mu(X_{t_{k+1}})-\mu(Y_{t_{k+1}})}{X_{t_{k+1}}-Y_{t_{k+1}}}\right)
\\
&
=
X_{t_{k+1}}-Y_{t_{k+1}}
-
  \left(t_{k+1}-t_k \right) \left( \mu(X_{t_{k+1}})-\mu(Y_{t_{k+1}})\right)
\\  \nonumber
 & =
X_{t_{k}}-Y_{t_{k}}
+ \smallint_{t_k}^{t_{k+1}} \mu(X_s) \, ds  -   \mu(X_{t_{k+1}})\left(t_{k+1}-t_k \right)
 =
X_{t_{k}}-Y_{t_{k}}
+ \smallint_{t_k}^{t_{k+1}} \mu(X_s) - \mu(X_{\lceil s\rceil_{\theta}}) \, ds 
 \end{align}
$\P$-a.s.\ for all $k \in\{0,\dots,n\}$.  
Next we claim that
\begin{align} 
\label{l:lemma_num3}
  &\left(
    X_{t_j}-Y_{t_j}
  \right)
  \prod_{i=0}^{j-1}\left(1-(t_{i+1}-t_i)\tfrac{\mu(X_{t_{i+1}})-\mu(Y_{t_{i+1}})}{X_{t_{i+1}}-Y_{t_{i+1}}}\right)
  \\& \nonumber
  =X_0-Y_0
  +\int_{0}^{t_j}
  \left(\mu(X_s)-\mu(X_{\lceil s\rceil_{\theta}})\right)
  \prod_{i\in\{0,1,\ldots,n-1\}\colon t_{i+1}\leq \lfloor s\rfloor_{\theta}}
   \left(1-(t_{i+1}-t_i)\tfrac{\mu(X_{t_{i+1}})-\mu(Y_{t_{i+1}})}{X_{t_{i+1}}-Y_{t_{i+1}}}\right)
   ds
\end{align}
$\P$-a.s.\ for all $j \in \{1,\dots,n \}$. 
We now prove \eqref{l:lemma_num3} by induction on $ j \in \{1,2,\dots,n \} $.
The base case $j=1$ is \eqref{l:lemma_num2} with $k=0$. 
For the induction step we assume that \eqref{l:lemma_num3} holds 
for some $ j \in \{ 1, 2, \dots, n - 1 \} $.
Then \eqref{l:lemma_num2} shows that 
{\allowdisplaybreaks \begin{align} \label{l:lemma_num4} \nonumber
  &\left(
    X_{t_{j+1}}-Y_{t_{j+1}}
  \right)
  \prod_{i=0}^{j}\left(1-(t_{i+1}-t_i)\tfrac{\mu(X_{t_{i+1}})-\mu(Y_{t_{i+1}})}{X_{t_{i+1}}-Y_{t_{i+1}}}\right)
  \\& \nonumber
  =
 \left(
    X_{t_j}-Y_{t_j} + \smallint_{t_j}^{t_{j+1}} \mu(X_s) - \mu(X_{\lceil s\rceil_{\theta}}) \, ds 
  \right)
  \prod_{i=0}^{j-1}\left(1-(t_{i+1}-t_i)\tfrac{\mu(X_{t_{i+1}})-\mu(Y_{t_{i+1}})}{X_{t_{i+1}}-Y_{t_{i+1}}}\right)
  \\&
  =
\left(
  \smallint_{t_j}^{t_{j+1}} \mu(X_s) - \mu(X_{\lceil s\rceil_{\theta}}) \, ds 
  \right)
  \prod_{i=0}^{j-1}\left(1-(t_{i+1}-t_i)\tfrac{\mu(X_{t_{i+1}})-\mu(Y_{t_{i+1}})}{X_{t_{i+1}}-Y_{t_{i+1}}}\right)
\\ & \nonumber
\quad +
X_0-Y_0
  +\int_{0}^{t_j}\left(\mu(X_s)-\mu(X_{\lceil s\rceil_{\theta}})\right)
  \prod_{i\in\{0,\ldots,n-1\}\colon t_{i+1}\leq \lfloor s\rfloor_{\theta}}
   \left(1-(t_{i+1}-t_i)\tfrac{\mu(X_{t_{i+1}})-\mu(Y_{t_{i+1}})}{X_{t_{i+1}}-Y_{t_{i+1}}}\right)
   ds
\\ & \nonumber
  =X_0-Y_0
  +\int_{0}^{t_{j+1}}\left(\mu(X_s)-\mu(X_{\lceil s\rceil_{\theta}})\right)
  \prod_{i\in\{0,\ldots,n-1\}\colon t_{i+1}\leq \lfloor s\rfloor_{\theta}}
   \left(1-(t_{i+1}-t_i)\tfrac{\mu(X_{t_{i+1}})-\mu(Y_{t_{i+1}})}{X_{t_{i+1}}-Y_{t_{i+1}}}\right)
   ds
\end{align}}%
$\P$-a.s.\ 
Induction hence shows \eqref{l:lemma_num3}.
Identity~\eqref{l:lemma_num3} proves \eqref{l:lemma_num_state1}
and it thus remains to prove~\eqref{l:lemma_num:state2}.
For this note that
\eqref{l:lemma_num_state1}
and the estimate that
$
  \left( t_{ k + 1 } - t_k \right)
  \frac{
    \mu(x) - \mu(y)
  }{
    x - y 
  } 
  \leq 
  \left( t_{ k + 1 } - t_k \right) L 
  \leq \left| \theta \right| L < 1
$ 
for all 
$ k \in \{ 0, 1, \dots, n - 1 \} $,
$ x, y \in I $ 
together with the assumption that
$
  \sup_{ i \in \{ 0, 1, \ldots, n \} }
  \P\!\left[  
    X_{ t_i } \in \partial I
  \right] = 0
$
yield that
{\allowdisplaybreaks  \begin{align} \label{l:lemma_num5} \nonumber  
  &  
  \left| 
    X_{ t_k } - Y_{ t_k } 
  \right|
\leq 
  \left[
    \prod_{i=0}^{k-1} \left(1-(t_{i+1}-t_i)\tfrac{\mu(X_{t_{i+1}})-\mu(Y_{t_{i+1}})}{X_{t_{i+1}}-Y_{t_{i+1}}}\right)
  \right]^{ - 1 }  
\\ \nonumber & \cdot 
  \left( 
    \left| 
      X_0 - Y_0 
    \right| 
    +  
    \int_0^{ t_k } 
    \left| 
      \mu( X_s ) - 
      \mu( X_{ \lceil s \rceil_{\theta} } )
    \right|
    \prod_{
      i \in \{ 0, 1, \ldots, n - 1 \}
      \colon t_{ i + 1 }
      \leq 
      \lfloor s \rfloor_{\theta} 
    }
    \left(
      1 - ( t_{ i + 1 } - t_i ) 
      \tfrac{ 
        \mu( X_{ t_{ i + 1 } } ) - \mu( Y_{ t_{ i + 1 } } ) 
      }{
        X_{ t_{ i + 1 } } - Y_{ t_{ i + 1 } } 
      }
    \right)
    ds  
  \right)
\\ & = 
  \left[
    \prod_{ i = 0 }^{ k - 1 } 
    \left(
      1 - 
      ( t_{ i + 1 } - t_i )
      \tfrac{ 
        \mu( X_{ t_{ i + 1 } } )
        -
        \mu( Y_{ t_{ i + 1 } } )
      }{
        X_{ t_{ i + 1 } } - Y_{ t_{ i + 1 } } 
      }
    \right)
  \right]^{ - 1 }   
  \left| 
    X_0 - Y_0 
  \right| 
\\ \nonumber & +  
  \int_0^{ t_k } 
  \left| 
    \mu( X_s ) - \mu( X_{ \lceil s \rceil_{ \theta } } )
  \right|
  \left[
    \prod_{ 
      i \in \{ 0, 1, \ldots, k - 1 \} \colon 
      t_{ i + 1 } > \lfloor s \rfloor_{ \theta }
    }
    \left(
      1 - 
      ( t_{ i + 1 } - t_i )
      \tfrac{ 
        \mu( X_{ t_{ i + 1 } } ) - 
        \mu( Y_{ t_{ i + 1 } } ) 
      }{
        X_{ t_{ i + 1 } } - Y_{ t_{ i + 1 } }
      }
    \right)
  \right]^{ - 1 }
   ds  
\\ \nonumber & \leq 
  \left[
   \prod_{ i = 0 }^{ k - 1 }
   \left( 1 - | \theta | L \right) 
  \right]^{ - 1 }   
  \left| 
    X_0 - Y_0 
  \right|  
  +  
  \int_0^{ t_k } 
  \left| 
    \mu(X_s) -
    \mu( X_{\lceil s\rceil_{\theta}} )
  \right|
  \left[
    \prod_{i\in\{0,\ldots,k-1\}\colon t_{i+1}> \lfloor s\rfloor_{\theta}}
    \left(1-| \theta | L\right)
  \right]^{ - 1 }
  \, ds  
\\ \nonumber & \leq
  \left( 
    \tfrac{ 1 }{ 1 - | \theta | L }
  \right)^k 
  \left( 
    \left| X_0 - Y_0  \right|  
    +  
    \int_0^{ t_k } 
      \left| 
        \mu(X_s) - 
        \mu( X_{\lceil s\rceil_{\theta}} )
      \right|
    ds  
  \right)
\end{align} }
$\P$-a.s.\ for all $k\in\{0,\dots,n\}$. 
This finishes the proof of Lemma~\ref{l:lemma_num}.
\end{proof}

The proof of the following lemma is analogous
to the proof of Corollary 2.27  in~\cite{HutzenthalerJentzen2014Memoires}.
%
%
%
\begin{lemma}  
\label{l:phi}
Let $ d, m \in \N $, 
$ c, \tilde{c}, T \in [0,\infty) $, 
$ p \in [2,\infty) $, 
$
  O \in \mathcal{B}( \R^d )
$,
$ 
  \mu \in \mathcal{L}^0( O; \R^d )
$,
$
  \sigma \in \mathcal{L}^0( O; \R^{ d \times m } )
$,
let
$
  ( 
    \Omega, \mathcal{F}, \P, ( \mathcal{F}_t )_{ t \in [0,T] } 
  )
$
be a filtered probability space, 
let
$
  W \colon [0,T] \times \Omega \to \R^m
$
be a standard $ ( \mathcal{F}_t )_{ t \in [0,T] } $-Brownian motion,
let $ \xi \colon \Omega \to O $ be an
$ \mathcal{F}_0 \big/ \mathcal{B} (O) $-measurable function 
with
$
  \E \!\left[ \| \mu(\xi) \|^p_{ \R^d } \right] < \infty
$,
assume
\begin{equation} 
\begin{split} 
  \langle x - y , \mu(x) - \mu(y) \rangle_{ \R^d } 
  & \leq c \, \| x - y \|^2_{ \R^d }
  \\ 
    \langle x, \mu(x) \rangle_{ \R^d } + 
    \tfrac{ (p - 1) }{ 2 } 
    \| \sigma (x) \|_{ HS( \R^m, \R^d ) }^2 & \leq \tilde{c} \left( 1 + \|x\|^2_{ \R^d } \right)
\end{split} 
\end{equation} 
for all $x,y \in O$ and assume that 
for all $ t \in (0, \nicefrac{ 1 }{ c } ) $ 
it holds that 
the mapping $ O \ni x \to x - t \mu(x) \in \R^d $ is surjective.
Then there exists a unique family 
$
  Y^h \colon (\N_0 \cap [0, \nicefrac{ T }{ h } ]) \times \Omega\to O 
$, 
$
  h \in (0,T] \cap (0, \nicefrac{ 1 }{ c } )
$, 
of stochastic processes 
satisfying $ Y_0^h = \xi $ 
and
\begin{equation} \label{eq:def.Y}
  Y^h_n = Y^h_{ n - 1 } + \mu( Y^h_n ) \, h 
  + \sigma( Y_{ n - 1 }^h ) \left( W_{ n h } - W_{ ( n - 1 ) h } \right) 
\end{equation}
for all $ n \in \N \cap [0, \nicefrac{ T }{ h } ] $, $ h \in (0,T] \cap (0, \nicefrac{ 1 }{ c } ) $ 
and there exists a real number $\rho \in (0,\infty)$ such that for all 
$ q \in [ 0, \nicefrac{ p }{ 2 } ] $ 
it holds that
\begin{align} 
  \label{eq:moment.bound.implicit.Euler}
  \limsup_{ h \searrow 0 }
  \sup_{
    n \in \N_0 \cap [0, \nicefrac{ T }{ h } ] 
  } 
  \E\!\left[ 
    \left( 1 + \| Y^h_n \|^2_{ \R^d } \right)^q 
  \right] 
\leq 
  e^{ \rho T } 
  \,
  \E\!\left[ 
    \left( 1 + \|\xi\|^2_{ \R^d } \right)^q 
  \right] 
\end{align}
and that
  \begin{equation}  
    \sup_{h \in (0,T] \cap (0, \nicefrac{ 1 }{ 4c } ]}
    \sup_{ n \in (\N_0 \cap [0, \nicefrac{ T }{ h } ]) }
    \E\Big[
      \big\{
        1 + 
         \| Y_n^h \|_{\R^d}^2
      \big\}^{ q }
    \Big]
  \leq
    e^{ (4c+\rho) T }
    \cdot
    \E\Big[
      \big\{
        1 + \left(
        \| 
         \xi
        \|_{\R^d}
        + T
	\| 
         \mu(\xi)
        \|_{\R^d}
        \right)^2
      \big\}^{ q }
    \Big].
  \end{equation}
\end{lemma}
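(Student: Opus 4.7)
My plan is to follow the template of Corollary~2.27 in~\cite{HutzenthalerJentzen2014Memoires}, as the authors themselves indicate, splitting the argument into a deterministic construction of the scheme, a deterministic one-step energy estimate, and a probabilistic iteration that converts the latter into moment bounds. I would first handle existence, uniqueness, and measurability. Applying Lemma~\ref{l:inverse_function} with $L = c$ shows that, for each admissible $h$, the mapping $F_h \colon O \to \R^d$ defined by $F_h(x) := x - h\mu(x)$ is injective; combined with the surjectivity hypothesis, $F_h$ is a bijection and Lemma~\ref{l:inverse_function} provides a $(1-hc)^{-1}$-Lipschitz (hence Borel measurable) inverse $F_h^{-1} \colon \R^d \to O$. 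Setting $Y^h_0 := \xi$ and inductively $Y^h_n := F_h^{-1}(Y^h_{n-1} + \sigma(Y^h_{n-1})(W_{nh} - W_{(n-1)h}))$ then yields an adapted process satisfying \eqref{eq:def.Y}, with uniqueness following from the injectivity of $F_h$.

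Next, writing $\Delta W_n := W_{nh} - W_{(n-1)h}$ and $\tilde Y_n := Y^h_{n-1} + \sigma(Y^h_{n-1})\Delta W_n$, I would square the implicit relation $Y^h_n - h\mu(Y^h_n) = \tilde Y_n$ and substitute the coercivity hypothesis into the cross term $-2h\langle Y^h_n, \mu(Y^h_n)\rangle_{\R^d}$. For $h$ sufficiently small this rearranges, after adding $1$ to both sides, into the deterministic one-step inequality
\begin{equation*}
  1 + \|Y^h_n\|_{\R^d}^2 \; \leq \; \frac{1 + \|\tilde Y_n\|_{\R^d}^2}{1 - 2h\tilde c},
\end{equation*}
the non-negative terms $h(p-1)\|\sigma(Y^h_n)\|_{HS(\R^m,\R^d)}^2$ and $h^2\|\mu(Y^h_n)\|_{\R^d}^2$ on the left being discarded.

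To propagate this bound to $q$-th moments for $q \in [0, p/2]$, I would raise it to the $q$-th power and take conditional expectation given $\mathcal{F}_{(n-1)h}$, exploiting the fact that $\sigma(Y^h_{n-1})\Delta W_n$ is, conditionally on $\mathcal{F}_{(n-1)h}$, a centred Gaussian vector with covariance $h\,\sigma\sigma^{\ast}(Y^h_{n-1})$. A second-order Taylor expansion of $u \mapsto (1+u)^q$ around $1 + \|Y^h_{n-1}\|_{\R^d}^2$, combined with the linear-growth bound $\|\sigma(x)\|_{HS(\R^m,\R^d)}^2 \leq C(1 + \|x\|_{\R^d}^2)$ obtained by feeding the one-sided Lipschitz inequality, evaluated at a fixed reference point, back into the coercivity assumption, would produce a recursion of the form $\E[(1+\|Y^h_n\|_{\R^d}^2)^q \mid \mathcal{F}_{(n-1)h}] \leq (1+\rho h)(1+\|Y^h_{n-1}\|_{\R^d}^2)^q$ for a constant $\rho \in (0,\infty)$ depending only on $p$, $c$ and $\tilde c$ and, crucially, uniform in $q \in [0, p/2]$. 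Iterating over $n \leq T/h$ and then sending $h \searrow 0$ yields \eqref{eq:moment.bound.implicit.Euler}. The second, non-asymptotic bound is proved along the same lines, using $(1 - 2h\tilde c)^{-n} \leq e^{4T\tilde c}$ for $h$ below the stated threshold combined with a separate treatment of the first step: there, the identity $Y^h_1 = F_h^{-1}(\xi + \sigma(\xi)\Delta W_1)$ together with $F_h(\xi) = \xi - h\mu(\xi)$ gives, via Lipschitz continuity of $F_h^{-1}$, the bound $\|Y^h_1\|_{\R^d} \leq \|\xi\|_{\R^d} + (1-hc)^{-1}(h\|\mu(\xi)\|_{\R^d} + \|\sigma(\xi)\Delta W_1\|_{\R^d})$, which upon propagation through the remaining stable recursion produces the term $T\|\mu(\xi)\|_{\R^d}$ that appears on the right-hand side.

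The main obstacle will be the Taylor-expansion step: one must control the second-order remainder of $(1+\cdot)^q$ uniformly across $q \in [0, p/2]$ so that the Gaussian moments $\E[\|\sigma(Y^h_{n-1})\Delta W_n\|_{\R^d}^{2k}] = O(h^k)$ genuinely absorb the formally higher-order contributions and yield a clean $(1+\rho h)$-factor rather than a $(1+\rho h + O(\sqrt{h}))$-factor that would not survive iteration over $\lfloor T/h\rfloor$ steps. The detailed bookkeeping is carried out in Corollary~2.27 of~\cite{HutzenthalerJentzen2014Memoires}, to which the present lemma is explicitly reduced by the preceding three-step scheme.
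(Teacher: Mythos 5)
The existence and uniqueness argument via Lemma~\ref{l:inverse_function} matches the paper's, and your deterministic one-step inequality $1 + \|Y^h_n\|_{\R^d}^2 \leq (1-2h\tilde c)^{-1}(1 + \|\tilde Y_n\|_{\R^d}^2)$ is a correct consequence of squaring the implicit step and applying the coercivity condition (for $h$ small enough that $1-2h\tilde c > 0$). The gap is in the probabilistic step. To close a recursion directly on $\{1+\|Y^h_n\|_{\R^d}^2\}^q$ you need $\E[(1+\|\tilde Y_n\|_{\R^d}^2)^q \mid \mathcal{F}_{(n-1)h}] \leq (1+\rho h)(1+\|Y^h_{n-1}\|_{\R^d}^2)^q$ with $\tilde Y_n = Y^h_{n-1}+\sigma(Y^h_{n-1})\Delta W_n$, and the Taylor expansion you outline leaves an uncancelled contribution of order $h\,\|\sigma(Y^h_{n-1})\|^2_{HS(\R^m,\R^d)}\,(1+\|Y^h_{n-1}\|_{\R^d}^2)^{q-1}$. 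You propose to absorb it via a linear-growth bound $\|\sigma(x)\|^2_{HS(\R^m,\R^d)} \leq C(1+\|x\|_{\R^d}^2)$, but that bound cannot be derived from the hypotheses: the one-sided Lipschitz condition, applied at a fixed reference point $y_0$, gives an \emph{upper} bound on $\langle x,\mu(x)\rangle_{\R^d}$ up to terms involving $\langle y_0,\mu(x)\rangle_{\R^d}$ and $\langle x,\mu(y_0)\rangle_{\R^d}$, whereas the coercivity inequality needs a \emph{lower} bound on $\langle x,\mu(x)\rangle_{\R^d}$ to return an upper bound on $\|\sigma(x)\|^2_{HS(\R^m,\R^d)}$. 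Take $d=m=1$, $O=\R$, $\mu(x)=-x^3$, $\sigma(x)=x^2$, $p=2$: all hypotheses hold with $c = \tilde c = 0$ and the implicit Euler step is well defined, yet $\|\sigma(x)\|^2 = x^4$ is not linearly bounded, so your one-step factor is not $1+O(h)$.

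The paper avoids this by iterating a Lyapunov recursion on the \emph{transformed} quantity $\{1+\|F_h(Y^h_n)\|_{\R^d}^2\}^q$ rather than on $\{1+\|Y^h_n\|_{\R^d}^2\}^q$ (Lemma~2.26 in~\cite{HutzenthalerJentzen2014Memoires}): the conditional one-step estimate compares $\E[\{1+\|F_h(Y^h_{n+1})\|_{\R^d}^2\}^q \mid Y^h_n]$ to $e^{\rho h}\{1+\|F_h(Y^h_n)\|_{\R^d}^2\}^q$, and the term $-2h\langle Y^h_n,\mu(Y^h_n)\rangle_{\R^d}$ hidden inside $\|F_h(Y^h_n)\|_{\R^d}^2$ is precisely what the coercivity inequality pairs against $\tfrac{p-1}{2}\|\sigma(Y^h_n)\|^2_{HS(\R^m,\R^d)}$, so no separate growth bound on $\sigma$ is needed. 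The implicit-to-explicit conversion (your deterministic inequality) is then applied \emph{once}, outside the iteration, via Lemma~2.25 of~\cite{HutzenthalerJentzen2014Memoires}. Moving that conversion inside the loop, as you do, forces an extraneous linear-growth assumption on $\sigma$ which the lemma does not make.
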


\begin{proof}[Proof 
  of 
  Lemma~\ref{l:phi}]
  Throughout this proof, let
  $
    F \colon (0,\nicefrac{1}{c}) \times O \to \R^d
  $ 
  be a function defined by
  $ F_h(x) := x - \mu(x) h $ 
  for all 
  $ (h,x) \in (0,\nicefrac{1}{c}) \times O $.
  Then 
  Lemma~\ref{l:inverse_function} and the surjectivity assumption
  imply
  for all $h\in(0,\nicefrac{1}{c})$ that the function $F_h$ is bijective.
  This ensures the unique 
  existence of stochastic
  processes
  $
    Y^h \colon (\N_0 \cap [0, \nicefrac{ T }{ h } ]) \times \Omega\to O 
  $, 
  $
    h \in (0,T] \cap (0, \nicefrac{ 1 }{ c } )
  $, 
  satisfying $ Y_0^h = \xi $ 
  and~\eqref{eq:def.Y}.
  In the next step, note that
  \begin{equation}
    \big\|
      F_{ h}( Y^h_{ n + 1 } )
    \big\|_{\R^d}^2 =
    \big\|
      Y_n^h
      +
      \sigma(Y_n^h )
      \big(
        W_{ ( n + 1 ) h }
        - W_{ n h }
      \big)
    \big\|_{\R^d}^2
  \end{equation}
  for all 
  $ 
    n \in (\N_0 \cap [0, \nicefrac{ T }{ h } ])
  $ and all
  $
    h \in (0,T] \cap (0, \nicefrac{ 1 }{ c } )
  $. 
  Lemma 2.26 in~\cite{HutzenthalerJentzen2014Memoires}
  hence implies the existence of 
  a real number $ \rho \in \mathbb{R} $
  such that
  \begin{equation}  \label{eq:cor.Lyapunov.implicit.Euler}
    \E\Big[
      \big\{
        1 + 
        \| 
          F_{ h }(
            Y_{n+1}^h
          )
        \|_{\R^d}^2
      \big\}^{ q } \,
      \big| \, Y_n^h
    \Big]
  \leq
    \exp\!\left(
      \rho h
    \right)
    \cdot
    \left\{
      1 + 
      \|
        F_{ h }(
          Y_n^h
        )
      \|_{\R^d}^2
    \right\}^{ q }
  \end{equation}
  $ \mathbb{P} $-a.s.\ for 
  all 
  $ 
    n \in (\N_0 \cap [0, \nicefrac{ T }{ h } ])
  $,
  $
    h \in (0,T] \cap (0, \nicefrac{ 1 }{ 4c } ]
  $ 
  and all $ q \in [0,\frac{p}{2}] $.
  Next fix a real number
  $ q \in [0,\frac{p}{2}] $
  and we now prove
  \eqref{eq:moment.bound.implicit.Euler}
  for this
  $ q \in [0,\frac{p}{2}] $.
  If 
  $ 
    \E\big[ 
      \| \xi \|_{\R^d}^{ 2 q } 
    \big] = \infty
  $,
  then \eqref{eq:moment.bound.implicit.Euler} 
  is trivial.
  So we assume
  $
    \E\big[
      \|\xi\|_{\R^d}^{ 2 q }
    \big]
    < \infty
  $ 
  for the rest of this proof.
  Hence, we obtain that
  $
    \E\big[
      \| \xi \|_{\R^d}^{ 2 q }
      +
      \| \mu(\xi) \|_{\R^d}^{ 2 q }
    \big]
    < \infty
  $.
  Now, for every 
  $
    h \in (0,T] \cap (0, \nicefrac{ 1 }{ 4c } ]
  $,
  we apply
  Corollary~2.2
  in~\cite{HutzenthalerJentzen2014Memoires}
  with the Lyapunov-type function
  $ 
    V \colon O \rightarrow [0,\infty) 
  $
  given by
  $
    V( x ) = 
    \big\{
      1 + \| F_{ h }( x ) \|_{\R^d}^2
    \big\}^{ q }
  $
  for all $ x \in O $,
  with the truncation function
  $
    \zeta \colon [0,\infty)
    \rightarrow (0,\infty]
  $
  given by
  $ 
    \zeta(t) = \infty
  $
  for all $ t \in [0,\infty) $
  and with the sequence
  $ t_n \in \R $, $ n \in \N_0 $,
  given by
  $ 
    t_n = \min( n h, T )
  $ 
  for all 
  $ n \in \N_0 $
  to obtain
  \begin{equation}    \label{eq:cor.Lyapunov.implicit.Euler2}
    \sup_{ n \in (\N_0 \cap [0, \nicefrac{ T }{ h } ]) }
    \E\Big[
      \big\{
        1 + 
        \| 
          F_{ h }( Y_{n}^h )
        \|_{\R^d}^2
      \big\}^{ q }
    \Big]
  \leq
    e^{ \rho T }
    \cdot
    \E\Big[
      \big\{
        1 +
        \| 
          F_{ h }( \xi  )
        \|_{\R^d}^2
      \big\}^{ q }
    \Big]
  \end{equation}
  for
  all 
  $
    h \in (0,T] \cap (0, \nicefrac{ 1 }{ 4c } ]
  $. 
  Lemma~2.25
  in~\cite{HutzenthalerJentzen2014Memoires}
  and the dominated convergence
  theorem  hence give
  \begin{equation}  
  \begin{split}
    \limsup_{ h \searrow 0 }
    \sup_{ n \in (\N_0 \cap [0, \nicefrac{ T }{ h } ]) }
    \E\Big[
      \big\{
        1 + \| Y_n^h \|_{\R^d}^2
      \big\}^{ q }
    \Big]
  & \leq
    \limsup_{ h \searrow 0 }
    \left(
    e^{ 4c h }
    \cdot
    e^{ \rho T }
    \cdot
    \E\Big[
      \big\{
        1 + 
        \| 
          F_{ h }( \xi )
        \|_{\R^d}^2
      \big\}^{ q }
    \Big]
    \right)
  \\ =
    e^{ \rho T }
    \cdot
    \E\Big[
      \lim_{(0,T]\ni h\to 0}
      \big\{
        1 + 
        \|
          F_{ h }( \xi )
        \|_{\R^d}^2
      \big\}^{ q }
    \Big]
  & =
    e^{ \rho T }
    \cdot
    \E\Big[
      \big\{ 
        1 +
        \| \xi \|_{\R^d}^2
      \big\}^{ q }
    \Big] 
  \end{split}.  
  \end{equation}
Furthermore, Lemma~2.25
  in~\cite{HutzenthalerJentzen2014Memoires} and \eqref{eq:cor.Lyapunov.implicit.Euler2} yield 
  \begin{equation}  
    \sup_{h \in (0,T] \cap (0, \nicefrac{ 1 }{ 4c } ]}
    \sup_{ n \in (\N_0 \cap [0, \nicefrac{ T }{ h } ]) }
    \E\Big[
      \big\{
        1 + 
         \| Y_n^h \|_{\R^d}^2
      \big\}^{ q }
    \Big]
  \leq
  e^{ 4c T } \cdot
    e^{ \rho T }
    \cdot
    \E\Big[
      \big\{
        1 + \left(
        \| 
         \xi
        \|_{\R^d}
        + T
	\| 
         \mu(\xi)
        \|_{\R^d}
        \right)^2
      \big\}^{ q }
    \Big]
  \end{equation}
  and this completes the proof of 
  Lemma~\ref{l:phi}.
\end{proof}

\begin{lemma}  
\label{l:mom_num}
Let $ d, m \in \N $, $n \in \N_0 $, 
$T, c \in [0,\infty) $,
 $\theta=(t_0,\dots, t_n) \in [0,T]^{n+1}$ satisfy $0=t_0< t_1 < \dots < t_n=T$ and $4c | \theta |  \leq 1$,
let
$
  O \in \mathcal{B}( \R^d )
$,
$ 
  \mu \in \mathcal{L}^0( O; \R^d )
$,
$
  \sigma \in \mathcal{L}^0( O; \R^{ d \times m } )
$,
let
$
  ( 
    \Omega, \mathcal{F}, \P, ( \mathcal{F}_t )_{ t \in [0,T] } 
  )
$
be a filtered probability space, 
let
$
  W \colon [0,T] \times \Omega \to \R^m
$
be a standard $ ( \mathcal{F}_t )_{ t \in [0,T] } $-Brownian motion,
let 
$
  Y \colon \{ t_0, t_1, \ldots, t_n \} \times \Omega \to O
$ 
be a measurable mapping satisfying 
  \begin{equation} \label{l:mom_num_ass1}
    Y_{t_{k+1}}= Y_{t_k} +  \mu( Y_{ t_{ k + 1 } } ) \left( t_{ k + 1 } - t_k \right) 
    + \sigma( Y_{ t_k } ) (W_{ t_{ k + 1 } } - W_{ t_k }  )
  \end{equation}
 $\P$-a.s.\ for all $k \in \{0,1,\dots,n-1 \}$ and
assume that for all $x \in O$ it holds that
$
  \langle x, \mu(x) \rangle_{\R^d} \leq c \left( 1 + \|x\|_{\R^d}^2 \right).
$
Then it holds for all $k \in \{0,\dots,n \}$, $p\in[2,\infty)$ that
\begin{equation} 
\begin{split}  \label{l:mom_num:state} 
 & \left\| \sup_{i \in \{0,1,\dots,k\} } \left[ 1 + \left\| Y_{t_i} \right\|_{\R^d}^2  \right] \right\|_{L^p( \Omega; \R )} 
\\ &
\leq 
e^{4ct_k} 
\left[ 
 1+ 
 \left\| Y_0 
\right\|_{L^{2p}( \Omega; \R^d )}^2 
\right]
+
\left[ \tfrac{2p^3}{p-1}
\sum_{i=0}^{k-1}
e^{8c(t_k - t_i)} 
\left\|
  \sigma(  Y_{t_i}  )^*  Y_{t_i}
\right\|_{L^{p}( \Omega; \R^m )}^2
  \left( t_{ i + 1 } - t_i \right)
\right]^{\nicefrac{1}{2}} 
\\ & \quad 
+
  p \left( 2 p - 1 \right)
  \left[
\sum_{i=0}^{k-1} 
 e^{4c(t_k - t_i)} 
\left\|
\sigma(  Y_{t_i} )
\right\|_{L^{2p}( \Omega; HS( \R^m, \R^d )  )}^2
  \left( t_{ i+1 } -  t_i \right) 
  \right]
  .
\end{split} 
\end{equation}
\end{lemma}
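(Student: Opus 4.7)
The strategy is to exploit the implicit structure by rewriting \eqref{l:mom_num_ass1} as
$Y_{t_{k+1}}-h_k\mu(Y_{t_{k+1}})=Y_{t_k}+\sigma(Y_{t_k})\Delta W_k$ with $h_k:=t_{k+1}-t_k$ and $\Delta W_k:=W_{t_{k+1}}-W_{t_k}$, then comparing the squared Euclidean norms of both sides. On the left, $\|Y_{t_{k+1}}-h_k\mu(Y_{t_{k+1}})\|^2_{\R^d}=\|Y_{t_{k+1}}\|_{\R^d}^2 -2h_k\langle Y_{t_{k+1}},\mu(Y_{t_{k+1}})\rangle_{\R^d}+h_k^2\|\mu(Y_{t_{k+1}})\|_{\R^d}^2$, so the one-sided bound $\langle x,\mu(x)\rangle_{\R^d}\leq c(1+\|x\|^2_{\R^d})$ absorbs the middle term on the right and yields the one-step recursion
\begin{equation*}
  (1-2ch_k)\bigl(1+\|Y_{t_{k+1}}\|^2_{\R^d}\bigr)
  \leq \bigl(1+\|Y_{t_k}\|^2_{\R^d}\bigr)+2\langle Y_{t_k},\sigma(Y_{t_k})\Delta W_k\rangle_{\R^d}+\|\sigma(Y_{t_k})\Delta W_k\|^2_{\R^d}.
\end{equation*}
The hypothesis $4c|\theta|\leq 1$ forces $(1-2ch_k)^{-1}\leq 1+4ch_k\leq e^{4ch_k}$, and a straightforward discrete Gronwall iteration hence gives for every $m\in\{0,\ldots,n\}$ that
\begin{equation*}
  1+\|Y_{t_m}\|^2_{\R^d} \leq e^{4ct_m}\bigl(1+\|Y_0\|^2_{\R^d}\bigr)+\sum_{i=0}^{m-1}e^{4c(t_m-t_i)}\bigl(2\langle Y_{t_i},\sigma(Y_{t_i})\Delta W_i\rangle_{\R^d}+\|\sigma(Y_{t_i})\Delta W_i\|^2_{\R^d}\bigr).
\end{equation*}

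Next the plan is to take the supremum in $m\leq k$ and split the sum into the non-negative diffusion contribution and a martingale contribution. For the diffusion part, monotonicity in $m$ together with $e^{4c(t_m-t_i)}\leq e^{4c(t_k-t_i)}$ replaces the sup by $\sum_{i<k}e^{4c(t_k-t_i)}\|\sigma(Y_{t_i})\Delta W_i\|^2_{\R^d}$. Applying Minkowski's inequality in $L^p$, conditioning on $\mathcal{F}_{t_i}$ (so that $\sigma(Y_{t_i})\Delta W_i$ is conditionally centered Gaussian with covariance $h_i\sigma(Y_{t_i})\sigma(Y_{t_i})^{*}$), and invoking the standard Gaussian $L^p$-moment bound produces the estimate $\|\|\sigma(Y_{t_i})\Delta W_i\|^2_{\R^d}\|_{L^p(\Omega;\R)}\leq p(2p-1)\,h_i\,\|\sigma(Y_{t_i})\|^2_{L^{2p}(\Omega;HS(\R^m,\R^d))}$, which after summation yields precisely the last summand in \eqref{l:mom_num:state}.

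For the martingale contribution, introduce the discrete $(\mathcal{F}_{t_m})_{m\leq k}$-martingale $N_m:=2\sum_{i=0}^{m-1}e^{-4ct_i}\langle\sigma(Y_{t_i})^{*}Y_{t_i},\Delta W_i\rangle_{\R^m}$ (which equals the martingale sum appearing above divided by $e^{4ct_m}$). The identity $\sum_{i<m}e^{4c(t_m-t_i)}\cdot 2\langle Y_{t_i},\sigma(Y_{t_i})\Delta W_i\rangle_{\R^d}=e^{4ct_m}N_m$ and the bound $e^{4ct_m}\leq e^{4ct_k}$ allow us to pull the exponential out of the sup, leaving $e^{4ct_k}\,\sup_{m\leq k}|N_m|$. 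A discrete Burkholder--Davis--Gundy inequality in combination with Doob's $L^p$-maximal inequality (the discrete analog of Lemma~7.2 in Da Prato \& Zabczyk~\cite{dz92} already used in the proof of Lemma~\ref{l:lemma_positive_mom_mit_sup}), together with the conditional variance identity $\E[\langle\sigma(Y_{t_i})^{*}Y_{t_i},\Delta W_i\rangle^2_{\R^m}\,|\,\mathcal{F}_{t_i}]=h_i\|\sigma(Y_{t_i})^{*}Y_{t_i}\|^2_{\R^m}$, then provides $\|\sup_{m\leq k}|N_m|\|_{L^p(\Omega;\R)}\leq\sqrt{\tfrac{p^3}{2(p-1)}\sum_{i<k}4e^{-8ct_i}\|\sigma(Y_{t_i})^{*}Y_{t_i}\|^2_{L^p(\Omega;\R^m)}h_i}$. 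Multiplying by the factor $e^{4ct_k}$ converts each $e^{-8ct_i}$ (which is $e^{-4ct_i}$ squared, from squaring the BDG increments) into $e^{8c(t_k-t_i)}$, and combines the constants into $\tfrac{4p^3}{2(p-1)}=\tfrac{2p^3}{p-1}$, producing the middle term in \eqref{l:mom_num:state}.

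The main technical obstacle is the martingale estimate: the coefficients $e^{4c(t_m-t_i)}$ depend on the running index $m$, so they cannot be taken out of the sup directly, and one must factor $e^{4ct_m}$ to reveal the martingale structure; the ensuing application of the discrete BDG inequality must be arranged so that squaring the resulting variance-style sum produces the weights $e^{8c(t_k-t_i)}$ with the precise constant $\tfrac{2p^3}{p-1}$ claimed in \eqref{l:mom_num:state}. The rest of the argument (one-step recursion, discrete Gronwall, Minkowski and Gaussian moment bound for the diffusion contribution) is then routine.
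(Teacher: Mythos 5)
Your proposal is correct and follows essentially the same route as the paper's proof: the one-step inequality $(1-2ch_k)(1+\|Y_{t_{k+1}}\|^2)\leq 1+\|Y_{t_k}+\sigma(Y_{t_k})\Delta W_k\|^2$ is exactly what the paper obtains by citing Lemma~2.25 in \cite{HutzenthalerJentzen2014Memoires} (you simply derive it inline), the discrete Gronwall iteration reproduces the paper's display \eqref{l:mom_num:3}, and your factorization $\sum_{i<m}e^{4c(t_m-t_i)}\cdot 2\langle Y_{t_i},\sigma(Y_{t_i})\Delta W_i\rangle=e^{4ct_m}N_m\leq e^{4ct_k}\sup_{m\leq k}|N_m|$ is the same manipulation the paper performs implicitly when it replaces $t_m$ by $t_k$ inside the stochastic integral in \eqref{l:mom_num:4}. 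The subsequent application of the Burkholder--Davis--Gundy-type and Gaussian moment bounds (Lemmas 7.2 and 7.7 in Da Prato \& Zabczyk) to the martingale and diffusion contributions, together with Minkowski's inequality, matches the paper term by term, including the constants $\tfrac{2p^3}{p-1}$ and $p(2p-1)$.
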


\begin{proof}[Proof of Lemma~\ref{l:mom_num}]

First of all, Lemma 2.25 in \cite{HutzenthalerJentzen2014Memoires} together with $4c | \theta |  \leq 1$ implies that for all $k \in \{1,\dots,n \}$ it holds that 
\begin{equation} 
\begin{split}  \label{l:mom_num:1} 
1 + \left\| Y_{t_k} \right\|_{\R^d}^2  
\leq
e^{4c(t_k - t_{k-1})} \left[ 1+ \left\| Y_{t_k} - \mu(Y_{t_k})(t_k - t_{k-1}) \right\|_{\R^d}^2 \right].
\end{split} 
\end{equation} 
This together with \eqref{l:mom_num_ass1} yields that for all $k \in \{1,\dots,n \}$ it holds that 
\begin{equation} 
\begin{split}  \label{l:mom_num:2} 
& 1 + \left\| Y_{t_k} \right\|_{\R^d}^2  
\leq
e^{4c(t_k - t_{k-1})} \left[ 1+ \left\|  Y_{t_{k-1}} + \sigma(  Y_{t_{k-1}} ) 
(W_{ t_{ k } } - W_{t_{k-1}}  )
 \right\|_{\R^d}^2 \right]
\\ 
&
=
e^{4c(t_k - t_{k-1})} 
\left[ 1+ \left\|  Y_{t_{k-1}} 
\right\|_{\R^d}^2 
+ 
2 \int_{t_{k-1}}^{t_k}
(Y_{t_{k-1}})^*  \sigma(  Y_{t_{k-1}} ) \, dW_s
+
\left\|
\sigma(  Y_{t_{k-1}} )
(W_{ t_{ k } } - W_{t_{k-1}}  )
 \right\|_{\R^d}^2 \right]
\end{split} 
\end{equation} 
$\P$-a.s.
A straight forward induction on  $k \in \{1,\dots,n-1 \}$ then shows that for all $k \in \{0,\dots,n \}$ it holds that 
\begin{equation} 
\begin{split}  \label{l:mom_num:3} 
1 + \left\| Y_{t_k} \right\|_{\R^d}^2  
\leq & 
e^{4ct_k} \left[ 1+ 
 \left\| Y_0 \right\|_{\R^d}^2 \right]
+
2 \int_{t_0}^{t_k}
e^{4c(t_k - \lfloor s \rfloor_{\theta})} 
(Y_{\lfloor s \rfloor_{\theta}} )^* \sigma(  Y_{\lfloor s \rfloor_{\theta}}  ) \, dW_s
\\ &
+
\sum_{i=0}^{k-1} 
 e^{4c(t_k - t_i)} 
\left\|
\sigma(  Y_{t_i} )
(W_{ t_{ i+1 } } - W_{t_i}  )
 \right\|_{\R^d}^2 
\end{split} 
\end{equation}
$\P$-a.s.
Estimate \eqref{l:mom_num:3} 
and the Burkholder-Davis-Gundy-type inequalities in 
Lemma~7.2 and Lemma~7.7 in Da Prato \& Zabczyk~\cite{dz92} ensure that for all $k \in \{0,\dots,n \}$, $p\in[2,\infty)$ it holds that
\begin{equation} 
\begin{split}  \label{l:mom_num:4} 
 & \left\| \sup_{i \in \{0,1,\dots,k\} } \left[ 1 + \left\| Y_{t_i} \right\|_{\R^d}^2  \right] \right\|_{L^p( \Omega; \R )} 
\\ &
\leq 
e^{4ct_k} 
\left\| 
 1+ 
 \left\| Y_0 \right\|_{\R^d}^2 
\right\|_{L^p( \Omega; \R )} 
+
2 
\left\|
\sup_{s \in [t_0, t_k]}
\left| 
\int_{t_0}^{s}
e^{4c(t_k - \lfloor s \rfloor_{\theta})} 
(Y_{\lfloor s \rfloor_{\theta}})^*  \sigma(  Y_{\lfloor s \rfloor_{\theta}}  ) \, dW_s \right|
\right\|_{L^p( \Omega; \R )} 
\\ & \quad 
+
 \left\|
\sum_{i=0}^{k-1} 
 e^{4c(t_k - t_i)} 
\left\|
 \sigma(  Y_{t_i} )
(W_{ t_{ i+1 } } - W_{t_i}  )
 \right\|_{\R^d}^2 
\right\|_{L^p( \Omega; \R )} 
\\ &
\leq 
e^{4ct_k} 
\left[ 
 1+ 
 \left\| Y_0 
\right\|_{L^{2p}( \Omega; \R^d )}^2 
\right]
+
2 \sqrt{\tfrac{p^3}{2(p-1)}
\int_{t_0}^{t_k}
e^{8c(t_k - \lfloor s \rfloor_{\theta})} 
\left\|
  \big(
    Y_{\lfloor s \rfloor_{\theta}} 
  \big)^*  
  \sigma(  Y_{\lfloor s \rfloor_{\theta}}  ) \right\|_{L^{p}( \Omega; HS( \R^m, \R )  )}^2 \, ds }
\\ & \quad 
+
\sum_{i=0}^{k-1} 
 e^{4c(t_k - t_i)} 
\left\|
\sigma(  Y_{t_i} )
(W_{ t_{ i+1 } } - W_{t_i}  )
\right\|_{L^{2p}( \Omega; \R^d )}^2
\\ &
\leq 
e^{4ct_k} 
\left[ 
 1+ 
 \left\| Y_0 
\right\|_{L^{2p}( \Omega; \R^d )}^2 
\right]
+
\left[ \tfrac{2p^3}{p-1} 
\sum_{i=0}^{k-1}
e^{8c(t_k - t_i)} 
\left\|
  \sigma(  Y_{t_i}  )^*  Y_{t_i}
\right\|_{L^{p}( \Omega; \R^m )}^2
(t_{i+1} - t_i)
\right]^{\nicefrac{1}{2}} 
\\ & \quad 
+
  p \left( 2 p - 1 \right)
  \left[
\sum_{i=0}^{k-1} 
 e^{4c(t_k - t_i)} 
\left\|
\sigma(  Y_{t_i} )
\right\|_{L^{2p}( \Omega; HS( \R^m, \R^d )  )}^2
  ( t_{ i+1 } -  t_i  ) 
  \right]
  .
\end{split} 
\end{equation}
  This finishes the proof
  of 
  Lemma~\ref{l:mom_num}.
\end{proof}
The next result, Corollary~\ref{l:uni_mom_num2}, proves, under suitable assumptions, uniform moment bounds
(see \eqref{l:mom_num:statebbb} below)
for a family of fully-drift implicit Euler approximations.
Corollary~\ref{l:uni_mom_num2} follows immediately from a combination of
Lemma~\ref{l:mom_num} and Lemma~\ref{l:phi}.
\begin{corollary}  
\label{l:uni_mom_num2}
Let $ d, m \in \N $, 
$ c, \bar{c}, \tilde{c}, T \in [0,\infty) $, 
$ p \in [2,\infty) $, 
$
  O \in \mathcal{B}( \R^d )
$,
$ 
  \mu \in \mathcal{L}^0( O; \R^d )
$,
$
  \sigma \in \mathcal{L}^0( O; \R^{ d \times m } )
$,
let
$
  ( 
    \Omega, \mathcal{F}, \P, ( \mathcal{F}_t )_{ t \in [0,T] } 
  )
$
be a filtered probability space, 
let
$
  W \colon [0,T] \times \Omega \to \R^m
$
be a standard $ ( \mathcal{F}_t )_{ t \in [0,T] } $-Brownian motion,
let $ \xi \colon \Omega \to O $ be an
$ \mathcal{F}_0 \big/ \mathcal{B} (O) $-measurable function 
with
$
   \big\| 
     \| \xi \|_{ \R^d }
     +
     \| \mu( \xi ) \|_{ \R^d }
   \big\|_{ L^{(\bar{c} + 1 \vee \bar{c})p}( \Omega; \R ) } 
  < \infty
$,
assume
\begin{equation} 
\begin{split} 
   \| \sigma (x) \|_{ HS( \R^m, \R^d ) }
  & \leq  \tilde{c} \left( 1 + \|x\|^{\bar{c}}_{ \R^d } \right)
  \\ 
  \langle x - y , \mu(x) - \mu(y) \rangle_{ \R^d } 
  & \leq c \, \| x - y \|^2_{ \R^d }
  \\ 
    \langle x, \mu(x) \rangle_{ \R^d } + 
    \tfrac{ ({(\bar{c} + 1 \vee \bar{c})p} - 1) }{ 2 } 
    \| \sigma (x) \|_{ HS( \R^m, \R^d ) }^2 & \leq \tilde{c} \left( 1 + \|x\|^2_{ \R^d } \right)
\end{split} 
\end{equation} 
for all $x,y \in O$ and assume that 
for all $ t \in (0, \nicefrac{ 1 }{ c } ) $ 
it holds that 
the mapping $ O \ni x \to x - t \mu(x) \in \R^d $ is surjective.
Then there exists a unique family 
$
  Y^h \colon (\N_0 \cap [0, \nicefrac{ T }{ h } ]) \times \Omega\to O 
$, 
$
  h \in (0,T] \cap (0, \nicefrac{ 1 }{ c } )
$, 
of stochastic processes 
satisfying $ Y_0^h = \xi $ 
and
\begin{equation} \label{l:uni_mom_num2.Y}
  Y^h_n = Y^h_{ n - 1 } + \mu( Y^h_n ) \, h 
  + \sigma( Y_{ n - 1 }^h ) \left( W_{ n h } - W_{ ( n - 1 ) h } \right) 
\end{equation}
for all $ n \in \N \cap [0, \nicefrac{ T }{ h } ] $, $ h \in (0,T] \cap (0, \nicefrac{ 1 }{ c } ) $ 
and 
it holds that
\begin{equation} 
\begin{split}  \label{l:mom_num:statebbb}  
\sup_{ h \in (0,T] \cap (0,\frac{1}{4c}]} 
\left\|
 \sup_{ n \in \N_0 \cap [0,T/h]}  \big\| Y^{h}_{n} \big\|_{ \R^d }  \right\|_{ L^{p}( \Omega; \R ) }
< \infty.
\end{split} 
\end{equation}
\end{corollary}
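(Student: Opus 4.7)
The plan is to combine Lemma~\ref{l:phi}, which yields unique existence together with pointwise-in-$n$ moment bounds for the drift-implicit Euler family, with Lemma~\ref{l:mom_num}, which moves the supremum over $n$ inside the $L^p$-norm at the cost of requiring control on moments of $\sigma(Y^h_n)$ and $\sigma(Y^h_n)^{*}Y^h_n$. The crucial observation is that the exponent $\hat p:=(\bar c+1\vee\bar c)\,p=\max((\bar c+1)p,\,2\bar c p)$ appearing in the Corollary's integrability and coercivity hypotheses is precisely the one needed so that the $L^{\hat p}$-bound on $\|Y^h_n\|$ delivered by Lemma~\ref{l:phi} translates, via the polynomial growth of $\sigma$, into every moment demanded by Lemma~\ref{l:mom_num}. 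First I would invoke Lemma~\ref{l:phi} with its free parameter ``$p$'' replaced by $\hat p\in[2,\infty)$. The Corollary's assumed integrability $\bigl\|\|\xi\|_{\R^d}+\|\mu(\xi)\|_{\R^d}\bigr\|_{L^{\hat p}(\Omega;\R)}<\infty$, one-sided Lipschitz condition, coercivity with constant $\tfrac{\hat p-1}{2}$, and surjectivity of $x\mapsto x-t\mu(x)$ are precisely the hypotheses of Lemma~\ref{l:phi}. Applied with $q=\hat p/2$, this yields the unique existence of $(Y^h)$ satisfying~\eqref{l:uni_mom_num2.Y} together with the uniform bound
\begin{equation*}
  M:=\sup_{h\in(0,T]\cap(0,\frac{1}{4c}]}\ \sup_{n\in\N_0\cap[0,T/h]}\|Y^h_n\|_{L^{\hat p}(\Omega;\R^d)}<\infty.
\end{equation*}

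Next, for each $h$ with $4\tilde c h\leq 1$, I would apply Lemma~\ref{l:mom_num} to $(Y^h_n)$ with partition $\theta=(0,h,2h,\ldots)$ and its coercivity constant taken equal to $\tilde c$. The right-hand side of~\eqref{l:mom_num:state} involves $\|Y^h_0\|_{L^{2p}}$, $\|\sigma(Y^h_{t_i})^{*}Y^h_{t_i}\|_{L^p}$, and $\|\sigma(Y^h_{t_i})\|_{L^{2p}}$. The polynomial growth $\|\sigma(x)\|_{HS}\leq\tilde c(1+\|x\|^{\bar c})$ together with the submultiplicativity $\|\sigma(y)^{*}y\|_{\R^m}\leq\|\sigma(y)\|_{HS}\|y\|_{\R^d}$ bounds these by $\tilde c(1+\|Y^h_{t_i}\|_{L^{2p\bar c}}^{\bar c})$ and $\tilde c(\|Y^h_{t_i}\|_{L^p}+\|Y^h_{t_i}\|_{L^{(\bar c+1)p}}^{\bar c+1})$ respectively, and the choice $\hat p\geq\max(2p\bar c,(\bar c+1)p)$ (verified by splitting into $\bar c\leq 1$ and $\bar c>1$) makes both quantities dominated by a constant times $1+M^{\bar c+1}$, uniformly in $h$ and $i$. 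Since $\sum_i(t_{i+1}-t_i)\leq T$ and the factors $e^{8\tilde c T}$ in~\eqref{l:mom_num:state} are independent of $h$, one obtains a bound on $\bigl\|\sup_{n}(1+\|Y^h_n\|_{\R^d}^{2})\bigr\|_{L^p(\Omega;\R)}$ uniform in such $h$, whence~\eqref{l:mom_num:statebbb} via $\|Y^h_n\|_{\R^d}\leq(1+\|Y^h_n\|_{\R^d}^{2})^{1/2}$. For the possibly remaining range $h\in(\tfrac{1}{4\tilde c},\tfrac{1}{4c}]$ (empty unless $\tilde c>c$), the cardinality of $\N_0\cap[0,T/h]$ is bounded by $4\tilde cT$, so $\sup_n\|Y^h_n\|_{\R^d}\leq\sum_{n=0}^{\lfloor 4\tilde cT\rfloor}\|Y^h_n\|_{\R^d}$ and the finite-sum bound from the previous step gives the desired uniform $L^p$-estimate on this residual range directly.

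The genuine content of the proof lies in the two lemmas invoked; the remainder is essentially bookkeeping and I do not anticipate a substantive obstacle. The only delicate point is the choice of the exponent $\hat p$: it must be simultaneously large enough to make the $\tfrac{\hat p-1}{2}\|\sigma(x)\|^2$-term in the coercivity hypothesis of Lemma~\ref{l:phi} the correct one, and large enough so that the polynomial growth of $\sigma$ allows one to control the $L^p$-norm of $\sigma(Y^h_n)^{*}Y^h_n$ and the $L^{2p}$-norm of $\sigma(Y^h_n)$ appearing in Lemma~\ref{l:mom_num} by (powers of) $\|Y^h_n\|_{L^{\hat p}}$.
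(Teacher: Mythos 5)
Your decomposition is exactly what the paper intends: Lemma~\ref{l:phi} (applied with its exponent replaced by $\hat p=(\bar c+1\vee\bar c)p$) for existence and the uniform pointwise $L^{\hat p}$ bound, then Lemma~\ref{l:mom_num} to move the supremum inside the $L^p$-norm. The paper states only that the corollary ``follows immediately from a combination'' of these two lemmas, so your fill-in is faithful to its route. Your residual-range argument for $h\in(\tfrac{1}{4\tilde c},\tfrac{1}{4c}]$ (bounding the supremum by a finite sum of at most $\lfloor 4\tilde cT\rfloor+1$ terms, each controlled by $M$) is a correct and in general necessary patch, since Lemma~\ref{l:mom_num} only operates when $4\tilde c|\theta|\le 1$.

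There is, however, one term on the right-hand side of \eqref{l:mom_num:state} that you list but never verify: $\|Y_0^h\|_{L^{2p}(\Omega;\R^d)}=\|\xi\|_{L^{2p}(\Omega;\R^d)}$. The hypothesis only gives $\xi\in L^{\hat p}$, and $\hat p\ge 2p$ holds exactly when $\bar c\ge 1$; for $\bar c<1$ one has $\hat p=(\bar c+1)p<2p$, so $\|\xi\|_{L^{2p}}$ is not covered. A cleaner application that sidesteps this, at least when $p\ge 4$, is to invoke Lemma~\ref{l:mom_num} with exponent $p/2$ and use $\big\|\sup_n\|Y^h_n\|\big\|_{L^p}^2=\big\|\sup_n\|Y^h_n\|^2\big\|_{L^{p/2}}\le\big\|\sup_n(1+\|Y^h_n\|^2)\big\|_{L^{p/2}}$; the initial-datum requirement becomes $\|\xi\|_{L^p}$ and the $\sigma$-terms need only $\|Y^h_n\|_{L^{\bar cp}}$ and $\|Y^h_n\|_{L^{(\bar c+1)p/2}}$, all of which $\hat p$ dominates. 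For $p\in[2,4)$ together with $\bar c<1$, Lemma~\ref{l:mom_num} cannot be applied with exponent below $2$, so either that corner case needs a separate argument or one should note that enlarging $\bar c$ to $1$ is possible only at the cost of a correspondingly strengthened coercivity hypothesis. You should flag this explicitly; as written, your proof silently assumes $\hat p\ge 2p$.
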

%
%
%
%
%
%
%
%
%

%
%
%

\subsection{Strong convergence rates for drift-implicit Euler approximations of Bessel-type processes}

%

%
%
The following corollary establishes strong convergence rates
for drift-implicit Euler approximations of Bessel-type processes.
\begin{corollary}   \label{l:num_corollary}
Let $ T, L, c \in [0,\infty) $, 
$ \varepsilon \in (0,\infty) $,
$ \gamma \in (0,1] $,
$
  \mu \in \mathcal{L}^0([0,\infty); \R)
$ ,
$
  n \in \N_0
$, 
$
  \theta=(t_0,\dots, t_n) \in [0,T]^{n+1}
$ 
satisfy 
$
  0 = t_0 < t_1 < \dots < t_n = T 
$, 
$
  L | \theta | < 1
$
and
$
  ( x-y) (\mu(x) - \mu(y)) \leq L (x-y)^2 
$ 
and
$
  |
    \mu(x) - \mu(y) 
  | 
  \leq 
  c
  \left| x - y \right|
  \big(
    \frac{ 1 }{ x } + \frac{ 1 }{ y } 
    + 
    \frac{ 1 }{ x y } + 
    x^{ c } + 
    y^{ c } 
  \big)
$ 
for all $x, y \in (0,\infty)$,
let
$
  ( 
    \Omega, \mathcal{F}, \P, ( \mathcal{F}_t )_{ t \in [0,T] } 
  )
$
be a stochastic basis, let
$
  W \colon [0,T] \times \Omega \to \R
$
be a standard $ ( \mathcal{F}_t )_{ t \in [0,T] } $-Brownian motion, 
let $X\colon[0,T]\times \Omega\to [0,\infty)$
  be an adapted stochastic process with continuous sample paths satisfying 
  $\int_0^T |\mu(X_s)| \,ds < \infty$ $ \P $-a.s., $\sup_{s \in [0,T]} \P [X_s=0]=0$
  and
  \begin{align} 
  \label{l:lemma_num_Xbbb}
    X_t=X_0+\int_0^t \mu(X_s)\,ds+W_t
  \end{align}
 $\P$-a.s.\
  for all $t\in[0,T]$,
let 
$
  Y \colon \{ t_0, t_1, \ldots, t_n \} \times \Omega \to [0,\infty)
$ 
be a measurable mapping satisfying $Y_0= X_0$ and
  \begin{equation} \label{l:lemma_num_Y_umformungbbb}
    Y_{t_{k+1}}= Y_{t_k} +  \mu( Y_{ t_{ k + 1 } } ) \left( t_{ k + 1 } - t_k \right) 
    + W_{ t_{ k + 1 } } - W_{ t_k }  
  \end{equation}
 $\P$-a.s.\ for all $k \in \{0,\dots,n-1 \}$.
Then 
\begin{equation} 
\begin{split}  \label{l:num_corollary:3bbb} 
&\left\| \sup_{k \in \{0,1,\dots,n\}  }  \left|X_{t_k} - Y_{t_k} \right| \right\|_{ L^{1}( \Omega; \R ) }
 \leq
 2 T c  \left( \tfrac{1}{1-|\theta|L} \right)^{n}
\Big\| 
\sup_{u \in [0,T]}   
\left| X_u - X_{\lceil u \rceil_{\theta}} \right|
 \Big\|_{ L^{\gamma (1 +\nicefrac{1}{\eps})}( \Omega; \R ) }^{\gamma}
\\ & \quad \cdot 
\sup_{u\in[0,T]} \left[
\big\| 
 \tfrac{2}{(X_u)^{1+\gamma}} 
\big\|_{ L^{1+\eps}( \Omega; \R ) } 
+  \tfrac{\gamma}{1+\gamma}  
\big\| (X_u)^{\nicefrac{1}{\gamma} -\gamma}  
\big\|_{ L^{1+\eps}( \Omega; \R ) } 
  + 2
\big\|
\left( X_u \right)^{ c + 1 - \gamma }
\big\|_{ L^{1+\eps}( \Omega; \R ) } 
+ \tfrac{1}{1+\gamma} 
 \right].
\end{split} 
\end{equation} 

\end{corollary}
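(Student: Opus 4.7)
The plan is to combine Lemma~\ref{l:lemma_num} with a Young-inequality decomposition in the spirit of the proof of Lemma~\ref{l:lemma_conv_order}. First, I would verify that Lemma~\ref{l:lemma_num} applies with the open interval $I=(0,\infty)$: the hypothesis $\sup_s \P[X_s=0]=0$ yields $\P[X_{t_i}\in\partial I]=0$ for each $i$, the one-sided Lipschitz condition on $\mu$ is one of the standing assumptions, and $L|\theta|<1$ is also assumed. Using $Y_0=X_0$, the first term in~\eqref{l:lemma_num:state2} vanishes and taking the supremum over $k\in\{0,1,\dots,n\}$ yields, $\P$-a.s.,
\begin{equation*}
  \sup_{k\in\{0,\ldots,n\}}|X_{t_k}-Y_{t_k}|
  \leq \left(\tfrac{1}{1-|\theta|L}\right)^{\!n}
  \int_0^T \big|\mu(X_s)-\mu(X_{\lceil s\rceil_\theta})\big|\,ds.
\end{equation*}

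Next I would substitute the Lipschitz-like bound $|\mu(x)-\mu(y)|\leq c|x-y|(\tfrac{1}{x}+\tfrac{1}{y}+\tfrac{1}{xy}+x^c+y^c)$ into the integrand and factor $|X_s-X_{\lceil s\rceil_\theta}|=|X_s-X_{\lceil s\rceil_\theta}|^\gamma|X_s-X_{\lceil s\rceil_\theta}|^{1-\gamma}$, using $|X_s-X_{\lceil s\rceil_\theta}|^{1-\gamma}\leq (X_s+X_{\lceil s\rceil_\theta})^{1-\gamma}$. Applying the Young-inequality computation of~\eqref{l:lemma_conv_order:1} to the inverse-moment factor $(X_s+X_{\lceil s\rceil_\theta})^{1-\gamma}\big(\tfrac{1}{X_s}+\tfrac{1}{X_{\lceil s\rceil_\theta}}+\tfrac{1}{X_sX_{\lceil s\rceil_\theta}}\big)$ together with the elementary subadditivity bound $(a+b)^{1-\gamma}(a^c+b^c)\leq 2(a^{c+1-\gamma}+b^{c+1-\gamma})$ produces a pointwise majorant of the form $c\,|X_s-X_{\lceil s\rceil_\theta}|^\gamma\, R(X_s,X_{\lceil s\rceil_\theta})$, where
\begin{equation*}
  R(x,y)
  :=\tfrac{2}{x^{1+\gamma}}+\tfrac{2}{y^{1+\gamma}}
  +\tfrac{\gamma}{1+\gamma}\big(x^{1/\gamma-\gamma}+y^{1/\gamma-\gamma}\big)
  +2\big(x^{c+1-\gamma}+y^{c+1-\gamma}\big)+\tfrac{2}{1+\gamma}.
\end{equation*}

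In the final step I would take the $L^1(\Omega;\R)$-norm, pulling the increment out of the time-integral via the trivial estimate $|X_s-X_{\lceil s\rceil_\theta}|^\gamma\leq \sup_{u\in[0,T]}|X_u-X_{\lceil u\rceil_\theta}|^\gamma$. H\"older's inequality with the conjugate pair $(1+\tfrac{1}{\eps},1+\eps)$ separates the two factors,
\begin{equation*}
  \Big\|\sup_u|X_u-X_{\lceil u\rceil_\theta}|^\gamma\smallint_0^T R(X_s,X_{\lceil s\rceil_\theta})\,ds\Big\|_{L^1}
  \leq \big\|\sup_u|X_u-X_{\lceil u\rceil_\theta}|\big\|_{L^{\gamma(1+1/\eps)}}^{\gamma}\Big\|\smallint_0^T R(X_s,X_{\lceil s\rceil_\theta})\,ds\Big\|_{L^{1+\eps}},
\end{equation*}
Minkowski's integral inequality gives $\|\int_0^T R\,ds\|_{L^{1+\eps}}\leq T\sup_s\|R(X_s,X_{\lceil s\rceil_\theta})\|_{L^{1+\eps}}$, and the triangle inequality in $L^{1+\eps}$ bounds each pair-symmetric contribution in $R$ by twice the $\sup_{u\in[0,T]}$ of the corresponding single-index norm. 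Collecting the constants $c\cdot T\cdot 2$ yields the prefactor $2Tc$ and the bracketed moment sum of~\eqref{l:num_corollary:3bbb}. No substantive obstacle is anticipated: the derivation is routine bookkeeping, and the main content is already packaged in Lemma~\ref{l:lemma_num} and the Young-inequality step of Lemma~\ref{l:lemma_conv_order}; mild care is only needed so that the conventions $0/0=0$ and the $\P$-null event $\{X_{t_i}=0\}$ do not affect the integrands.
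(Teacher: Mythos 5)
Your proposal is correct and follows essentially the same route as the paper's proof: apply Lemma~\ref{l:lemma_num} (with $I=(0,\infty)$, $Y_0=X_0$, and $\sup_s\P[X_s=0]=0$) to reduce to the time-integral of $|\mu(X_s)-\mu(X_{\lceil s\rceil_\theta})|$, factor the increment as $|X_s-X_{\lceil s\rceil_\theta}|^\gamma\,(X_s+X_{\lceil s\rceil_\theta})^{1-\gamma}$, use exactly the Young-inequality bound from the proof of Lemma~\ref{l:lemma_conv_order} together with $(x+y)^{1-\gamma}(x^c+y^c)\leq 2(x^{c+1-\gamma}+y^{c+1-\gamma})$, and then apply H\"older with conjugates $(1+\nicefrac{1}{\eps},1+\eps)$ followed by Minkowski and the triangle inequality. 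The constants and the final bracketed moment expression work out exactly as you describe, so the proposal matches the paper's argument.
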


\begin{proof} [Proof of Corollary~\ref{l:num_corollary}]
Observe that 
Young's inequality implies 
that for all 
$
  x, y \in (0,\infty)
$
it holds that
 {\allowdisplaybreaks 
\begin{equation} 
\begin{split}   
\label{l:num_corollary:1} 
 &
  \left( x + y \right)^{ (1 - \gamma) } 
  \left( 
    \tfrac{ 1 }{ x } + 
    \tfrac{ 1 }{ y } + 
    \tfrac{ 1 }{ x y }
  \right)
=
 \tfrac{x+y}{x(x+y)^{\gamma}} + \tfrac{x+y}{y(x+y)^{\gamma}} + \tfrac{x+y}{xy(x+y)^{\gamma}} 
=
\tfrac{2 + \frac{x}{y} + \frac{y}{x}}{(x+y)^{\gamma}} 
+
\tfrac{ \frac{1}{y} + \frac{1}{x}}{(x+y)^{\gamma}} 
\\ 
&
\leq
\tfrac{1}{x^{\gamma}} + \tfrac{1}{y^{\gamma}} 
+
\tfrac{x^{1-\gamma}}{y} + \tfrac{y^{1-\gamma}}{x}
+
\tfrac{1}{y^{1+\gamma}} + \tfrac{1}{x^{1+\gamma}}
\\ 
&
\leq
\tfrac{\gamma}{1+\gamma} \tfrac{1}{x^{1+\gamma}} + \tfrac{1}{1+\gamma}
+
\tfrac{\gamma}{1+\gamma} \tfrac{1}{y^{1+\gamma}} + \tfrac{1}{1+\gamma}
+
\tfrac{1}{1+\gamma} \tfrac{1}{y^{1+\gamma}} + \tfrac{\gamma}{1+\gamma} x^{\frac{(1-\gamma)(1+\gamma)}{\gamma} }
+
\tfrac{1}{1+\gamma} \tfrac{1}{x^{1+\gamma}} + \tfrac{\gamma}{1+\gamma} y^{\frac{(1-\gamma)(1+\gamma)}{\gamma} }
\\ & \quad 
+
 \tfrac{1}{y^{1+\gamma}} +  \tfrac{1}{x^{1+\gamma}} 
\\
&
=
\tfrac{2}{x^{1+\gamma}} + \tfrac{2}{y^{1+\gamma}} +   \tfrac{\gamma}{1+ \gamma} x^{\frac{1}{\gamma} -\gamma} +   \tfrac{\gamma}{1+ \gamma}  y^{\frac{1}{\gamma} -\gamma}  + \tfrac{2}{1+\gamma}
\end{split} 
\end{equation}}%
and
\begin{equation} 
\begin{split}  \label{l:num_corollary:1bb} 
&(x+y)^{1-\gamma} \left( x^c + y^c \right)
=
\tfrac{(x+y)x^c}{(x+y)^{\gamma}} + \tfrac{(x+y)y^c}{(x+y)^{\gamma}}
\leq
x^{c+1-\gamma} + x^{c} y^{1-\gamma} + y^{c+1-\gamma} + y^{c} x^{1-\gamma}
\\ &
\leq
x^{c+1-\gamma} 
+ y^{1-\gamma + c} \tfrac{1-\gamma}{1-\gamma + c} + x^{1-\gamma + c} \tfrac{c}{1-\gamma+c}
+y^{c+1-\gamma} 
+ x^{1-\gamma + c} \tfrac{1-\gamma}{1-\gamma + c} + y^{1-\gamma + c} \tfrac{c}{1-\gamma+c}
\\ &
=
2x^{c+1-\gamma} + 2y^{c+1-\gamma}.
\end{split} 
\end{equation}
Moreover, observe that 
Lemma~\ref{l:lemma_num}, 
the assumption that $ \sup_{ s \in [0,T] } \P\big[ X_s = 0 \big] = 0 $ 
and H\"older's inequality
show that 
{\allowdisplaybreaks  
\begin{align}  \label{l:num_corollary:2} \nonumber
&\left\| \sup_{k \in \{0,1,\dots,n\}  }  \left|X_{t_k} - Y_{t_k} \right| \right\|_{ L^{1}( \Omega; \R ) }
\leq  \left\|  \sup_{k \in \{0,1,\dots,n\}  } \left[ \left( \tfrac{1}{1-|\theta|L} \right)^{k}  \int_0^{t_k} \left| \mu(X_s) -  \mu(X_{\lceil s \rceil_{\theta}}) \right| \, ds \right] \right\|_{ L^{1}( \Omega; \R ) } 
\\ &  \nonumber
= \left\|   \left( \tfrac{1}{1-|\theta|L} \right)^{n}  \int_0^{T} \left| \mu(X_s) -  \mu(X_{\lceil s \rceil_{\theta}}) \right| \1_{ \{ X_s, X_{\lceil s \rceil_{\theta}}) \in (0,\infty) \}}  \, ds  \right\|_{ L^{1}( \Omega; \R ) } 
 \\ \nonumber
 &\leq
 c\left( \tfrac{1}{1-|\theta|L} \right)^{n}\left\|  \int_0^{T}
  \left| X_s - X_{\lceil s \rceil_{\theta}} \right|
  \left(
    \tfrac{ 1 }{ X_s } + \tfrac{ 1 }{ X_{\lceil s \rceil_{\theta}} } 
    + 
    \tfrac{ 1 }{ X_s X_{\lceil s \rceil_{\theta}} } + 
    (X_s)^{ c } + 
    (X_{\lceil s \rceil_{\theta}})^{ c } 
  \right)
 \, ds \right\|_{ L^{1}( \Omega; \R ) }
%
%
%
 \\ \nonumber
 &\leq
 c\left( \tfrac{1}{1-|\theta|L} \right)^{n}\Bigg\| 
\sup_{u \in [0,T]}   
\left| X_u - X_{\lceil u \rceil_{\theta}} \right|^{\gamma}
\int_0^{T} 
\left| X_s + X_{\lceil s \rceil_{\theta}} \right|^{1-\gamma}
\\ \nonumber & \quad \cdot  
\left(
    \tfrac{ 1 }{ X_s } + \tfrac{ 1 }{ X_{\lceil s \rceil_{\theta}} } 
    + 
    \tfrac{ 1 }{ X_s X_{\lceil s \rceil_{\theta}} } + 
    (X_s)^{ c } + 
    (X_{\lceil s \rceil_{\theta}})^{ c } 
  \right)
 \, ds \Bigg\|_{ L^{1}( \Omega; \R ) }
 \\ \nonumber
 &\leq
 c\left( \tfrac{1}{1-|\theta|L} \right)^{n}
\Big\| 
\sup_{u \in [0,T]}   
\left| X_u - X_{\lceil u \rceil_{\theta}} \right|^{\gamma}
 \Big\|_{ L^{1+\nicefrac{1}{\eps}}( \Omega; \R ) }
\\ \nonumber & \quad \cdot 
\Bigg\| 
\int_0^{T} 
\left| X_s + X_{\lceil s \rceil_{\theta}} \right|^{1-\gamma} 
\left(
    \tfrac{ 1 }{ X_s } + \tfrac{ 1 }{ X_{\lceil s \rceil_{\theta}} } 
    + 
    \tfrac{ 1 }{ X_s X_{\lceil s \rceil_{\theta}} } + 
    (X_s)^{ c } + 
    (X_{\lceil s \rceil_{\theta}})^{ c } 
  \right)
 \, ds \Bigg\|_{ L^{1+\eps}( \Omega; \R ) }
 \\ \nonumber
 &\leq
 c\left( \tfrac{1}{1-|\theta|L} \right)^{n}
\Big\| 
\sup_{u \in [0,T]}   
\left| X_u - X_{\lceil u \rceil_{\theta}} \right|
 \Big\|_{ L^{\gamma (1+ \nicefrac{1}{\eps})}( \Omega; \R ) }^{\gamma}
\\  & \quad \cdot 
\int_0^{T} \big\| 
\left| X_s + X_{\lceil s \rceil_{\theta}} \right|^{1-\gamma} 
\left(
    \tfrac{ 1 }{ X_s } + \tfrac{ 1 }{ X_{\lceil s \rceil_{\theta}} } 
    + 
    \tfrac{ 1 }{ X_s X_{\lceil s \rceil_{\theta}} } + 
    (X_s)^{ c } + 
    (X_{\lceil s \rceil_{\theta}})^{ c } 
  \right)
\big\|_{ L^{1+\eps}( \Omega; \R ) } \, ds.
\end{align}}%
Inequality~\eqref{l:num_corollary:2}, inequality~\eqref{l:num_corollary:1} and  
inequality~\eqref{l:num_corollary:1bb}
imply that 
\begin{equation} 
\begin{split}  \label{l:num_corollary:3ccc} 
&\left\| \sup_{k \in \{0,1,\dots,n\}  }  \left|X_{t_k} - Y_{t_k} \right| \right\|_{ L^{1}( \Omega; \R ) }
 \\
 &\leq
 c\left( \tfrac{1}{1-|\theta|L} \right)^{n}
\Big\| 
\sup_{u \in [0,T]}   
\left| X_u - X_{\lceil u \rceil_{\theta}} \right|
 \Big\|_{  L^{\gamma (1+ \nicefrac{1}{\eps})}( \Omega; \R )  }^{\gamma} 
\int_0^{T} \Big\| 
 \tfrac{2}{( X_{\lceil s \rceil_{\theta}})^{1+\gamma}} + \tfrac{2}{(X_s)^{1+\gamma}} 
\\ & \quad 
+  \tfrac{\gamma}{1+\gamma}   ( X_{\lceil s \rceil_{\theta}})^{\frac{1}{\gamma} -\gamma} 
+ \tfrac{\gamma}{1+\gamma} (X_s)^{\frac{1}{\gamma} -\gamma}  + \tfrac{2}{1+\gamma} 
  + 2 \left(  X_{\lceil s \rceil_{\theta}} \right)^{ c + 1 - \gamma }
  + 2 \left( X_s \right)^{ c + 1 - \gamma }
\Big\|_{ L^{1+\eps}( \Omega; \R ) } \, ds 
%
%
%
%
 \\
 &\leq
 2 T c  \left( \tfrac{1}{1-|\theta|L} \right)^{n}
\Big\| 
\sup_{u \in [0,T]}   
\left| X_u - X_{\lceil u \rceil_{\theta}} \right|
 \Big\|_{  L^{\gamma (1+ \nicefrac{1}{\eps})}( \Omega; \R )  }^{\gamma}
\\ & \quad \cdot 
\sup_{u\in[0,T]} \left[
\big\| 
 \tfrac{2}{(X_u)^{1+\gamma}} 
\big\|_{ L^{1+\eps}( \Omega; \R ) } 
+  \tfrac{\gamma}{1+\gamma}  
\big\| (X_u)^{\frac{1}{\gamma} -\gamma}  
\big\|_{ L^{1+\eps}( \Omega; \R ) } 
  + 2
\big\|
\left( X_u \right)^{ c + 1 - \gamma }
\big\|_{ L^{1+\eps}( \Omega; \R ) } 
+ \tfrac{1}{1+\gamma} 
 \right].
\end{split} 
\end{equation} 
This finishes the proof of Corollary~\ref{l:num_corollary}.
\end{proof}
\begin{remark} \label{remark.L1.to.Lp}
In the setting of Corollary~\ref{l:num_corollary}, H\"older's inequality implies that for all  
$p \in [1,\infty)$, $\kappa \in [0,\infty]$
it holds that
\begin{align} \label{l:num_corollary:22}  \nonumber
& \left\| \sup_{k \in \{0,1,\dots,n\}  }  \left|X_{t_k} - Y_{t_k} \right| \right\|_{ L^{p}( \Omega; \R ) }
=
\left\| \sup_{k \in \{0,1,\dots,n\}  } \Big[ \left|X_{t_k} - Y_{t_k} \right|^{\frac{1}{p(1+\kappa)}} |X_{t_k} - Y_{t_k}|^{1-\frac{1}{p(1+\kappa)}} \Big] \right\|_{ L^{p}( \Omega; \R ) }
\\ 
& \leq
\left\| \sup_{k \in \{0,1,\dots,n\}  } |X_{t_k} - Y_{t_k}|^{\frac{1}{p(1+\kappa)}}  \right\|_{ L^{p(1+\kappa)}( \Omega; \R ) }
\left\| \sup_{k \in \{0,1,\dots,n\}  }  |X_{t_k} - Y_{t_k}|^{1-\frac{1}{p(1+\kappa)}} \right\|_{ L^{p(1+\nicefrac{1}{\kappa})}( \Omega; \R ) }
\\ \nonumber
& \leq 
\left\| \sup_{k \in \{0,1,\dots,n\}  }  \left| X_{t_k} - Y_{t_k} \right| \right\|_{ L^{ 1}( \Omega; \R ) }^{\frac{1}{p(1+\kappa)}}
\left[ 
\left\| \sup_{k \in \{0,1,\dots,n\}  } \left| X_{t_k}  \right| \right\|_{ L^{p + \frac{p-1}{\kappa}}( \Omega; \R )}^{1-\frac{1}{p(1+\kappa)}} 
+
\left\| \sup_{k \in \{0,1,\dots,n\}  } \left| Y_{t_k} \right| \right\|_{ L^{p + \frac{p-1}{\kappa}}( \Omega; \R )}^{1-\frac{1}{p(1+\kappa)}}\right].
\end{align}
\end{remark}

\subsection{Strong convergence rates for drift-implicit square root
Euler approximations of Cox-Ingersoll-Ross-type processes}

In the next two lemmas we present elementary 
properties of the drift coefficient of the transformed SDE
derived in Lemma~\ref{l:transformierte_gleichung}.

\begin{lemma}   \label{l:num_bound}
Let  $\mu  \in
  C^1\!\left([0,\infty) , \R \right)$,
  $\sigma \in
  C\!\left([0,\infty) , [0,\infty) \right)$ satisfy $\sigma(0)=0$, $\sigma((0,\infty)) \subseteq (0,\infty)$, $\sigma \cdot \sigma  \in
  C^2\!\left([0,\infty) , [0,\infty) \right)$, $\mu(0) > \tfrac{(\sigma \cdot \sigma)'(0)}{4} > 0$ and $\smallint_1^{\infty} \tfrac{1}{\sigma(z)}\,  dz = \infty$,
  let $\phi\colon[0,\infty) \to [0,\infty)$ be the function defined by
   $ \phi(y) := \int_{0}^y \tfrac{1}{\sigma(z)} \, dz$
  for all $y \in [0,\infty)$ and
let $g \colon (0, \infty) \to \R $ be the function defined by 
 \begin{align} 
   g(x)
:=
\left( \tfrac{\mu - \frac{1}{4} (\sigma \cdot \sigma)'}{\sigma} \right)  \! \left( \phi^{-1}(x) \right)
\end{align} 
for all $x \in (0,\infty)$. 
Then $\phi$ is well-defined and bijective, $g$ is well-defined and continuously differentiable, $\lim_{(0,\infty) \ni x \to 0} \tfrac{ \sigma (x)}{\sqrt{x}}
=
\sqrt{(\sigma \cdot \sigma)'(0)} \in (0,\infty)$,
\begin{align}
 \lim_{(0,\infty) \ni x \to 0} x g(x) = \tfrac{2 \mu(0)}{(\sigma \cdot \sigma)'(0)} - \tfrac{1}{2}
 \quad \text{   and   } \quad
  \limsup_{(0,\infty) \ni x \to 0}  \left|g'(x) +  \tfrac{1}{x^2}\left( \tfrac{2 \mu(0)}{(\sigma \cdot \sigma)'(0)} - \tfrac{1}{2} \right) \right|
  < \infty.
\end{align}

%
%
%
%
%
%
  %
  %
  %
  %
  %
%
%
%

\end{lemma}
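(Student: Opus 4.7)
The plan is to introduce the auxiliary function $h := \sigma \cdot \sigma \in C^2([0,\infty), [0,\infty))$, observe that $h(0) = 0$ and $h'(0) > 0$ by the hypotheses, and write out the two-term Taylor expansion $h(z) = h'(0)\, z + \tfrac{1}{2} h''(0)\, z^2 + o(z^2)$ as $z \to 0^+$. Since $\sigma(z) = \sqrt{h(z)} = \sqrt{h'(0)\, z}\bigl(1 + O(z)\bigr)$, this immediately gives the third claim $\lim_{x \to 0^+} \sigma(x)/\sqrt{x} = \sqrt{h'(0)} = \sqrt{(\sigma \cdot \sigma)'(0)} \in (0,\infty)$. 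The integrand $1/\sigma(z)$ is therefore of order $z^{-1/2}$ near $0$ and hence Lebesgue integrable on $[0,y]$ for every $y \in [0,\infty)$, so $\phi$ is well-defined on $[0,\infty)$. Because $1/\sigma > 0$ on $(0,\infty)$, $\phi$ is continuous, strictly increasing and satisfies $\phi(0) = 0$, while the assumption $\int_1^{\infty} \tfrac{dz}{\sigma(z)} = \infty$ yields $\lim_{y \to \infty} \phi(y) = \infty$, proving that $\phi \colon [0,\infty) \to [0,\infty)$ is bijective.

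On $(0,\infty)$ the function $\sigma = \sqrt{h}$ is $C^1$ with $\sigma' = h'/(2\sigma)$, so $\phi$ is $C^1$ there with $\phi' = 1/\sigma$, and consequently $\phi^{-1}$ is $C^1$ on $(0,\infty)$ with $(\phi^{-1})'(x) = \sigma(\phi^{-1}(x))$. Combining this with the $C^1$-regularity of $\mu$ and $h' = (\sigma \cdot \sigma)'$ shows that $g$ is well-defined and continuously differentiable on $(0,\infty)$.

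Next I would derive the sharp asymptotics of $\phi(y)$ via the substitution $z = u^2$, which converts $\phi(y) = \int_0^y dz/\sqrt{h(z)}$ into $\int_0^{\sqrt{y}} 2\, du/\sqrt{h(u^2)/u^2}$, where the integrand extends continuously to $1/\sqrt{h'(0)}$ at $u = 0$. This yields $\phi(y) = \tfrac{2\sqrt{y}}{\sqrt{h'(0)}}\bigl(1 + O(y)\bigr)$ as $y \to 0^+$, and inverting gives $\phi^{-1}(x) = \tfrac{h'(0)}{4}\, x^2\, (1 + O(x^2))$. Setting $y := \phi^{-1}(x)$ and using $\sigma(y) = \sqrt{h'(0)\, y}\,(1 + O(y))$, the identity
\begin{equation*}
  x\, g(x) \;=\; \phi(y) \cdot \frac{\mu(y) - h'(y)/4}{\sigma(y)}
\end{equation*}
expands to $\tfrac{2\sqrt{y}}{\sqrt{h'(0)}} \cdot \tfrac{\mu(0) - h'(0)/4 + O(y)}{\sqrt{h'(0)\, y}\,(1 + O(y))} = \tfrac{2\mu(0)}{h'(0)} - \tfrac{1}{2} + O(\sqrt{y})$, which establishes the fourth claim.

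The last claim is the main obstacle: one must show that the $1/x^2$-singularity of $g'$ has the precise strength $c := \tfrac{2\mu(0)}{h'(0)} - \tfrac{1}{2}$. Differentiating $g$ via the chain rule, using $(\phi^{-1})' = \sigma \circ \phi^{-1}$ and $\sigma' = h'/(2\sigma)$, yields after simplification
\begin{equation*}
  g'(x) \;=\; \mu'(y) \,-\, \tfrac{1}{4}\, h''(y) \,-\, \frac{\bigl(\mu(y) - h'(y)/4\bigr)\, h'(y)}{2\, h(y)}, \qquad y := \phi^{-1}(x).
\end{equation*}
The first two terms are bounded as $y \to 0^+$. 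For the third, I would expand
\begin{equation*}
  \frac{h'(y)}{h(y)} \;=\; \frac{1}{y} \;+\; \frac{h''(0)}{2\, h'(0)} \;+\; O(y)
\end{equation*}
by comparing the leading Taylor coefficients of $y\, h'(y)$ and $h(y)$. Combining this with $\mu(y) - h'(y)/4 = \mu(0) - h'(0)/4 + O(y)$ gives $g'(x) = -\tfrac{\mu(0) - h'(0)/4}{2\, y} + O(1)$, and substituting the expansion $1/y = 4/(h'(0)\, x^2) + O(1)$ obtained above produces $g'(x) = -c/x^2 + O(1)$, which is precisely the required bound. The technical subtlety is to keep track of the error terms carefully enough that the $O(1)$ contributions do not mask the claimed cancellation between the Taylor coefficients of $h$.
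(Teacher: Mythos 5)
Your proposal is correct and follows essentially the same path as the paper's proof: both derive the chain-rule identity $g'(\phi(y)) = \mu'(y) - \tfrac{1}{4}(\sigma\cdot\sigma)''(y) - \bigl(\tfrac{(\mu-\frac{1}{4}(\sigma\cdot\sigma)')(\sigma\cdot\sigma)'}{2(\sigma\cdot\sigma)}\bigr)(y)$, establish the local asymptotics $\sigma(y)\sim\sqrt{(\sigma\cdot\sigma)'(0)\,y}$ and $\phi(y)\sim 2\sqrt{y}/\sqrt{(\sigma\cdot\sigma)'(0)}$ from the $C^2$-regularity of $\sigma\cdot\sigma$, and then convert the resulting $-\alpha/y$ singularity into $-\alpha/x^2$ via these asymptotics. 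The remaining difference is stylistic (the paper works with explicit two-sided bounds such as $\tfrac{2\sqrt{y}}{\sqrt{(\sigma\cdot\sigma)'(0)}}\pm c_2 y^{3/2}$ where you use Landau notation), and the only small overclaim in your sketch is writing $h'(y)/h(y)=1/y+h''(0)/(2h'(0))+O(y)$ where $C^2$-regularity only yields an $o(1)$ remainder; this is immaterial since the final $\limsup$ bound needs nothing finer than an $O(1)$ remainder.
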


\begin{proof} [Proof of Lemma~\ref{l:num_bound}]
First of all, we define 
$
  \alpha :=\tfrac{2 \mu(0)}{(\sigma \cdot \sigma)'(0)} - \tfrac{1}{2} \in (0,\infty)
$.
Next note that 
the assumptions 
$
  \sigma \cdot \sigma  \in
  C^2\!\left([0,\infty) ,[0,\infty) \right)
$ 
and 
$
  \sigma((0,\infty)) \subseteq (0,\infty)
$ 
ensure that 
$
  \sigma|_{(0,\infty)}   
  \in
  C^1\!\left((0,\infty) ,[0,\infty) \right)
$.
Furthermore,
observe that
the assumptions
$
  \mu \in
  C^1\!\left([0,\infty) ,\R \right)
$,
$
  \sigma \cdot \sigma  \in
  C^2\!\left([0,\infty) ,[0,\infty) \right)
$ 
and 
$
  \mu(0) > \tfrac{(\sigma \cdot \sigma)'(0)}{4} > 0
$
imply that there exist real numbers 
$\eps \in (0,1)$ and $c_1 \in (0,\infty)$, which we fix for the rest of this proof, such that for all $x \in (0, \eps)$ it holds that 
\begin{align} \label{l:num_bound:1}
0< \tfrac{1}{4} (\sigma \cdot \sigma)'(0)
 \leq
 (\sigma \cdot \sigma)'(0)-c_1 x
 \leq
 (\sigma \cdot \sigma)'(x)
 &\leq
 (\sigma \cdot \sigma)'(0) + c_1 x
 \qquad\text{ and}
 \\
\label{l:num_bound:2} 
0 <  \tfrac{\mu(0)}{2} - \tfrac{(\sigma \cdot \sigma)'(0)}{8}
 \leq
 \mu(0) - \tfrac{(\sigma \cdot \sigma)'(0)}{4} - c_1 x
 \leq
 \mu(x) - \tfrac{(\sigma \cdot \sigma)'(x)}{4} 
 &\leq
 \mu(0) - \tfrac{(\sigma \cdot \sigma)'(0)}{4} + c_1 x.
\end{align}
In the next step we observe that the assumption
$ \sigma(0) = 0 $ shows that 
\begin{align} \label{l:num_bound:4}
\lim_{(0,\infty) \ni x \to 0} \tfrac{ \sigma (x)}{\sqrt{x}}
=
\sqrt{\lim_{(0,\infty) \ni x \to 0}  \tfrac{(\sigma \cdot \sigma)(x)}{x}}
=\sqrt{(\sigma \cdot \sigma)'(0)} \in (0,\infty).
\end{align}
Hence, there exist real numbers $\delta \in (0,1)$ and $\lambda \in (0,\infty)$ such that for all $x\in (0,\delta)$ it holds  that $\sigma(x) \geq \lambda \sqrt{x}$. 
This, the continuity of $ \sigma $ 
and the assumption that $ \sigma((0,\infty)) \subseteq (0,\infty) $ 
imply that for all $ y \in (0,\infty) $ 
it holds that
\begin{align} 
\label{l:num_bound:222222}
  \int_0^y 
  \tfrac{ 1 }{ \sigma(z) } 
  \, dz 
\leq
  \int_0^{ \delta } 
  \tfrac{ 1 }{ \sigma(z) } 
  \, dz  
  + 
  \left| 
    \int_{\delta}^y 
    \tfrac{ 1 }{ \sigma(z) } 
    dz 
  \right|
\leq 
 \int_0^{\delta} \tfrac{1}{\lambda \sqrt{z}} \, dz  +  \left| \int_{\delta}^y \tfrac{1}{\sigma(z)}\,  dz\right|  < \infty
 .
\end{align}
This ensures that $ \phi $ is well-defined. 
Furthermore, observe that $ \phi $ is strictly increasing and continuous and 
note that 
$
  \phi|_{(0,\infty)} \in
  C^1\!\left((0,\infty) , [0,\infty) \right)
$. 
This and the assumption that 
$
  \smallint_1^{\infty} \tfrac{1}{\sigma(z)} \, dz = \infty
$ 
show that 
$ \phi \colon [0,\infty) \to [0,\infty) $
is bijective,
that  
$
  \phi^{ - 1 } \colon [0,\infty) \to [0,\infty)
$ 
is also strictly increasing and continuous 
and that 
$
  \phi^{-1}|_{(0,\infty)} \in
  C^1\!\left( (0,\infty) , [0,\infty) \right)
$. 
In the next step we observe that fact that
for all $ x \in (0,\infty) $ 
it holds that
$ \sigma( \phi^{ - 1 }( x ) ) > 0 $ 
ensures that $ g $ is well-defined.
Furthermore, we note that l'Hospital's rule, 
the fact that $ \phi(0) = 0 $
and \eqref{l:num_bound:4} show that
 \begin{align} \label{l:num_bound:5}
\lim_{(0,\infty) \ni x \to 0} \tfrac{\phi(x)}{\sqrt{x}}
&=
\lim_{(0,\infty) \ni x \to 0} 2 \phi'(x) \sqrt{x} 
= \lim_{(0,\infty) \ni x \to 0} \tfrac{2 \sqrt{x}}{\sigma(x)}
=
\tfrac{2}{\sqrt{(\sigma \cdot \sigma)'(0)}} \in (0,\infty).
\end{align} 
In addition, observe that \eqref{l:num_bound:4}, \eqref{l:num_bound:5}
and the identity $ \phi^{-1}(0) = \phi(0) = 0 $ imply that
 \begin{align} \label{l:num_bound:6} 
\lim_{(0,\infty) \ni x \to 0} \tfrac{x}{\sigma(\phi^{-1}(x))}
&=
\lim_{(0,\infty) \ni x \to 0} \tfrac{\phi(x)}{\sigma(x)}
=
\lim_{(0,\infty) \ni x \to 0} \tfrac{\phi(x)}{\sqrt{x}}
\cdot
\lim_{(0,\infty) \ni x \to 0} \tfrac{\sqrt{x}}{\sigma(x)}
=
\tfrac{2}{(\sigma \cdot \sigma)'(0)} \quad \in (0,\infty).
\end{align} 
This shows that
\begin{equation} 
\begin{split} 
 \label{l:num_bound:7} 
&
  \lim_{(0,\infty) \ni x \to 0} 
  \left[ 
    x \cdot g(x)
  \right] 
  =
\lim_{(0,\infty) \ni x \to 0} 
  \left[ 
    x 
    \cdot \left( \tfrac{\mu - \frac{1}{4} (\sigma \cdot \sigma)'}{\sigma} \right)\!\left( \phi^{-1}(x) \right)
  \right] 
\\
&
=
  \left[
    \lim_{ (0,\infty) \ni x \to 0 } 
    \left( 
      \tfrac{ x }{ \sigma\left( \phi^{-1}(x) \right)
      } 
    \right)
  \right] 
  \left[ 
    \lim_{ (0,\infty) \ni x \to 0 } 
    \left( \mu(x) - \tfrac{1}{4} (\sigma \cdot \sigma)'(x) \right)
  \right]
=
 \tfrac{2\mu(0)}{(\sigma \cdot \sigma)'(0)}  - \tfrac{1}{2} = \alpha.
\end{split} 
\end{equation} 
Moreover, 
observe 
that identities 
$
  (\sigma \cdot \sigma)'(x) = 2 \sigma(x) \sigma'(x)
$ 
and 
$
  \big( \phi^{-1} \big)' \big(\phi(x)\big) =  \sigma(x)
$
for all $ x \in (0,\infty) $ 
imply that for all $ y \in (0,\infty) $ 
it holds that
\begin{align} \label{l:num_bound:8} 
 g'\big( \phi( y ) \big)
&=
  \left(
    \tfrac{
      \left( \mu' - \frac{(\sigma \cdot \sigma)''}{4}\right) \sigma  
      -
      \left( \mu - \frac{(\sigma \cdot \sigma)'}{4}\right) \sigma' 
    }{
      \sigma \cdot \sigma
    } 
  \right)\!( y ) 
  \cdot
  \sigma( y )
=
  \mu'(y) - \tfrac{(\sigma \cdot \sigma)''(y)}{4}
  -
  \left(
    \tfrac{
      \left( 
        \mu - \frac{(\sigma \cdot \sigma)'}{4}\right)(\sigma \cdot \sigma)'
    }{
      2 (\sigma \cdot \sigma)
    } 
  \right)\!( y )
  .
 \end{align}
In the next step we define
a real number 
$
  c_2
  := 
  \tfrac{ 8 c_1 }{ 9 }
  \big( 
    ( \sigma \cdot \sigma )'( 0 ) 
  \big)^{ - 3 / 2 }
  \in (0, \infty)
  .
$
The assumption that $ \sigma(0) = 0 $ 
and estimate \eqref{l:num_bound:1}
show then that for all $y \in (0,\eps)$ it holds that
\begin{align}  \label{l:num_bound:9}  \nonumber
0 \leq \phi(y) 
& = 
 \int_{0}^y \tfrac{1}{\sigma(z)} \, dz
 =
 \int_{0}^y \tfrac{1}{\sqrt{ \int_{0}^z {(\sigma \cdot \sigma)'(x)} \, dx }}\,  dz
\leq
 \int_{0}^y \tfrac{1}{\sqrt{ (\sigma \cdot \sigma)'(0)z - c_1 z^2  }}\,  dz
 \\ \nonumber
 &
 =
 \int_{0}^y \tfrac{1}{\sqrt{ (\sigma \cdot \sigma)'(0)z }}\,  dz
 +
  \int_{0}^y \tfrac{
\sqrt{ (\sigma \cdot \sigma)'(0)z }- \sqrt{ (\sigma \cdot \sigma)'(0)z - c_1 z^2  } 
  }{\sqrt{ (\sigma \cdot \sigma)'(0)z   } \sqrt{ (\sigma \cdot \sigma)'(0)z - c_1 z^2  }}\,  dz
   \\
 &
 =
\tfrac{2\sqrt{y}}{\sqrt{ (\sigma \cdot \sigma)'(0)}} 
 +
  \int_{0}^y \tfrac{ c_1 z^2 
  }{\sqrt{ (\sigma \cdot \sigma)'(0)z   } \sqrt{ (\sigma \cdot \sigma)'(0)z - c_1 z^2  }
  \left(
  \sqrt{ (\sigma \cdot \sigma)'(0)z   } + \sqrt{ (\sigma \cdot \sigma)'(0)z - c_1 z^2  }
  \right)
  }\,  dz
   \\ \nonumber
 &
 \leq
\tfrac{2\sqrt{y}}{\sqrt{ (\sigma \cdot \sigma)'(0)}} 
 +
  \int_{0}^y \tfrac{ c_1 z^2 
  }{\sqrt{ (\sigma \cdot \sigma)'(0)z   } \sqrt{\frac{1}{4}(\sigma \cdot \sigma)'(0)z  }
  \left(
  \sqrt{ (\sigma \cdot \sigma)'(0)z   } + \sqrt{ \frac{1}{4}(\sigma \cdot \sigma)'(0)z }
  \right)
  } \, dz
   \\  \nonumber
 &
 =
\tfrac{2\sqrt{y}}{\sqrt{ (\sigma \cdot \sigma)'(0)}} 
 +
  \int_{0}^y \tfrac{4 c_1 z^2 
  }{
  3
  \left( (\sigma \cdot \sigma)'(0)z \right)^{3/2}
  }\,  dz
 =
\tfrac{2\sqrt{y}}{\sqrt{ (\sigma \cdot \sigma)'(0)}} 
 +
 \tfrac{ 8c_1 y^{3/2} 
  }{
  9
  \left( (\sigma \cdot \sigma)'(0) \right)^{3/2}
  } 
=
\tfrac{2\sqrt{y}}{\sqrt{ (\sigma \cdot \sigma)'(0)}} 
 +
 c_2 y^{3/2}.
\end{align}
Estimate \eqref{l:num_bound:1}, 
estimate \eqref{l:num_bound:2} and 
the identity $\alpha = \tfrac{4 \mu(0) - (\sigma \cdot \sigma)'(0)}{2(\sigma \cdot \sigma)'(0)}$
therefore imply that 
for all $y \in (0,\eps)$ it holds that
 {\allowdisplaybreaks  \begin{align} \nonumber \label{l:num_bound:10}
& \big(\phi(y)\big)^2
  \cdot
\left(
\tfrac{
\left( \mu - \frac{(\sigma \cdot \sigma)'}{4}\right)(\sigma \cdot \sigma)'
}
{2(\sigma \cdot \sigma)
 } 
 \right)\!( y ) 
- \alpha
\leq
\left(
\tfrac{2\sqrt{y}}{\sqrt{ (\sigma \cdot \sigma)'(0)}} 
 +
 c_2y^{3/2}\right)^2
\tfrac{
\left( \mu(0) - \frac{(\sigma \cdot \sigma)'(0)}{4} + c_1 y
 \right)
\big(
(\sigma \cdot \sigma)'(0) + c_1y 
\big)
}
{
2\int_{0}^y {(\sigma \cdot \sigma)'(x)} \, dx
}
-
\alpha
\\
&\leq
\left(
\tfrac{4y}{ (\sigma \cdot \sigma)'(0)} 
+
\tfrac{4c_2y^2}{\sqrt{ (\sigma \cdot \sigma)'(0)}} 
 +
 c_2^2 y^3\right)
\tfrac{
\left( \mu(0) - \frac{(\sigma \cdot \sigma)'(0)}{4} + c_1 y
 \right)
\big(
(\sigma \cdot \sigma)'(0) + c_1y 
\big)
}
{
2 (\sigma \cdot \sigma)'(0)y -c_1 y^2
}
-
\tfrac{4 \mu(0) - (\sigma \cdot \sigma)'(0)}{2(\sigma \cdot \sigma)'(0)}
\\  \nonumber
&= \Big(
(\sigma \cdot \sigma)'(0) + c_1y 
\Big)
\left(
\tfrac{1}{ (\sigma \cdot \sigma)'(0)} 
+
\tfrac{c_2y}{\sqrt{ (\sigma \cdot \sigma)'(0)}} 
 +
 \tfrac{c_2^2 y^2}{4}\right)
\left(\tfrac{
 4 \mu(0) - (\sigma \cdot \sigma)'(0) + 4c_1 y
}
{
2 (\sigma \cdot \sigma)'(0) -c_1 y
}
 \right)
-
\tfrac{4 \mu(0) - (\sigma \cdot \sigma)'(0)}{2(\sigma \cdot \sigma)'(0)}.
\end{align} }%
Analogously, we obtain from \eqref{l:num_bound:1} that for all $y \in (0,\eps)$  it holds that
 {\allowdisplaybreaks  \begin{align} \label{l:num_bound:11}  \nonumber
\phi(y) & = 
 \int_{0}^y \tfrac{1}{\sigma(z)} \, dz
 =
 \int_{0}^y \tfrac{1}{\sqrt{ \int_{0}^z {(\sigma \cdot \sigma)'(x)} \, dx }}\,  dz
\geq
 \int_{0}^y \tfrac{1}{\sqrt{ (\sigma \cdot \sigma)'(0)z + c_1 z^2  }} \, dz
 \\ \nonumber
 &
 =
 \int_{0}^y \tfrac{1}{\sqrt{ (\sigma \cdot \sigma)'(0)z }} \, dz
 +
  \int_{0}^y \tfrac{
\sqrt{ (\sigma \cdot \sigma)'(0)z }- \sqrt{ (\sigma \cdot \sigma)'(0)z + c_1 z^2  } 
  }{\sqrt{ (\sigma \cdot \sigma)'(0)z   } \sqrt{ (\sigma \cdot \sigma)'(0)z + c_1 z^2  }}\,  dz
   \\
 &
 =
\tfrac{2\sqrt{y}}{\sqrt{ (\sigma \cdot \sigma)'(0)}} 
 -
  \int_{0}^y \tfrac{ c_1 z^2 
  }{\sqrt{ (\sigma \cdot \sigma)'(0)z   } \sqrt{ (\sigma \cdot \sigma)'(0)z + c_1 z^2  }
  \left(
  \sqrt{ (\sigma \cdot \sigma)'(0)z   } + \sqrt{ (\sigma \cdot \sigma)'(0)z + c_1 z^2  }
  \right)
  } \, dz
   \\ \nonumber
 &
 \geq
\tfrac{2\sqrt{y}}{\sqrt{ (\sigma \cdot \sigma)'(0)}} 
 -
  \int_{0}^y \tfrac{ c_1 z^2 
  }{\sqrt{ (\sigma \cdot \sigma)'(0)z   } \sqrt{\frac{1}{4}(\sigma \cdot \sigma)'(0)z  }
  \left(
  \sqrt{ (\sigma \cdot \sigma)'(0)z   } + \sqrt{ \frac{1}{4} (\sigma \cdot \sigma)'(0)z }
  \right)
  } \, dz
   \\  \nonumber
 &
 =
\tfrac{2\sqrt{y}}{\sqrt{ (\sigma \cdot \sigma)'(0)}} 
 -
  \int_{0}^y \tfrac{ 4c_1 z^2 
  }{
3  \left( (\sigma \cdot \sigma)'(0)z \right)^{3/2}
  } \, dz
 =
\tfrac{2\sqrt{y}}{\sqrt{ (\sigma \cdot \sigma)'(0)}} 
 -
 \tfrac{ 8c_1 y^{3/2} 
  }{
9  \left( (\sigma \cdot \sigma)'(0) \right)^{3/2}
  } 
=
\tfrac{2\sqrt{y}}{\sqrt{ (\sigma \cdot \sigma)'(0)}} 
 -
 c_2y^{3/2}.
\end{align} }%
Next note that \eqref{l:num_bound:1} shows that for all $ y \in (0,\eps) $ 
it holds that 
$
  \frac{ 2 \sqrt{y} }{ \sqrt{ (\sigma \cdot \sigma)'(0) } 
  } 
  -
  c_2 y^{ 3 / 2 } > 0 
$.
This, \eqref{l:num_bound:1} and \eqref{l:num_bound:2} imply that for all $y \in (0,\eps)$ it holds that
 {\allowdisplaybreaks \begin{align} \nonumber \label{l:num_bound:12}
& 
  \big(
    \phi(y)
  \big)^2
  \cdot 
  \left(
    \tfrac{
      \left( \mu - \frac{(\sigma \cdot \sigma)'}{4}\right)(\sigma \cdot \sigma)'
    }{
      2 (\sigma \cdot \sigma)
    } 
  \right)\!( y ) 
  - \alpha
\geq
  \left(
    \tfrac{2\sqrt{y}}{\sqrt{ (\sigma \cdot \sigma)'(0)}} 
    -
    c_2 y^{ 3 / 2 }
  \right)^{ \! 2 }
\tfrac{
\left( \mu(0) - \frac{(\sigma \cdot \sigma)'(0)}{4} - c_1 y
 \right)
\big(
(\sigma \cdot \sigma)'(0) - c_1y 
\big)
}
{
2\int_{0}^y {(\sigma \cdot \sigma)'(x)} \, dx
}
-
\alpha
\\
&\geq
\left(
\tfrac{4y}{ (\sigma \cdot \sigma)'(0)} 
-
\tfrac{4 c_2 y^2}{\sqrt{ (\sigma \cdot \sigma)'(0)}} 
 +
 c_2^2 y^3\right)
\tfrac{
\left( \mu(0) - \frac{(\sigma \cdot \sigma)'(0)}{4} - c_1 y
 \right)
\big(
(\sigma \cdot \sigma)'(0) - c_1 y 
\big)
}
{
2 (\sigma \cdot \sigma)'(0)y + c_1 y^2
}
-
\tfrac{4 \mu(0) - (\sigma \cdot \sigma)'(0)}{2(\sigma \cdot \sigma)'(0)}
\\ \nonumber
&= \Big(
(\sigma \cdot \sigma)'(0) - c_1y 
\Big)
\left(
\tfrac{1}{ (\sigma \cdot \sigma)'(0)} 
-
\tfrac{c_2 y}{\sqrt{ (\sigma \cdot \sigma)'(0)}} 
 +
 \tfrac{c_2^2 y^2}{4}\right)
\left(\tfrac{
 4 \mu(0) - (\sigma \cdot \sigma)'(0) - 4c_1 y
}
{
2 (\sigma \cdot \sigma)'(0) + c_1 y
}
 \right)
-
\tfrac{4 \mu(0) - (\sigma \cdot \sigma)'(0)}{2(\sigma \cdot \sigma)'(0)}.
\end{align} }%
Next observe that \eqref{l:num_bound:10} and \eqref{l:num_bound:12} prove that
\begin{align} \label{l:num_bound:13}
  \limsup_{(0,\eps) \ni x \to 0}
  \left[ 
    \frac{1}{x} \left| 
\big(\phi(x)\big)^2 
  \cdot
\left(
\tfrac{
\left( \mu - \frac{(\sigma \cdot \sigma)'}{4}\right)(\sigma \cdot \sigma)'
}
{2(\sigma \cdot \sigma)
 } 
 \right)\!(x)  
-
\alpha
\right| 
  \right] 
  < \infty.
\end{align}
Finally, we note that 
\eqref{l:num_bound:8}, the continuity of $ \phi $, 
the fact taht $ \phi(0) = 0 $, 
the assumptions that
$
  \mu \in
  C^1\!\left([0,\infty) , \R \right)
$
and
$ (\sigma \cdot \sigma) \in
  C^2\!\left([0,\infty), [0,\infty) \right)
$
and \eqref{l:num_bound:13} and \eqref{l:num_bound:5} 
ensure that
 {\allowdisplaybreaks  \begin{align} \nonumber \label{l:num_bound:14}
& \limsup_{(0,\eps) \ni x \to 0} \left| g'(x) + \tfrac{\alpha}{x^2} \right|
=
 \limsup_{(0,\eps) \ni x \to 0} \left| -g'\big(\phi(x)\big) - \tfrac{\alpha}{(\phi(x))^2} \right|
\\ 
&
\leq
 \limsup_{(0,\eps) \ni x \to 0}\left(  
\left| \mu'(x)\right| + \left|\tfrac{(\sigma \cdot \sigma)''(x)}{4}
\right|\right)
+
 \limsup_{(0,\eps) \ni x \to 0} \left| 
\left(
\tfrac{
\left( \mu - \frac{(\sigma \cdot \sigma)'}{4}\right)(\sigma \cdot \sigma)'
}
{2(\sigma \cdot \sigma)
 } 
 \right)
 (x)  
-
 \tfrac{\alpha}{(\phi(x))^2} \right|
\\ \nonumber
&
=
\left| \mu'(0)\right| + \left|\tfrac{(\sigma \cdot \sigma)''(0)}{4}
\right|
+
 \left[
 \limsup_{(0,\eps) \ni x \to 0}
 \frac{1}{x} \left| 
\big(\phi(x)\big)^2  
\left(
\tfrac{
\left( \mu - \frac{(\sigma \cdot \sigma)'}{4}\right)(\sigma \cdot \sigma)'
}
{2(\sigma \cdot \sigma)
 } 
 \right)
 (x)  
-
\alpha \right|
\right]
\left[
 \limsup_{(0,\eps) \ni x \to 0} \Big| 
 \tfrac{
x}
{
(\phi(x))^2
}\Big|
\right]
< \infty.
\end{align} }%
This finishes the proof of Lemma~\ref{l:num_bound}.
\end{proof}
\begin{lemma} 
\label{l:prop_h}
Let 
$ \alpha \in (0,\infty)$,
$
  \mu \in
  C^1\!\left([0,\infty) , \R \right)
$,
$
  \sigma \in
  C\!\left([0,\infty) , [0,\infty) \right)
$ 
satisfy
$
  \sigma(0) = 0 
$, 
$
  \sigma( (0,\infty) ) \subseteq (0,\infty)
$, 
$
  \sigma \cdot \sigma  \in
  C^2\!\left( [0,\infty) , [0,\infty) \right)
$,
$
  ( \sigma \cdot \sigma )'( 0 ) > 0
$,
$
  \alpha 
  =
  \frac{ 2 \mu( 0 ) }{
    ( \sigma \cdot \sigma )'( 0 )
  }
  -
  \frac{ 1 }{ 2 }
$,
let 
$
  \phi \colon [0,\infty) \to [0,\infty)
$ 
and
$ 
  g \colon (0, \infty) \to \R 
$
be functions defined by
$ 
  \phi(y) 
  := 
  \int_0^y \tfrac{ 1 }{ \sigma(z) } \, dz
$
and
$  
  g(z)
  :=
  \big( 
    \frac{ \mu - \nicefrac{1}{4} (\sigma \cdot \sigma)'}{\sigma} 
  \big)\big( \phi^{-1}(z) \big)
$
for all $ y \in [0,\infty) $,
$ z \in (0,\infty) $,
let
$
  L:= 
  \big[ 
    \sup_{z \in (0,\infty)} 
    g'(z)
  \big]^+ 
  \in [0,\infty]
$
and
  %
  %
  %
  %
  %
assume that 
\begin{align}
\label{eq:polynomial.growth.mu.sigma_lemma}
\inf_{\rho\in[1,\infty)}\limsup_{x\to\infty}
\left[
  \tfrac{
     [ \mu(x) ]^+
   }{ x }
  +
  \tfrac{
    \sigma(x)
  }{ x }
  +
  \tfrac{
    |\mu(x)|
  }{ 
    x^{\rho}
  }
  +
  \big[ g'(x) \big]^+ 
  +
  \tfrac{
    |
      g'(x)
    |
  }{
    x^{ \rho }
  }
\right]
<\infty
  .
\end{align}
%
%
%
%
  %
%
%
%
%
%
Then 
$ \phi $ is well-defined and bijective, 
$
  \phi^{-1}|_{(0,\infty)} \in
  C^1\!\left( (0,\infty) , [0,\infty) \right)
$, 
$ g $ is well-defined and continuously differentiable,
$ L < \infty $, 
it holds
for all $ t \in (0,\nicefrac{1}{L}) $ 
that the function 
$ (0,\infty) \ni x \mapsto x - t g(x) \in \R $ is bijective,
it holds that
\begin{equation} 
\begin{split} 
\lim_{(0,\infty ) \ni x \to 0} \tfrac{| \mu(x) - \mu(0) | + | (\sigma \cdot \sigma)'(x) - (\sigma \cdot \sigma)'(0)|}{\sigma(x)}
 < \infty
\end{split} 
\end{equation} 
and it holds that there exists a real number 
$ c \in [0,\infty) $ such that
for all $ x, y \in (0,\infty) $ it holds that
\begin{align} 
&
  \left( x - y \right)
  \left( g(x) - g(y) \right)
  \leq L \left( x - y \right)^2
  ,
\\ &
  \left| g(x) - g(y) \right|  
  \leq 
  c \left| x - y \right|
  \big( 
    1 + \tfrac{ 1 }{ x y } + x^c + y^c 
  \big)
  ,
\\ & 
  \alpha - c \left( x + x^c \right)  
  \leq x g(x)  
  \leq c \left( 1 + x^2 \right)
  .
\end{align}
\end{lemma}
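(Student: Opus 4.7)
\textbf{Proof plan for Lemma~\ref{l:prop_h}.}
The strategy is to extract every asymptotic input needed for this lemma from Lemma~\ref{l:num_bound} and the polynomial growth assumption~\eqref{eq:polynomial.growth.mu.sigma_lemma}, then glue these local pieces with the continuity of $g$ on $(0,\infty)$. First I would verify that Lemma~\ref{l:num_bound} is applicable: the identity $\alpha=\frac{2\mu(0)}{(\sigma\cdot\sigma)'(0)}-\frac12$ together with $\alpha\in(0,\infty)$ and $(\sigma\cdot\sigma)'(0)>0$ yields $\mu(0)>\frac{(\sigma\cdot\sigma)'(0)}{4}>0$, while the growth assumption gives $\sigma(z)\leq Cz$ for large $z$ so that $\int_1^\infty\tfrac{1}{\sigma(z)}\,dz=\infty$. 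Lemma~\ref{l:num_bound} then delivers that $\phi$ is well-defined and bijective, that $\phi^{-1}|_{(0,\infty)}\in C^1$, that $g$ is well-defined and continuously differentiable, and that $\sigma(x)/\sqrt{x}\to\sqrt{(\sigma\cdot\sigma)'(0)}$, $xg(x)\to\alpha$, and $g'(x)+\alpha/x^2$ is bounded as $x\to 0^+$.

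Next I would show $L<\infty$. Near $0$, the estimate $g'(x)=-\alpha/x^2+O(1)$ from Lemma~\ref{l:num_bound} forces $g'(x)\to-\infty$, so $[g'(x)]^+\to 0$; at infinity, $\limsup_{x\to\infty}[g'(x)]^+<\infty$ by~\eqref{eq:polynomial.growth.mu.sigma_lemma}; on compact subsets of $(0,\infty)$, $g'$ is bounded by continuity. Hence $L=[\sup_{z}g'(z)]^+<\infty$. For the bijectivity of $F_t(x):=x-tg(x)$ with $t\in(0,1/L)$, note $F_t'(x)=1-tg'(x)\geq 1-tL>0$, so $F_t$ is strictly increasing and continuous; then $xg(x)\to\alpha>0$ forces $g(x)\sim\alpha/x$ as $x\to 0^+$, hence $F_t(x)\to-\infty$, while $F_t(x)\geq F_t(1)+(1-tL)(x-1)\to+\infty$ as $x\to\infty$, giving surjectivity onto $\R$. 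The boundary limit \[\limsup_{x\to 0}\tfrac{|\mu(x)-\mu(0)|+|(\sigma\cdot\sigma)'(x)-(\sigma\cdot\sigma)'(0)|}{\sigma(x)}<\infty\] follows from $\mu,\,(\sigma\cdot\sigma)'\in C^1$ near $0$ (numerator is $O(x)$) combined with $\sigma(x)\sim\sqrt{(\sigma\cdot\sigma)'(0)}\sqrt{x}$ (denominator is $\Theta(\sqrt{x})$), so the ratio is $O(\sqrt{x})\to 0$.

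The remaining three inequalities follow from uniform bounds on $g'$ and $g$. The one-sided Lipschitz bound $(x-y)(g(x)-g(y))\leq L(x-y)^2$ is the mean-value theorem applied to $g$ together with $g'(\xi)\leq L$. For the pointwise bound on $|g'|$ I combine the three regimes: $|g'(z)|\leq C/z^2$ near $0$ (from Lemma~\ref{l:num_bound}), $|g'(z)|\leq Cz^\rho$ for large $z$ (from~\eqref{eq:polynomial.growth.mu.sigma_lemma}), and boundedness on compacts, producing a uniform estimate $|g'(z)|\leq c(1+z^{-2}+z^c)$ for a sufficiently large constant $c$. Integrating along the straight line from $y$ to $x$ (using $\int_y^x z^{-2}\,dz=(x-y)/(xy)$ and $\int_y^x z^c\,dz\leq|x-y|(x^c+y^c)$) yields $|g(x)-g(y)|\leq c|x-y|(1+\tfrac{1}{xy}+x^c+y^c)$.

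Finally, for the two-sided bound on $xg(x)$: the asymptotic $g(x)=\alpha/x+O(1)$ as $x\to 0^+$ (obtained by integrating $g'(s)=-\alpha/s^2+O(1)$ from $x$ to $1$) gives $xg(x)\to\alpha$, hence $xg(x)\geq\alpha-Cx$ near $0$; at infinity, $[g'(x)]^+\leq C$ integrates to $g(x)\leq C(1+x)$, so $xg(x)\leq C(1+x^2)$, while $|g'(x)|\leq Cx^\rho$ integrates to $g(x)\geq -Cx^{\rho+1}$, so $xg(x)\geq-Cx^{\rho+2}\geq\alpha-c\,x^c$ for $c$ sufficiently large; on compact subsets of $(0,\infty)$ everything is bounded by continuity, so enlarging $c$ once more yields $\alpha-c(x+x^c)\leq xg(x)\leq c(1+x^2)$ globally. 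The only mildly subtle step is extracting the sharp leading-order expansion $g(x)=\alpha/x+O(1)$ near $0$, but this is essentially already contained in equation~\eqref{l:num_bound:14} of Lemma~\ref{l:num_bound}; everything else is bookkeeping.
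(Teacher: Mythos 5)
Your argument is correct and matches the paper's approach: invoke Lemma~\ref{l:num_bound} for the small-$x$ asymptotics of $g$, $g'$, $\sigma$, $\phi$, use the growth condition \eqref{eq:polynomial.growth.mu.sigma_lemma} for large $x$, and glue with continuity of $g'$ on compacts of $(0,\infty)$. The only cosmetic differences are that the paper obtains the $|g(x)-g(y)|$ bound by applying the mean value theorem to the corrected function $z\mapsto g(z)-\tfrac{\alpha}{z}$ (rather than integrating a global pointwise bound on $|g'|$, as you do), and that the paper appeals to Lemma~\ref{l:inverse_function} for injectivity of $x\mapsto x-tg(x)$ rather than observing strict monotonicity directly from $F_t'(x)=1-tg'(x)\geq 1-tL>0$.
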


\begin{proof}[Proof 
of Lemma~\ref{l:prop_h}]
The assumption that
$
  \limsup_{ x \to \infty }
  \frac{ \sigma(x) }{ x }
  < \infty
$
implies that
$
  \smallint_1^{ \infty } 
  \tfrac{ 1 }{ \sigma(z) } \, dz 
  = \infty
$.
%
%
%
%
%
%
%
%
%
Moreover,
the assumptions
that
$
  \sigma \cdot \sigma 
  \in
  C^2\!\left( 
    [0,\infty) , [0,\infty) 
  \right)
$ 
and that
$
  \sigma(
    (0,\infty)
  ) \subseteq (0,\infty)
$ 
show that 
$
  \sigma|_{ ( 0 , \infty ) } 
  \in
  C^1\!\left( (0,\infty) , [0,\infty) \right)
$.
%
%
%
%
%
%
%
%
Lemma~\ref{l:num_bound} 
implies that $ \phi $ is well-defined and bijective, 
that 
$
  g \in C^1\!\left( (0,\infty) , \R \right)
$,
that
\begin{align}  \label{l:prop_h:6a}
 \lim_{(0,\infty) \ni x \to 0} x g(x) = \alpha
  \qquad 
  \text{and that} 
  \qquad
  \limsup_{ 
    (0,\infty) \ni x \to 0
  }  
  \left|
    g'(x) + \tfrac{ \alpha }{ x^2 } 
  \right|
  < \infty .
\end{align}
This implies that 
$
  \liminf_{ (0,\infty) \ni x \to 0 } g(x) = \infty
$ 
and that
$
  \limsup_{ (0,\infty) \ni x \to 0 } g'(x) < \infty
$.
Moreover, Lemma~\ref{l:num_bound} yields that 
$
\lim_{(0,\infty) \ni x \to 0} \tfrac{ \sigma (x)}{\sqrt{x}}
=\sqrt{(\sigma \cdot \sigma)'(0)} \in (0,\infty).
$
Hence, 
there exist real numbers
$ \eps \in (0,1) $, 
$ \gamma \in (0,\infty) $ 
such that for all 
$ x \in (0,\eps) $ 
it holds that 
$ \sigma(x) \geq \gamma \sqrt{x} $. 
This, 
$
  \mu \in
  C^1\! \left( [0,\infty) , \R \right)
$ 
and 
$
  \sigma \cdot \sigma  \in
  C^2\!\left( [0,\infty) , [0,\infty) \right)
$ 
imply 
that there exist real numbers
$ \eps \in (0,1) $, $ \gamma \in (0,\infty) $ 
such that
\begin{equation} 
\begin{split} 
\label{l:prop_h:3} 
\lim_{(0,\infty ) \ni x \to 0} \tfrac{| \mu(x) - \mu(0) | + | (\sigma \cdot \sigma)'(x) - (\sigma \cdot \sigma)'(0)|}{\sigma(x)}
&\leq
\lim_{(0,\eps ) \ni x \to 0} \tfrac{| \mu(x) - \mu(0) | + | (\sigma \cdot \sigma)'(x) - (\sigma \cdot \sigma)'(0)|}{\gamma x}
\\
&
= \tfrac{ |\mu'(0) |+ | (\sigma \cdot \sigma)''(0) | }{\gamma } < \infty.
\end{split} 
\end{equation} 
Observe that $ \phi $ is strictly increasing
and continuous and that 
$
  \phi|_{ (0,\infty) } 
  \in
  C^1\!\left( 
    (0,\infty) , [0,\infty) 
  \right)
$. 
This, the fact that
$ \phi(0) = 0 $, 
the continuity of $ \phi $ 
and 
the fact that 
$
  \smallint_1^{ \infty } 
  \tfrac{ 1 }{ \sigma(z) } \, dz =\infty 
$ 
imply that 
$ \phi $ is bijective,
that  
$
  \phi^{ - 1 } \colon [0,\infty) \to [0,\infty)
$ 
is 
strictly increasing and continuous 
and that 
$
  \phi^{ - 1 }|_{ (0,\infty) } 
  \in
  C^1\!\left( 
    (0,\infty) , [0,\infty) 
  \right)
$.
Next note that
the assumption 
that 
$
  \limsup_{ (0,\infty) \ni x \to 0 } g'(x) < \infty
$,
\eqref{eq:polynomial.growth.mu.sigma_lemma}
and 
the fact that
$
  g \in C^1\!\left( (0,\infty) , \R) \right)
$ 
yield that $ L < \infty $.
Moreover, observe that for all 
$
  x, y \in (0,\infty)
$ 
it holds that  
\begin{align}  
\label{l:prop_h:17NEW}
  \left( x - y \right)
  \left( g(x) - g(y) \right)
\leq 
  L \left( x - y \right)^2.
\end{align}
In the next step we note that
\eqref{l:prop_h:6a} and \eqref{eq:polynomial.growth.mu.sigma_lemma} 
imply that there exists 
$
  \lambda \in [3,\infty)
$, 
which we fix for the rest of this proof, 
such that for all 
$
  x \in (0,\infty)
$, 
$
  y \in [1,\infty)
$,
$
  z \in (0,1)
$ 
it holds that
\begin{align}  
\label{l:prop_h:7aa} 
  \left|
    g'(x) + \tfrac{ \alpha }{ x^2 }
  \right| 
& \leq 
  \lambda 
  \,
  \big( 1 + x^{ \lambda } 
  \big)
  ,
\qquad 
  | g'(y) | 
\leq 
  \lambda \, y^{ (\lambda - 2) }
  ,
 \qquad
  z g(z) \leq 
  \lambda
  \qquad 
  \text{and} 
  \qquad
  |
    g'(z) + \tfrac{ \alpha }{ z^2 } 
  |
  \leq 
  \lambda .
\end{align}
Moreover, 
the mean value theorem implies 
that for every $ x, y \in (0,\infty) $ with $ x < y $ 
there exists a real number $ \xi \in (x,y) $ 
such that
$
  \tfrac{
    g(x) - \frac{ \alpha }{ x } - g(y) 
    + \frac{ \alpha }{ y } 
  }{ x - y } 
  = g'( \xi ) + \tfrac{ \alpha }{ \xi^2 }
$.
%
%
%
This and  \eqref{l:prop_h:7aa}  show that for all $x,y \in (0,\infty)$ it holds that
\begin{align}  
\label{l:prop_h:7} \nonumber
  | g(x) - g(y) | 
& \leq 
  \big|
    g(x) - \tfrac{ \alpha }{ x } - g(y) 
    + \tfrac{ \alpha }{ y } 
  \big| 
  + 
  \alpha 
  \big|
    \tfrac{ 1 }{ x } - \tfrac{ 1 }{ y }
  \big|
\leq 
  \lambda \left| x - y \right|
  \big( 
    1+ x^{ \lambda } + y^{ \lambda }
  \big) 
  + \alpha \left| x - y \right| \tfrac{ 1 }{ x y }
\\ & \leq 
  ( \alpha + \lambda )  
  \left| x - y \right|
  \big(
    1 + \tfrac{ 1 }{ x y } + 
    x^{ \lambda } + y^{ \lambda }
  \big)
  .
\end{align}
%
%
%
%
%
%
%
In the next step
we note that 
\eqref{l:prop_h:7aa} ensures 
that for all $ x \in [1,\infty) $ 
it holds that
\begin{align} 
\label{l:prop_h:17a}
  x g(x) 
&
  = x g(1) 
  + x \smallint_1^x g'(z) \, dz 
\geq 
  x g(1) 
  - \lambda x \smallint_1^x z^{ (\lambda - 2) } \, dz
= x g(1) 
  - \lambda x 
  \tfrac{ 
    ( x^{ (\lambda - 1) } - 1 )
  }{ ( \lambda - 1 ) } 
  \geq 
  \alpha - 
  \big(
    | g(1) | + \alpha + \lambda 
  \big)  
  \, 
  x^{ \lambda }
\end{align}
and that for all $x \in (0,1)$ it holds that
\begin{equation} 
\begin{split} 
\label{l:prop_h:17b}
  x g(x) & = x g(1) 
  + 
  x \smallint_1^x g'(z) + \tfrac{ \alpha }{ z^2 } \, dz
  - x \smallint_1^x \tfrac{ \alpha }{ z^2 } \, dz
\geq 
  x \left( g(1) - \alpha \right) 
  + \alpha - x \smallint_x^1 \lambda \, dz
  \geq 
  \alpha - 
  \left( 
    | g(1) | + \lambda + \alpha
  \right) x .
\end{split} 
\end{equation} %
Moreover, note that 
the fact that $ L \in [0,\infty) $ 
shows that for all $ x \in [1,\infty) $ 
it holds that
\begin{align}   
\label{l:prop_h:17c}
  x g(x) & = x g(1) 
+ 
  x \smallint_1^x g'(z) \, dz 
\leq 
  x g(1) + L x \left( x - 1 \right) 
\leq 
  \left( L + |g(1)| \right) 
  \left( 1 + x^2 \right)
  .
\end{align}
%
%
%
%
%
In the next step we define 
a real number $ c \in \R $
through
$
  c := \alpha + L + | g(1) | + \lambda 
$
and we observe that 
\eqref{l:prop_h:7aa}, 
\eqref{l:prop_h:17a}, \eqref{l:prop_h:17b} and \eqref{l:prop_h:17c}  show  that for all 
$
  x \in (0,\infty)
$ 
it holds that
\begin{align}  
\label{l:prop_h:17d}
  \alpha - c \left( x + x^c \right) 
\leq 
  x g(x)  
\leq 
  c \left( 1 + x^2 \right)
  .
\end{align}
%
%
%
%
%
%
%
%
%
%
%
%
%
%
%
Observe that 
$g \in  C^1\! \left( (0,\infty) , \R \right)$
implies that for all $ x \in (1, \infty) $, $ t \in [0,\infty ) $ 
it holds that
\begin{equation}
\begin{split}
  x - t g(x) 
&=
  x 
  - t 
  \left[ 
    g( x ) - g( 1 ) + g(1) 
  \right]
 \geq 
  x - t 
  \left[ 
    L \left( x - 1 \right) + g(1) 
  \right]
\\ & =
  x \left( 1 - t L \right) - 
  t \left( g(1) - L \right)
  .
\end{split}
\end{equation}
This implies that for all 
$
  t \in (0, \frac{ 1 }{ L } ) 
$ 
it holds that 
$
  \lim_{ x \to \infty } 
  \left( x - t g(x) \right) = \infty
$. 
Combining
this and
the fact that
$
  \liminf_{ x \searrow 0 } 
  g(x) 
  = \infty
$
with the continuity of the function $ g $
yields that
for all $ t \in (0,\frac{1}{L}) $ 
it holds that the function 
$ (0,\infty) \ni x \mapsto x - t g(x) \in \R $ is surjective. This together with Lemma~\ref{l:inverse_function} ensures that
for all $ t \in (0,\frac{1}{L}) $ 
it holds that the function 
$ (0,\infty) \ni x \mapsto x - t g(x) \in \R $ is bijective.
The proof of Lemma~\ref{l:prop_h}
is thus completed.
\end{proof}

\begin{theorem}   
\label{thm:num_theorem}
Let $ T, \alpha \in (0,\infty) $, 
$\mu  \in
  C^1\!\left([0,\infty) , \R \right)$,
 $\sigma \in
  C\!\left([0,\infty) , [0,\infty) \right)$ satisfy 
 $\sigma(0)=0$, $\sigma((0,\infty)) \subseteq (0,\infty)$, $\sigma \cdot \sigma  \in
  C^2\!\left([0,\infty) , [0,\infty) \right)$,
  $
    ( \sigma \cdot \sigma )'(0) > 0 
  $,
  $
    \alpha = 
      \frac{ 
        2 \mu(0) 
      }{
        ( \sigma \cdot \sigma )'( 0 ) 
      }
    - \frac{ 1 }{ 2 } 
  $,
let
$
  ( 
    \Omega, \mathcal{F}, \P, ( \mathcal{F}_t )_{ t \in [0,T] } 
  )
$
be a stochastic basis, let
$
  W \colon [0,T] \times \Omega \to \R
$
be a standard $ ( \mathcal{F}_t )_{ t \in [0,T] } $-Brownian motion,
  let $X\colon[0,T]\times \Omega\to[0,\infty) $
  be an adapted stochastic process with continuous sample paths satisfying 
  \begin{equation} \label{thm:num_theorem:ass1}
    X_t=X_0+\int_0^t \mu(X_s)\,ds+\int_0^t\sigma(X_s)\,dW_s
  \end{equation}
  $\P$-a.s.\
  for all $t \in [0,T]$,
  let 
  $ \phi \colon [0,\infty) \to [0,\infty) $ 
  and
  $ g \colon [0, \infty) \to \R $  
  be functions
  defined by
  $ 
    \phi(y) := \int_{0}^y \tfrac{1}{\sigma(z)} \, dz
  $,
  $
    g( 0 ) := 0
  $
  and
  $  
    g(z)
    :=
    \big( 
      \frac{ \mu - \nicefrac{ 1 }{ 4 } ( \sigma \cdot \sigma )' }{ \sigma } 
    \big)\big( 
      \phi^{-1}(z) 
    \big)
  $
  for all $ y \in [0,\infty) $, 
  $ z \in (0,\infty) $,
  let
  $
    L:= 
    \big[ 
      \sup_{z \in (0,\infty)} 
      g'(z)
    \big]^+ 
    \in [0,\infty]
  $
  %
  %
  %
  %
  %
  and assume that
  $
    \sup_{ t \in [0,T] }
    \E\!\left[
      \left| \phi( X_0 ) \right|^r 
      +
      \left| \phi( X_t ) \right|^{ - q }
    \right] 
    < \infty
  $
  for all $ r \in \R $, $ q \in [1,1 + 2 \alpha) $
  and 
\begin{align}
\label{eq:polynomial.growth.mu.sigma}
\inf_{
  \rho \in [1,\infty)
}
\limsup_{ x \to \infty }
\left[
  \tfrac{
     [ \mu(x) ]^+
   }{ x }
  +
  \tfrac{
    \sigma(x)
  }{ x }
  +
  \tfrac{
    |\mu(x)|
  }{ 
    x^{\rho}
  }
  +
  \tfrac{ 
    \sigma(x)
  }{ 
    1 + ( \phi(x) )^{ \rho }  
  }
  +
  \big[ g'(x) \big]^+ 
  +
  \tfrac{
    |
      g'(x)
    |
  }{
    x^{ \rho }
  }
\right]
<\infty
  .
\end{align}
%
%
%
%
%
%
%
%
%
%
%
Then 
$ L < \infty $ 
and there exists a unique family
$
  Y^h \colon 
  ( \N_0 \cap [0,\nicefrac{T}{h}] ) \times \Omega \to [0,\infty) 
$, $h \in (0,T] \cap (0,\nicefrac{1}{L})$, 
of stochastic processes 
satisfying 
$Y^{h}_0 = \phi(X_0)$  and
  \begin{equation} \label{thm:num_theorem:umformung}
    Y^{h}_{n}= Y^{h}_{n-1} + g(Y^{h}_{n}) \, h + W_{nh} -W_{(n-1)h}  
  \end{equation}
for all $n \in \N \cap [0,\nicefrac{T}{h}]$, $h \in (0,T] \cap (0,\nicefrac{1}{L})$
and 
it holds for all 
$ \eps \in ( 0,\infty) $, $ p \in [1,\infty) $
that
\begin{align} \label{thm:num_theorem:ass8}
  \sup_{
    h \in (0,T] \cap [ 0 , \nicefrac{ 1 }{ ( 4 L ) } ] 
  } 
    \left[ 
      h^{
        \left( 
          \eps 
          -
          \frac{ \left( \alpha \wedge \nicefrac{ 1 }{ 2 } \right) }{ p } 
        \right)
      }
      \,
      \bigg\| 
        \sup_{ n \in \N_0 \cap [0,\nicefrac{ T }{ h } ] }  
        \big| 
          X_{ n h } - \phi^{ - 1 }( Y^h_n ) 
        \big| 
        \,
      \bigg\|_{ L^p( \Omega; \R ) }
    \right] 
    < \infty
    .
\end{align}
\end{theorem}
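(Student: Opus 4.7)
The plan is to Lamperti-transform $X$ to $Z_t := \phi(X_t)$, which satisfies an additive-noise SDE with drift $g$, apply Corollary~\ref{l:num_corollary} in combination with the temporal H\"older regularity of $Z$ from Proposition~\ref{p:lemma_conv_order_1} to the pair $(Z,Y^h)$, and finally transfer the $L^p$-estimate back to $X$ via $X = \phi^{-1}(Z)$.

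First, Lemma~\ref{l:prop_h} provides $L < \infty$, the bijectivity of $(0,\infty) \ni x \mapsto x - t g(x) \in \R$ for $t \in (0, 1/L)$, and the structural estimates on $g$ (global one-sided Lipschitz continuity with constant $L$, the Lipschitz-type bound $|g(x) - g(y)| \leq c|x-y|(1 + 1/(xy) + x^c + y^c)$, and the two-sided bound $\alpha - c(x+x^c) \leq x g(x) \leq c(1+x^2)$). Lemma~\ref{l:transformierte_gleichung} applied with $I = (0,\infty)$, $J = [0,\infty)$, $x_0 = 0$ then shows that $Z = \phi(X)$ is a continuous adapted process satisfying $Z_t = Z_0 + \int_0^t g(Z_s)\, ds + W_t$ $\P$-a.s.; its boundary hypotheses at $b = 0$ reduce to $\alpha > 0$ together with the $C^1$-smoothness of $\mu$ and $\sigma\cdot\sigma$. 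Existence and uniqueness of $Y^h$ then follow from the established bijectivity.

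Next I would assemble moment estimates: finite moments of $Z$ of any order via Lemma~\ref{l:lemma_positive_mom} from $x g(x) \leq c(1+x^2)$; finite inverse moments of $Z_t$ of order strictly below $1+2\alpha$ directly from the hypothesis on $\E[|\phi(X_t)|^{-q}]$; and uniform moments of $Y^h$ of any order via Lemma~\ref{l:phi} (using one-sided Lipschitz continuity of $g$ together with the assumed all-order moments of $\phi(X_0)$). Proposition~\ref{p:lemma_conv_order_1} applied with $Z$ in place of the process $X$ there produces the temporal H\"older regularity of $Z - W$, both in the pathwise sup-seminorm with exponent $2\alpha/(1+2\alpha) - \eps$ and in the $L^q(\Omega;\R)$-pointwise sense with the stronger exponent $((2\alpha(q+1)+1)/(q(1+2\alpha))) \wedge 1 - \eps$; combined with the $1/2$-H\"older regularity of $W$, these yield matching regularity bounds for $Z$ itself.

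Now Corollary~\ref{l:num_corollary} applied to $(Z, Y^h)$ with drift $g$ produces an $L^1$-bound on $\sup_k |Z_{t_k} - Y^h_k|$ whose rate is essentially $\gamma \cdot \theta - \eps'$, where $\gamma \in (0, 2\alpha \wedge 1)$ is bounded above by the inverse-moment constraint $(1+\gamma)(1+\eps) < 1 + 2\alpha$ and $\theta$ is the H\"older exponent of $Z$ in the relevant $L^Q$-norm with $Q = \gamma(1+1/\eps)$. By choosing $\gamma$ close to $2\alpha \wedge 1$ and the auxiliary parameter $\eps$ moderate so that the $L^Q$-exponent $\theta$ approaches $1/2$ (via the $1/q$ gain in the second estimate of Proposition~\ref{p:lemma_conv_order_1}), one obtains $L^1$-rate arbitrarily close to $\alpha \wedge 1/2$. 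Remark~\ref{remark.L1.to.Lp} then interpolates this to an $L^p$-rate of $(\alpha \wedge 1/2)/p - \eps''$, using the uniform $L^{p+(p-1)/\kappa}$-moments of $Z$ and $Y^h$ and letting $\kappa \to 0$. Finally, to transfer the bound from $Z_{nh} - Y^h_n$ to $X_{nh} - \phi^{-1}(Y^h_n) = \phi^{-1}(Z_{nh}) - \phi^{-1}(Y^h_n)$, I would use the mean value theorem with $(\phi^{-1})' = \sigma \circ \phi^{-1}$ together with the polynomial growth bound $\sigma(\phi^{-1}(u)) \leq C(1 + u^\rho)$ guaranteed by \eqref{eq:polynomial.growth.mu.sigma}, and absorb the resulting polynomial factor in $Z$ and $Y^h$ via H\"older's inequality at level $L^{p(1+\kappa)}$ and the previously-established moment bounds; sending $\kappa \to 0$ preserves the rate.

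The main obstacle is securing the sharp $L^1$-rate $\alpha \wedge 1/2$: directly pairing the sup-seminorm H\"older exponent $2\alpha/(1+2\alpha)$ of $Z$ with the inverse-moment-limited value $\gamma \approx 2\alpha$ yields only $4\alpha^2/(1+2\alpha)$, which is strictly less than $\alpha$ for $\alpha<1/2$. The sharp bound requires invoking the second, pointwise $L^q$-version of Proposition~\ref{p:lemma_conv_order_1}---which gains an additive $1/q$ in the H\"older exponent for small $q$---and then simultaneously balancing the three competing parameters: the inverse-moment-induced ceiling on $\gamma$, the moment-induced ceiling on $\theta$, and the polynomial loss coming from the $\phi^{-1}$ transfer. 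Each of these steps is standard in isolation, but carrying all three through together, and then optimizing to realize the full rate $(\alpha \wedge 1/2)/p - \eps$ in $L^p$, constitutes the bookkeeping-heavy part of the proof.
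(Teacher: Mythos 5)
Your overall architecture — Lamperti transform $Z := \phi(X)$ via Lemma~\ref{l:transformierte_gleichung}, structural bounds on $g$ via Lemma~\ref{l:prop_h}, moment bounds via Lemmas~\ref{l:lemma_positive_mom} and~\ref{l:lemma_positive_mom_mit_sup} and Corollary~\ref{l:uni_mom_num2}, the strong $L^1$-error via Corollary~\ref{l:num_corollary} followed by the $L^1$-to-$L^p$ interpolation in Remark~\ref{remark.L1.to.Lp}, and the transfer back to $X$ via $\phi^{-1}$ with a H\"older estimate — matches the paper's proof. There is, however, a genuine gap at the central step where you appeal to Proposition~\ref{p:lemma_conv_order_1} for the temporal H\"older regularity of $Z$: that route cannot deliver the rate in~\eqref{thm:num_theorem:ass8}, and the parameter optimization you describe cannot close the deficit.

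Corollary~\ref{l:num_corollary} requires a bound on the \emph{pathwise} quantity $\|\sup_{u\in[0,T]}|Z_u - Z_{\lceil u\rceil_\theta}|\|_{L^{\gamma(1+\nicefrac{1}{\eps})}}$ scaling like $h^{\nicefrac{1}{2}-\eps'}$, so that, with $\gamma$ just below $2\alpha\wedge 1$ (the ceiling forced by the inverse-moment factor $\|(Z_u)^{-(1+\gamma)}\|_{L^{1+\eps}}$, which needs $(1+\gamma)(1+\eps)<1+2\alpha$), the $L^1$-rate reaches $\alpha\wedge\tfrac{1}{2}$. But for $\alpha<\tfrac{1}{2}$ the first estimate of Proposition~\ref{p:lemma_conv_order_1} gives $Z-W$ only the pathwise-sup H\"older exponent $\tfrac{2\alpha}{1+2\alpha}-\eps<\tfrac{1}{2}$; since $W$ itself is only $(\tfrac{1}{2}-\eps)$-H\"older pathwise, the decomposition $Z=(Z-W)+W$ yields pathwise exponent at most $\tfrac{2\alpha}{1+2\alpha}-\eps$ for $Z$. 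Feeding this into Corollary~\ref{l:num_corollary} with $\gamma\approx 2\alpha$ gives rate $\tfrac{4\alpha^2}{1+2\alpha}$, which is strictly smaller than $\alpha$ on the whole accessible range $\alpha\in(0,\tfrac{1}{2})$. The additive $\nicefrac{1}{q}$ gain from the second estimate of Proposition~\ref{p:lemma_conv_order_1} does not rescue this: that estimate is a pointwise $L^q(\Omega)$-bound, and to feed it to Corollary~\ref{l:num_corollary} you must convert back to pathwise regularity via the embedding $C^{\theta}([0,T],L^q(\Omega))\subseteq L^q(\Omega;C^{\theta-\nicefrac{1}{q}-\eps}([0,T]))$, which exactly cancels the $\nicefrac{1}{q}$ and returns you to the pathwise exponent $\tfrac{2\alpha}{1+2\alpha}-\eps$.

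The missing ingredient, which the paper uses and your proposal omits, is Theorem~\ref{l:cir_hr}. That result gives $\|\sup_{s\neq t}|t-s|^{\eps-\nicefrac{1}{2}}|Z_t - Z_s|\|_{L^p}<\infty$ for every $p,\eps>0$ directly from the original CIR-type SDE for $X=\phi^{-1}(Z)$, by writing $Z_t-Z_s = \int_{X_s}^{X_t}\tfrac{1}{\sigma(z)}\,dz$, using Lemma~\ref{l:cir_min} (the lower bound $\sigma(z)\geq c_8\sqrt{z}/(1+c_{10}z^{c_9})$ turns the integral into a square-root increment of $X$) and Lemma~\ref{l:reg_CIR} ($\|X_t-X_s\|_{L^q}$ of order $\sqrt{|t-s|}$). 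This produces pathwise $(\tfrac{1}{2}-\eps)$-H\"older regularity of $Z$ for every $\alpha>0$, a cancellation that is invisible to the $Z=(Z-W)+W$ decomposition underlying Proposition~\ref{p:lemma_conv_order_1}, and it is precisely what powers~\eqref{thm:num_theorem:888NEW}. Without it your argument only reaches rate $\tfrac{4\alpha^2}{1+2\alpha}$ in $L^1$.
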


\begin{proof} [Proof of Theorem~\ref{thm:num_theorem}]

It follows from Lemma~\ref{l:prop_h} that
$ \phi $ is well-defined and bijective, 
that 
$ 
  \phi^{ - 1 }|_{(0,\infty)} 
  \in
  C^1\! \left( (0,\infty) , [0,\infty) \right)
$, 
that $ g $ is well-defined,
that $ g|_{(0,\infty)}$ is continuously differentiable,
that $ L < \infty $
and that
\begin{equation} 
\begin{split} \label{thm:num_theorem:3}
\lim_{(0,\infty ) \ni x \to 0} \tfrac{| \mu(x) - \mu(0) | + | (\sigma \cdot \sigma)'(x) - (\sigma \cdot \sigma)'(0)|}{\sigma(x)}
 < \infty.
\end{split} 
\end{equation} 
Furthermore, 
Lemma~\ref{l:prop_h} implies that
there exists a real number $c \in \R$, which we fix for the rest of this proof, such
that for all 
$
  x, y \in (0,\infty)
$ 
it holds that  
\begin{align}  
\label{thm:num_theorem:17NEW}
&
  \left( x - y \right) \left( g( x ) - g( y ) \right)
  \leq 
  L \left( x - y \right)^2
  ,
\\ &
\label{thm:num_theorem:7} 
  \left| g( x ) - g( y ) \right| 
  \leq
  c \left| x - y \right|
  \big( 
    1 + \tfrac{ 1 }{ x y } + x^c + y^c
  \big)
  ,
\\ &
\label{thm:num_theorem:17d}
  \alpha - c \left( x + x^c \right) 
  \leq 
  x g(x)  
  \leq c \left( 1 + x^2 \right) .
\end{align}
Moreover,
Lemma~\ref{l:prop_h} shows that
for all $ t \in (0,\nicefrac{1}{L}) $ 
it holds that the function 
$ (0,\infty) \ni x \mapsto x - t g(x) \in \R $ is bijective. 
%
%
%
%
%
%
%
%
%
%
%
%
%
This proves that
there exists a unique family 
$
  Y^h \colon 
  (\N_0 \cap [0,\nicefrac{T}{h}]) \times \Omega
  \to [0,\infty) 
$, 
$
  h \in (0,T] \cap ( 0, \nicefrac{ 1 }{ L } )
$, 
of 
stochastic processes 
satisfying 
$ Y^h_0 = \phi( X_0 ) $ 
and
  \begin{equation} \label{thm:num_theorem:umformung2}
  Y^h_n = Y^h_{ n - 1 } 
  + g\big( Y^h_n \big) h 
  + W_{ n h } - W_{ ( n - 1 ) h }  
  \end{equation}
for all 
$
  n \in \N \cap [0, \nicefrac{ T }{ h } ] 
$,
$ h \in (0,T]  \cap ( 0 , \nicefrac{ 1 }{ L } ) $.
In the next step, we define a mapping 
$
  Z \colon [0,T] \times \Omega \to [0,\infty)
$ 
by
$
  Z_t := \phi(X_t)
$
for all $ t \in [0,T] $.
Note that
$Z$ is an adapted stochastic process with continuous sample paths.
Moreover, observe that the assumption
that
$
  \sup_{ t \in [0,T] } 
  \E\!\left[
    ( Z_t )^{ - 1 } 
  \right] < \infty
$ 
implies that
$
  \sup_{ t \in [0,T] } 
  \P\!\left[ 
    Z_t = 0 
  \right] = 0
$.
Next note that 
Lemma~\ref{l:transformierte_gleichung}, 
\eqref{thm:num_theorem:3}, 
$ \sigma(0) = 0 $ and 
$
  \mu(0) - \tfrac{ 1 }{ 4 } 
  ( \sigma \cdot \sigma )'( 0 ) > 0
$ 
yield that
\begin{align} \label{thm:num_theorem:6}
 \int_0^T \left|\left( \tfrac{\mu - \frac{1}{4} (\sigma \cdot \sigma)'}{\sigma} \right) \! \left( \phi^{-1}(Z_s) \right) \1_{\{Z_s \in (0,\infty)\}} \right| \, ds < \infty
\end{align}
 $\P$-a.s.\
and 
\begin{equation} 
\begin{split} 
 \label{thm:num_theorem:77} 
  Z_t &= Z_0 + \int_0^t \left( \tfrac{\mu - \frac{1}{4} (\sigma \cdot \sigma)'}{\sigma} \right) \! \left( \phi^{-1}(Z_s) \right) \1_{\{Z_s \in \phi((0,\infty)) \}} \, ds + W_t
= Z_0 + \int_0^t  g(Z_s) \, ds + W_t
\end{split} 
\end{equation} $\P$-a.s.\
for all $t\in[0,T]$. 
%
%
%
%
%
%
%
Lemma~\ref{l:lemma_positive_mom} together with \eqref{thm:num_theorem:17d}
and $\E [ (Z_0)^{q} ] < \infty$ for all $q \in (0,\infty)$ implies that
for all $q \in (0,\infty)$ it holds that
$\sup_{t \in [0,T]} \E [ ( Z_t)^{q} ] 
 < \infty.$
%
%
%
%
Then Lemma~\ref{l:lemma_positive_mom_mit_sup} yields that
for all $q \in (0,\infty)$ it holds that
\begin{align} \label{thm:num_theorem:612} 
\left\| \sup_{s \in [0,T]}  Z_s  \, \right\|_{ L^{q}( \Omega; \R ) }  
 < \infty.
\end{align}
%
%
%
%
%
%
In the next step we note that 
\eqref{thm:num_theorem:17d}
and the assumption that
$
  \E\!\left[ ( Z_0 )^q \right] < \infty
$
for all $ q \in \R $ 
show 
that for all $ q \in (0,\infty) $ 
it holds that $\E\big[
   | g(Z_0) |^q 
 \big]
< \infty$. 
%
%
%
%
%
%
%
%
%
%
Corollary~\ref{l:uni_mom_num2} 
(applied with $ O = (0,\infty) $ and 
$ \xi = Z_0 + \1_{ \{ Z_0 = 0 \} } $),
$
  \P[Z_0 = 0] = 0
$,
\eqref{thm:num_theorem:17NEW} and
\eqref{thm:num_theorem:17d} 
prove that for all $ q \in (0,\infty) $ 
it holds that
\begin{align} \label{thm:num_theorem:613b}
\sup_{ h \in (0,T] \cap (0,\frac{1}{4L}]} 
\left\|
 \sup_{ n \in \N_0 \cap [0,T/h]}   Y^{h}_{n} \right\|_{ L^{q}( \Omega; \R ) }
< \infty.
\end{align}
Corollary~\ref{l:num_corollary} together with
\eqref{thm:num_theorem:17NEW} and \eqref{thm:num_theorem:7} shows that for all $\eps, \kappa  \in \left( 0,\infty \right)$, $\gamma \in (0,1]$  it holds that
\begin{align} \label{thm:num_theorem:888} \nonumber
&
 \sup_{h \in (0,T] \cap [0,\frac{1}{4L}]} \left[ 
h^{- \gamma \left( \frac{1}{2} - \frac{\eps}{2\gamma}\right)}
\left\| 
\sup_{
n \in \N_0 \cap [0,T/h]
}
\left| Z_{nh} -  Y^{h}_{n} \right| 
\right\|_{ L^{1}( \Omega; \R ) } \right] 
\\  &
\leq
2 T c  
 \sup_{h \in (0,T] \cap [0,\frac{1}{4L}]} \left[ 
\left( \tfrac{1}{1-hL} \right)^{\lceil T / h \rceil}
h^{- \gamma \left( \frac{1}{2} - \frac{\eps}{2\gamma}\right)}
\Big\| 
\sup_{u \in [0,T]}   
\left| Z_u - Z_{\lceil u \rceil_{(0,h,2h,\dots, \lceil T/h \rceil h) }} \right|
 \Big\|_{ L^{\gamma(1 + \nicefrac{1}{\kappa})}( \Omega; \R ) }^{\gamma} \right]
\\ & \quad \cdot \nonumber
\sup_{u\in[0,T]} \left[
\big\| 
 \tfrac{2}{(Z_u)^{1+\gamma}} 
\big\|_{ L^{1+\kappa}( \Omega; \R ) } 
+  \tfrac{\gamma}{1+\gamma}  
\big\| (Z_u)^{\frac{1}{\gamma} -\gamma}  
\big\|_{ L^{1+\kappa}( \Omega; \R ) } 
  + 2
\big\|
\left( Z_u \right)^{ c + 1 - \gamma }
\big\|_{ L^{1+\kappa}( \Omega; \R ) } 
+ \tfrac{1}{1+\gamma} 
 \right].
\end{align}
Estimate~\eqref{thm:num_theorem:888} 
(applied with 
$ 
  \eps \in ( 0, 2 \alpha \wedge 1) 
$, 
$
  \gamma = 2 \alpha \wedge 1 - \eps
$,
$
  \kappa = 
  \frac{ 1 }{ 2 } 
  ( 
    \frac{ 1 + 2 \alpha }{ 1 + \gamma } - 1 
  )
$), 
Theorem~\ref{l:cir_hr},
the assumption that
$
  \mu \in
  C^1\!\left( [0,\infty) , \R \right)
$,
the assumption that
$
  \sigma \cdot \sigma 
  \in
  C^2\!\left(
    [0,\infty) , [0,\infty) 
  \right)
$,
\eqref{eq:polynomial.growth.mu.sigma},
\eqref{thm:num_theorem:612} 
and the assumption that
$
  \sup_{ t \in [0,T] } 
  \E\!\left[
    ( Z_t )^{ - q } 
  \right] < \infty 
$ 
for all 
$
  q \in [ 1, 1 + 2 \alpha ) 
$
show that for all 
$ 
  \eps \in ( 0, 2 \alpha \wedge 1 )
$ 
it holds that
\begin{align} 
\label{thm:num_theorem:888NEW} 
&
  \sup_{
    h \in (0,T] \cap [ 0, \frac{ 1 }{ 4 L } ]
  } 
  \left[ 
    h^{
      ( 
        \eps - 
        \min\{ \alpha , \nicefrac{ 1 }{ 2 } \} 
      )
    }
    \left\| 
      \sup_{
        n \in \N_0 \cap [ 0, T / h ]
      }
      \big| 
        Z_{ n h } - Y^h_n 
      \big| 
    \right\|_{ 
      L^1( \Omega; \R ) 
    } 
  \right] 
  < \infty .
\end{align}
%
%
Remark \ref{remark.L1.to.Lp}, 
\eqref{thm:num_theorem:888NEW},
\eqref{thm:num_theorem:612} 
and 
\eqref{thm:num_theorem:613b} 
yield that for all 
$
  p \in [1,\infty)
$,  
$
  \eps \in ( 0, \min\{ 2 \alpha , 1 \} )
$, 
$
  \kappa \in \left( 0, \infty \right)
$  
it holds that
\begin{align} \label{thm:num_theorem:888dd}  \nonumber
& 
  \sup_{
    h \in (0,T] \cap [ 0 , \frac{ 1 }{ 4 L } ]
  } 
  \left[ 
    h^{
      - 
      \left[
        \frac{ 
          \min\{ \alpha , \nicefrac{ 1 }{ 2 } \} - \eps
        }{ p ( 1 + \kappa ) }
      \right]
    }
    \left\|
      \sup_{
        n \in \N_0 \cap [0, T / h ]
      } 
      \left| 
        Z_{ n h } - Y^h_n 
      \right| 
    \right\|_{ 
      L^p( \Omega; \R )  
    } 
  \right]
\\ & \leq
  \sup_{
    h \in (0,T] \cap [ 0, \frac{ 1 }{ 4 L } ] 
  } 
  \Bigg( 
    \left[
      h^{ 
        ( \eps - \min\{ \alpha , \nicefrac{1}{2} \} )
      }
      \left\| 
        \sup\nolimits_{
          n \in \N_0 \cap [0,T/h]
        } 
        | Z_{ n h } - Y^h_n | 
      \right\|_{ L^1( \Omega; \R ) } 
    \right]^{
      \frac{ 1 }{ p ( 1 + \kappa ) } 
    }  
\\ &  \quad \cdot \nonumber
  \left[ 
    \left\| 
      \sup\nolimits_{ s \in [0,T] } 
      \left| Z_s \right| 
    \right\|_{ 
      L^{ p + \frac{ p - 1 }{ \kappa } 
      }( \Omega; \R ) 
    }^{ 
      1 - \frac{ 1 }{ p (1 + \kappa ) } 
    } 
    +
    \left\|
      \sup\nolimits_{
        n \in \N_0 \cap [0,T/h]
      } 
      \big| Y_n^h \big| 
    \right\|_{ 
      L^{ p + \frac{ p - 1 }{ \kappa } 
      }( \Omega; \R )
    }^{ 1 - \frac{ 1 }{ p ( 1 + \kappa ) }
    }
  \right]
  \Bigg)
  < \infty .
\end{align}
Next note that
estimate \eqref{eq:polynomial.growth.mu.sigma}
proves that there exists a real number 
$
  \rho \in \R 
$, 
which we fix for the rest of this proof, 
such that for all 
$
  x \in [0,\infty)
$ 
it holds that
$
  \sigma(x) 
  \leq \rho \,
  \big(
    1 + ( \phi(x) )^{ \rho } 
  \big)
$.
This together with the  monotonicity and the continuity  of $\phi$
shows that for all 
$
  p \in [1,\infty)
$, 
$
  h \in (0,T] \cap (0,\frac{ 1 }{ 4 L } ]
$, 
$
  \delta \in (0,\infty)
$ 
it holds that
 {\allowdisplaybreaks  
\begin{align} 
\label{thm:num_theorem:614}
&
  \left\| 
    \sup_{
      n \in \N_0 \cap [0, T / h ]
    } 
    \sup_{
      z \in 
      [ 
        X_{ n h } \wedge \phi^{ - 1 }( Y^h_n ) , 
        X_{ n h } \vee \phi^{ - 1 }( Y^h_n ) 
      ] 
    } 
    \sigma(z) 
  \right\|_{ 
    L^{ 
      p ( 1 + \nicefrac{ 1 }{ \delta } ) 
    }( \Omega; \R ) 
  }
\nonumber
\\ & \leq  
\left\|
  \sup_{
    n \in \N_0 \cap [0,T/h]
  } 
  \sup_{
    z \in 
    [ 
      X_{ n h } \wedge \phi^{ - 1 }( Y^h_n ) , 
      X_{ n h } \vee \phi^{ - 1 }( Y^h_n ) 
    ] 
  } 
  \rho \,
  \big(
    1 + ( \phi(z) )^{ \rho } 
  \big)
\right\|_{ 
  L^{ p ( 1 + \nicefrac{ 1 }{ \delta } ) 
  }( \Omega; \R ) 
}
%
%
%
%
%
%
%
%
%
%
%
%
\\
\nonumber
& \leq 
  \rho 
  \left[
    \left\|
      \sup_{ s \in [0,T] } 
      \big[ 
        1 + ( Z_s )^{ \rho } 
      \big] 
    \right\|_{ 
      L^{ p ( 1 + \nicefrac{ 1 }{ \delta } ) 
      }( \Omega; \R ) 
    } 
    + 
    \sup_{ 
      \tilde{h} \in (0,T] \cap [ 0 , \frac{ 1 }{ 4 L } ]
    } 
    \left\|  
      \sup_{ 
        n \in \N_0 \cap [ 0 , T / \tilde{h} ] 
      } 
      \left[
        1 + ( Y^{ \tilde{h} }_n )^{ \rho } 
      \right]
    \right\|_{ 
      L^{ p ( 1 + \nicefrac{ 1 }{ \delta } ) 
    }( \Omega; \R ) } 
  \right]
  .
\end{align} }%
%
%
%
%
%
%
%
%
%
%
%
%
H\"older's inequality 
and \eqref{thm:num_theorem:614} 
show that for all 
$
  p \in [1,\infty)
$, 
$
  h \in (0,T] \cap (0, \frac{ 1 }{ 4 L } ]
$, 
$
  \delta \in (0,\infty)
$ 
it holds that
 {\allowdisplaybreaks  \begin{align} \label{thm:num_theorem:511} \nonumber
&
  \left\| 
    \sup_{ n \in \N_0 \cap [ 0, T / h ]
    }  
    \big| 
      X_{ n h } - 
      \phi^{-1}( Y^h_n )
    \big| 
  \right\|_{ 
    L^p( \Omega; \R ) 
  }
=
\left\| \sup_{ n \in \N_0 \cap [0,T/h]} \left| \int_{X_{nh}}^{\phi^{-1}(Y^{h}_{n})}  \tfrac{\sigma(z)}{\sigma(z)} \, dz \right| \right\|_{ L^{p}( \Omega; \R ) }
\\ & \nonumber 
\leq 
\left\|
\sup_{ n \in \N_0 \cap [0,T/h]} 
\sup_{z \in [X_{nh} \wedge \phi^{-1}(Y^{h}_n),X_{nh} \vee \phi^{-1}(Y^{h}_n)] }  \sigma(z) \right\|_{ L^{p(1 + \nicefrac{1}{\delta})}( \Omega; \R ) }
\left\| \sup_{ n \in \N_0 \cap [0,T/h]} \left|  \int_{X_{nh}}^{\phi^{-1}(Y^{h}_{n})} \hspace{-0.15cm}  \tfrac{1}{\sigma(z)} \, dz \right| \right\|_{ L^{p(1+\delta)}( \Omega; \R ) }
\\ \nonumber
& \leq 
 \rho \left[
\left\|\sup_{s \in [0,T]} \big[ 1+(Z_s)^{\rho} \big] \right\|_{ L^{p(1 + \nicefrac{1}{\delta})}( \Omega; \R ) } + 
\sup_{ \tilde{h} \in (0,T] \cap [0,\frac{1}{4L}]} 
\left\|  
 \sup_{ n \in \N_0 \cap [0,T/\tilde{h}]} 
\left[
1+(Y^{\tilde{h}}_{n})^{\rho} 
\right]
\right\|_{ L^{p(1 + \nicefrac{1}{\delta})}( \Omega; \R ) } \right]
\\ 
& \quad \cdot
\left\| \sup_{ n \in \N_0 \cap [0,T/h]} \left| Z_{nh} - Y^h_n \right| \right\|_{ L^{p(1+\delta)}( \Omega; \R ) }.
%
%
\end{align} }%
%
%
%
This,
\eqref{thm:num_theorem:612}, \eqref{thm:num_theorem:613b} 
and \eqref{thm:num_theorem:888dd} 
imply that for all 
$ p \in [1,\infty) $, 
$
  \eps \in \left( 0 , \min\{ 2 \alpha , 1 \} \right)
$, 
$
  \kappa, \delta \in (0,\infty)
$  
it holds that
 {\allowdisplaybreaks  \begin{align} \label{thm:num_theorem:511WW} 
&
  \sup_{
    h \in (0,T] \cap [ 0 , \frac{ 1 }{ 4 L } ] 
  } 
  \left[ 
    h^{
      -
      \left[
      \frac{
        \min\{ \alpha , \nicefrac{1}{2} \} - \eps
      }{
        p ( 1 + \delta ) ( 1 + \kappa ) 
      } 
      \right]
    }
    \left\| 
      \sup_{ n \in \N_0 \cap [0, T / h ] 
      }  
      \big| 
        X_{ n h } - \phi^{ - 1 }( Y^h_n ) 
      \big| 
    \right\|_{ 
      L^p( \Omega; \R ) 
    }
  \right] < \infty.
\end{align} }%
%
%
%
This proves that
for all 
$
  p \in [1,\infty)
$, 
$
  \eps \in 
  \big( 
    0 , 
    \frac{
      \min\{ \alpha , \nicefrac{ 1 }{ 2 } \}
    }{ p }
  \big)
$ 
it holds that
 {\allowdisplaybreaks  
\begin{align} 
\label{thm:num_theorem:511WQQW} 
&
  \sup_{
    h \in (0,T] \cap [ 0, \frac{ 1 }{ 4 L } ]
  } 
  \left[ 
    h^{
      - 
      \left[
        \frac{ \alpha \wedge \nicefrac{ 1 }{ 2 } 
        }{ p } - \eps 
      \right]
    }
    \left\| 
      \sup_{ n \in \N_0 \cap [0, T / h ] }  
      \big| 
        X_{ n h } - 
        \phi^{ - 1 }( Y^h_n ) 
      \big| 
    \right\|_{ 
      L^p( \Omega; \R ) 
    }
  \right] < \infty.
\end{align} }%
The proof of Theorem~\ref{thm:num_theorem}
is thus completed.
\end{proof}

Theorem~\ref{thm:num_theorem}
uses the assumption that 
suitable inverse moments 
of a transformation 
of the solution process of the SDE~\eqref{thm:num_theorem:ass1}
is finite,
that is,
in the setting of Theorem~\ref{thm:num_theorem}
that the quantity
$
  \sup_{ t \in [0,T] }
  \E\!\left[
    \left| \phi( X_t ) \right|^{ - q }
  \right] 
$
is finite
for all $ q \in [1,1 + 2 \alpha) $.
The next result, Lemma~\ref{l:inverse.moments.CIR},
gives a sufficient condition to ensure finiteness
of suitable inverse moments of
Cox-Ingersoll-Ross processes.
Lemma~\ref{l:inverse.moments.CIR}
extends and is based on Lemma~A.1 in 
Bossy \& Diop~\cite{BossyDiop2007}.

\begin{lemma} 
\label{l:inverse.moments.CIR}
Let $ T, \beta, \delta \in (0,\infty) $, $ \gamma \in \R $, 
$ p \in ( 0, \nicefrac{ 2 \delta }{ \beta^2 } ) $,
let
$
  ( 
    \Omega, \mathcal{F}, \P, ( \mathcal{F}_t )_{ t \in [0,T] } 
  )
$
be a stochastic basis, let
$
  W \colon [0,T] \times \Omega \to \R
$
be a standard $ ( \mathcal{F}_t )_{ t \in [0,T] } $-Brownian motion,
let $X\colon[0,T]\times \Omega\to[0,\infty) $
be an adapted stochastic process with continuous sample paths satisfying 
  \begin{equation} \label{l:theorem_cir:ass1}
    X_t=X_0+\int_0^t \delta - \gamma X_s\,ds+ \int_0^t \beta \sqrt{X_s} \, dW_s
  \end{equation}
  $\P$-a.s.\
  for all $t \in [0,T]$
  and assume that 
  $ 
    \inf_{ \eps \in ( 0, \nicefrac{ 2 \delta }{ \beta^2 } - p ) } 
    \sup_{ u \in [1,\infty) } 
    u^{ (p + \eps) } 
    \,
    \E\big[
      \exp( - u X_0 )
    \big] 
    < \infty
  $.
Then it holds that
  $\sup_{t\in[0,T]}
    \E\! \left[\left(X_t\right)^{-p}\right]<\infty$.
\end{lemma}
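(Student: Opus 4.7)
The plan is to use the Laplace integral representation $x^{-p}=\Gamma(p)^{-1}\int_0^{\infty}u^{p-1}e^{-ux}\,du$ combined with the well-known affine Laplace transform of the CIR process. The decay exponent $\nu:=2\delta/\beta^2$ of the CIR transition Laplace transform matches precisely the threshold in the hypothesis $p<\nu$, so everything boils down to one clean one-dimensional estimate.

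First, I would check that $\P[X_t>0]=1$ for every $t\in[0,T]$, so that $(X_t)^{-p}$ is a.s.\ well-defined: for $t=0$ this follows from $\P[X_0=0]=\lim_{u\to\infty}\E[e^{-uX_0}]=0$ (given by the hypothesis), and for $t\in(0,T]$ from the Laplace transform below (whose limit at $u=\infty$ vanishes because $\nu>0$). Tonelli's theorem then yields
\begin{equation*}
\E[(X_t)^{-p}]=\frac{1}{\Gamma(p)}\int_0^{\infty}u^{p-1}\,\E[e^{-uX_t}]\,du,
\end{equation*}
and the standard affine identity for CIR reads
\begin{equation*}
\E\!\left[e^{-uX_t}\,\big|\,X_0\right]=\rob{1+u\Phi(t)}^{-\nu}\exp\!\left(-\tfrac{u\Psi(t)}{1+u\Phi(t)}X_0\right),
\end{equation*}
with $\Psi(t):=e^{-\gamma t}$ and $\Phi(t):=\beta^2(1-e^{-\gamma t})/(2\gamma)$ (read as $\Phi(t)=\beta^2 t/2$ when $\gamma=0$). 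The key quantitative fact is $\Phi(t)+\Psi(t)\geq\Psi(t)\geq e^{-\gamma^+T}$ uniformly on $[0,T]$.

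Fixing $\eps\in(0,\nu-p)$ witnessing the hypothesis and combining with the trivial bound $\E[e^{-vX_0}]\leq 1$, one obtains a constant $\tilde{C}<\infty$ such that $\E[e^{-vX_0}]\leq\tilde{C}(1+v)^{-(p+\eps)}$ for all $v\geq 0$. Applying this with $v=u\Psi(t)/(1+u\Phi(t))$ and using the algebraic identity $1+u\Psi(t)/(1+u\Phi(t))=(1+u(\Phi(t)+\Psi(t)))/(1+u\Phi(t))$ together with $\nu>p+\eps$ yields
\begin{equation*}
\E[e^{-uX_t}]\leq\tilde{C}\,\rob{1+u\Phi(t)}^{-(\nu-p-\eps)}\rob{1+u(\Phi(t)+\Psi(t))}^{-(p+\eps)}\leq\tilde{C}\,\rob{1+u(\Phi(t)+\Psi(t))}^{-(p+\eps)}.
\end{equation*}
The substitution $s=u(\Phi(t)+\Psi(t))$ then gives $\E[(X_t)^{-p}]\leq\tilde{C}\,\Gamma(\eps)(\Phi(t)+\Psi(t))^{-p}/\Gamma(p+\eps)$, and the uniform lower bound on $\Phi(t)+\Psi(t)$ delivers the conclusion.

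The main conceptual obstacle is that a naive bound $(1+u\Phi(t))^{-\nu}\leq(u\Phi(t))^{-\nu}$ followed by splitting the $u$-integral produces a factor $\Phi(t)^{-p}$ that blows up as $t\searrow 0$. The estimate above bypasses this by pairing the CIR Laplace decay with the hypothesized decay of $\E[e^{-vX_0}]$: one spends exactly $\nu-p-\eps>0$ units of the $(1+u\Phi(t))^{-\nu}$ factor to absorb the decay deficit from the hypothesis, and transfers the remaining decay to the benign combination $\Phi(t)+\Psi(t)$, which is bounded below on $[0,T]$. The Laplace transform formula in the three cases $\gamma>0$, $\gamma=0$, $\gamma<0$ is routine to verify.
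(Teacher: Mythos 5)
Your proof is correct and follows essentially the same route as the paper: both rest on the Laplace integral representation $x^{-p}=\Gamma(p)^{-1}\int_0^\infty u^{p-1}e^{-ux}\,du$, the affine Laplace transform of the CIR transition (the paper cites line 8 in the proof of Lemma~A.1 in Bossy \& Diop, you invoke the same identity), Fubini/Tonelli, and the decay hypothesis on $\E[e^{-uX_0}]$. The only cosmetic differences are in how the resulting one-dimensional integral is handled: the paper splits $\int_0^\infty$ into $\int_0^1+\int_1^\infty$ and bounds the pieces crudely, whereas you upgrade the hypothesis to the uniform bound $\E[e^{-vX_0}]\leq\tilde{C}(1+v)^{-(p+\eps)}$, use the algebraic factorization $1+u\Psi(t)/(1+u\Phi(t))=(1+u(\Phi(t)+\Psi(t)))/(1+u\Phi(t))$, and then recognize the remaining integral as a Beta function, which gives the slightly cleaner explicit bound $\E[(X_t)^{-p}]\leq\tilde{C}\,\Gamma(\eps)(\Phi(t)+\Psi(t))^{-p}/\Gamma(p+\eps)$ together with the uniform lower bound $\Phi(t)+\Psi(t)\geq\Psi(t)\geq e^{-\gamma^+T}$. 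Both arguments are equally valid.
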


\begin{proof}[Proof 
of Lemma~\ref{l:inverse.moments.CIR}]
First of all, note that the assumption 
$ 
    \inf_{ \eps \in ( 0, \nicefrac{ 2 \delta }{ \beta^2 } - p ) } 
    \sup_{ u \in [1,\infty) } 
    u^{ (p + \eps) } 
    \,
    \E\big[
      \exp( - u X_0 )
    \big] 
    < \infty
$
implies that
$
    \inf_{ \eps \in ( 0, \nicefrac{ 2 \delta }{ \beta^2 } - p ) } 
    \sup_{ u \in (0,\infty) } 
    u^{ (p + \eps) } 
    \,
    \E\big[
      \exp( - u X_0 )
    \big] 
    < \infty
$.
This shows that there exist real numbers 
$ C \in ( 0 , \infty ) $,
$ \eps \in \left( 0, \nicefrac{ 2 \delta }{ \beta^2 } - p \right) $, 
which we fix for the rest of this proof, 
such that for all $ u \in (0,\infty) $ 
it holds that 
$
  \E\big[
    \exp( - u X_0 )
  \big] 
  \leq 
  C \, 
  u^{ - ( p +¸\eps ) }
$.
This shows that for all $ t \in [0,T] $ it holds that
\begin{align}  
\nonumber
\label{l:inverse.moments.CIR:1}
&
  \int_1^\infty 
    u^{ (p - 1) }
    \left[
      \tfrac{ 
        u \beta^2 
      }{ 
        2 \gamma 
      }
      \left(
        1 - e^{ - \gamma t } 
      \right)
      + 1 
    \right]^{
      - \frac{ 2 \delta }{ \beta^2 } 
    }
    \E\!\left[
      \exp\!\left(
        - 
        \tfrac{ 
          e^{ - \gamma t } 
        }{
          \frac{ 
            u \beta^2 
          }{
            2 \gamma
          }
          (
            1 - e^{ - \gamma t } 
          ) 
          + 1 
        }
	X_0 u
      \right)
    \right] 
  du
\\&
\leq C 
  \int_1^\infty u^{ (p-1) }
    \left[
      \tfrac{ u \beta^2 }{ 2 \gamma }
      \left(
        1 - e^{ - \gamma t } 
      \right)
      +
      1
    \right]^{
      - \frac{ 2 \delta }{ \beta^2 }
    }
    \left[
      \tfrac{
        e^{ - \gamma t } 
      }{
        \frac{ u \beta^2 
        }{
          2 \gamma
        }
        ( 
          1 - e^{ - \gamma t } 
        )
        + 1 
      }
      u
    \right]^{
      - ( p + \eps ) 
    } 
  du
\\& \nonumber
= C \, e^{ \gamma t ( p + \eps ) } 
  \int_1^{ \infty } 
    u^{ - ( 1 + \eps ) } 
    \left[
      \tfrac{ 
        u \beta^2 }{ 2 \gamma 
      }
      \left( 
        1 - e^{ - \gamma t } 
      \right)
      + 1 
    \right]^{
      \left[
        p + \eps 
        - \frac{ 2 \delta }{ \beta^2 } 
      \right]
    }
  du
\leq 
  C \, 
  e^{ 
    \gamma t ( p + \eps )
  }
  \int_1^\infty 
    u^{ - ( 1 + \eps ) } 
  \, du
= \tfrac{C}{\eps} e^{ \gamma t (p+\eps)}.
\end{align}
This, line 8 in the proof of Lemma A.1 in Bossy \& Diop~\cite{BossyDiop2007}
and Fubini's theorem 
imply that for all $t\in [0,T]$ it holds that
\begin{equation}  \begin{split}
  &
    \E\!\left[
      \left( X_t \right)^{ - p }
    \right]
  =
  \E\!\left[
    \tfrac{ 1 }{ \Gamma(p) }
    \int_0^\infty 
      u^{ (p - 1) }
      \left[
        \tfrac{ u \beta^2 }{ 2 \gamma }
        \left( 
          1 - e^{ - \gamma t }
        \right)
        + 1 
      \right]^{
        - \frac{ 2 \delta }{ \beta^2 }
      }
      \exp\!\left(
        -
        \tfrac{
          e^{ - \gamma t } 
        }{
          \frac{ 
            u \beta^2 
          }{ 
            2 \gamma 
          }
          ( 
            1 - e^{ - \gamma t }
          )
          + 1
        }
        X_0 u
      \right)
    du 
  \right]
\\ & = 
  \tfrac{ 1 }{
    \Gamma(p)
  } 
  \Bigg(
    \int_0^1 
      u^{ (p - 1) }
      \left[
        \tfrac{ u \beta^2 }{ 2 \gamma }
        ( 1 - e^{ - \gamma t } )
        + 1 
      \right]^{
        - \frac{ 2 \delta }{ \beta^2 } 
      }
      \E\!\left[
        \exp\!\left(
          - 
          \tfrac{ 
            e^{ - \gamma t } 
          }{
            \frac{
              u \beta^2
            }{
              2 \gamma
            }
            ( 1 - e^{ - \gamma t } ) + 1
          }
	  X_0 u
        \right)
      \right]
    du
\\ & \qquad \quad 
+
  \int_1^\infty 
    u^{ (p - 1) }
    \left[
      \tfrac{ u \beta^2 
      }{
        2 \gamma
      }
      ( 1 - e^{ - \gamma t } )
      + 1 
    \right]^{ - \frac{ 2 \delta }{ \beta^2 } }
    \E\!\left[
      \exp\!\left(
        - 
        \tfrac{ 
          e^{ - \gamma t } 
        }{
          \frac{ u \beta^2 }{ 2 \gamma }
          ( 1 - e^{ - \gamma t } )
          + 1 
        }
	X_0 u
      \right)
    \right]
  du
\Bigg)
\\ & \leq 
  \tfrac{ 1 }{ \Gamma(p) } 
  \bigg(
    \int_0^1 
      u^{ (p - 1) }
    \, du
    + 
    \tfrac{ C }{ \eps } 
    e^{ \gamma t ( p + \eps ) }
  \bigg)
= 
  \tfrac{ 1 }{ \Gamma(p) } 
  \Big(
    \tfrac{ 1 }{ p }
    + 
    \tfrac{ C }{ \eps } 
    e^{ \gamma t ( p + \eps ) }
  \Big) 
  < \infty
  .
\end{split}     \end{equation}
This finishes the proof of Lemma~\ref{l:inverse.moments.CIR}.
\end{proof}

\begin{corollary}   \label{c:theorem_cir}
Let $ T,
\beta \in (0,\infty) $, $ \gamma \in \R $, $ \delta \in ( \nicefrac{ \beta^2 }{ 4 }, \infty ) $,
let
$
  ( 
    \Omega, \mathcal{F}, \P, ( \mathcal{F}_t )_{ t \in [0,T] } 
  )
$
be a stochastic basis, let
$
  W \colon [0,T] \times \Omega \to \R
$
be a standard $ ( \mathcal{F}_t )_{ t \in [0,T] } $-Brownian motion,
  let $X\colon[0,T]\times \Omega\to[0,\infty) $
  be an adapted stochastic process with continuous sample paths satisfying  
  $
    \inf_{
      r \in (0, \nicefrac{ 2 \delta }{ \beta^2 } - q ) 
    } 
    \sup_{
      u \in [1,\infty)
    } 
    u^{ (q + r) } 
    \,
    \E\big[
      \exp( - u X_0 )
    \big] 
    +
    \E\big[ 
      (X_0)^s
    \big] 
    < \infty
  $ 
  for all 
  $ 
    q \in [ \nicefrac{ 1 }{ 2 } , \nicefrac{ 2 \delta }{ \beta^2 } )
  $,
  $ s \in \R $
  and
  \begin{equation} \label{l:theorem_cir:ass1bb}
    X_t=X_0+\int_0^t \delta - \gamma X_s\,ds+ \int_0^t \beta \sqrt{X_s} \, dW_s
  \end{equation}
  $\P$-a.s.\
  for all $t \in [0,T]$.
%
%
%
%
Then there exists a unique family 
$
  Y^h \colon
  \big[
    0, \lfloor \nicefrac{ T }{ h } \rfloor h
  \big]
  \times \Omega \to [0,\infty)
$, 
$ 
  h \in (0,T] \cap (0, \nicefrac{ 2 }{ ( - \gamma )^+ } )
$, 
of mappings satisfying
for all
$
  h \in (0,T] \cap ( 0, \nicefrac{ 2 }{ ( - \gamma )^+ } )
$,
$ n \in \N_0 \cap [0,\nicefrac{T}{h} - 1] $,
$ t \in ( n h , (n+1) h ] $
that
$
  Y_0^h = X_0
$ 
and
\begin{equation} 
\label{eq:LBEcor}
  Y_{t}^h
  =
  \left[ 
    n + 1 - \tfrac{ t }{ h }
  \right]
  Y_{ n h }^h
  +
  \left[ 
    \tfrac{ t }{ h } - n 
  \right]
  \left[
    \tfrac{
      ( Y_{ n h }^h )^{ \nicefrac{ 1 }{ 2 } }
      +
      \frac{
        \beta
      }{ 2 }
      ( W_{ n h + h } - W_{ n h } )
      +
      \sqrt{
        \left[
          ( Y_{ n h }^h )^{ \nicefrac{ 1 }{ 2 } }
          +
          \frac{ \beta }{ 2 }
          ( W_{ n h + h } - W_{ n h } )
        \right]^2
          +
          ( 2 + \gamma h ) 
          ( \delta - \frac{ \beta^2 }{ 4 } )
          h
      }
    }{ 
      ( 2 + \gamma h ) 
    }
  \right]^2
\end{equation}
and it holds for all 
$ p, \eps \in \left( 0, \infty \right) $
that
\begin{align} \label{l:theorem_cir:statement}
\sup_{
  h \in (0,T] \cap (0, \nicefrac{1}{(-2\gamma)^+}]
} 
\left[ 
  h^{  
    \left[
      \eps  
      - 
      \frac{ 
        (\nicefrac{2 \delta}{\beta^2}) \wedge 1  - \nicefrac{1}{2} 
      }{
        ( p \vee 1) 
      } 
    \right]
  }
  \bigg\| 
    \!
    \sup_{
      t\in[0,\lfloor \nicefrac{T}{h}\rfloor h]
    }
    \big| 
      X_t -  Y^h_t 
    \big|
  \bigg\|_{ L^{p}( \Omega; \R ) }
\right]
  < \infty
  .
\end{align}

\end{corollary}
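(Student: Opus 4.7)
\noindent\textbf{Proof proposal for Corollary~\ref{c:theorem_cir}.}
The plan is to reduce the statement to Theorem~\ref{thm:num_theorem} by identifying Alfonsi's linearly interpolated drift-implicit square-root Euler scheme with $\phi^{-1}$ applied to the drift-implicit Euler approximations of the transformed Bessel-type SDE, and then to pass from the grid to continuous time using the temporal H\"older regularity of the Bessel-type process $\sqrt{X}$ provided by Corollary~\ref{cor_hr}. Concretely, I would first plug the CIR coefficients $\mu(x)=\delta-\gamma x$ and $\sigma(x)=\beta\sqrt{x}$ into the definitions of Theorem~\ref{thm:num_theorem}, obtaining $\phi(y)=\tfrac{2\sqrt{y}}{\beta}$, $\phi^{-1}(z)=\tfrac{\beta^2 z^2}{4}$, $g(z)=\tfrac{2\delta-\beta^2/2}{\beta^2 z}-\tfrac{\gamma z}{2}$, and $\alpha=\tfrac{2\delta}{\beta^2}-\tfrac{1}{2}$, which is strictly positive because $\delta>\tfrac{\beta^2}{4}$. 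A straightforward computation shows $g'(z)=-\tfrac{2(\delta-\beta^2/4)}{\beta^2 z^2}-\tfrac{\gamma}{2}$, so $L=\sup_z [g'(z)]^+\leq \tfrac{(-\gamma)^+}{2}$, giving $\tfrac{1}{4L}\geq \tfrac{1}{2(-\gamma)^+}$, which matches the range of $h$ in the statement; the polynomial growth condition \eqref{eq:polynomial.growth.mu.sigma} is then elementary since $\sigma(x)/x=\beta/\sqrt{x}$, $\phi(x)\asymp\sqrt{x}$, and $\mu,g'$ have at most quadratic growth.

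Next I would verify the inverse-moment assumption $\sup_{t\in[0,T]}\E[|\phi(X_t)|^{-q}]<\infty$ for every $q\in[1,1+2\alpha)=[1,\tfrac{2\delta}{\beta^2}+\tfrac{1}{2})$. Since $|\phi(X_t)|^{-q}=(\tfrac{\beta}{2})^{q}(X_t)^{-q/2}$, this reduces to $\sup_t\E[(X_t)^{-p}]<\infty$ for every $p<\tfrac{\delta}{\beta^2}+\tfrac{1}{4}$, which in turn is smaller than $\tfrac{2\delta}{\beta^2}$ thanks to $\delta>\tfrac{\beta^2}{4}$, so Lemma~\ref{l:inverse.moments.CIR} applies. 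The exponential moment bound required by that lemma is furnished by the hypothesis $\inf_{r}\sup_{u\geq 1}u^{q+r}\E[e^{-u X_0}]<\infty$. With these verifications complete, Theorem~\ref{thm:num_theorem} produces a unique family $\tilde Y^h$ of drift-implicit Euler approximations of $\phi(X)$ with the strong rate
\begin{equation}
\sup_{h\in(0,T]\cap(0,1/(4L)]}h^{\eps-\frac{\alpha\wedge 1/2}{p}}
\Big\|\sup_{n\in\N_0\cap[0,T/h]}\big|X_{nh}-\phi^{-1}(\tilde Y^h_n)\big|\Big\|_{L^p(\Omega;\R)}<\infty
\end{equation}
for all $p\in[1,\infty)$, $\eps\in(0,\infty)$; note that $\alpha\wedge\tfrac{1}{2}=(\tfrac{2\delta}{\beta^2})\wedge 1-\tfrac{1}{2}$ in both cases $\tfrac{2\delta}{\beta^2}\lessgtr 1$.

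The link to Alfonsi's scheme~\eqref{eq:LBEcor} is algebraic: the drift-implicit recursion $\tilde Y^h_{n+1}=\tilde Y^h_n+h\,g(\tilde Y^h_{n+1})+W_{(n+1)h}-W_{nh}$ is equivalent, after multiplying by $\tilde Y^h_{n+1}$ and rescaling by $\beta/2$, to the quadratic equation solved in closed form by $\sqrt{Y^h_{(n+1)h}}=\tfrac{\beta}{2}\tilde Y^h_{n+1}$ with $Y^h_{(n+1)h}$ as in~\eqref{eq:LBEcor} at the grid points. Hence $Y^h_{nh}=\phi^{-1}(\tilde Y^h_n)$, so the above rate controls the grid error of Alfonsi's scheme. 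To handle the linear interpolation on $t\in[nh,(n+1)h]$, I bound
\begin{equation}
\sup_{t\in[0,\lfloor T/h\rfloor h]}|X_t-Y^h_t|
\leq 2\sup_{n}|X_{nh}-Y^h_{nh}|+2\sup_{t}|X_t-X_{\lfloor t/h\rfloor h}|,
\end{equation}
and control the second term via $|X_t-X_s|\leq (\sqrt{X_t}+\sqrt{X_s})|\sqrt{X_t}-\sqrt{X_s}|$, using Corollary~\ref{cor_hr} (with $\sup_s X_s\in L^{q}$ for every $q$ from Lemma~\ref{l:lemma_positive_mom}) to obtain a rate $h^{1/2-\eps}$, which is at least as good as the target.

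Finally, the case $p\in(0,1)$ follows from $\|\cdot\|_{L^p}\leq\|\cdot\|_{L^1}$, which explains the $p\vee 1$ in the exponent of~\eqref{l:theorem_cir:statement}. The main obstacle I anticipate is the careful verification of the inverse-moment assumption of Theorem~\ref{thm:num_theorem} together with the polynomial-growth bound \eqref{eq:polynomial.growth.mu.sigma}, as well as the precise algebraic verification that Alfonsi's closed-form scheme agrees on the grid with the drift-implicit Euler scheme of the transformed equation; once these bookkeeping items are in place, all the convergence work has already been carried out in Theorem~\ref{thm:num_theorem} and Corollary~\ref{cor_hr}.
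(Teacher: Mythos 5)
Your proposal follows essentially the same route as the paper: you apply Theorem~\ref{thm:num_theorem} to the CIR coefficients with $\phi(x)=\tfrac{2}{\beta}\sqrt{x}$ and $g(z)=\tfrac{\alpha}{z}-\tfrac{\gamma z}{2}$, verify the inverse-moment hypothesis via Lemma~\ref{l:inverse.moments.CIR}, identify the drift-implicit Euler recursion for the transformed process with Alfonsi's closed-form scheme at the grid points, and pass from the grid to continuous time using the temporal H\"older regularity of $X$ deduced from Corollary~\ref{cor_hr} and moment bounds. One small arithmetic slip worth correcting: $1+2\alpha=1+2\bigl(\tfrac{2\delta}{\beta^2}-\tfrac{1}{2}\bigr)=\tfrac{4\delta}{\beta^2}$, not $\tfrac{2\delta}{\beta^2}+\tfrac{1}{2}$, so the inverse moments $\sup_{t}\E[(X_t)^{-p}]<\infty$ are needed for all $p\in[\tfrac{1}{2},\tfrac{2\delta}{\beta^2})$ rather than only for $p<\tfrac{\delta}{\beta^2}+\tfrac{1}{4}$; this is exactly the range Lemma~\ref{l:inverse.moments.CIR} and the corollary's hypothesis supply, so the conclusion is unaffected, but there is no slack in the range as your phrasing suggests.
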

\begin{proof} [Proof of Corollary~\ref{c:theorem_cir}]
We fix $ p, \eps \in (0,\infty) $ 
throughout this proof and we
%
%
%
%
define a function $\phi \colon [0,\infty) \to [0,\infty)$ by
\begin{align}  
\label{cir:bsp_numerik:1}
  \phi(x) := 
  \int_0^x \tfrac{ 1 }{ \beta \sqrt{z} } \, dz 
  = 
  \tfrac{ 2 }{ \beta } \sqrt{x}
\end{align}
for all $x \in [0,\infty)$.
Note that $ \phi $ is bijective
and observe 
that for all $ x \in [0,\infty) $ it holds that 
$
  \phi^{-1}( x ) = \tfrac{ \beta^2 }{ 4 } x^2
$.
%
%
%
%
%
We define 
$ 
  \alpha := \tfrac{ 2 \delta }{ \beta^2 } - \tfrac{ 1 }{ 2 } \in (0, \infty)
$ 
and we
a function 
$
  g \colon [0,\infty) \to \R
$ 
by 
$
  g(0) := 0 
$ 
and
$ g(x):= 
\tfrac{\alpha}{x} - \tfrac{\gamma x}{2}
$
for all $ x \in (0,\infty) $.
The fact that
$
  g|_{ (0,\infty) } \in C^1( (0,\infty), \R )
$ 
and that
for all $ x \in (0,\infty) $ it holds that
$ 
  g'(x)
=
  - \tfrac{ \alpha }{ x^2 } - \tfrac{ \gamma }{ 2 }
$
proves that 
$
  L :=
  \big[ 
    \sup_{x \in (0,\infty)} g'(x) 
  \big]^+ 
  =  
  \left( - \nicefrac{\gamma}{2} \right)^+
$
and that
$
  \limsup_{ x \to \infty } 
  \nicefrac{
    \left|  g'(x) \right|
  }{ x } 
  = 0
$.
Lemma~\ref{l:inverse.moments.CIR}
implies that for all 
$
  q \in [ \tfrac{ 1 }{ 2 } , \tfrac{ 2 \delta }{ \beta^2 } )
$ 
it holds that
$
  \sup_{ t \in [0,T] }
  \E\!\left[
    \left( X_t \right)^{ - q }
  \right]
  < \infty
$.
This shows that 
for all 
$
  q \in [ 1, 1 + 2 \alpha ) 
$ 
it holds that
$
  \sup_{ t \in [0,T] }
  \E\!\left[
    \phi\left( X_t \right)^{ - q }
  \right] < \infty
$.
Now we apply Theorem~\ref{thm:num_theorem}
to obtain that
there exists a unique family
$
  Y^h \colon 
  \{ 0, h, 2 h, \ldots , \lfloor \nicefrac{ T }{ h } \rfloor h \}
  \times \Omega \to [0,\infty) 
$, 
$
  h \in (0,T] \cap (0, \nicefrac{ 1 }{ L } ) 
$,
of mappings
satisfying $ Y^h_0 = X_0 $ 
and
\begin{equation} 
\label{cor:num_theorem:umformung2}
  \phi\!\left(
    Y^h_{ n h }
  \right)
  = 
  \phi\!\left(
    Y^h_{ ( n - 1 ) h }
  \right)
  + 
  g\!\left(
    \phi\big( Y^h_{ n h } \big) 
  \right) h 
  + W_{ n h } - W_{ (n - 1 ) h }  
\end{equation}
for all 
$ n \in \N \cap [ 0 , \nicefrac{ T }{ h } ] $, $ h \in (0,T] \cap (0, \nicefrac{ 1 }{ L } ) $
and it holds 
for all $ \varepsilon, p \in (0,\infty) $ that
\begin{align} \label{eq:cor_cir:statement}
\sup_{
  h \in (0,T] \cap (0, \nicefrac{1}{(-2\gamma)^+}]
} 
\left[ 
  h^{  
    \left[
      \eps  
      - 
      \frac{ 
        (\nicefrac{2 \delta}{\beta^2}) \wedge 1  - \nicefrac{1}{2} 
      }{
        ( p \vee 1) 
      } 
    \right]
  }
  \left\| 
    \sup_{
      n \in \N_0 \cap [0,T/h] 
    }
    \left| 
      X_{nh} -  Y^{h}_{nh} 
    \right|
  \right\|_{ L^{p}( \Omega; \R ) }
\right]
  < \infty
  .
\end{align}
The solution of the implicit equation~\eqref{cor:num_theorem:umformung2}
is well-known (see (4) in Alfonsi~\cite{Alfonsi2005})
and the linear interpolations of the resulting discrete-time processes
satisfy equation~\eqref{eq:LBEcor}.
Finally, \eqref{eq:cor_cir:statement}
together with the fact that for all 
$ p, \eps \in (0,\infty) $
it holds that
\begin{equation}
  \left\|
    \sup_{ 
      s, t \in [0,T] , s \neq t
    }
    \left[
    \frac{ 
      \left|
        X_t - X_s
      \right|
    }{
      \left| t - s \right|^{ ( \eps - \frac{ 1 }{ 2 } ) }
    }
    \right]
  \right\|_{
    L^p( \Omega; \R ) 
  }
  < \infty
\end{equation}
implies~\eqref{l:theorem_cir:statement}.
This finishes the proof of Corollary~\ref{c:theorem_cir}.
\end{proof}

\subsubsection*{Acknowledgements}

Special thanks are due to Steffen Dereich for encouraging us to investigate this problem.
Moreover, we are very grateful to Florian Mueller-Reiter from the swissQuant Group AG for a
number of quite helpful comments regarding the use of the Heston model in the financial practice.

This project has been partially supported by the research project
``Numerical approximation of stochastic differential equations with non-globally Lipschitz
continuous coefficients'' (HU1889/2-1)
funded by the German Research Foundation.

\bibliographystyle{acm}
\bibliography{../../Bib/bibfile}
%
\end{document}